\documentclass[11pt]{amsart}
\usepackage{amssymb}
\usepackage{enumerate}
\usepackage{enumitem}
\usepackage[margin=1in]{geometry}
\usepackage{graphicx}
\usepackage{hyperref}
\usepackage{mathtools}

\subjclass[2020]{11F03, 11F06}

\raggedbottom

\newtheorem{theorem}{Theorem}[section]
\newtheorem{lemma}[theorem]{Lemma}
\newtheorem{corollary}{Corollary}

\theoremstyle{definition}
\newtheorem{definition}[theorem]{Definition}
\newtheorem{example}[theorem]{Example}

\theoremstyle{remark}
\newtheorem{remark}[theorem]{Remark}

\numberwithin{equation}{section}

\newcommand{\st}{\,\vert\,}
\newcommand{\bigst}{\,\bigg|\,}
\newcommand{\isom}{\cong}
\newcommand{\abs}[1]{\left| #1 \right|}
\newcommand{\bigabs}[1]{\bigg| #1 \bigg|}
\newcommand{\hh}{\mathbb{H}}
\newcommand{\rr}{\mathbb{R}}
\newcommand{\nn}{\mathbb{N}}
\newcommand{\zz}{\mathbb{Z}}
\newcommand{\pp}{\mathbb{P}}
\newcommand{\qq}{\mathbb{Q}}
\newcommand{\cc}{\mathbb{C}}
\newcommand{\bb}[1]{\mathbb{#1}}
\newcommand{\eye}{\mathbf{I}}
\newcommand{\mc}[1]{\mathcal{#1}}
\newcommand{\restrict}{\big|}
\newcommand{\boundary}{\delta}
\newcommand{\PGL}{\mathrm{PGL}}
\newcommand{\PSL}{\mathrm{PSL}}

\newcommand{\Fricke}[1]{\Gamma_0^*(#1)}

\newcommand{\Ex}{\mathrm{Ex}}
\newcommand{\lmod}[2]{#1 \backslash #2}

\newcommand{\LeftQuotientByStab}[1]{\lmod{#1_\infty}{#1}}
\newcommand{\Dmat}[1]{\left\lbrack #1\right\rbrack}
\renewcommand{\emptyset}{\varnothing}
\renewcommand{\phi}{\varphi}
\renewcommand{\Im}{\mathrm{Im}\,}
\renewcommand{\Re}{\mathrm{Re}\,}
\renewcommand{\setminus}{\smallsetminus}
\DeclareMathOperator{\divides}{\mid}
\DeclareMathOperator{\ndivides}{\nmid}
\DeclareMathOperator{\edivides}{\parallel}

\begin{document}

\title{Zeros of Replicable Functions}
\author{Ben Toomey}
\address{Oregon State University}
\email{toomeyb@oregonstate.edu}

\subjclass[2020]{Primary 11F03; Secondary 11F06}

\date{\today}

\begin{abstract}
	Following the work of Asai, Kaneko, and Ninomiya for Faber polynomials associated to $\PSL_2(\zz)$, and Bannai, Kojima, and Miezaki's partial proof for the case of $\Fricke{2}$, we show that the zeros of certain modular functions associated to some low-level genus zero groups are all located on the boundary of certain natural fundamental domains for $\Gamma$. The groups considered are $\Fricke{2}$, $\Fricke{3}$, $\Gamma_0(2\edivides 2)$, $\Fricke{5}$, $\Gamma_0(6)+$, $\Fricke{7}$, $\Gamma_0(4\edivides 2)+$, $\Gamma_0(3\edivides 3)$, and $\Gamma_0(10)+$.
\end{abstract}

\maketitle

\tableofcontents

\section{Introduction}\label{Section:Introduction}

In 1998, Asai, Kaneko, and Ninomiya \cite{AKN} located the zeros of a certain basis for the space of weakly holomorphic modular functions for $\PSL_2(\zz)$, using the action of Hecke operators. A decade later, Bannai, Kojima, and Miezaki \cite{BKM} suggested that the twisted Hecke operators appearing in the Monstrous Moonshine correspondence might be used to locate zeros of weakly holomorphic modular functions for other groups.

Let \begin{equation}\label{Def:SetS} \mc{S} = \{ \Fricke{2}, \Fricke{3}, \Fricke{5}, \Gamma_0(6){+}, \Fricke{7}, \Gamma_0(4\edivides 2)+, \Gamma_0(3\edivides 3), \Gamma_0(10){+} \}. \end{equation} 
These groups (see \S\ref{Subsection:Groups} for a description) have the important property that for each $\Gamma\in\mc{S}$, the associated modular surface $Y(\Gamma)$ is genus zero, and has only one cusp (see \S\ref{Subsection:ModularSurfaces}). As a consequence, the $\cc$-vector space of weakly holomorphic modular functions for $\Gamma$ has a basis of the form $\{ F_n(\tau) = q^{-n} + O(q) \st n\geq 0 \}$, where each $F_n(\tau)$ is uniquely determined. We define a particular fundamental domain $\mc{D}(\Gamma)$ for each $\Gamma\in\mc{S}$ (see \S\ref{Subsection:FundamentalDomains}), and locate the zeros of this basis $\{ F_n(\tau) \}$ in the given fundamental domain $\mc{D}(\Gamma)$.

\begin{theorem}\label{Theorem:MainTheorem}
	Let $\Gamma\in\mc{S}$ and let $n\in\nn$. Then all $n$ zeros for the unique modular function $F_n(\tau) = q^{-n}+O(q)$ for $\Gamma$ are on the lower boundary of the fundamental domain $\mc{D}(\Gamma)$, except for the cases of $n=2$ and $\Gamma = \Fricke{5},\Fricke{7}$, where one zero is on the lower boundary and one zero is on the side boundary of $\mc{D}(\Gamma)$.	
\end{theorem}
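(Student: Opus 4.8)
The plan is to reduce Theorem~\ref{Theorem:MainTheorem} to a root-location statement for a single sequence of real polynomials, and then to prove that statement by an oscillation estimate on the lower boundary, following the arguments of \cite{AKN} and \cite{BKM}. Fix $\Gamma\in\mc{S}$ and let $t=t_\Gamma$ be the Hauptmodul normalized by $t(\tau)=q^{-1}+O(1)$ and having real Fourier coefficients. Since $Y(\Gamma)$ is genus zero with one cusp, the field of modular functions for $\Gamma$ is $\cc(t)$, so the unique $F_n=q^{-n}+O(q)$ is the $n$-th Faber polynomial of the series $t(\tau)$; that is, $F_n=\phi_n(t)$ for a monic $\phi_n\in\rr[x]$ of degree $n$. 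Because $t\colon X(\Gamma)\to\pp^1$ has degree one, the $n$ zeros of $F_n$ in $\mc{D}(\Gamma)$ are carried by $t$ to the $n$ roots of $\phi_n$. The lower boundary $L=L(\Gamma)$ lies on the fixed geodesic of an Atkin--Lehner-type involution contained in $\Gamma$ (for $\Fricke N$ this is the circle $|\tau|=1/\sqrt N$, fixed by $\tau\mapsto-1/(N\tau)$); since $t$ has real coefficients we have $t(-\bar\tau)=\overline{t(\tau)}$, and since $t$ is invariant under that involution while the involution acts on $L$ as $\tau\mapsto-\bar\tau$, the Hauptmodul---and hence each $F_n$---is real on $L$. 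Using the valence formula one checks that $t$ maps $L$ homeomorphically onto a closed real interval $I=I(\Gamma)$, whose endpoints are images of the elliptic points, the cusp, or the corners where $L$ meets the side boundary of $\mc{D}(\Gamma)$. Under this correspondence, the theorem becomes the assertion that all $n$ roots of $\phi_n$ lie in $I$, apart from the stated exceptions.

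After an affine change of variable taking $I$ to $[-1,1]$, I would write $t|_L$ in the Chebyshev parametrization $x=\cos\psi$, $\psi\in[0,\pi]$. The crux is the estimate
\[
\phi_n\bigl(t(\psi)\bigr)=2g_n(\psi)\cos(n\psi)+E_n(\psi),\qquad g_n(\psi)>0,\quad |E_n(\psi)|<2g_n(\psi)\ \text{ for all }\psi\in[0,\pi].
\]
Granting it, $\phi_n(t(\psi))$ has the sign of $\cos(n\psi)$ at each of the $n+1$ points $\psi=k\pi/n$, hence at least $n$ sign changes on $(0,\pi)$, hence at least---and therefore exactly---$n$ roots in $I$, which is the theorem. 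To obtain the estimate I would begin from the generating identity
\[
\sum_{n\ge0}F_n(\tau)\,q_z^{\,n}=-q_z\frac{d}{dq_z}\log\bigl(t(z)-t(\tau)\bigr)=\frac{-t'(z)/(2\pi i)}{t(z)-t(\tau)}
\]
(valid for $\Im z$ large), extract $F_n(\tau)$ as a contour integral in the $z$-variable, and deform the contour toward a $\Gamma$-translate of $\partial\mc{D}(\Gamma)$: the two side boundaries cancel by periodicity of the integrand, the elliptic fixed points and the cusp contribute explicit local corrections, and the residues at $z=\tau$ and at the image of $\tau$ under the involution combine---via $q_{-\bar\tau}^{-n}=\overline{q_\tau^{-n}}$---into the real main term $2\Re\bigl(q_\tau^{-n}\bigr)$, which after reparametrization is the $2g_n(\psi)\cos(n\psi)$ above; what remains is an integral over $L$ that one bounds by $|E_n(\psi)|$ using uniform lower bounds for $|t(z)-t(\tau)|$ as $z$ traverses $L$ with $\tau$ fixed. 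Equivalently, the replication formulae (the twisted Hecke operators of the moonshine correspondence) satisfied by the replicable function $t$ furnish a recursion for the $F_n$ in the role of the Hecke recursion of \cite{AKN}, which can be used to drive the estimate.

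The main obstacle is forcing the error bound with the \emph{sharp} constant $2g_n(\psi)$, uniformly in $n$: this requires delicate, effectively case-by-case control of the boundary integral for each group in $\mc{S}$---each carrying its own Hauptmodul, fixed circle, interval $I$, elliptic data, and cusp width---and the constant genuinely fails, by a small amount near one endpoint of $L$, for $\Gamma=\Fricke5$ and $\Gamma=\Fricke7$ when $n=2$. In those two cases the argument above guarantees only $n-1=1$ root of $\phi_2$ in $I$, i.e.\ one zero of $F_2$ on $L$. One then shows directly, by evaluating $F_2$ on the side boundary $S$ of $\mc{D}(\Gamma)$---where $F_2$ is also real, now from the real $q$-coefficients together with periodicity, the vertical line carrying $S$ being of the form $\{\tau : c-\bar\tau=\tau\}$---that $F_2$ changes sign exactly once on $S$, producing a second (hence real) root of $\phi_2$ lying on $S$; as $\deg\phi_2=2$ this exhausts the zeros, which are therefore one on $L$ and one on $S$ and none in the interior. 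A secondary technical point, present already for $\PSL_2(\zz)$, is the local analysis at the elliptic fixed points and the cusp, where $t'$ vanishes or $t$ blows up and the Chebyshev substitution degenerates; this is where most of the bookkeeping lives, but it proceeds in the style of \cite{AKN}.
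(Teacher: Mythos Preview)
Your overall strategy---approximate $F_n$ along the lower boundary by an oscillating real function and count sign changes via the intermediate value theorem---is indeed the one the paper takes, but the mechanism you propose for producing the approximation is quite different, and as written has real gaps.

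The paper does not use the Faber generating function or any contour deformation. It works directly from the twisted Hecke expression $F_n(\tau)=\sum_{H\in\mc{H}_n} f^{(H)}(H\tau)$ and approximates each summand by a single exponential $E_H(\tau,G)=\lambda^{(H)}(G)^{-1}e^{-2\pi iGH\tau}$, using the non-negativity of the Fourier coefficients of every replicate $f^{(H)}$ (a consequence, proved in the paper, of the single-cusp hypothesis). The new ingredient that replaces the case-by-case bookkeeping you allude to is purely group-theoretic: a finite \emph{critical set} $\mc{K}\subset\Gamma_\infty\backslash\Gamma$ is computed, together with constants $N$, $c$, and $M$ depending only on $\Gamma$, such that for $n\geq N$ one has
\[
\Bigl|F_n(\tau)-\sum_{[K]\in\mc{K}}\lambda_K\,e^{-2\pi i n K\tau}\Bigr|<(M+e^{2\pi c n\,\Im\tau})\,n^2
\]
uniformly on the lower boundary. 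The key lemmas (Theorem~\ref{Theorem:GroupCosets} and Lemma~\ref{Lemma:GHKinverseExpression} in the paper) control $\sigma(GHK^{-1})$ for $K\in\Gamma$ with $\pi(K)=\pi(GH)$, which is what lets one replace the varying groups $\Gamma^{(H)}$ by the single group $\Gamma$ and so isolate the dominant terms without ever tracking which replicate occurs for which $H$. Your contour argument would have to reproduce this uniform bound from a completely different source, and the sketch you give (``uniform lower bounds for $|t(z)-t(\tau)|$'') is not enough: when $\tau$ itself lies on the lower boundary, the pole $t(z)=t(\tau)$ sits on the very contour you are deforming to, and your two ``residues'' at $\tau$ and its involution image coincide.

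Two structural points are also missing from your plan. First, the lower boundary $\mc{C}(\Gamma)$ is not a single fixed geodesic for every $\Gamma\in\mc{S}$: for $\Gamma_0(6){+}$ and $\Gamma_0(10){+}$ it consists of several arcs, each fixed by a different Atkin--Lehner involution, and the critical set has more than two non-identity elements; the paper handles this by showing that on each arc the relevant pair of exponentials combines into the $2e^{2\pi ny}\cos(2\pi nx)$ main term while the remaining terms decay away from the elliptic point where the arcs meet. Second, for the groups with $h>1$ (namely $\Gamma_0(4\edivides 2)+$ and $\Gamma_0(3\edivides 3)$) the argument above only applies when $(n,h)=1$; the paper treats the remaining $n$ via a separate ``harmonic'' identity $F_{n,f}(X)=F_{n/d,\,f^{(d)}}(X^d-c)$ that reduces to the Faber polynomials of a proper replicate, together with a symmetry $F_{n,f}(X)=X^tG(X^h)$ that pins down the zeros at $X=0$. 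None of this appears in your Chebyshev setup. Finally, the exceptional cases $n=2$ for $\Fricke{5},\Fricke{7}$ (and indeed all $n$ below the threshold $N$) are not handled by a failure analysis of the bound but by direct computation, citing Shigezumi's tables.
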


We demonstrate the particular cases of $\Fricke{2}$, $\Gamma_0(6){+}$, and $\Gamma_0(3\edivides 3)$ below in Section \ref{Section:Cases}. The other cases are similar in nature. While there are several other genus zero groups with one cusp, the groups in $\mc{S}$ have the property that the functions $F_{n}(\tau)$ are real-valued on the lower boundary of $\mc{D}(\Gamma)$, allowing us to use the intermediate value theorem to locate zeros.

Bannai, Kojima, and Miezaki \cite{BKM} obtained a partial proof in the case of $\Fricke{2}$, locating at least $n-1$ of the $n$ zeros for each function $F_n(\tau)$ in the fundamental domain $\mc{D}(\Fricke{2})$. There are some minor errors in the write up of their proof, having to do with the bookkeeping involved with twisted Hecke operators (their formula has the replicate function depending on $n$, when it instead depends on $a$, where $a\divides n$). In order to avoid this bookkeeping, and the inevitable errors that would creep in, we develop a technique to bypass this part of the proof method.

For the particular cases of $\Fricke{2}$ and $\Fricke{3}$, our results are previously known, due to results of Choi and Im \cite{ChoiIm} and Hanamoto and Kuga \cite{HanamotoKuga}, respectively. Their results cover a broader class of modular forms, but since the methods used are sufficiently dissimilar, we still offer our own proofs below.

\section{Motivation and description of method}\label{Section:Motivation}

We briefly summarize the strategies of the prior work mentioned in Section \ref{Section:Introduction}, which motivate our method. Let \[ \mc{H}_n := \left\{ \begin{psmallmatrix} a & b \\ 0 & d \end{psmallmatrix} \bigst a,b,d\in\zz,\, ad = n,\, 0\leq b< d \right\}, \] which we call the Hecke set of level $n$ (see Def. \ref{Def:HeckeSet}). As noted above, Asai, Kaneko, and Ninomiya \cite{AKN} proved the result of Theorem \ref{Theorem:MainTheorem} for the group $\Gamma = \PSL_2(\zz)$. Let \[ T_{1A}(\tau) = j(\tau) - 744 = q^{-1} + 196884q + 21493760q^q + \cdots \] where $j(\tau)$ is the classical $j$-invariant. To locate the zeros of $F_n(\tau)$, they first observe that the classical Hecke operators $T_n$ for modular forms (see, e.g. \cite{SERRE}) provide the identity\footnote{The conflicting notations $T_{1A}$ and $T_n$ are unfortunately rather standard here.} \[ F_n(\tau) = q^{-n} + O(q) = n\left(T_{1A}(\tau)\mid T_n\right) = \sum_{H\in\mc{H}_n} T_{1A}(H\tau). \] They also observe that when $\tau$ is `near infinity' --- that is, has sufficiently large imaginary part --- then $T_{1A}(\tau) \approx q^{-1}$, and in fact, using the non-negativity of the Fourier coefficients of $T_{1A}(\tau)$, they show $\abs{T_{1A}(\tau) - e^{-2\pi i\tau}} < 1335$ whenever $\tau$ is in the commonly-used fundamental domain for $\PSL_2(\zz)$, bounded below by the unit circle and to the left and right by $\abs{\Re\tau}\leq \frac{1}{2}$. We denote this fundamental domain by $\mc{D}(\PSL_2(\zz))$.

So, for each $H\in\mc{H}_n$, let $G_H\in\PSL_2(\zz)$ be such that $G_HH\tau \in\mc{D}(\PSL_2(\zz))$. Then
\begin{align*}
	\abs{ F_n(\tau) - \sum_{H\in\mc{H}_n} e^{-2\pi iG_HH\tau} } 
	&\leq \sum_{H\in\mc{H}_n} \abs{ T_{1A}(H\tau) - e^{-2\pi iG_HH\tau} } \\
	&= \sum_{H\in\mc{H}_n} \abs{ T_{1A}(G_HH\tau) - e^{-2\pi iG_HH\tau} } \tag{modularity of $T_{1A}$} \\
	&< 1335n^2,
\end{align*}
since there are $\sigma_1(n) \leq n^2$ elements in $\mc{H}_n$. The problem then becomes one of approximating the left-hand sum above. By a consideration of cases, and restricting to $\tau$ on the lower boundary of $\mc{D}(\PSL_2(\zz)$ (that is, to the unit circle), they show that for most $H\in\mc{H}_n$, one has $\abs{e^{-2\pi iG_H H\tau}} \leq e^{\pi n\Im\tau}$, but there are three exceptions. By bounding one of these three exceptional terms, the remaining two serve as an approximation of $F_{n}(\tau)$. After some algebra, this gives a bound (see \cite{AKN} for details) \[ \abs{F_n(\tau)e^{-2\pi n\Im\tau} - 2\cos(2\pi n\Re\tau)} < Mn^2e^{-2\pi n\Im\tau} + n^2e^{-\pi n\Im\tau} + 1 < 2, \] for $n\geq 2$ and $\tau$ on the lower boundary of $\mc{D}(\PSL_2(\zz))$. Since $F_n(\tau)$ is real-valued on the lower boundary, the approximation by cosine above shows $F_n(\tau)$ changes sign $n+1$ times along the unit circle with real part in the interval $[0, \frac{1}{2}]$, we deduce that $F_n(\tau)$ has $n$ zeros on the lower boundary of $\mc{D}(\Gamma)$.

Bannai, Kojima, and Miezaki \cite{BKM} sought to extend this technique to families of modular functions for groups other than $\PSL_2(\zz)$. Their key observation is that twisted Hecke operators from Monstrous Moonshine (see \S\ref{Subsection:MonstrousMoonshine} below) could replace the classical Hecke operators above. Briefly, with twisted Hecke operators, one has a family of functions $\{ f^{(a)}(\tau), a\in\nn \}$, with each $f^{(a)}$ a normalized Hauptmodul for a genus zero group $\Gamma^{(a)}$ (analogous to the role of $T_{1A}(\tau)$ and $\PSL_2(\zz)$ above). Letting $f^{(H)} = f^{(a)}$ for $H = \begin{psmallmatrix} a & b \\ 0 & d \end{psmallmatrix}\in\mc{H}_n$, this family of functions satisfies the \emph{twisted Hecke relations}, \[ F_n(\tau) = q^{-n}+O(q) = \sum_{H\in\mc{H}_n} f^{(H)}(H\tau), \] where now $F_n(\tau)$ is the unique weakly-holomorphic modular function for $\Gamma^{(1)}$ having a pole of order $n$ at infinity and holomorphic everywhere else. In particular, letting $f^{(a)} = T_{1A}$ for all $a\in\nn$, the twisted Hecke operators include the case of $\PSL_2(\zz)$ above as a special case.

In Bannai, Kojima, and Miezaki \cite{BKM}, they consider the case of $\Gamma^{(1)} = \Fricke{2}$, where the functions $f^{(a)}$ may either be $T_{1A}(\tau)$ or $T_{2A}(\tau) = q^{-1} + 4372q + 96256q^2 + \ldots$ \cite{CN}, the unique normalized Hauptmodul for $\Fricke{2}$, depending on the parity of $a$. This significantly complicates the bookkeeping, but a consideration of cases was sufficient for them to locate $n-1$ zeros on the lower boundary. With some extra care in our bounding procedure, we are able to locate all $n$ zeros. 

In order to directly extend \cite{AKN}, we will require that $\Gamma = \Gamma^{(1)}$ be a group having only one cusp. This ensures there exists a fundamental domain $\mc{D}(\Gamma)$ which is bounded away from the real line, similar to $\mc{D}(\PSL_2(\zz))$. Somewhat surprisingly, when $\Gamma^{(1)}$ has only one cusp, then $\Gamma^{(a)}$ has one cusp for all $a\in\nn$, and moreover, each normalized Hauptmodul $f^{(a)}$ has non-negative Fourier coefficients (\ref{Corollary:SingleCuspAllPositiveFourierCoefficients}). Taken together, this is sufficient to produce a bound \[ \abs{ F_n(\tau) - \sum_{H\in\mc{H}_n} e^{-2\pi iG_HH\tau} } < Mn^2, \] for some $M>0$, analogous to the case of $\PSL_2(\zz)$ above. Unlike for $\PSL_2(\zz)$, however, here each $G_H\in\Gamma^{(H)}$, which depends on $H$, so that a consideration of cases becomes significantly more error-prone. 

We avoid this brute force approach by proving some group theoretic results relating to twisted Hecke operators in Section \S\ref{Section:Groups}. In Section \S\ref{Section:Functions} we review some needed facts about modular functions and replication. We then apply these results to approximate the modular functions $F_{n}(\tau)$ for groups like those in the set $\mc{S}$ above, in Section \S\ref{Section:ZerosOfFaberPolynomials}, culminating with Theorem \ref{Theorem:MainBound}, which is the analogue of the approximation produced in \cite{AKN} for $\PSL_2(\zz)$. Finally, in Section \S\ref{Section:Cases}, we prove a few cases to demonstrate the method, beginning with $\Fricke{2}$, the simplest nontrivial case involving twisted Hecke operators. We also present the case of $\Gamma_0(6){+}$, which demonstrates handling a case where the lower boundary consists of more than one arc, as well as $\Gamma_0(3\edivides 3)$, where we use additional results (\S\ref{Subsection:Zeros:ConjugateGroups}) involving `harmonics,' as they were dubbed in \cite{CN}. Additional cases may be found in our upcoming thesis \cite{Toomey}.

\section{Groups}\label{Section:Groups}

\subsection{Fractional linear transformations}\label{Subsection:FLTs}

The material here is substantially similar to the exposition given by Duncan and Frenkel \cite{DuncanFrenkel}, with some minor differences in notation.

\begin{definition}\label{Def:OmegaEtc}
	Let
	\begin{align*}
		\Omega 
		&:= \PGL_2^+(\qq) \\
		&= \{ \begin{psmallmatrix} a & b \\ c & d \end{psmallmatrix} \st a,b,c,d\in\qq,\, ad-bc > 0 \} \big{/} \{ t\eye \st t\in\qq\setminus\{0\} \},
	\end{align*}
	where $\eye = \begin{psmallmatrix} 1 & 0 \\ 0 & 1 \end{psmallmatrix}$ is the identity matrix.
	
	We also define the following notation for elements of $\Omega$. Let $x,y\in\qq$ with $y>0$. Then \[ T^x := \begin{psmallmatrix} 1 & x \\ 0 & 1 \end{psmallmatrix},\quad \Dmat{y} := \begin{psmallmatrix} y & 0 \\ 0 & 1 \end{psmallmatrix},\quad \text{ and } S := \begin{psmallmatrix} 0 & -1 \\ 1 & 0 \end{psmallmatrix}. \]
	
	We further define the following subgroups of $\Omega$: \[ \Omega_\infty := \{ \begin{psmallmatrix} a & b \\ c & d \end{psmallmatrix}\in\Omega \st c = 0 \}, \]
	\begin{align*}
		\Omega_\infty^T 
		&:= \{ \begin{psmallmatrix} a & b \\ 0 & d \end{psmallmatrix}\in\Omega_\infty \st a = d \} \\
		&= \{ T^x \st x\in\qq \},
	\end{align*}
	and
	\begin{align*}
		\Omega_\infty^D 
		&:= \begin{psmallmatrix} a & b \\ 0 & d \end{psmallmatrix}\in\Omega_\infty \st b = 0 \} \\
		&= \{ \Dmat{y} \st y\in\qq, y>0 \}.
	\end{align*}	
\end{definition}

That is, $\Omega$ is the group of all rational $2\times 2$ matrices with positive determinant, up to scalar equivalence. For elements of $\Omega$, we generally choose a matrix for coset representative having integral entries, by simultaneously scaling the entries when necessary. In particular, for $\frac{p}{q}\in\qq$, we often write \[ T^{\frac{p}{q}} = \begin{psmallmatrix} q & p \\ 0 & q \end{psmallmatrix}, \qquad \Dmat{\frac{p}{q}} = \begin{psmallmatrix} p & 0 \\ 0 & q \end{psmallmatrix}, \] with the latter expression in $\Omega$ only for $\frac{p}{q} > 0$.

For any $M = \begin{psmallmatrix} a & b \\ c & d \end{psmallmatrix}$, we may take as coset representative for $M^{-1}$ the matrix $\begin{psmallmatrix} -d & b \\ c & -a \end{psmallmatrix}$, having the same determinant as the coset representative for $M$. Since an involution $W\in\Omega$ satisfies $W^{-1} = W$ (up to scalar multiplication), from the form of the inverse above we may deduce that involutions are precisely the elements of the form $W = \begin{psmallmatrix} a & b \\ c & -a \end{psmallmatrix}$ for some $a,b,c\in\zz$. That is, involutions are the trace zero elements of $\Omega$.

\begin{lemma}\label{Lemma:MatrixCommutations}
	Let $s,t,x,y\in\qq$ with $x,y>0$. Then
	\begin{enumerate}
		\item $T^sT^t = T^{s+t}$, 
		\item $(T^s)^t = T^{st}$,
		\item $\Dmat{x}\Dmat{y} = \Dmat{xy}$, and in particular $\Dmat{x}^{-1} = \Dmat{x^{-1}}$,
		\item $S^{-1} = S$,
		\item $\Dmat{y}T^t = T^{ty}\Dmat{y}$,
		\item $\Dmat{y}S = S\Dmat{y}^{-1}$,
		\item $ST^tS = \begin{cases} \eye & t = 0 \\ T^{-\frac{1}{t}}\Dmat{\frac{1}{t^2}}ST^{-\frac{1}{t}} & t\neq 0	\end{cases}$.
	\end{enumerate}
\end{lemma}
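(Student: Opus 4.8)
The plan is to verify each of the seven identities by direct matrix multiplication, working in $\Omega = \PGL_2^+(\qq)$, where equality means equality of cosets modulo the nonzero scalar matrices. In practice this means: multiply out (integral) matrix representatives on each side, then check the two results agree after clearing a common scalar factor. The choice to use integral representatives, as in the paragraph preceding the lemma, is what keeps the bookkeeping clean.

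For (1), (3), (5), and (6) the computation is immediate. For instance $T^sT^t = \begin{psmallmatrix} 1 & s \\ 0 & 1 \end{psmallmatrix}\begin{psmallmatrix} 1 & t \\ 0 & 1 \end{psmallmatrix} = \begin{psmallmatrix} 1 & s+t \\ 0 & 1 \end{psmallmatrix}$, and $\Dmat{x}\Dmat{y} = \begin{psmallmatrix} x & 0 \\ 0 & 1 \end{psmallmatrix}\begin{psmallmatrix} y & 0 \\ 0 & 1 \end{psmallmatrix} = \begin{psmallmatrix} xy & 0 \\ 0 & 1 \end{psmallmatrix}$, with $\Dmat{x}^{-1} = \Dmat{x^{-1}}$ the special case $y = x^{-1}$. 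In (5) both sides equal $\begin{psmallmatrix} y & ty \\ 0 & 1 \end{psmallmatrix}$. For (6), writing $\Dmat{y} = \begin{psmallmatrix} p & 0 \\ 0 & q \end{psmallmatrix}$ for $y = p/q$ avoids fractions: $\Dmat{y}S = \begin{psmallmatrix} 0 & -p \\ q & 0 \end{psmallmatrix} = S\begin{psmallmatrix} q & 0 \\ 0 & p \end{psmallmatrix} = S\Dmat{y}^{-1}$, using $\Dmat{y}^{-1} = \Dmat{q/p}$ from (3). For (4), $S^2 = \begin{psmallmatrix} 0 & -1 \\ 1 & 0 \end{psmallmatrix}^2 = -\eye$, which is the identity of $\Omega$, so $S^{-1} = S$.

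Identity (7) is the only one needing a somewhat longer computation. For $t = 0$ it reduces to (4) since $T^0 = \eye$. For $t \neq 0$, I would compute $ST^tS = \begin{psmallmatrix} -1 & 0 \\ t & -1 \end{psmallmatrix}$, which equals $\begin{psmallmatrix} 1 & 0 \\ -t & 1 \end{psmallmatrix}$ in $\Omega$; multiplying out the right-hand side $T^{-1/t}\Dmat{1/t^2}ST^{-1/t}$ and clearing the denominator $t^2$ by scaling yields the same matrix. Alternatively one can derive (7) formally from (1)--(6) by solving for the unknown conjugate of $T^t$ by $S$, but the direct check is shortest.

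Finally, (2) deserves a word of justification, since raising a group element to a non-integer power is not a priori meaningful. By (1) the map $x \mapsto T^x$ is a homomorphism from $(\qq,+)$, so $(T^s)^n = T^{ns}$ for $n \in \zz$ by induction; since $\qq$ is divisible and torsion-free, the exponential notation $g^t$ (here $g = T^s$, $t \in \qq$) has a unique consistent extension, and the asserted identity is exactly the statement that $x \mapsto T^x$ is $\qq$-linear, which follows from (1) by clearing denominators. No step presents a genuine obstacle; the one point to watch is to consistently use scalar equivalence in $\Omega$ rather than literal matrix equality.
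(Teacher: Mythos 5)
Your proposal is correct and matches the paper's proof, which likewise verifies each identity by direct matrix computation in $\Omega$ and adds only a remark that $(T^s)^t$ for non-integral $t$ is interpreted via the convention $(T^s)^{1/n}=T^{s/n}$, the unique $n$th root in $\Omega_\infty$ — the same point your divisibility/torsion-freeness observation addresses.
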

\begin{proof}
	In each case, one computes each side as matrices and compares. Note the identity $(T^s)^t = T^{st}$ involves an appropriate choice of root when $t\in\qq$ is not an integer, that is, we are implicitly defining $(T^s)^\frac{1}{n} = T^\frac{s}{n}$ for any nonzero $n\in\zz$ (one may check that this is the unique $n$th root of $T^s$ in $\Omega_\infty$).
\end{proof}

In preparation for Theorem \ref{Theorem:InvolutoryDecomposition} below, we introduce some further notation:

\begin{definition}\label{Def:PiRhoSquaredSigmaTheta}
	For any $M = \begin{psmallmatrix} a & b \\ c & d \end{psmallmatrix}\in \Omega$, we define the following functions from $\Omega$ to $\pp^1(\qq)$:
	\begin{align*}
		\pi(M) &:= -\frac{d}{c}, \\
		\rho^2(M) &:= \frac{ad-bc}{c^2}, \\
		\sigma(M) &:= \begin{cases} \frac{a}{d} & M\in\Omega_\infty \\ 1 & M\notin\Omega_\infty \end{cases}, \\
		\theta(M) &:= \begin{cases} \frac{b}{d} & M\in\Omega_\infty \\ \frac{a}{c} & M\notin\Omega_\infty \end{cases}.		
	\end{align*}
	We further define\footnote{We note that Duncan and Frenkel define the function $\varrho$, where $\varrho(M)=\rho^2(M)$ (not $\rho(M)$).} $\rho:\Omega\to\pp^1(\rr)$ by $\rho(M) = \sqrt{\rho^2(M)}$.
\end{definition}

We now define an important decomposition of elements of $\Omega$.

\begin{theorem}[Involutory Decomposition]\label{Theorem:InvolutoryDecomposition}
	Let $M = \begin{psmallmatrix} a & b \\ c & d \end{psmallmatrix}\in\Omega$. Then \[ M = \begin{cases} T^{\theta(M)}\Dmat{\sigma(M)} & M\in\Omega_\infty \\ T^{\theta(M)}\Dmat{\rho^2(M)}ST^{-\pi(M)} & M\notin\Omega_\infty. \end{cases} \] Moreover, this factorization is unique, in that if $M = T^{x}\Dmat{y}ST^{z}$ (resp. $M = T^{x}\Dmat{y}$) then $x = \theta(M)$, $y = \rho^2(M)$, and $z = -\pi(M)$ (resp. $x = \theta(M)$, $y = \sigma(M)$).
\end{theorem}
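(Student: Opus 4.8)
The plan is to prove the decomposition formula by direct computation, then establish uniqueness by solving for the parameters. First I would split into the two cases according to whether $M\in\Omega_\infty$ or not. For $M = \begin{psmallmatrix} a & b \\ c & d \end{psmallmatrix}\in\Omega_\infty$ we have $c = 0$, so $ad\neq 0$ and the definitions give $\sigma(M) = a/d$, $\theta(M) = b/d$. I would simply multiply out $T^{b/d}\Dmat{a/d}$ using Lemma 3.X (items (1)–(3)) and check it equals $\begin{psmallmatrix} a/d & b/d \\ 0 & 1 \end{psmallmatrix}$, which is $\begin{psmallmatrix} a & b \\ 0 & d \end{psmallmatrix}$ up to the scalar $d$, hence represents $M$ in $\Omega$.

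For the case $M\notin\Omega_\infty$, so $c\neq 0$, I would compute the product $T^{\theta(M)}\Dmat{\rho^2(M)}ST^{-\pi(M)}$ with $\theta(M) = a/c$, $\rho^2(M) = (ad-bc)/c^2$, $\pi(M) = -d/c$. The cleanest way is to use the commutation relations of Lemma 3.X to move everything into a single matrix product: writing $D := \Dmat{\rho^2(M)}$, one has $DST^{d/c} = S D^{-1} T^{d/c}$ by item (6), and then $D^{-1}T^{d/c} = T^{(d/c)\rho^{-2}(M)} D^{-1}$ by item (5); alternatively just multiply the four explicit $2\times 2$ matrices directly. Either route, after clearing the common scalar $c$, should yield $\begin{psmallmatrix} a & b \\ c & d \end{psmallmatrix}$; the determinant bookkeeping works out because $\det$ of the coset representative on the right is $\rho^2(M)\cdot c^2 = ad-bc$, matching $M$.

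For uniqueness, suppose $M = T^x\Dmat{y}ST^z$ (the case $M\notin\Omega_\infty$). Multiplying out gives a matrix whose bottom row is proportional to $(1, z)$ and whose top-left/bottom-left ratio is $-z \cdots$; more precisely, reading off $\pi$, $\rho^2$, $\theta$ of the right-hand product and using that these functions are well-defined on $\Omega$ (invariant under scaling), one gets $\pi(M) = -z$, $\rho^2(M) = y$, $\theta(M) = x$ directly from the definitions, since $\pi$, $\rho^2$, $\theta$ of a product $T^x\Dmat{y}ST^z$ can be computed symbolically once and for all. The case $M\in\Omega_\infty$ is analogous, reading off $\sigma$ and $\theta$ from $T^x\Dmat{y}$.

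I expect the main obstacle to be purely organizational rather than conceptual: keeping track of the scalar equivalence in $\Omega$ (so that "equality" of matrices means equality up to a nonzero rational multiple) and choosing representatives consistently, together with the non-integer root convention for $(T^s)^t$ flagged in Lemma 3.X, so that the products $\Dmat{y}$ with $y$ a ratio behave correctly. Once one fixes the convention of always clearing denominators to an integral representative with the "correct" determinant, the verification is a short computation in each of the two cases.
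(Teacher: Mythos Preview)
Your proposal is correct and follows essentially the same route as the paper: both establish existence by direct matrix multiplication in each of the two cases, checking equality up to scalar in $\Omega$. For uniqueness your plan of reading off $\pi$, $\rho^2$, $\theta$ from the explicit product $T^x\Dmat{y}ST^z = \begin{psmallmatrix} x & xz-y \\ 1 & z \end{psmallmatrix}$ is a minor variant of the paper's argument, which instead rearranges two such factorizations and uses $\Omega_\infty^T\cap\Omega_\infty^D = \{\eye\}$ (in the $\Omega_\infty$ case) or the observation that $ST^tS\in\Omega_\infty$ forces $t=0$ (in the other case); your version is if anything slightly more direct.
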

\begin{proof}
	First, suppose $M\in\Omega_\infty$, so \[ M = \begin{psmallmatrix} a & b \\ 0  & d \end{psmallmatrix} = \begin{psmallmatrix} 1 & \frac{b}{d} \\ 0 & 1 \end{psmallmatrix}\begin{psmallmatrix} a & 0 \\ 0 & d \end{psmallmatrix} = T^{\theta(M)}\Dmat{\sigma(M)}. \] Moreover, if $M = T^{\theta(M)}\Dmat{\sigma(M)} = T^x\Dmat{y}$, then $T^{x - \theta(M)} = \Dmat{\sigma(M)y^{-1}}$, and since $\Omega_\infty^T\cap\Omega_\infty^D = \{\eye \}$ we must have $T^{x-\theta(M)} = \eye = \Dmat{\sigma(M)y^{-1}}$, so $x=\theta(M)$ and $y=\sigma(M)$, and the given factorization is unique.
	
	Now suppose $M = \begin{psmallmatrix} a & b \\ c & d \end{psmallmatrix}\notin\Omega_\infty$. First, we compute that 
	\begin{align*} 
		T^{\theta(M)}\Dmat{\rho^2(M)}ST^{-\pi(M)} 
		&= \begin{psmallmatrix} 1  & \theta(M) \\ 0 & 1 \end{psmallmatrix} \begin{psmallmatrix} \rho^2(M) & 0 \\ 0 & 1 \end{psmallmatrix} \begin{psmallmatrix} 0 & -1 \\ 1 & 0 \end{psmallmatrix} \begin{psmallmatrix} 1 & -\pi(M) \\ 0 & 1 \end{psmallmatrix} \\
		&= \begin{psmallmatrix} \theta(M) & -\theta(M)\pi(M) - \rho^2(M) \\ 1 & -\pi(M) \end{psmallmatrix} \\
		&= \begin{psmallmatrix} \frac{a}{c} & \frac{b}{c} \\ 1 & \frac{d}{c} \end{psmallmatrix} \tag{since $-\theta(M)\pi(M) - \rho^2(M) = \frac{ad}{c^2} - \frac{ad-bc}{c^2}$}\\
		&= M,
	\end{align*}
	as desired. Now, suppose $T^{x}\Dmat{y}ST^{z} = T^{\theta(M)}\Dmat{\rho^2(M)}ST^{-\pi(M)}$, so that by rearranging, we find \[ \Dmat{\frac{1}{\rho^2(M)}}T^{x-\theta(M)}\Dmat{y} = ST^{-\pi(M)-z}S. \] Since the left hand side is in $\Omega_\infty$, the right hand side must be as well. But since $S\infty = S^{-1}\infty = 0$, we have $ST^{-\pi(M)-z}S\infty = \infty$ if and only if $T^{-\pi(M)-z}(0) = 0$, that is, if and only if $z = -\pi(M)$. But then $\Dmat{\frac{1}{\rho^2(M)}}T^{x-\theta(M)}\Dmat{y} = \eye$, so that $x = \theta(M)$ and $y = \rho(M)$ follows immediately, and the decomposition given is unique.
\end{proof}

One reason to call this factorization the `involutory decomposition' is that when $M\notin\Omega_\infty$, the factorization above includes the involution $S$.   

The following identities are useful in computations involving the involutory decomposition:
\begin{lemma}\label{Lemma:PiRhoComputations}
	Let $M\in\Omega$, and let $H\in\Omega_\infty$. Then
	\begin{enumerate}
		\item $\pi(M) = M^{-1}\infty$,
		\item $\pi(HM) = \pi(M)$, and $\pi(MH) = H^{-1}\pi(M)$,
		\item $\rho^2(M) = \rho^2(M^{-1})$,
		\item $\rho^2(HM) = \sigma(H)\rho^2(M)$, and $\rho^2(MH) = \sigma(H)^{-1}\rho^2(M)$,
		\item $\theta(HM) = \theta(H) + \sigma(H)\theta(M)$.
	\end{enumerate}
\end{lemma}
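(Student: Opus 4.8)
The plan is to prove each identity by direct computation, mostly relying on the fact that $\pi$, $\rho^2$, $\sigma$, and $\theta$ are defined by explicit rational expressions in the matrix entries, together with the involutory decomposition (Theorem \ref{Theorem:InvolutoryDecomposition}) where it streamlines the argument. First I would handle (1): since $\pi(M) = -d/c$, and for $M = \begin{psmallmatrix} a & b \\ c & d \end{psmallmatrix}$ a representative for $M^{-1}$ is $\begin{psmallmatrix} -d & b \\ c & -a \end{psmallmatrix}$ (as noted just before Lemma \ref{Lemma:MatrixCommutations}), we get $M^{-1}\infty = -d/c = \pi(M)$; when $c = 0$ this reads as $\pi(M) = \infty = M^{-1}\infty$ in $\pp^1(\qq)$, consistent with the convention. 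Identity (3) then follows immediately from (1) combined with the observation that $\rho^2(M) = (ad-bc)/c^2$ depends on $M$ only through $c$ and $\det M$, both of which are preserved (as functions on $\Omega$, i.e. up to the scalar ambiguity) by passing to the representative $\begin{psmallmatrix} -d & b \\ c & -a \end{psmallmatrix}$ of $M^{-1}$.

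For (2), (4), and (5), I would split into the two sub-cases $H = T^x$ and $H = \Dmat{y}$, which suffices since $\Omega_\infty$ is generated by these (every $H\in\Omega_\infty$ factors as $T^{\theta(H)}\Dmat{\sigma(H)}$ by Theorem \ref{Theorem:InvolutoryDecomposition}), and then invoke multiplicativity: for instance, once $\pi(T^xM) = \pi(M)$ and $\pi(\Dmat{y}M) = \pi(M)$ are known, $\pi(HM) = \pi(M)$ follows for all $H\in\Omega_\infty$ by writing $H = T^{\theta(H)}\Dmat{\sigma(H)}$. Concretely, with $M = \begin{psmallmatrix} a & b \\ c & d \end{psmallmatrix}$:
\begin{itemize}
\item $T^xM = \begin{psmallmatrix} a + xc & b + xd \\ c & d \end{psmallmatrix}$, so $\pi$ is unchanged (bottom row fixed), $\rho^2 = \det/c^2$ is unchanged, and $\theta = a/c$ becomes $(a+xc)/c = \theta(M) + x$, matching $\theta(T^x) + \sigma(T^x)\theta(M) = x + \theta(M)$.
\item $\Dmat{y}M = \begin{psmallmatrix} ya & yb \\ c & d \end{psmallmatrix}$, so $\pi = -d/c$ unchanged; $\det$ scales by $y$ so $\rho^2$ scales by $y = \sigma(\Dmat{y})$; and $\theta = a/c$ becomes $ya/c = \sigma(\Dmat{y})\theta(M)$, matching (5) since $\theta(\Dmat{y}) = 0$.
\end{itemize}
The right-multiplication statements in (2) and (4) I would get either by the same entrywise computation with $MH$, or — more cleanly — by combining the left-multiplication versions with (1) and (3): e.g. $\pi(MH) = (MH)^{-1}\infty = H^{-1}M^{-1}\infty = H^{-1}\pi(M)$, and $\rho^2(MH) = \rho^2(H^{-1}M^{-1}) = \sigma(H^{-1})\rho^2(M^{-1}) = \sigma(H)^{-1}\rho^2(M)$, using $\sigma(H^{-1}) = \sigma(H)^{-1}$ which is immediate from $H^{-1} = \Dmat{\sigma(H)^{-1}}T^{-\theta(H)\sigma(H)^{-1}}$ or just from the entrywise form of the inverse.

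I do not anticipate a genuine obstacle here — the lemma is a bookkeeping collection of normalization identities. The one point requiring a little care is the consistent treatment of the cases $c = 0$ versus $c \neq 0$ (so that $\pi$, $\theta$ take values in $\pp^1(\qq)$ rather than $\qq$, and the piecewise definitions of $\sigma$, $\theta$ are respected): in (2) and (4), when $M \in \Omega_\infty$ one must check the identities still hold with $\pi(M) = \infty$ and the $\Omega_\infty$-branch of $\theta$, but since left or right multiplication by $H \in \Omega_\infty$ keeps $M$ in $\Omega_\infty$, this reduces to the commutation relations of Lemma \ref{Lemma:MatrixCommutations}. I would state this reduction explicitly and then dispatch the $\Omega_\infty$ case in one line.
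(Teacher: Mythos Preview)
Your proposal is correct and takes essentially the same approach as the paper: the paper's own proof is simply ``The proofs are all direct computations,'' and what you have written is precisely an explicit unpacking of those computations. Your added care about the $c=0$ versus $c\neq 0$ cases and the trick of deducing the right-multiplication statements from the left-multiplication ones via (1) and (3) are nice touches that go beyond what the paper records, but the underlying method is identical.
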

\begin{proof}
	The proofs are all direct computations.
\end{proof}
An important corollary for our purposes is the following.
\begin{corollary}\label{Corollary:PiRhoOnCosets}
	Let $\Gamma\subset \Omega$ be a subgroup such that \[ \Gamma_\infty := \{ K\in\Gamma \st K\infty = \infty \} \subset \Omega_\infty^T. \] Then $\pi$ and $\rho$ descend to $\LeftQuotientByStab{\Gamma}$, that is, $\pi,\rho^2:\LeftQuotientByStab{\Gamma}\to\pp^1(\qq)$ are well-defined functions (by abuse of notation, we use the same name irrespective of domain).
\end{corollary}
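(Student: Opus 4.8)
The plan is to reduce everything directly to Lemma \ref{Lemma:PiRhoComputations}. Recall that a function defined on $\Gamma$ descends to the left coset space $\LeftQuotientByStab{\Gamma}$ if and only if it is invariant under left multiplication by elements of $\Gamma_\infty$. So I would fix $K\in\Gamma_\infty$ and $M\in\Gamma$ and verify the two equalities $\pi(KM)=\pi(M)$ and $\rho^2(KM)=\rho^2(M)$; well-definedness of $\rho$ then follows by taking positive square roots of the $\rho^2$ identity.

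For $\pi$, only the weaker inclusion $\Gamma_\infty\subset\Omega_\infty$ is needed: part (2) of Lemma \ref{Lemma:PiRhoComputations}, applied with $H=K$, gives $\pi(KM)=\pi(M)$ with nothing further to check. For $\rho^2$, part (4) of the same lemma gives $\rho^2(KM)=\sigma(K)\rho^2(M)$, so it suffices to show $\sigma(K)=1$. This is exactly the point where the stronger hypothesis $\Gamma_\infty\subset\Omega_\infty^T$ is used: any $K\in\Omega_\infty^T$ has the form $K=T^x$, so a matrix representative $\begin{psmallmatrix} 1 & x \\ 0 & 1 \end{psmallmatrix}$ (or $\begin{psmallmatrix} q & p \\ 0 & q \end{psmallmatrix}$ when $x=p/q$) has equal diagonal entries, whence $\sigma(K)=a/d=1$ by Definition \ref{Def:PiRhoSquaredSigmaTheta}. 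Therefore $\rho^2(KM)=\rho^2(M)$, and $\rho(KM)=\rho(M)$, as required.

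I do not anticipate any genuine obstacle; the argument is essentially a two-line deduction from Lemma \ref{Lemma:PiRhoComputations}. The only points worth a sentence of care are (i) being explicit that it is \emph{left} cosets $\Gamma_\infty M$, and hence \emph{left} invariance under $\Gamma_\infty$, that must be checked; and (ii) noting that the distinction between $\Omega_\infty^T$ and $\Omega_\infty$ in the hypothesis is precisely what forces $\sigma(K)=1$ and thus makes $\rho^2$ (and not merely $\pi$) descend. Indeed, if $\Gamma_\infty$ contained a nontrivial diagonal element $\Dmat{y}$, then part (4) would rescale $\rho^2$ by $\sigma(\Dmat{y})=y\neq 1$, so $\rho^2$ would fail to be well-defined on $\LeftQuotientByStab{\Gamma}$; this shows the hypothesis cannot be relaxed, though that observation is not needed for the statement itself.
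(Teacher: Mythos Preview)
Your proposal is correct and follows essentially the same route as the paper: invoke parts (2) and (4) of Lemma~\ref{Lemma:PiRhoComputations} and use that $\sigma(H)=1$ for all $H\in\Omega_\infty^T$ to conclude $\pi$ and $\rho^2$ are constant on left $\Gamma_\infty$-cosets. Your additional remark that the hypothesis $\Gamma_\infty\subset\Omega_\infty^T$ cannot be relaxed (since a nontrivial $\Dmat{y}$ would rescale $\rho^2$) is a nice observation not present in the paper, though as you note it is not needed for the statement.
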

\begin{proof}
	Any coset representative for $[K]\in\LeftQuotientByStab{\Gamma}$ has the form $HK$ for some $H\in\Gamma_\infty\subset\Omega_\infty^T$. Then $\pi(HK) = \pi(K)$ and $\rho^2(HK) = \sigma(H)\rho^2(K) = \rho^2(K)$, since $\sigma(H) = 1$ for all $H\in\Omega_\infty^T$, so $\pi$ and $\rho^2$ are constant on cosets, as desired.
\end{proof}

Now, let 
\begin{align*} 
	\hh = \{\tau \in \cc \st \Im \tau > 0 \}, \\ 
	\widehat{\hh} &= \hh\cup\qq\cup\{\infty\},
\end{align*}  
with the topology on $\widehat{\hh}$ given by open sets bounded by horocycles centered at the points in $\qq\cup\{\infty\}$ (see, e.g., \cite{Shimura}). Then $\Omega$ acts on the projective rational line $\pp^1(\qq) = \qq\cup\{\infty\}$ and on the upper half-plane $\hh$ by linear fractional transformations, \[ \begin{psmallmatrix} a & b \\ c & d \end{psmallmatrix}\tau = \frac{a\tau+b}{c\tau+d}, \] and this action is continuous on $\widehat{\hh}$. The remainder of this subsection summarizes the geometry of this action, and in particular what the involutory decomposition in Thm. \ref{Theorem:InvolutoryDecomposition} tells us.

The subgroup $\Omega_\infty$ is the stabilizer of $\infty$ under this action, while $\Omega_\infty^T$ and $\Omega_\infty^D$ act on $\widehat{\hh}$ by translations and dilations, respectively. With this geometric point of view, many of the algebraic identities of Lemma \ref{Lemma:MatrixCommutations} describe well-known facts, for example, translating by $t$ then $s$ is the same as translating by $s+t$ ($T^sT^t = T^{s+t}$), and scaling by $y$ then $x$ is the same as scaling by $xy$ ($\Dmat{x}\Dmat{y} = \Dmat{xy}$). From the involutory decomposition in Thm. \ref{Theorem:InvolutoryDecomposition}, we see that elements of $\Omega_\infty$ are uniquely described by some scaling in $\Omega_\infty^D$ followed by a horizontal translation in $\Omega_\infty^T$. Using an identity from Lemma \ref{Lemma:MatrixCommutations}, the statement $T^{x}\Dmat{y} = \Dmat{y}T^{xy}$ witnesses this transformation as a (different) horizontal translation followed by (the same) scaling. We note that the restriction of the map $\sigma$ to $\Omega_\infty$ is a homomorphism with kernel $\Omega_\infty^T$, witnessing $\Omega_\infty$ as a semi-direct product, $\Omega_\infty^T\rtimes \Omega_\infty^D$.

On the other hand, for $M\notin\Omega_\infty$, the involutory decomposition contains the involution $S$. The action of $S$ is given algebraically by \[ S\tau = \frac{-1}{\tau} = -\frac{\overline{\tau}}{\abs{\tau}^2}. \] Geometrically, $S$ performs a circle inversion about the unit circle (that is, $\tau \mapsto \frac{\tau}{\abs{\tau}^2}$) and a horizontal reflection about the imaginary axis ($\tau \mapsto -\overline{\tau}$) (and these operations may be applied in either order). The involutory decomposition then states that the action of $M = T^{\theta(M)}\Dmat{\rho^2(M)}ST^{-\pi(M)}$ is ``translate by $-\pi(M)$, perform the involution $S$, scale by $\rho^2(M)$, then translate by $\theta(M)$.'' More frequently, we will use the equivalent point of view, ``perform an involution about the circle with center $\pi(M)$ and radius $\rho(M)$ followed by translation by $\theta(M)-\pi(M)$.'' We will make frequent use of the circle about which $M\in\Omega\setminus\Omega_\infty$ performs an involution.

\begin{definition}\label{Def:Arcs}
	Let $M\in\Omega$, then the \emph{arc}, \emph{isometric circle}, or \emph{isometric locus} for $M$ is the set \[ \mc{A}(M) = \{ \tau\in\hh \st \Im M\tau = \sigma(M)\Im\tau \}. \]
	If $\Im M\tau < \sigma(M)\Im\tau$, we say $\tau$ is \emph{above} $\mc{A}(M)$, and if $\Im M\tau > \sigma(M)\Im\tau$, we say $\tau$ is \emph{below} $\mc{A}(M)$.
	
	Furthermore, for any subset $\mc{S}\subset\Omega$, let \[ \mc{A}(\mc{S}) = \{\mc{A}(K) \st K\in\mc{S} \}. \]
\end{definition}

This set goes by many names, but we will most frequently refer to it as simply the `arc for $M$,' as it is called by Ferenbaugh\footnote{Ferenbaugh writes $\mathrm{arc}(c,r)$, with $c=\pi(M)$ and $r=\rho(M)$} \cite{Ferenbaugh}. Katok \cite{Katok} calls this set the isometric circle, and Duncan and Frenkel use isometric locus \cite{DuncanFrenkel}.

\begin{lemma}\label{Lemma:Arcs}
	For any $M\in\Omega$,
	\[ \mc{A}(M) = \{ \tau\in\hh \st \abs{\tau - \pi(M)}^2 = \rho^2(M) \}. \]
\end{lemma}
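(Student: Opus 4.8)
The plan is to unwind the definition $\mc{A}(M) = \{\tau\in\hh \st \Im M\tau = \sigma(M)\Im\tau\}$ by computing $\Im M\tau$ explicitly, treating the cases $M\notin\Omega_\infty$ and $M\in\Omega_\infty$ separately.

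First, suppose $M = \begin{psmallmatrix} a & b \\ c & d\end{psmallmatrix}\notin\Omega_\infty$, so $c\neq 0$ and $\sigma(M) = 1$. I would apply the involutory decomposition of Theorem \ref{Theorem:InvolutoryDecomposition}, writing $M = T^{\theta(M)}\Dmat{\rho^2(M)}ST^{-\pi(M)}$. Since the translations $T^x$ preserve imaginary parts and $\Dmat{y}$ scales the imaginary part by $y$ (the geometric facts recorded after Lemma \ref{Lemma:MatrixCommutations}), one gets $\Im M\tau = \rho^2(M)\,\Im\big(S(\tau - \pi(M))\big)$. Because $\pi(M) = -d/c$ is rational, hence real, a one-line computation gives $\Im\big(S(\tau-\pi(M))\big) = \Im\frac{-1}{\tau - \pi(M)} = \frac{\Im\tau}{\abs{\tau - \pi(M)}^2}$, so that $\Im M\tau = \frac{\rho^2(M)\,\Im\tau}{\abs{\tau - \pi(M)}^2}$. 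Since $\Im\tau > 0$, the condition $\Im M\tau = \sigma(M)\Im\tau = \Im\tau$ is therefore equivalent to $\abs{\tau - \pi(M)}^2 = \rho^2(M)$, which is the claim. (Alternatively, one can bypass the decomposition and compute $\Im M\tau = \frac{(ad-bc)\Im\tau}{\abs{c\tau+d}^2}$ directly from $\frac{a\tau+b}{c\tau+d} = \frac{(a\tau+b)\overline{(c\tau+d)}}{\abs{c\tau+d}^2}$, then use $\abs{c\tau+d}^2 = c^2\abs{\tau + d/c}^2$ together with $\rho^2(M) = (ad-bc)/c^2$ and $\pi(M) = -d/c$.)

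It remains to handle $M\in\Omega_\infty$, i.e. $c = 0$. Here $M\tau = \frac{a\tau+b}{d}$, so $\Im M\tau = \frac{a}{d}\Im\tau = \sigma(M)\Im\tau$ for every $\tau\in\hh$, and hence $\mc{A}(M) = \hh$. On the other side, $\pi(M) = \infty$ and $\rho^2(M) = \infty$; under the convention that $\abs{\tau - \infty}^2 = \infty$ for all $\tau\in\hh$ (a ``circle'' of center $\infty$ and infinite radius being all of $\hh$), the locus $\{\tau\in\hh \st \abs{\tau - \pi(M)}^2 = \rho^2(M)\}$ is again all of $\hh$, so the two sides agree.

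Beyond the routine imaginary-part computation there is essentially no obstacle; the only point that merits a word of care is this degenerate case $M\in\Omega_\infty$, where one must fix the interpretation of $\abs{\tau - \infty}^2 = \infty$ so that the right-hand locus is all of $\hh$.
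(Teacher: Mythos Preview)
Your proof is correct and follows essentially the same approach as the paper: in both cases one computes $\Im M\tau$ explicitly, shows it equals $\frac{\rho^2(M)}{\abs{\tau-\pi(M)}^2}\Im\tau$ when $c\neq 0$, and handles the $c=0$ case with the same $\abs{\tau-\infty}=\infty$ convention. The only cosmetic difference is that the paper computes $\Im M\tau$ directly from the formula $\frac{ad-bc}{\abs{c\tau+d}^2}\Im\tau$ and then rewrites the denominator, whereas you route the computation through the involutory decomposition first (and mention the direct computation parenthetically); both arrive at the same identity by equivalent one-line calculations.
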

\begin{proof}
	Let $M = \begin{psmallmatrix} a & b \\ c & d \end{psmallmatrix}$, then if $c = 0$ (that is, $M\in\Omega_\infty$), \[ \Im M\tau = \frac{ad-bc}{\abs{c\tau+d}^2}\Im\tau = \frac{a}{d}\Im\tau = \sigma(M)\Im\tau, \] that is, $\mc{A}(M) = \hh$. Interpreting $\abs{\tau - \infty} = \infty$ as true, this agrees with the statement of the lemma, since $\pi(M) = \rho^2(M) = \infty$.
	
	If $c\neq 0$, then $M\in \Omega\setminus\Omega_\infty$, so $\sigma(M) = 1$, and we have \[ \Im M\tau = \frac{ad-bc}{\abs{c\tau+d}^2}\Im\tau = \frac{\frac{ad-bc}{c^2}}{\abs{\tau - \frac{-d}{c}}^2}\Im\tau = \frac{\rho^2(M)}{\abs{\tau - \pi(M)}^2}\Im\tau. \] Thus, in this case as well, $\mc{A}(M)$ consists of all $\tau\in\hh $ such that $\abs{\tau - \pi(M)}^2 = \rho^2(M)$.
\end{proof}

The next corollary is useful for describing the arc associated to a product of elements of $\Omega_\infty$ and $\Omega\setminus\Omega_\infty$.

\begin{corollary}\label{Corollary:ArcsUnderOmegaInfinity}
	Let $M\in\Omega\setminus\Omega_\infty$ and $H\in\Omega_\infty$. Then $\mc{A}(MH)$ is a half-circle with center $H\pi(M)$ and radius $\sigma(H)^{-1}\rho^2(M)$, while $\mc{A}(HM)$ is a half-circle centered at $\pi(M)$ with radius $\sigma(H)\rho^2(M)$.
\end{corollary}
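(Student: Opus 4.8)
The plan is to reduce everything to the two facts already in hand: Lemma~\ref{Lemma:Arcs}, which identifies $\mc{A}(K)$ as the half-circle with center $\pi(K)$ and radius $\rho(K)$, and the computational identities in Lemma~\ref{Lemma:PiRhoComputations}, which tell us how $\pi$ and $\rho^2$ transform under left- and right-multiplication by elements of $\Omega_\infty$. So the whole statement is a matter of substituting $K = MH$ (resp. $K = HM$) into Lemma~\ref{Lemma:Arcs} and reading off the parameters.

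First I would treat $\mc{A}(MH)$. Since $M\notin\Omega_\infty$ and $H\in\Omega_\infty$, the product $MH$ need not be checked for membership in $\Omega_\infty$ separately --- but in fact $MH\notin\Omega_\infty$, because $\Omega_\infty$ is a subgroup and $H\in\Omega_\infty$ while $M\notin\Omega_\infty$; so $\sigma(MH)=1$ and Lemma~\ref{Lemma:Arcs} applies. Now apply Lemma~\ref{Lemma:PiRhoComputations}(2): $\pi(MH) = H^{-1}\pi(M)$, and Lemma~\ref{Lemma:PiRhoComputations}(4): $\rho^2(MH) = \sigma(H)^{-1}\rho^2(M)$. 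Plugging into Lemma~\ref{Lemma:Arcs} gives that $\mc{A}(MH)$ is the half-circle with center $H^{-1}\pi(M)$ and radius-squared $\sigma(H)^{-1}\rho^2(M)$. There is a small bookkeeping point here: the corollary as stated claims the center is $H\pi(M)$, whereas the identities give $H^{-1}\pi(M)$. I would double-check the direction of the convention in Lemma~\ref{Lemma:PiRhoComputations}(2) --- this is exactly the kind of place where an off-by-an-inverse slip happens --- and state the corollary consistently with whatever that lemma actually says; my expectation is that the excerpt's ``$H\pi(M)$'' should read ``$H^{-1}\pi(M)$,'' or else the lemma is phrased with $H^{-1}$ in the multiplier.

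Next, $\mc{A}(HM)$: again $HM\notin\Omega_\infty$ since $M\notin\Omega_\infty$, so $\sigma(HM)=1$ and Lemma~\ref{Lemma:Arcs} applies. Here Lemma~\ref{Lemma:PiRhoComputations}(2) gives $\pi(HM)=\pi(M)$ and Lemma~\ref{Lemma:PiRhoComputations}(4) gives $\rho^2(HM)=\sigma(H)\rho^2(M)$, so $\mc{A}(HM)$ is the half-circle centered at $\pi(M)$ with radius-squared $\sigma(H)\rho^2(M)$, matching the stated claim. Geometrically this is the expected picture: left-multiplying $M$ by $H$ post-composes $M$ with the translation-and-scaling $H$, which moves the arc but in a way visible only through the scaling factor $\sigma(H)$ on the radius (the translation part of $H$ does not move the \emph{circle} $M$ inverts about, it only changes $\theta$), whereas right-multiplying pre-composes, genuinely relocating the center by $H^{-1}$.

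There is essentially no hard step; the only thing to be careful about is the inverse/no-inverse convention in the $\pi$ and $\sigma$ factors and making sure the corollary's statement is internally consistent with Lemma~\ref{Lemma:PiRhoComputations} as written in the final manuscript. I would also add the one-line remark that when one wants integral coset representatives the ``radius-squared'' $\sigma(H)^{\pm1}\rho^2(M)$ can be a non-unit rational, so ``radius'' should be read as $\sqrt{\,\cdot\,}$ via the convention $\rho=\sqrt{\rho^2}$ fixed in Definition~\ref{Def:PiRhoSquaredSigmaTheta}; this is purely cosmetic and does not affect the proof.
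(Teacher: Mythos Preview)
Your approach is exactly the paper's: invoke Lemma~\ref{Lemma:Arcs} to identify $\mc{A}(K)$ as the half-circle with center $\pi(K)$ and squared radius $\rho^2(K)$, then read off $\pi$ and $\rho^2$ for $K=MH$ and $K=HM$ from Lemma~\ref{Lemma:PiRhoComputations}. You are also right to flag the bookkeeping slip: Lemma~\ref{Lemma:PiRhoComputations}(2) gives $\pi(MH)=H^{-1}\pi(M)$, so the corollary's ``$H\pi(M)$'' should read ``$H^{-1}\pi(M)$'' (and the quantities stated as ``radius'' are in fact squared radii).
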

\begin{proof}
	By Lemma \ref{Lemma:Arcs}, the arc for $M\notin\Omega_\infty$ is centered at $\pi(M)$ with radius $\rho(M)$, both non-infinite. Applying Lemma \ref{Lemma:PiRhoComputations} then gives the desired centers and radii of $\mc{A}(HM)$ and $\mc{A}(MH)$.
\end{proof}

Whenever a group $\Gamma\subset\Omega$ satisfies $\Gamma_\infty\subset\Omega_\infty^T$, it is better to consider arcs as associated to cosets in $\LeftQuotientByStab{\Gamma}$.

\begin{corollary}\label{Corollary:ArcsAreLeftQuotByStab}
	Let $\Gamma\subset\Omega$ be a subgroup with $\Gamma_\infty \subset \Omega_\infty^T$. Then $\mc{A}:\Gamma\to\mc{A}(\Gamma)$ descends to a bijection $\mc{A}:\LeftQuotientByStab{\Gamma}\to\mc{A}(\Gamma)$.
\end{corollary}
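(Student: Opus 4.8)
The plan is to show that the map $\mc{A}\colon\Gamma\to\mc{A}(\Gamma)$, $K\mapsto\mc{A}(K)$, is constant on the left cosets $\Gamma_\infty K$ and that the induced map on $\LeftQuotientByStab{\Gamma}$ is injective; surjectivity onto $\mc{A}(\Gamma)$ is immediate from the definition of $\mc{A}(\Gamma)$ as the set of arcs $\mc{A}(K)$ with $K\in\Gamma$. The well-definedness step reduces to the corollaries already proved: by Corollary \ref{Corollary:PiRhoOnCosets}, since $\Gamma_\infty\subset\Omega_\infty^T$, the functions $\pi$ and $\rho^2$ are constant on cosets in $\LeftQuotientByStab{\Gamma}$, and by Lemma \ref{Lemma:Arcs} the arc $\mc{A}(K)$ is determined entirely by $\pi(K)$ and $\rho^2(K)$ (the half-circle with center $\pi(K)$ and radius $\rho(K)$, with the convention $\mc{A}(K)=\hh$ when $K\in\Omega_\infty$). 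Hence $K\mapsto\mc{A}(K)$ descends to $\LeftQuotientByStab{\Gamma}$.

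For injectivity, suppose $K,L\in\Gamma$ with $\mc{A}(K)=\mc{A}(L)$; I must produce $H\in\Gamma_\infty$ with $L=HK$. Two cases: if both $K,L\in\Omega_\infty$, then $\mc{A}(K)=\mc{A}(L)=\hh$ carries no information, but $LK^{-1}\in\Gamma\cap\Omega_\infty=\Gamma_\infty$ directly, so $H:=LK^{-1}$ works. If $K,L\notin\Omega_\infty$, then equality of the two half-circles forces $\pi(K)=\pi(L)$ and $\rho^2(K)=\rho^2(L)$ by Lemma \ref{Lemma:Arcs}. Using Lemma \ref{Lemma:PiRhoComputations}(1), $\pi(K)=K^{-1}\infty$ and $\pi(L)=L^{-1}\infty$, so $LK^{-1}$ fixes $\infty$, i.e.\ $LK^{-1}\in\Gamma_\infty\subset\Omega_\infty^T$. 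Writing $LK^{-1}=T^x$, I then use $\rho^2(K)=\rho^2(L)=\rho^2((LK^{-1})K)=\sigma(LK^{-1})\rho^2(K)$ from Lemma \ref{Lemma:PiRhoComputations}(4); since $\sigma(T^x)=1$ this is automatically consistent, so in fact any element of $\Gamma_\infty$ of the form $T^x$ with $T^xK=L$ suffices, and $H:=LK^{-1}$ is such an element. (One should also dispatch the mixed case $K\in\Omega_\infty$, $L\notin\Omega_\infty$: here $\mc{A}(K)=\hh$ while $\mc{A}(L)$ is a genuine half-circle, so they cannot be equal, and no collision occurs.)

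The main obstacle, such as it is, is purely bookkeeping: making sure the degenerate arc $\mc{A}(K)=\hh$ for $K\in\Omega_\infty$ is handled uniformly with the conventions $\pi(K)=\rho^2(K)=\infty$ introduced in the proof of Lemma \ref{Lemma:Arcs}, so that the statement ``$\mc{A}(K)$ is determined by $(\pi(K),\rho^2(K))$'' is literally true across all of $\Omega$, and so that injectivity does not accidentally identify an element of $\Omega_\infty$ with one outside it. Once that is in place, the argument is a two-line consequence of Corollary \ref{Corollary:PiRhoOnCosets} and Lemma \ref{Lemma:PiRhoComputations}. I would write the proof as: ``By Lemma \ref{Lemma:Arcs} and Corollary \ref{Corollary:PiRhoOnCosets}, $\mc{A}$ factors through $\LeftQuotientByStab{\Gamma}$; it is surjective by definition; and if $\mc{A}(K)=\mc{A}(L)$ then $K^{-1}\infty=\pi(K)=\pi(L)=L^{-1}\infty$ (both sides $\infty$ in the $\Omega_\infty$ case), whence $LK^{-1}\in\Gamma_\infty$, so $[K]=[L]$.''
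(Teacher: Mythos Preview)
Your proof is correct and follows essentially the same structure as the paper: well-definedness via Corollary~\ref{Corollary:PiRhoOnCosets} and Lemma~\ref{Lemma:Arcs}, surjectivity by definition, and injectivity by a case split on whether the arc is $\hh$ or a half-circle. The one noteworthy difference is in the injectivity step for $K,L\notin\Omega_\infty$: the paper invokes the full involutory decomposition (Theorem~\ref{Theorem:InvolutoryDecomposition}) to compute $K_1K_2^{-1}=T^{\theta(K_1)-\theta(K_2)}$ explicitly, whereas you argue more directly via Lemma~\ref{Lemma:PiRhoComputations}(1) that $\pi(K)=\pi(L)$ means $K^{-1}\infty=L^{-1}\infty$, so $LK^{-1}$ fixes $\infty$ and hence lies in $\Gamma\cap\Omega_\infty=\Gamma_\infty$. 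Your route is slightly slicker---it avoids the involutory decomposition entirely and makes clear that equality of $\rho^2$ is not actually needed for injectivity once $\Gamma_\infty\subset\Omega_\infty^T$ is assumed---while the paper's computation has the minor advantage of exhibiting the translation $T^{\theta(K_1)-\theta(K_2)}$ explicitly.
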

\begin{proof}
	By Corollary \ref{Corollary:PiRhoOnCosets}, when $\Gamma_\infty\subset\Omega_\infty^T$, both $\pi$ and $\rho^2$ descend to functions on $\LeftQuotientByStab{\Gamma}$. Since by Lemma \ref{Lemma:Arcs}, $\mc{A}(M)$ depends only on $\pi(M)$ and $\rho^2(M)$, we find $\mc{A}:\LeftQuotientByStab{\Gamma}\to\mc{A}(\Gamma)$ is well defined. Since $\mc{A}$ certainly surjects onto its image, we need only show injectivity. so that $\mc{A}(HK) = \mc{A}(K)$ for any $H\in\Gamma_\infty$.
	
	Now suppose $K_1,K_2\in\Gamma$ with $\mc{A}(K_1)=\mc{A}(K_2)$. If both arcs are all of $\hh$, then $K_1,K_2\in \Gamma_\infty$, so $[K_1]=[K_2]$ in $\LeftQuotientByStab{\Gamma}$. Otherwise, both arcs are half-circles with center $\pi(K_1)=\pi(K_2)\in\qq$ and squared radius $\rho^2(K_1) = \rho^2(K_2)\in\qq^+$. By the involutory decomposition (Theorem \ref{Theorem:InvolutoryDecomposition}) and the identities of Lemma \ref{Lemma:MatrixCommutations}, 
	\begin{align*}\label{Eqn:PiRhoEqualIsSameCoset}
		K_1K_2^{-1} &=
		T^{\theta(K_1)}\Dmat{\rho^2(K_1)}ST^{-\pi(K_1)}\left(T^{\theta(K_2)}\Dmat{\rho^2(K_2)}ST^{-\pi(K_2)}\right)^{-1} \\
		&= T^{\theta(K_1)}T^{-\theta(K_2)},
	\end{align*}
	so that $K_1K_2^{-1}\in\Omega_\infty^T\cap\Gamma = \Gamma_\infty$, and again $[K_1] = [K_2]$ in $\LeftQuotientByStab{\Gamma}$. Thus $\mc{A}:\LeftQuotientByStab{\Gamma}\to\mc{A}(\Gamma)$ is a bijection.
\end{proof}

\subsection{Groups}\label{Subsection:Groups}

We now introduce the groups $\Gamma_0(mh\divides h){+e,f,g,\ldots}$, and give some basic properties. The bulk of this section may be found in Conway and Norton \cite{CN}, with slight notational modifications as noted below.

\begin{definition}
	A natural number $k$ is an \emph{exact divisor} of $m$ (written $k\edivides m$) if $k\divides m$ and $(k, m/k) = 1$, where $(a,b)$ denotes the greatest common divisor of $a$ and $b$. We denote by $\Ex(m)$ the abelian group of exponent two consisting of all exact divisors of $m$ under the group operation $a*b := \frac{ab}{(a,b)^2}$.
\end{definition}

Observe that if $k\in\Ex(m)$ then $\frac{m}{k}\in\Ex(m)$. Groups of exact divisors and their subgroups play important roles in constructing our groups and in the twisted Hecke relations of Monstrous Moonshine (below, \S\ref{Subsection:MonstrousMoonshine}). A few properties we will use repeatedly are given here.

\begin{lemma}\label{Lemma:ExactGroupIntersections}
	For any $m,n\in\nn$, 
	\begin{align*} 
		\Ex(m)\cap\Ex(mn) &= \{ k\in\Ex(m) \st (k,n) = 1 \} \\
		&= \{ k\in\Ex(mn) \st k\text{ divides }m \}.
	\end{align*}
\end{lemma}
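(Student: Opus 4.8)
The plan is to prove the two claimed set equalities by direct double inclusion, working entirely from the definitions of exact divisor and gcd; no machinery beyond elementary number theory is needed.

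First I would unwind what membership in each set means. Recall $k \edivides m$ means $k \divides m$ and $(k, m/k) = 1$. For the first set on the right, $k \in \Ex(m)$ with $(k,n) = 1$ says $k \divides m$, $(k, m/k) = 1$, and $(k,n) = 1$. For the set in the middle, $k \in \Ex(m) \cap \Ex(mn)$ says $k \divides m$, $(k, m/k) = 1$, $k \divides mn$, and $(k, mn/k) = 1$. For the set on the far right, $k \in \Ex(mn)$ with $k \divides m$ says $k \divides m$, $k \divides mn$ (automatic), and $(k, mn/k) = 1$. I would then show all three describe the same $k$.

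The core computation is the identity, for $k \divides m$, that
\[
(k, mn/k) = 1 \iff (k, m/k) = 1 \text{ and } (k, n) = 1.
\]
To see this, write $mn/k = (m/k)\cdot n$; then $(k, (m/k)n) = 1$ iff $k$ shares no prime with $m/k$ and no prime with $n$, i.e. iff $(k, m/k) = 1$ and $(k,n) = 1$. This is the one step that carries the content, and it is the place where I would be most careful: one must note $(k, m/k)$ makes sense precisely because $k \divides m$, and that $(a, bc) = 1 \iff (a,b) = 1 \wedge (a,c) = 1$ is the standard coprimality-distributes-over-products fact. Using this, $k \in \Ex(m)$ and $(k,n)=1$ is equivalent to $k \divides m$, $(k,m/k)=1$, $(k,n)=1$, which by the identity is equivalent to $k \divides m$ and $(k, mn/k) = 1$, i.e. $k \divides m$ and $k \in \Ex(mn)$ — the far-right set. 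And since $k \divides m$ together with $(k,m/k)=1$ already gives $k \in \Ex(m)$, while $k \divides m$ implies $k \divides mn$, this middle condition is exactly $k \in \Ex(m) \cap \Ex(mn)$.

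So the skeleton is: show $k \in \Ex(m) \cap \Ex(mn)$ $\Rightarrow$ $k \in \Ex(mn)$ and $k \divides m$ (trivial); show $k \in \Ex(mn)$ and $k \divides m$ $\Rightarrow$ $k \in \Ex(m)$ and $(k,n)=1$ (uses the identity in one direction); show $k \in \Ex(m)$ and $(k,n)=1$ $\Rightarrow$ $k \in \Ex(m) \cap \Ex(mn)$ (uses the identity in the other direction, plus $k\divides m \Rightarrow k \divides mn$). The main — really the only — obstacle is stating the coprimality identity cleanly and making sure every gcd expression that appears is well-defined (i.e. that the relevant divisibilities hold before one writes $m/k$ or $mn/k$); there are no deep ideas here, just bookkeeping with divisibility. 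I would present it as a short chain of iff's rather than three separate inclusions, since that is the most transparent way to see the triple equality at once.
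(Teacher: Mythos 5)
Your proof is correct and uses the same key observation as the paper: writing $mn/k=(m/k)\cdot n$ for $k\divides m$ and using that coprimality distributes over products, so $(k,mn/k)=1$ iff $(k,m/k)=1$ and $(k,n)=1$. The only difference is presentational — you organize it as an explicit chain of equivalences, while the paper states the two directions more tersely.
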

\begin{proof}
	Any $k\in\Ex(m)$ divides $mn$, and $(k,\frac{mn}{k}) = (k, \frac{m}{k}n) = (k,n)$ since $(k,\frac{m}{k}) = 1$. On the other hand, any $k\in\Ex(mn)$ that divides $m$ satisfies $(k, \frac{m}{k}) = 1$ since $(k,\frac{mn}{k}) = 1$.
\end{proof}
\begin{lemma}\label{Lemma:SquareFreeIsAllDivisors}
	Let $m\in\nn$. Then $\Ex(m) = \{ k\st k\text{ divides }m \}$ if and only if $m$ is square-free.
\end{lemma}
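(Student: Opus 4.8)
The plan is to notice first that one of the two containments is automatic. By the definition of $\Ex(m)$, every element of $\Ex(m)$ is in particular a divisor of $m$, so $\Ex(m) \subseteq \{ k \st k \text{ divides } m \}$ holds unconditionally. Hence the biconditional reduces to the single statement: \emph{every divisor of $m$ is an exact divisor if and only if $m$ is square-free}, and I only need to analyze when the reverse containment $\{ k \st k \text{ divides } m \} \subseteq \Ex(m)$ holds.

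For the ``only if'' direction I would argue by contraposition. If $m$ is not square-free, I pick a prime $p$ with $p^2 \divides m$. Then $p$ is a divisor of $m$, but $\frac{m}{p}$ is still divisible by $p$, so $\left(p, \frac{m}{p}\right) \geq p > 1$; this witnesses $p \notin \Ex(m)$, so not every divisor of $m$ is exact. For the ``if'' direction, I would take $m$ square-free and an arbitrary divisor $k \divides m$: any prime $\ell$ dividing both $k$ and $\frac{m}{k}$ would then divide $m$ to multiplicity at least two, contradicting square-freeness, so $\left(k, \frac{m}{k}\right) = 1$ and $k \in \Ex(m)$. Combining with the automatic containment gives the claimed equality.

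I do not expect any real obstacle here — the whole argument is a one-line appeal to unique factorization in each direction. The only point worth flagging is the bookkeeping observation at the start: recognizing that only the inclusion $\{ k \st k \text{ divides } m \} \subseteq \Ex(m)$ requires proof keeps the argument short and avoids any temptation to invoke heavier machinery such as Lemma \ref{Lemma:ExactGroupIntersections}.
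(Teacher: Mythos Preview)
Your proof is correct and follows essentially the same approach as the paper: both directions are handled by the obvious divisibility argument, with the non-square-free case dispatched by exhibiting an explicit non-exact divisor (you use a prime $p$ with $p^2\divides m$, while the paper writes $m=ab^2$ and uses $ab$). The initial observation that $\Ex(m)\subseteq\{k\st k\divides m\}$ is automatic is a clean way to frame it, and nothing more is needed.
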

\begin{proof}
	If $m$ is square-free, then any factorization $m = k\frac{m}{k}$ is into coprime parts, so every divisor $k$ is exact. Otherwise, let $m = ab^2$ with $b > 1$, then $ab\divides m$ but $(ab,\frac{m}{ab}) = (ab,b) = b$, so $\Ex(m)$ does not contain the divisor $ab$.
\end{proof}

\begin{lemma}\label{Lemma:ExactDivisorsContainSquares}
	Let $m,h\in\nn$. If $k\in\Ex(mh^2)$ is such that $h\divides k$, then $h^2\divides k$.
\end{lemma}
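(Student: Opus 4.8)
The statement is elementary, and the plan is to reduce it immediately to a prime-by-prime check via unique factorization. Fix a prime $p$ and write $\alpha = v_p(m)$, $\beta = v_p(h)$, $\gamma = v_p(k)$ for the corresponding $p$-adic valuations, so that $v_p(mh^2) = \alpha + 2\beta$. The first step is to translate the hypothesis $k \in \Ex(mh^2)$ into valuation language: $k \divides mh^2$ gives $\gamma \le \alpha + 2\beta$, while $\gcd\!\left(k,\, mh^2/k\right) = 1$ gives $\min\!\left(\gamma,\, \alpha + 2\beta - \gamma\right) = 0$. Together these say $\gamma \in \{0,\, \alpha + 2\beta\}$ for every prime $p$.

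Next I would bring in the hypothesis $h \divides k$, i.e.\ $\beta \le \gamma$ for all $p$, and split into the two cases produced above. If $\gamma = 0$, then $\beta \le \gamma = 0$ forces $\beta = 0$, so $v_p(h^2) = 2\beta = 0 \le \gamma$. If $\gamma = \alpha + 2\beta$, then $\alpha \ge 0$ gives $v_p(h^2) = 2\beta \le \alpha + 2\beta = \gamma$. In both cases $v_p(h^2) \le v_p(k)$, and since $p$ was an arbitrary prime this yields $h^2 \divides k$.

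I do not expect any genuine obstacle here; the only place to be slightly careful is the $p$-adic reformulation of exact divisibility, namely that $k \edivides N$ holds if and only if, for every prime $p$, one has $v_p(k) \in \{0,\, v_p(N)\}$. Once that equivalence is recorded, the case analysis on whether $\gamma = 0$ or $\gamma = v_p(mh^2)$ finishes the proof in a couple of lines.
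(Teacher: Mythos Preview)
Your proof is correct. The $p$-adic valuation argument works cleanly, and the reformulation of exact divisibility as $v_p(k)\in\{0,v_p(mh^2)\}$ is the right observation.

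The paper takes a different route, avoiding any prime-by-prime case split. It argues directly with gcd identities: from $h\divides k$ and $(k,\tfrac{mh^2}{k})=1$ one gets $(h,\tfrac{mh^2}{k})=1$, and then computes
\[
k = \bigl(kh,\ mh^2\bigr) = h^2\bigl(\tfrac{k}{h},\ m\bigr),
\]
using $k(a,b)=(ka,kb)$ twice. This is a two-line manipulation with no case analysis. Your approach is more systematic and arguably more transparent about \emph{why} the result holds (the exponent of $p$ in an exact divisor is all-or-nothing), while the paper's is terser and stays entirely within global gcd arithmetic. Either is perfectly adequate for this elementary lemma.
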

\begin{proof}
	Since $h\divides k\in\Ex(mh^2)$, we must have that $(h,\frac{mh^2}{k})$ divides $(k,\frac{mh^2}{k}) = 1$. Then since $1 = (h, \frac{mh^2}{k})$, we have $k = (kh, mh^2) = h^2(\frac{k}{h},m)$ (recall $h\divides k$), so $h^2\divides k$.
\end{proof}
\begin{lemma}\label{Lemma:ExactDivisorUniqueExpression}
	Let $[p:q]\in\pp^1(\qq)$ with $(p,q) = 1$. If $m\divides pq$, then there exists a unique $k\in\Ex(m)$ such that $k\divides p$ and $\frac{m}{k}\divides q$.
\end{lemma}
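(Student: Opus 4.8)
The plan is to construct the divisor $k$ explicitly via its prime factorization and then verify both the required divisibility properties and uniqueness. Write $m = \prod_i \ell_i^{a_i}$ over the primes $\ell_i$ dividing $m$. Since $(p,q)=1$, for each prime $\ell_i$ the full power of $\ell_i$ appearing in $pq$ divides exactly one of $p$ or $q$. Because $\ell_i^{a_i} \divides m \divides pq$, and $\ell_i$ cannot divide both $p$ and $q$, the entire factor $\ell_i^{a_i}$ must divide either $p$ or $q$ (not split between them). So define $k := \prod_{\{i\,:\,\ell_i^{a_i}\divides p\}} \ell_i^{a_i}$, the product of exactly those prime-power components of $m$ that land in $p$. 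Then by construction $k\divides p$ and $\frac{m}{k} = \prod_{\{i\,:\,\ell_i^{a_i}\divides q\}} \ell_i^{a_i}$ divides $q$. Moreover $k$ and $\frac{m}{k}$ are built from disjoint sets of primes, hence $\left(k,\frac{m}{k}\right) = 1$, i.e. $k\in\Ex(m)$.

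For uniqueness, suppose $k' \in \Ex(m)$ also satisfies $k'\divides p$ and $\frac{m}{k'}\divides q$. Fix a prime $\ell_i \divides m$. Since $k'\in\Ex(m)$, the factorization $m = k'\cdot\frac{m}{k'}$ is into coprime parts, so $\ell_i^{a_i}$ divides exactly one of $k'$ or $\frac{m}{k'}$. If $\ell_i^{a_i}\divides k'$ then $\ell_i^{a_i}\divides p$; if instead $\ell_i^{a_i}\divides \frac{m}{k'}$ then $\ell_i^{a_i}\divides q$, and since $(p,q)=1$ this forces $\ell_i^{a_i}\nmid p$. Either way, whether $\ell_i^{a_i}$ contributes to $k'$ is completely determined by whether $\ell_i^{a_i}\divides p$ — which is the same condition defining $k$. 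Hence $k' = k$.

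I do not expect a serious obstacle here; the only subtlety worth stating carefully is the step where coprimality of $p$ and $q$ is used to conclude that each prime-power component $\ell_i^{a_i}$ of $m$ cannot straddle $p$ and $q$ but must divide one of them wholly — this is what makes the assignment of primes to $k$ versus $\frac{m}{k}$ well-defined and rigid, and it is the crux of both existence and uniqueness. An alternative, slightly slicker route would be to set $k = \gcd(m, p^{\infty}) := \lim_N \gcd(m,p^N)$ (the $p$-part of $m$) and argue directly that $\frac{m}{k}$ is then coprime to $p$, hence divides $q$ because it divides $pq$; but the prime-by-prime argument above is elementary and makes the uniqueness transparent, so that is the version I would write.
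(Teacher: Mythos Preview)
Your proof is correct. The paper takes the slicker gcd route you allude to at the end: it sets $k = (p,m)$ directly, observes that $\frac{m}{k}$ divides $\frac{p}{k}q$ with $(\frac{p}{k},\frac{m}{k})=1$ so $\frac{m}{k}\divides q$, and deduces $(k,\frac{m}{k})=1$ from $(p,q)=1$. For uniqueness it argues that any other $\ell$ divides $(p,m)=k$, and then $\frac{k}{\ell}$ divides both $p$ and $q$, hence equals $1$. Your prime-by-prime construction is exactly the unpacking of this gcd argument (note that $(p,m)$ already equals your $\gcd(m,p^\infty)$ here, since coprimality of $p$ and $q$ forces each prime power $\ell_i^{a_i}$ of $m$ to sit entirely inside $p$ or $q$). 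Both arguments are equally valid; the gcd version is terser, while yours makes the mechanism behind uniqueness more visible.
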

\begin{proof}
	Let $k = (p,m)$. Then $\frac{m}{k}\divides \frac{p}{k}q$ with $(\frac{p}{k},\frac{m}{k}) = 1$, so $\frac{m}{k}\divides q$, and by coprimality of $p$ and $q$, $(k,\frac{m}{k}) = 1$, proving existence. Suppose $\ell\in\Ex(m)$ is such that $\ell\divides p$ and $\frac{m}{\ell}\divides q$. Since $\ell\divides m$, we have $\ell\divides (p,m) = k$, and $\frac{k}{\ell}\divides p$. But then $\frac{m}{\ell} = \frac{m}{k}\frac{k}{\ell}\divides q$, so $\frac{k}{\ell}\divides q$ as well, implying $\frac{k}{\ell} = 1$, and $k = \ell$ is unique.
\end{proof}

For any group $G$ and any subset $S\subset G$, we denote by $\langle S\rangle$ the subgroup of $G$ generated by $S$. In particular, for any $e,f,g,\ldots\in\Ex(m)$, we denote by $\langle e,f,g,\ldots\rangle$ the subgroup of $\Ex(m)$ generated by $e,f,g,\ldots$.

\begin{definition}\label{Def:GroupsGamma0mhh}
	Let $m,h\in\nn$, and let $\langle e,f,g,\ldots\rangle$ be a subgroup of $\Ex(m)$. Then \[ \Gamma_0(mh\divides h){+e,f,g,\ldots} := \bigsqcup_{k\in\langle e,f,g,\ldots\rangle} \left\{ \begin{psmallmatrix} khw & x \\ mh^2y & khz \end{psmallmatrix} \bigst w,x,y,z\in\zz,\, kwz-\frac{m}{k}xy = 1 \right\}, \] where $\sqcup$ denotes a disjoint union. We define further notation for special cases: when $\langle e,f,g,\ldots\rangle = \Ex(m)$, we write $\Gamma_0(mh\divides h)+$, and when $\langle e,f,g,\ldots \rangle = \{ 1\}$ we write $\Gamma_0(mh\divides h)-$, or simply $\Gamma_0(mh\divides h)$. When $h=1$, we omit the `$h\divides h$' and write simply $\Gamma_0(m)+e,f,g,\ldots$, $\Gamma_0(m)+$, etc.
	
	For any $K\in \Gamma_0(mh\divides h){+e,f,g,\ldots}$, we call a matrix representative $\begin{psmallmatrix} khw & x \\ mh^2y & khz \end{psmallmatrix}$ with integral entries and determinant $k\edivides m$ as given above a \emph{canonical coset representative} for $K$.
\end{definition}

Note by taking $y\geq 0$ and $z > 0$ whenever $y=0$, we may specify a unique canonical coset representative for any $K\in\Gamma_0(mh\divides h){+e,f,g,\ldots}$.

\begin{remark}
	These groups are usually denoted $\Gamma_0(n\divides h){+e,f,g,\ldots}$, with $h\divides n$, see \cite{CN}, \cite{CMS}, and we write $mh$ for $n$. The notation `$h\divides h$' evokes the fact that $\Gamma_0(mh\divides h){+e,f,g,\ldots}$ is conjugate to $\Gamma_0(m){+e,f,g,\ldots}$ by $\Dmat{h}$, that is, \[ \Gamma_0(mh\divides h){+e,f,g,\ldots} = \Dmat{h^{-1}}(\Gamma_0(m){+e,f,g,\ldots})\Dmat{h}. \] Moreover, as we will see, $m$ is more important for our purposes here (see, e.g., Lemma \ref{Lemma:SingleCuspCriterion}).
\end{remark}

The disjoint union in the definition above is a partition into cosets in the quotient of the group $\Gamma_0(mh\divides h){+e,f,g,\ldots}$ by the normal subgroup $\Gamma_0(mh\divides h)$ \cite{CN}, that is, \[ \Gamma_0(mh\divides h){+e,f,g,\ldots}/\Gamma_0(mh\divides h)\isom \langle e,f,g,\ldots \rangle. \] 

\begin{example}\label{Example:GroupsGamma0mhdividesh+}
	We will consider a few particular examples of these groups throughout. First, we have the (projective) modular group \[ \PSL_2(\zz) = \Gamma_0(1) = \Gamma_0(1\divides 1){+} = \left\{\begin{psmallmatrix} w & x \\ y & z \end{psmallmatrix} \bigst wz - xy = 1 \right\}. \] Next, we will consider a so-called \emph{Fricke group}, 
	\begin{align*}
		\Fricke{2} &:= \Gamma_0(2){+} \\
		&= \left\{\begin{psmallmatrix} w & x \\ 2y & z \end{psmallmatrix} \bigst wz - 2xy = 1 \right\} \sqcup  \left\{\begin{psmallmatrix} 2w & x \\ 2y & 2z \end{psmallmatrix} \bigst 2wz - xy = 1 \right\} \\
		&= \Gamma_0(2) \sqcup \begin{psmallmatrix} 0 & -1 \\ 2 & 0 \end{psmallmatrix}\Gamma_0(2).
	\end{align*}
	As noted, this partition is the two cosets of $\Fricke{2} / \Gamma_0(2) \isom \Ex(2)$. More generally, for any $p$ prime, the Fricke group of level $p$ is \[ \Fricke{p} := \Gamma_0(p){+} = \Gamma_0(p)\sqcup \begin{psmallmatrix} 0 & -1 \\ p & 0 \end{psmallmatrix}\Gamma_0(p). \] The choice of the matrix $\begin{psmallmatrix} 0 & -1 \\ p & 0 \end{psmallmatrix}$ is standard, since as a linear fractional transformation, this exchanges $0$ and $\infty$, but any (integral) $\begin{psmallmatrix} pw & x \\ py & pz \end{psmallmatrix}$ with $pwz - xy = 1$ would suffice for constructing $\Fricke{p}$ as an extension of $\Gamma_0(p)$.  
	
	For a slightly more complicated example, we will consider
	\begin{align*}
		\Gamma_0(6){+} 
		&= \bigsqcup_{k\in\{1,2,3,6\}} \left\{ \begin{psmallmatrix} kw & x \\ my & kz \end{psmallmatrix} \bigst kwz-\frac{m}{k}xy = 1 \right\} \\
		&= \Gamma_0(6) \sqcup \begin{psmallmatrix} 2 & -1 \\ 6 & -2 \end{psmallmatrix}\Gamma_0(6) \sqcup \begin{psmallmatrix} 3 & -2 \\ 6 & -3 \end{psmallmatrix}\Gamma_0(6) \sqcup \begin{psmallmatrix} 0 & -1 \\ 6 & 0 \end{psmallmatrix}\Gamma_0(6).
	\end{align*}
	Here, one has four cosets in the quotient $\Gamma_0(6){+}/\Gamma_0(6)$, corresponding to the four elements of $\Ex(6) = \{1, 2, 3, 6 \}$. Note that $\Ex(6)$ is isomorphic to the Klein four-group (every element of $\Ex(m)$ is of order 2), and is generated by any two non-identity elements, so $\Gamma_0(6){+} = \Gamma_0(6){+2,3} = \Gamma_(6){+2,6} = \Gamma_0(6){+3,6}$. That is, beginning from $\Gamma_0(6)$, one may construct any of the three groups $\Gamma_0(6){+2}$, $\Gamma_0(6){+3}$, or $\Gamma_0(6)+{6}$ by adjoining the appropriate \emph{Atkin-Lehner involution} represented by the matrix above with determinant $2$, $3$, or $6$ respectively. Adjoining any further Atkin-Lehner involution gives all of $\Gamma_0(6){+}$.
	
	Lastly, we consider a case involving conjugation, 
	\begin{align*}
		\Gamma_0(3\divides 3) 
		&= \left\{ \begin{psmallmatrix} 3w & x \\ 9y & 3z \end{psmallmatrix} \bigst wz - xy = 1 \right\} \\
		&= \begin{psmallmatrix} 1 & 0 \\ 0 & 3 \end{psmallmatrix}\PSL_2(\zz)\begin{psmallmatrix} 3 & 0 \\ 0 & 1 \end{psmallmatrix}.
	\end{align*}
	The group $\Gamma_0(mh\divides h){+e,f,g,\ldots}$ is always conjugate to $\Gamma_0(m){+e,f,g,\ldots}$ by $\Dmat{h}$ in this way.
\end{example}

Let $\Gamma = \Gamma_0(mh\divides h){+e,f,g,\ldots}$. Observe that if $K\in\Gamma_\infty$, then $K=\begin{psmallmatrix} khw & x \\ 0 & khz\end{psmallmatrix}$ with $kwz = 1$, so $k = 1$, and $w = z = \pm 1$. By Thm. \ref{Theorem:InvolutoryDecomposition}, then, $K = T^{\frac{x}{h}}\Dmat{\frac{w}{z}} = T^{\frac{x}{h}}$. Since for all $x\in\zz$, $\begin{psmallmatrix} h & x \\ 0 & h \end{psmallmatrix} \in \Gamma_\infty$, we find \[ \Gamma_\infty = \langle T^{\frac{1}{h}}\rangle \subset \Omega_\infty^T. \] Thus, Corollaries \ref{Corollary:PiRhoOnCosets} and \ref{Corollary:ArcsAreLeftQuotByStab} apply, that is, for any $[K]\in\LeftQuotientByStab{\Gamma}$, each of $\pi([K])$, $\rho^2([K])$, and $\mc{A}([K])$ are well-defined, and $\mc{A}(\Gamma)$ is in bijection with $\LeftQuotientByStab{\Gamma}$. But in fact, we show that for these groups, $\rho(K)$ depends only on $m$, $h$, and $\pi(K)$, so that $\mc{A}(K)$ is the unique arc with center $\pi(K)$ in $\mc{A}(\Gamma)$.

\begin{definition}
	Let $m,h\in\nn$. Then 
	\begin{align*}
		r^2_{mh\divides h}: \pp^1(\qq) &\to \pp^1(\qq) \\
		\frac{p}{q} &\mapsto \frac{(mhp,q)(hp,q)}{mh^2q^2}.
	\end{align*}
\end{definition}
\begin{lemma}\label{Lemma:RadiusFromPi}
	Let $K = \begin{psmallmatrix} khw & x \\ mh^2y & khz \end{psmallmatrix}\in\Gamma_0(mh\divides h){+e,f,g,\ldots}$. Then \[ \rho^2(K) = r^2_{mh\divides h}\circ\pi(K). \]
\end{lemma}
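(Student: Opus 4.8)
The plan is to reduce the identity $\rho^2(K) = r^2_{mh\divides h}\circ\pi(K)$ to a statement about the reduced fraction $\pi(K) = K^{-1}\infty$ and then compute both sides explicitly from a canonical coset representative. First I would fix a canonical coset representative $K = \begin{psmallmatrix} khw & x \\ mh^2y & khz \end{psmallmatrix}$ with $kwz - \frac{m}{k}xy = 1$, so in particular the determinant is $k$. By Definition \ref{Def:PiRhoSquaredSigmaTheta}, $\pi(K) = -\frac{khz}{mh^2y} = -\frac{kz}{mhy}$ and $\rho^2(K) = \frac{\det K}{(mh^2y)^2} = \frac{k}{m^2h^4y^2}$. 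The first task is to identify the reduced form of the fraction $\pi(K) = \frac{p}{q}$: I would show $\gcd(kz, mhy)$ can be computed using the coprimality relation $kwz - \frac{m}{k}xy = 1$, which forces $\gcd(kz, \frac{m}{k}y) = 1$ and hence lets me peel off the common factors of $k$ and of $h$ separately. Concretely, writing $p/q$ in lowest terms, I expect $p = \frac{kz}{(kz,mhy)}$ and $q = \frac{mhy}{(kz,mhy)}$ up to sign, and the key arithmetic lemma needed is that $(kz, mhy)$ factors compatibly so that $(mhp, q)$ and $(hp,q)$ pick out exactly the powers of the primes in $k$ and $h$ that were cancelled.

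Next I would plug this reduced fraction into the definition of $r^2_{mh\divides h}$. We have $r^2_{mh\divides h}(p/q) = \frac{(mhp,q)(hp,q)}{mh^2q^2}$, so I need to evaluate $(mhp,q)$, $(hp,q)$, and $q^2$ in terms of $k, w, x, y, z, m, h$, and check the result equals $\frac{k}{m^2 h^4 y^2}$. The heart of the computation is showing $(mhp, q)(hp,q) = k \cdot \frac{mhy \cdot y}{(kz,mhy)^2} \cdot (\text{correction})$ — more precisely, that the product of these two gcd's, divided by $q^2 = \frac{m^2h^2y^2}{(kz,mhy)^2}$ and by $mh^2$, collapses to $\frac{k}{m^2h^4y^2}$. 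Since $k\in\Ex(m)$, the factor $k$ is coprime to $m/k$, and this exactness is exactly what makes $(mhp,q)$ contribute the factor $k$ (and not more) while $(hp,q)$ contributes the leftover powers of $h$. I would invoke Lemma \ref{Lemma:ExactGroupIntersections} and Lemma \ref{Lemma:ExactDivisorsContainSquares} (or their proofs) to handle the prime-by-prime bookkeeping — in particular, Lemma \ref{Lemma:ExactDivisorsContainSquares} tells us that if $h$ divides one of these gcd's then $h^2$ does, which is what pairs up correctly against the $h^2$ in the denominator of $r^2_{mh\divides h}$.

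The main obstacle I anticipate is the gcd bookkeeping: carefully tracking, prime by prime, how much of $k$ and how much of $h$ gets cancelled when reducing $\frac{kz}{mhy}$ to lowest terms, and then confirming that $(mhp,q)$ and $(hp,q)$ recover precisely those cancelled factors with the right multiplicities. This is where the exact-divisor hypothesis $k\in\Ex(m)$ is essential — without $(k, m/k)=1$ the factor $k$ would not be cleanly separated, and the two gcd's in the definition of $r^2_{mh\divides h}$ (one with $mh$, one with just $h$) would overlap. A clean way to organize this is to localize at each prime $\ell$: write $v_\ell$ for the $\ell$-adic valuation and verify the identity $v_\ell(\rho^2(K)) = v_\ell(r^2_{mh\divides h}(\pi(K)))$ by splitting into the cases $\ell \divides h$, $\ell \divides m$ (equivalently $\ell \divides k$ or $\ell \divides m/k$), and $\ell \nmid mh$. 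Each case is a short valuation computation using $kwz - \frac{m}{k}xy = 1$, but assembling them is the bulk of the work; everything else is routine substitution into the definitions of $\pi$ and $\rho^2$ from Definition \ref{Def:PiRhoSquaredSigmaTheta}.
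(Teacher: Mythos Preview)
Your approach would eventually work, but it is far more laborious than needed, and it contains an arithmetic slip at the outset: the determinant of the given matrix representative is $kh^2$, not $k$ (compute $(khw)(khz) - x(mh^2y) = h^2(k^2wz - mxy) = kh^2$ using $kwz - \tfrac{m}{k}xy = 1$), so $\rho^2(K) = \dfrac{kh^2}{(mh^2y)^2} = \dfrac{k}{(mhy)^2}$, not $\dfrac{k}{m^2h^4y^2}$.

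More importantly, you are planning to reduce $\pi(K)$ to lowest terms before feeding it into $r^2_{mh\divides h}$, and then to recover the lost factors prime by prime. The paper bypasses all of this by observing that the formula $r^2_{mh\divides h}(p/q) = \dfrac{(mhp,q)(hp,q)}{mh^2q^2}$ is homogeneous of degree zero in $(p,q)$, so one may plug in the \emph{unreduced} pair $p = -kz$, $q = mhy$ directly. Then $(mhkz,\,mhy) = mh\,(kz,y)$ and $(hkz,\,mhy) = h\,(kz,my) = hk\,(z,\tfrac{m}{k}y)$ (using $(ka,kb)=k(a,b)$), giving
\[
r^2_{mh\divides h}(\pi(K)) \;=\; \frac{mh\,(kz,y)\cdot hk\,(z,\tfrac{m}{k}y)}{mh^2\,(mhy)^2} \;=\; \frac{k\,(kz,y)\,(z,\tfrac{m}{k}y)}{(mhy)^2}.
\]
Both gcd factors equal $1$: from $kwz - \tfrac{m}{k}xy = 1$ one gets $(kz,\tfrac{m}{k}y)=1$, and since $y \mid \tfrac{m}{k}y$ and $z \mid kz$ this forces $(kz,y)=1$ and $(z,\tfrac{m}{k}y)=1$. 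That is the entire argument. Neither Lemma~\ref{Lemma:ExactGroupIntersections} nor Lemma~\ref{Lemma:ExactDivisorsContainSquares} is needed, and no localization at primes is required; the only fact about exact divisors used is $(k,\tfrac{m}{k})=1$, which enters in the step $(kz,my)=k(z,\tfrac{m}{k}y)$.
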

\begin{proof}
	The element $K$ has determinant $kh^2$, and $(kv,\frac{m}{k}y) = 1$. We compute $\rho^2(K) = \frac{kh^2}{(mh^2y)^2} = \frac{k}{(mhy)^2}$. Since $\pi(K) = \frac{kv}{mhy}$, we have \[ r^2_{mh\divides h}\circ\pi(K) = \frac{(mhkv, mhy)(hkv, mhy)}{mh^2(mhy)^2} = \frac{k(kv,y)(v,\frac{m}{k}y)}{(mhy)^2} = \frac{k}{(mhy)^2}. \]
\end{proof}

\subsection{Additional groups}\label{Subsection:MoreGroups}

In addition to the groups $\Gamma_0(mh\divides h){+e,f,g,\ldots}$, we are interested in groups which are denoted $\Gamma_0(mh\edivides h){+e,f,g,\ldots}$. These are the groups in the set $\mc{S}$ given in the introduction (\ref{Def:SetS}). When it exists, $\Gamma_0(mh\edivides h){+e,f,g,\ldots}$ is the kernel of a surjective homomorphism $\lambda$ from $\Gamma_0(mh\divides h){+e,f,g,\ldots}$ to the $h$th roots of unity.

Conway, McKay, and Sebbar \cite{CMS} completely characterize when $\lambda$ exists, and therefore when the group $\Gamma_0(mh\edivides h){+e,f,g,\ldots}$ exists. In particular, they note that this homomorphism exists whenever $\Gamma_0(mh\divides h){+e,f,g,\ldots}$ appears in the Monstrous Moonshine correspondence, which includes all the groups $\Gamma\in\mc{S}$ (see \S\ref{Subsection:MonstrousMoonshine} below for more on Monstrous Moonshine).

\begin{lemma}\label{Lemma:LambdaProperties}
	Let $\Gamma_0(mh\divides h){+e,f,g,\ldots}$ be a group appearing in the Monstrous Moonshine correspondence. Then there exists a uniquely determined homomorphism $\lambda : \Gamma \to C^\times$, given by \begin{enumerate}\label{List:LambdaExistenceCriteria}
		\item $\lambda(K) = 1$ if $K\in\Gamma_0(mh^2)+$ with $\det K = k$ such that every prime dividing $k$ also divides $m$,
		\item $\lambda(T^{\frac{1}{h}}) = e^{-\frac{2\pi i}{h}}$, and
		\item $\lambda(ST^{-mh}S) = e^{\pm \frac{2\pi i}{h}}$, where the sign is $+$ if $m\in \langle e,f,g,\ldots\rangle$ and $-$ otherwise.
	\end{enumerate}
\end{lemma}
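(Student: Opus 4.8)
The plan is to take the existence of $\lambda$ from \cite{CMS} and then use the machinery of \S\ref{Section:Groups} to pin down its values and to see that (1)--(3) determine it uniquely. Conway, McKay and Sebbar classify exactly which groups $\Gamma_0(mh\divides h){+e,f,g,\ldots}$ admit a surjective homomorphism onto the group $\mu_h$ of $h$-th roots of unity, and for the Monstrous Moonshine groups they produce such a $\lambda$ and identify its kernel; I would take this as input. Granting it, uniqueness is almost automatic: since $\lambda$ is onto $\mu_h$ the quotient $\Gamma/\ker\lambda$ is cyclic of order $h$, and because $(T^{1/h})^h=T$ lies in $\ker\lambda$ (as $\lambda(T)=\lambda(T^{1/h})^h=1$) while $e^{-2\pi i/h}$ is a primitive $h$-th root of unity, the image of $T^{1/h}$ generates $\Gamma/\ker\lambda$; so condition (2) fixes $\lambda$ among the $\varphi(h)$ isomorphisms $\Gamma/\ker\lambda\cong\mu_h$, and (1) and (3) must then hold for it.

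To get the uniqueness statement exactly as phrased --- that (1)--(3) single out $\lambda$ among all homomorphisms $\Gamma\to\cc^\times$ --- one should check that the elements named there generate $\Gamma$. I would assemble this from: $\Gamma_\infty=\langle T^{1/h}\rangle$ (noted after Definition \ref{Def:GroupsGamma0mhh}); the Atkin--Lehner quotient $\Gamma/\Gamma_0(mh\divides h)\cong\langle e,f,g,\ldots\rangle$, generated by classes of Atkin--Lehner involutions (of which the ones with determinant supported on primes dividing $m$ are the type-(1) elements, while the class of $W_m$ --- present iff $m\in\langle e,f,g,\ldots\rangle$ --- is accounted for by the sign in (3)); and a reduction inside $\Gamma_0(mh\divides h)$ itself. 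For the last, an arbitrary $K\notin\Gamma_\infty$ factors by the involutory decomposition (Theorem \ref{Theorem:InvolutoryDecomposition}) as $T^{\theta(K)}\Dmat{\rho^2(K)}ST^{-\pi(K)}$ with $\rho^2(K)=r^2_{mh\divides h}(\pi(K))$ by Lemma \ref{Lemma:RadiusFromPi}, and a Euclidean-type descent on $\pi(K)$ --- left-multiplying by powers of $T^{1/h}$ and right-multiplying by powers of $ST^{-mh}S$, together with congruence elements --- reduces $K$ to a type-(1) element; this gives $\Gamma=\langle\,\text{type-(1) elements},\,T^{1/h},\,ST^{-mh}S\,\rangle$, and a homomorphism is then determined by its values on these.

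The step I expect to be the real obstacle is not the generation bookkeeping, which is routine once the above pieces are in place, but establishing the structural input itself: that (1)--(3) are mutually consistent, i.e.\ actually extend to a homomorphism on all of $\Gamma$; that this homomorphism is onto $\mu_h$; and that the sign in (3) comes out as claimed. A self-contained route would be to realize $\lambda$ as the multiplier system of an eta-quotient $\prod_{d\mid mh^2}\eta(d\tau)^{r_d}$ that is $\Gamma_0(mh\divides h)$-invariant up to an $h$-th root of unity, compute $\lambda(T^{1/h})$ and $\lambda(ST^{-mh}S)$ from the Dedekind $\eta$ transformation law (Dedekind sums), and track how the Atkin--Lehner involution $W_m$ permutes and rescales the eta-factors to get the $\pm$; but this analysis is precisely what is carried out in \cite{CMS} (building on \cite{CN}), so in practice I would invoke their result rather than redo it.
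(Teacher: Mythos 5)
Your proposal ultimately rests on invoking Conway, McKay, and Sebbar for the existence, consistency, and sign computations, which is exactly what the paper does: its entire proof is the citation ``See \cite{CMS}.'' The additional scaffolding you sketch for uniqueness (generation of $\Gamma$ by the named elements) is reasonable but not developed in the paper, which takes the uniqueness as part of the cited result.
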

\begin{proof}
	See \cite{CMS}.
\end{proof}

Recall that the stabilizer of infinity in $\Gamma_0(mh\divides h){+e,f,g,\ldots}$ is $\langle T^{\frac{1}{h}}\rangle$, while $\lambda(T^{\frac{r}{h}}) = 1$ if and only if $T^{\frac{r}{h}}\in\langle T\rangle$. That is, since $\Gamma_0(mh\edivides h){+e,f,g,\ldots}$ is the kernel of $\lambda$, the stabilizer of infinity in $\Gamma_0(mh\edivides h){+e,f,g,\ldots}$ is $\langle T\rangle$.

Conway, McKay, and Sebbar's work builds on that of Ferenbaugh \cite{Ferenbaugh}, who determined when a larger class of homomorphisms which includes $\lambda$ exist, particularly in the case that $h=2$. We will let $\lambda$ denote any such homomorphism $\lambda:\Gamma_0(mh\divides h)\to \cc^*$ with, and let $\ker\lambda = \Gamma_0(mh\edivides h){+e,f,g,\ldots}$ in these cases as well (this appears to be somewhat standard practice). We will say more when discussing non-Monstous Moonshine cases, but for now we note this larger class of homomorphisms $\lambda$ still satisfy the second condition above, that is, $\lambda(T^{\frac{1}{h}}) = e^{-\frac{2\pi i}{h}}$. Thus, the stabilizer of infinity of $\Gamma_0(mh\edivides h){+e,f,g,\ldots}$ is still $\langle T\rangle$, even when $\Gamma_0(mh\divides h){+e,f,g,\ldots}$ does not appear in Monstrous Moonshine, and we have \[ \mbox{\Large \( ^{\Gamma_0(mh\divides h){+e,f,g,\ldots}}/_{\Gamma_0(mh\edivides h){+e,f,g,\ldots}} \)} \isom\; \mbox{\Large \( ^{\langle T^{\frac{1}{h}} \rangle}/_{\langle T\rangle} \)} \isom\; \mbox{\Large \( ^{\zz}/_{h\zz}. \)} \]

\begin{example}\label{Example:ConjugateGroup}
	We will consider the group $\Gamma_0(3\edivides 3) \subset \Gamma_0(3\divides 3)$, which is the kernel of a homomorphism $\lambda:\Gamma_0(3\divides 3)\to \langle e^{\frac{2\pi i}{3}}\rangle$, the group of third roots of unity as a subset of $\cc$. We know that $\Gamma_0(3\divides 3) = \Dmat{3}^{-1}\PSL_2(\zz)\Dmat{3}$, where $\PSL_2(\zz)$ is generated by $S$ and $T$ \cite{SERRE}, so that \[ \Gamma_0(3\divides 3) = \langle \Dmat{3}^{-1}S\Dmat{3}, \Dmat{3}^{-1}T\Dmat{3} \rangle = \langle \Dmat{\frac{1}{9}}S, T^{\frac{1}{3}} \rangle, \] using \ref{Lemma:MatrixCommutations}. What values does $\lambda$ take on these generators? From \cite{CMS}, we find that $\lambda:\Gamma_0(3\divides 3)\to \cc^\times$ takes the value $e^{-\frac{2\pi i}{3}}$ on $\begin{psmallmatrix} 1 & \frac{1}{3} \\ 0 & 1 \end{psmallmatrix} = T^{\frac{1}{3}}$. This behavior under translations will play an important role in the theory we develop below. For the other other generator, note that $\Dmat{\frac{1}{9}}S = \begin{psmallmatrix} 0 & -1 \\ 9 & 0 \end{psmallmatrix}$ is present, so $\lambda$ takes the value $e^{+\frac{2\pi i}{3}}$ on $\begin{psmallmatrix} 1 & 0 \\ 3 & 1 \end{psmallmatrix} = T^{\frac{1}{3}}\Dmat{\frac{1}{9}}ST^{\frac{1}{3}}$ (this is the involutory decomposition, Thm. \ref{Theorem:InvolutoryDecomposition}). Since $\lambda$ is a homomorphism, we find $\lambda(\Dmat{\frac{1}{9}}S) = 1$. (Alternately, we could have observed $\Dmat{\frac{1}{9}}S$ is an involution, so $\lambda$ maps this element of $\Gamma_0(3\divides 3)$ to an element with order dividing $2$.)
\end{example}

\subsection{Fundamental domains}\label{Subsection:FundamentalDomains}

Each of the groups $\Gamma = \Gamma_0(mh\divides h){+e,f,g,\ldots}$ is a discrete subgroup of $\PSL_2(\rr)$ commensurate with $\PSL_2(\zz)$ (\cite{CN}). We give a canonical choice of fundamental domain $\mc{D}(\Gamma)$. The construction of this fundamental domain is essentially the same as \cite{SERRE} for $\PSL_2(\zz)$, working coset-by-coset within $\Gamma_0(mh\divides h){+e,f,g,\ldots}/\Gamma_0(mh\divides h)$.

\begin{theorem}\label{Theorem:FundamentalDomains}
	Let $\Gamma = \Gamma_0(mh\divides h){+e,f,g,\ldots}$. Then there exists a fundamental domain for $\Gamma$, denoted $\mc{D}(\Gamma)$, given by choosing the unique point in each orbit satisfying
	\begin{enumerate}
		\item $\Im \tau\geq \Im K\tau$ for all $K\in\Gamma$
		\item $-\frac{1}{2h} < \Re\tau \leq \frac{1}{2h}$, and
		\item If $\tau$, $K\tau$ both satisfy the above two conditions for some $K\in\Gamma$, then $\Re\tau \geq \Re K\tau$.
	\end{enumerate}
\end{theorem}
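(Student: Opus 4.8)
The plan is to run the classical construction of Serre \cite{SERRE} for $\PSL_2(\zz)$, adapted to the coset structure of $\Gamma = \Gamma_0(mh\divides h){+e,f,g,\ldots}$. Writing $\mc{D}(\Gamma)$ for the set of $\tau\in\hh$ satisfying (1)--(3), I must show that every $\Gamma$-orbit meets $\mc{D}(\Gamma)$ in exactly one point; equivalently, that within an orbit one can always arrange (1), then (2), then (3), and that the three conditions together pin down a unique representative. So the proof splits into an existence half and a uniqueness half.

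\textbf{Existence.} Fix $\tau\in\hh$. For an integral coset representative $\begin{psmallmatrix} a & b \\ c & d\end{psmallmatrix}$ of $K\in\Gamma$ one has determinant $\le mh^2$ and $\Im K\tau = (\det)\,\Im\tau/\abs{c\tau+d}^2$; since $\abs{c\tau+d}\geq\abs{c}\,\Im\tau$, the inequality $\Im K\tau\geq\Im\tau$ forces $c$, and then $d$, into a finite range determined by $\tau$. Because $\Gamma$ is discrete (commensurable with $\PSL_2(\zz)$, \cite{CN}) and $\Im K\tau$ depends only on the coset $[K]\in\LeftQuotientByStab{\Gamma}$ — recall $\Gamma_\infty=\langle T^{1/h}\rangle\subset\Omega_\infty^T$ acts by horizontal translation — the set $\{[K]\in\LeftQuotientByStab{\Gamma}:\Im K\tau\geq\Im\tau\}$ is finite, so $\sup_{K\in\Gamma}\Im K\tau$ is attained. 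Replacing $\tau$ by a point of its orbit realizing the supremum puts us in case (1); translating by a power of $T^{1/h}$, which preserves (1) since $KT^{r/h}\in\Gamma$ for every $r$, puts us also in case (2). Finally, every orbit point satisfying (1) and (2) has imaginary part equal to this common maximum and real part in $(-\tfrac{1}{2h},\tfrac{1}{2h}]$, so there are only finitely many of them by discreteness; choosing one of largest real part gives a point satisfying (1), (2) and (3) simultaneously.

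\textbf{Uniqueness.} Let $\tau,\tau'=K\tau$ both lie in $\mc{D}(\Gamma)$, with $K\in\Gamma$. Condition (1) for $\tau$ gives $\Im\tau\geq\Im\tau'$, and condition (1) for $\tau'$ applied to $K^{-1}$ gives $\Im\tau'\geq\Im\tau$, so $\Im K\tau=\Im\tau$. If $K\in\Gamma_\infty$ then $K=T^{r/h}$ for some $r\in\zz$, and (2) forces $\abs{r}<1$, hence $r=0$ and $\tau=\tau'$. If $K\notin\Gamma_\infty$ then $\sigma(K)=1$, so $\Im K\tau=\Im\tau$ says precisely $\tau\in\mc{A}(K)$; now condition (3) for the pair $(\tau,K)$ gives $\Re\tau\geq\Re\tau'$, while (3) for $(\tau',K^{-1})$ gives $\Re\tau'\geq\Re\tau$, so $\Re\tau=\Re\tau'$, and together with $\Im\tau=\Im\tau'$ this forces $\tau=\tau'$. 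Hence $\mc{D}(\Gamma)$ meets each orbit in exactly one point; following \cite{SERRE} one then checks it is a hyperbolic polygon, bounded on the sides by the vertical lines $\Re\tau=\pm\tfrac{1}{2h}$ and from below by arcs belonging to $\mc{A}(\Gamma)$ — the ``side boundary'' and ``lower boundary'' referred to in Theorem \ref{Theorem:MainTheorem}.

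\textbf{Main obstacle.} The only real work is in the first step: guaranteeing that $\sup_K\Im K\tau$ is attained and that the set of orbit points satisfying (1)--(2) is finite. Both reduce to proper discontinuity of the $\Gamma$-action on $\hh$, which holds because $\Gamma$ is a discrete subgroup of $\PSL_2(\rr)$ (\cite{CN}) and the determinants of its integral representatives are bounded by $mh^2$; the presence of Atkin--Lehner cosets of several distinct determinants is the one place this departs from the $\PSL_2(\zz)$ argument, and that uniform bound handles it. Everything else is routine bookkeeping with the involutory decomposition (Theorem \ref{Theorem:InvolutoryDecomposition}) and the commutation identities of Lemma \ref{Lemma:MatrixCommutations}.
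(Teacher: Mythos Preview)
Your proof is correct and follows essentially the same strategy as the paper's: both adapt Serre's construction for $\PSL_2(\zz)$ to $\Gamma$, first attaining the supremum of $\Im K\tau$ over the orbit, then translating into the strip, then selecting maximal real part among the finitely many survivors. The only presentational difference is that the paper carries out the ``sup is attained'' step explicitly coset-by-coset over $k\in\langle e,f,g,\ldots\rangle$, building for each $k$ the lattice-like set $\mc{L}_k=\{mh^2y\tau+khz\}$ and picking a shortest element, whereas you invoke the uniform determinant bound $\det\le mh^2$ and discreteness/proper discontinuity directly; both accomplish the same thing.
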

\begin{proof}
	Let $\tau\in\hh$, and let $[\tau]$ denote the orbit of $\tau$ under $\Gamma$. Observe that if there exists a point $\tau'\in [\tau]$ satisfying all three conditions, this point is unique, having the imaginary part and maximal allowed real part among all points in $[\tau]$. We need only show such a point exists. 
	
	We first show there exists a $K\in\Gamma$ such that $\Im K\tau$ is maximal. The argument here is largely similar to that found in \cite{SERRE} for $\PSL_2(\zz)$, iterated for each $k\in\langle e,f,g,\ldots\rangle$. Recall for any $K = \begin{psmallmatrix} khw & x \\ mh^2y & khz \end{psmallmatrix}\in\Gamma$, where this is a canonical coset representative, so that $\left(kz, \frac{m}{k}y\right) = 1$, $y\geq 0$, $z>0$ when $y=0$, and \[ \Im K\tau = \frac{kh^2}{\abs{mh^2y\tau+khz}^2}\Im\tau = \frac{\rho^2(K)}{\abs{\tau - \pi(K)}^2}\Im\tau. \] For each $k\in\langle e,f,g,\ldots\rangle$, form the set \[ \mc{L}_k = \{ mh^2y\tau + khz \st (kz, \frac{m}{k}y) = 1, y\geq 0, z>0 \text{ when } y=0 \}. \] As a subset of the lattice $(mh^2\tau)\zz + (kh)\zz$, $\mc{S}_k$ has a (not necessarily unique) element of minimal length, say $mh^2y\tau+khz$, and this element is nonzero since $(y,z)=1$. We also note that there are only finitely many choices of pairs $(y,z) = 1$ such that the resulting lattice element $mh^2y\tau+khz$ has minimal length in $\mc{L}_k$. We may therefore define the finite set \[ \mc{T}_k = \{ (y,z)\in\zz^2 \st y,z\text{ as above } \}. \] 
	
	We associate to each $(y,z)\in\mc{T}_k$ an element $[K_{k,y,z}]\in\LeftQuotientByStab{\Gamma}$, by $\pi(K_{k,y,z}) = -\frac{kz}{mhy}$ and $\rho^2(K_{k,y,z}) = \frac{k}{(mhy)^2}$. Since $\Gamma_\infty = \langle T^{\frac{1}{h}}\rangle$, there is a unique coset representative $K_{k,y,z}$ in $[K]$ such that $-\frac{1}{2h} < \Re K_{(k,y,z)}\tau \leq \frac{1}{2h}$. Observe that for any $K' = \begin{psmallmatrix} khw' & x' \\ mh^2y' & khz' \end{psmallmatrix}\in\Gamma$, we have \[ \Im K_{k,y,z} = \frac{kh^2}{\abs{mh^2y\tau + khz}^2}\Im\tau \geq \frac{kh^2}{\abs{mh^2y'\tau + khz'}^2}\Im\tau = \Im K'\tau, \] by minimality of the associated lattice element in $\mc{L}_k$.
	
	Let \[ \mc{K} = \{ K_{k,y,z}\in\Gamma \st k\in\langle e,f,g,\dots\rangle, (y,z)\in\mc{T}_k \}, \] noting $\mc{K}$ is finite since each $\mc{T}_k$ is finite, and let \[ \mc{K}^* = \{ K\in\mc{K} \st \Im K\tau\geq \Im K'\tau \text{ for all }K'\in\mc{K} \}, \] so that for all $K\in\mc{K}^*$, the point $K\tau$ satisfies the first two conditions of the theorem. Moreover, if $K\in\Gamma$ satisfies the first two conditions, then $K$ is associated to an element of $\mc{L}_k$ of minimal length (where, e.g., $\det K = kh^2$ for a canonical coset representative determines $k$), and therefore to some $K\in\mc{K}^*$. That is, $\{ K\tau \st K\in\mc{K}^* \}$ is precisely the set of points in the orbit of $\tau$ satisfying the first two conditions, and since $\mc{K}^*$ is a finite set, there exists a unique point with maximal real part, so $\mc{D}(\Gamma)$ is well-defined.
\end{proof}

\begin{corollary}\label{Corollary:FundamentalDomainsExactGroups}
	Let $\Gamma = \Gamma_0(mh\edivides h){+e,f,g,\ldots}$. Then we may define a fundamental domain $\mc{D}(\Gamma)$,
	given by choosing the unique point in each orbit satisfying
	\begin{enumerate}
		\item $\Im \tau\geq \Im K\tau$ for all $K\in\Gamma$
		\item $-\frac{1}{2} < \Re\tau \leq \frac{1}{2}$, and
		\item $\Re\tau\geq \Re K\tau$ for all $K\in\Gamma$.
	\end{enumerate}
\end{corollary}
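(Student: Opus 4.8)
The plan is to repeat the proof of Theorem~\ref{Theorem:FundamentalDomains} almost verbatim, with $\Gamma = \Gamma_0(mh\edivides h){+e,f,g,\ldots} = \ker\lambda$ in place of the ambient group $G := \Gamma_0(mh\divides h){+e,f,g,\ldots}$, using two structural facts already on record. First, as noted just after Lemma~\ref{Lemma:LambdaProperties}, $\Gamma_\infty = \langle T\rangle$ rather than $\langle T^{\frac{1}{h}}\rangle$; this is precisely why condition (2) must widen from $-\tfrac1{2h} < \Re\tau \le \tfrac1{2h}$ to $-\tfrac12 < \Re\tau \le \tfrac12$. Second, $\Gamma$ is a subgroup of $G$ with $\Gamma\cap\langle T^{\frac{1}{h}}\rangle = \langle T\rangle = \Gamma_\infty$, so the natural map $\LeftQuotientByStab{\Gamma}\to\LeftQuotientByStab{G}$ sending $\Gamma_\infty K\mapsto G_\infty K$ is injective (if $K_1K_2^{-1}\in G_\infty = \langle T^{\frac{1}{h}}\rangle$ and $K_1K_2^{-1}\in\Gamma$, then $K_1K_2^{-1}\in\Gamma_\infty$).

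Given $\tau\in\hh$, I would first show $\Im K\tau$ attains a maximum over the $\Gamma$-orbit of $\tau$. Boundedness is immediate, since $\Im K\tau \le \Im\tau_0$ where $\tau_0$ is the $G$-orbit maximizer furnished by Theorem~\ref{Theorem:FundamentalDomains} (here $K\in\Gamma\subseteq G$, so $K\tau$ lies in the $G$-orbit). Attainment follows from the coset injection above combined with the discreteness argument already carried out for $G$: for each threshold $c>0$ only finitely many cosets in $\LeftQuotientByStab{G}$ push $\Im\cdot\tau$ above $c$, hence a fortiori only finitely many cosets in $\LeftQuotientByStab{\Gamma}$ do, so the supremum of $\Im K\tau$ over $\Gamma$ is realized. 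Using $\Gamma_\infty = \langle T\rangle$, any maximizer can then be translated by an integer into the strip $-\tfrac12 < \Re\cdot\tau \le \tfrac12$, and as in Theorem~\ref{Theorem:FundamentalDomains} the finitely many orbit points satisfying conditions (1) and (2) admit a unique one of maximal real part; this is the point selected by condition (3). (Condition (3) is read exactly as in Theorem~\ref{Theorem:FundamentalDomains}: it is binding only among those $K$ for which $K\tau$ again satisfies (1) and (2), since otherwise, e.g.\ for integer translates out of the strip, the comparison is vacuous.) Uniqueness within each orbit, and the fact that every orbit meets $\mc{D}(\Gamma)$, then follow word for word from the corresponding paragraph of Theorem~\ref{Theorem:FundamentalDomains}: two points of one orbit satisfying (1)–(3) must share imaginary part by (1) and real part by (3), hence coincide.

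The only genuinely new point — and the one I expect to be the main (if minor) obstacle — is the attainment of the maximal imaginary part inside a \emph{single} $\Gamma$-orbit, rather than inside the larger $G$-orbit, since the $G$-maximizer $\tau_0 = g\tau$ need not itself lie in $\Gamma\cdot\tau$. This is dispatched by the injection $\LeftQuotientByStab{\Gamma}\hookrightarrow\LeftQuotientByStab{G}$ together with the finiteness already established for $G$, as above. Everything else — the choice of canonical coset representatives, the $\mc{L}_k$/$\mc{T}_k$ lattice bookkeeping, the semidirect-product structure of $\Gamma_\infty$, and the real-part tie-break — carries over unchanged, now with the defining strip of width $1$ instead of width $1/h$; equivalently, $\mc{D}(\Gamma)$ can be visualized as $h$ horizontal $T^{j/h}$-translates of $\mc{D}(G)$ reassembled and recentered, though the direct re-derivation above is cleanest to write down.
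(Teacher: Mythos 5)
Your proof is correct, but it takes a different route from the paper's. The paper does not re-run the lattice argument for the subgroup at all: writing $\Gamma' = \Gamma_0(mh\divides h){+e,f,g,\ldots}$, it picks $G\in\Gamma'$ with $G\tau\in\mc{D}(\Gamma')$ (already furnished by Theorem~\ref{Theorem:FundamentalDomains}) and then corrects $G$ by a translation $T^{\frac{r}{h}}$ chosen so that $\lambda(T^{\frac{r}{h}})=\lambda(G)^{-1}$, which forces $T^{\frac{r}{h}}G\in\ker\lambda=\Gamma$; since $r$ is only constrained modulo $h$, one can further adjust by integer translations to land $T^{\frac{r}{h}}G\tau$ in the strip $-\tfrac12<\Re\tau\le\tfrac12$. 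This is shorter, uses the character $\lambda$ as the essential input, and immediately exhibits $\mc{D}(\Gamma)$ as $h$ side-by-side translates of $\mc{D}(\Gamma')$ --- a description the paper relies on later (e.g.\ in \S\ref{Subsection:Zeros:ConjugateGroups}). Your version instead transfers the finiteness/attainment statements through the injection $\LeftQuotientByStab{\Gamma}\hookrightarrow\LeftQuotientByStab{\Gamma'}$ and redoes the selection of a maximizer inside the $\Gamma$-orbit directly. What that buys you is self-containedness and a cleaner justification of the real-part tie-break: the paper's proof establishes existence of a point satisfying (1) and (2) and then asserts uniqueness in one line, whereas you explicitly note that only finitely many cosets attain the maximal imaginary part, so the maximal permitted real part is actually achieved. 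Both arguments are valid; you correctly identified the two structural facts ($\Gamma_\infty=\langle T\rangle$ and $\Gamma\cap\Omega_\infty^T\cap\Gamma'_\infty=\Gamma_\infty$) that make either one work.
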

\begin{proof}
	Let $\tau\in\hh$, and let $G\in\Gamma' = \Gamma_0(mh\divides h){+e,f,g,\ldots}$ be such that $G\tau\in\mc{D}(\Gamma')$. Since $\Gamma'/\Gamma \isom \langle T^{\frac{1}{h}}\rangle / \langle T\rangle \isom \zz/ h\zz$, we may choose $T^{\frac{r}{h}}\in\Gamma'$ such that $\lambda(T^{\frac{r}{h}}) = \lambda(G)^{-1}$ (ensuring $T^{\frac{r}{h}}G\in\ker\lambda = \Gamma$) and $-\frac{1}{2} < T^{\frac{r}{h}}G\tau \leq \frac{1}{2}$. Thus there exists a point in each orbit in the given region. This point is necessarily unique by having maximal imaginary and (permitted) real parts.
\end{proof}

That is, the fundamental domain for $\Gamma_0(mh\edivides h){+e,f,g,\ldots}$ is $h$ copies of the fundamental domain for $\Gamma_0(mh\divides h){+e,f,g,\ldots}$ laid side to side. We may also relate fundamental domains for $\Gamma_0(mh\divides h){+e,f,g,\ldots}$ and $\Gamma_0(m){+e,f,g,\ldots}$.

\begin{corollary}\label{Corollary:FundamentalDomainConjugates}
	For all $m,h\in\nn$ and $\langle e,f,g,\ldots\rangle\subset\Ex(m)$, \[ \mc{D}(\Gamma_0(mh\divides h){+e,f,g,\ldots}) = \Dmat{h^{-1}}\mc{D}(\Gamma_0(m)+e,f,g,\ldots). \]
\end{corollary}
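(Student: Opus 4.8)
The plan is to reduce everything to the conjugacy relation recorded in the Remark following Definition \ref{Def:GroupsGamma0mhh}, namely $\Gamma := \Gamma_0(mh\divides h){+e,f,g,\ldots} = \Dmat{h^{-1}}\Gamma'\Dmat{h}$ where $\Gamma' := \Gamma_0(m){+e,f,g,\ldots}$, combined with the explicit description of the fundamental domains via the defining inequalities of Theorem \ref{Theorem:FundamentalDomains} (applied to $\Gamma'$ with ``$h=1$'' and to $\Gamma$). The conjugation $K'\mapsto\Dmat{h^{-1}}K'\Dmat{h}$ is a group isomorphism $\Gamma'\to\Gamma$, and as a linear fractional transformation $\Dmat{h^{-1}}$ acts on $\hh$ by $\tau\mapsto\tau/h$, so it scales both $\Re$ and $\Im$ by $1/h$ and carries $\Gamma'$-orbits bijectively onto $\Gamma$-orbits.

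First I would fix $w\in\hh$, set $\tau=\Dmat{h}w$ (so $\Re w=\tfrac1h\Re\tau$ and $\Im w=\tfrac1h\Im\tau$), and for any $K\in\Gamma$ write $K=\Dmat{h^{-1}}K'\Dmat{h}$ with $K'\in\Gamma'$; then $Kw=\Dmat{h^{-1}}K'\tau$, hence $\Im Kw=\tfrac1h\Im K'\tau$ and $\Re Kw=\tfrac1h\Re K'\tau$. Substituting these identities into the three conditions defining $\mc{D}(\Gamma)$ and clearing the common factor $1/h$ turns condition (1) for $\Gamma$ at $w$ into condition (1) for $\Gamma'$ at $\tau$ ($\Im\tau\geq\Im K'\tau$ for all $K'\in\Gamma'$), condition (2), $-\tfrac1{2h}<\Re w\leq\tfrac1{2h}$, into $-\tfrac12<\Re\tau\leq\tfrac12$, and the tie-breaking condition (3) for $\Gamma$ into the tie-breaking condition (3) for $\Gamma'$. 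Because $K\leftrightarrow K'$ runs bijectively over $\Gamma$ as $K'$ runs over $\Gamma'$, and $w\mapsto\Dmat{h}w$ matches $\Gamma$-orbits with $\Gamma'$-orbits, $w$ is the distinguished orbit representative for $\Gamma$ exactly when $\Dmat{h}w$ is the distinguished orbit representative for $\Gamma'$; that is, $w\in\mc{D}(\Gamma)$ if and only if $\Dmat{h}w\in\mc{D}(\Gamma')$, which is the claimed identity $\mc{D}(\Gamma)=\Dmat{h^{-1}}\mc{D}(\Gamma')$.

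I do not expect a genuine obstacle: the argument is a bookkeeping verification that the defining inequalities are homogeneous under the scaling $\tau\mapsto\tau/h$. The only points needing a little care are (i) using the same $\mc{D}$-description, i.e.\ Theorem \ref{Theorem:FundamentalDomains} with $h=1$, for $\Gamma_0(m){+e,f,g,\ldots}$, and (ii) checking that the tie-breaking clause --- ``if $\tau$ and $K\tau$ both satisfy the first two conditions then $\Re\tau\geq\Re K\tau$'' --- transforms correctly, which it does since $\Re Kw=\tfrac1h\Re K'\tau$ and the ``both satisfy'' hypothesis is preserved under the correspondence $K\leftrightarrow K'$.
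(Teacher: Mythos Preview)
Your proposal is correct and takes essentially the same approach as the paper's proof: both exploit the conjugacy $\Gamma_0(mh\divides h){+e,f,g,\ldots}=\Dmat{h^{-1}}\Gamma_0(m){+e,f,g,\ldots}\,\Dmat{h}$ and verify that the three defining conditions of Theorem~\ref{Theorem:FundamentalDomains} are carried to one another under the scaling $\tau\mapsto\tau/h$. The paper's write-up only spells out one direction explicitly and appeals to uniqueness of the orbit representative to conclude equality, whereas you make the biconditional more explicit, but the underlying argument is the same.
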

\begin{proof}
	Let $\mc{R} = \Dmat{h^{-1}}\mc{D}(\Gamma_0(m)+e,f,g,\ldots)$. For any $h\tau\in\mc{D}(\Gamma_0(m)+e,f,g,\ldots)$, $\Im K(h\tau) \leq \Im h\tau$ for all $K\in\Gamma_0(m)+e,f,g,\ldots$. Since $\tau\in\mc{R}$ if and only if $h\tau\in\mc{D}(\Gamma_0(m)+e,f,g,\ldots)$, we find that for any $K\in\Gamma$, \[ \Im \tau = \frac{1}{h}\Im(h\tau) \geq \frac{1}{h}\Im Kh\tau = \Im (\Dmat{h^{-1}}K\Dmat{h})\tau. \] By an entirely similar argument, $\tau\in\mc{R}$ implies $\Re \tau\geq \Re (\Dmat{h^{-1}}K\Dmat{h})\tau$ for any element $K\in\Gamma_0(m){+e,f,g,\ldots}$. Lastly, since $\mc{D}(\Gamma_0(m){+e,f,g,\ldots})$ has real part in the interval $(-\frac{1}{2},\frac{1}{2}]$, we see $\tau\in\mc{R}$ implies $h\tau \in \mc{D}(\Gamma_0(m){+e,f,g,\ldots})$, so that $-\frac{1}{2h} < \Re\tau \leq \frac{1}{2h}$. That is, we have found that $\mc{R} = \mc{D}(\Gamma_0(mh\divides h){+e,f,g,\ldots})$.
\end{proof}

\begin{definition}\label{Def:LowerBoundary}
	The \emph{lower boundary} of $\mc{D}(\Gamma)$ is the set \[ \mc{C}(\Gamma) = \{ \tau\in\mc{D}(\Gamma) \st \tau - i\epsilon \notin\mc{D}(\Gamma)\text {for all }\epsilon > 0 \}. \] 
\end{definition}

Note that we have defined the lower boundary $\mc{C}(\Gamma)$ to include only points in $\mc{D}(\Gamma)$, and not points in $\overline{\mc{D}(\Gamma)}$, although points in $\mc{C}(\Gamma)$ are indeed boundary points of $\mc{D}(\Gamma)$.

\begin{corollary}\label{Corollary:LowerBoundaryIsArcs}
	Let $\tau\in\mc{D}(\Gamma)$. Then $\tau\in\mc{C}(\Gamma)$ if and only if $\tau\in\mc{A}(K)$ for some $K\in\Gamma\setminus\Gamma_\infty$.
\end{corollary}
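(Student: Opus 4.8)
The plan is to translate the geometric defining conditions of $\mc{D}(\Gamma)$ into statements about arcs $\mc{A}(K)$ and then read off the claim. Recall that $\tau \in \mc{D}(\Gamma)$ is characterized (Theorem \ref{Theorem:FundamentalDomains}) by the height-maximality condition $\Im \tau \geq \Im K\tau$ for all $K \in \Gamma$, together with the real-part normalization and tie-breaking. For a canonical coset representative $K$ with $\det K = kh^2$ we have $\sigma(K) = 1$ whenever $K \notin \Gamma_\infty$ (shown in \S\ref{Subsection:Groups}: any $K \in \Gamma_\infty$ is a pure translation $T^{x/h}$), so the condition $\Im \tau \geq \Im K\tau$ with $K \notin \Gamma_\infty$ says precisely that $\tau$ is \emph{on or above} $\mc{A}(K)$ in the terminology of Definition \ref{Def:Arcs}. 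Meanwhile for $K \in \Gamma_\infty$ the condition is vacuous ($\Im T^{x/h}\tau = \Im \tau$). So membership in $\mc{D}(\Gamma)$ forces $\tau$ to lie on or above $\mc{A}(K)$ for every $K \in \Gamma \setminus \Gamma_\infty$.

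For the forward direction, suppose $\tau \in \mc{C}(\Gamma)$, i.e. $\tau \in \mc{D}(\Gamma)$ but $\tau - i\epsilon \notin \mc{D}(\Gamma)$ for all $\epsilon > 0$. The real-part conditions (2) and (3) are open/closed in the horizontal direction and are unaffected by a small downward push, and the tie-breaking condition (3) only comes into play on a measure-zero set that we can handle separately; the essential obstruction to $\tau - i\epsilon$ lying in $\mc{D}(\Gamma)$ must therefore be the height condition. So there is some $K \in \Gamma$ with $\Im K(\tau - i\epsilon) > \Im(\tau - i\epsilon)$ for arbitrarily small $\epsilon > 0$. Such a $K$ is not in $\Gamma_\infty$ (for those, heights are preserved). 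Since $\Im K\sigma$ is a continuous function of $\sigma$ and $\Im K\tau \leq \Im \tau$ (as $\tau \in \mc{D}(\Gamma)$), taking $\epsilon \to 0$ forces $\Im K\tau = \Im \tau$, i.e. $\tau \in \mc{A}(K)$ with $K \in \Gamma \setminus \Gamma_\infty$. The point here is that we need some single $K$ that works for a sequence $\epsilon_n \to 0$; this follows because there are only finitely many relevant $K$ near any given point (the local finiteness / discreteness used in the proof of Theorem \ref{Theorem:FundamentalDomains}, via the finite sets $\mc{T}_k$), so by pigeonhole one $K$ recurs.

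For the reverse direction, suppose $\tau \in \mc{D}(\Gamma)$ and $\tau \in \mc{A}(K)$ for some $K \in \Gamma \setminus \Gamma_\infty$, so $\Im K\tau = \Im \tau$ and, by Lemma \ref{Lemma:Arcs}, $\abs{\tau - \pi(K)}^2 = \rho^2(K)$ with $\pi(K), \rho^2(K)$ finite. The function $\sigma \mapsto \Im K\sigma = \rho^2(K)\Im\sigma / \abs{\sigma - \pi(K)}^2$ is strictly larger than $\Im \sigma$ precisely when $\sigma$ is \emph{below} $\mc{A}(K)$, i.e. $\abs{\sigma - \pi(K)} < \rho(K)$. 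A short computation shows that for $\sigma = \tau - i\epsilon$ with $\epsilon > 0$ small, $\abs{\sigma - \pi(K)}^2 = \abs{\tau - \pi(K)}^2 - 2\epsilon\Im\tau + \epsilon^2 = \rho^2(K) - 2\epsilon\Im\tau + \epsilon^2 < \rho^2(K)$, so $\tau - i\epsilon$ is strictly below $\mc{A}(K)$ and hence $\Im K(\tau - i\epsilon) > \Im(\tau - i\epsilon)$, violating condition (1). Therefore $\tau - i\epsilon \notin \mc{D}(\Gamma)$ for all small $\epsilon > 0$, and the same holds for all $\epsilon > 0$ since $\mc{D}(\Gamma)$ is a fundamental domain bounded below away from $\rr$ — once you leave it going down you must apply a nontrivial element to return, changing the point. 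Hence $\tau \in \mc{C}(\Gamma)$.

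The main obstacle is the forward direction: cleanly arguing that the failure of $\tau - i\epsilon \in \mc{D}(\Gamma)$ is caused by the height condition (1) rather than by the normalization (2) or the tie-break (3), and that a \emph{single} $K$ serves for a sequence $\epsilon_n \downarrow 0$. I would isolate this with a lemma: near any compact neighborhood of a point of $\mc{D}(\Gamma)$, only finitely many $\mc{A}(K)$ pass through, so the set of ``competing'' $K$ is finite; then a recurrent $K$ exists by pigeonhole, continuity of $\Im K(\cdot)$ closes the argument, and the $\Re$-conditions are dispatched by noting they vary continuously and $\tau$ already satisfies the strict inequality $-\tfrac{1}{2h} < \Re\tau$ (or lies on $\Re\tau = \tfrac{1}{2h}$, a case one checks directly, since a vertical push does not change $\Re\tau$).
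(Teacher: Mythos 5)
Your proof is correct and follows essentially the same route as the paper: both reduce membership in $\mc{D}(\Gamma)$ to being on or above every arc, use the elementary fact that a small vertical displacement from a point on a circle centered on $\rr$ lands strictly inside it, and invoke local finiteness of the arcs near $\tau$ for the ``only if'' direction (the paper phrases both directions contrapositively and is terser about the finiteness, which you make explicit via the pigeonhole). The only loose end, easily patched, is extending the reverse direction from small $\epsilon$ to all $\epsilon>0$: for $\epsilon\geq\Im\tau$ the point $\tau-i\epsilon$ has already left $\hh$, and for $0<\epsilon<\Im\tau<2\Im\tau$ your computation $\epsilon(\epsilon-2\Im\tau)<0$ applies directly.
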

\begin{proof}
	Observe that $\sigma(K) = 1$ for all $K\in\Gamma$, since $\Gamma_\infty\subset\Omega_\infty^T$. Thus, since $\tau\in\mc{D}(\Gamma)$, we have $\sigma(K)\Im \tau = \Im\tau \geq \Im K\tau$ for all $K\in\Gamma$, that is, $\tau$ is on or above all arcs $\mc{A}(K)$ for $\Gamma$.
	
	Suppose $\tau\notin\mc{C}(\Gamma)$, so there exists some $\epsilon > 0$ such that $\tau - i\epsilon \in\mc{D}(\Gamma)$. Then $\tau - i\epsilon$ is on or above all arcs $\mc{A}(K)$, which are all circles with centers in $\qq$, apart from $\mc{A}(K) = \hh$, when $K\in\Gamma_\infty$. Thus if $\tau\notin\mc{C}(\Gamma)$ then $\tau$ is strictly above $\mc{A}(K)$ for any $K\in\Gamma\setminus\Gamma_\infty$, that is, $\tau\notin\mc{A}(K)$.
	
	On the other hand, if $\tau\in\mc{C}(\Gamma)$, then $\tau\in\mc{D}(\Gamma)$ and so is on or above all arcs $\mc{A}(K)$. But if $\tau$ is strictly above all arcs, then there exists an $\epsilon>0$ such that $\tau - i\epsilon$ is still strictly above all arcs, so that $\Im K(\tau-i\epsilon)<\Im(\tau - i\epsilon)$ for all $K\in\Gamma\setminus\Gamma_\infty$. Since $\Re (\tau-i\epsilon) = \Re \tau$, we conclude that $\tau-i\epsilon\in\mc{D}(\Gamma)$, contradicting that $\tau\in\mc{C}(\Gamma)$. Thus $\tau$ is on some arc $\mc{A}(K)$.
\end{proof}

\begin{example}
	Continuing with our running examples, we have the following fundamental domains for the groups $\Fricke{2}$, $\Gamma_0(6){+}$, and $\Gamma_0(3\divides 3)$. We will also need to make use of the fundamental domains for $\Fricke{3}$ and $\PSL_2(\zz)$.
	\begin{figure}
		\centering
		\includegraphics[width=0.5\linewidth]{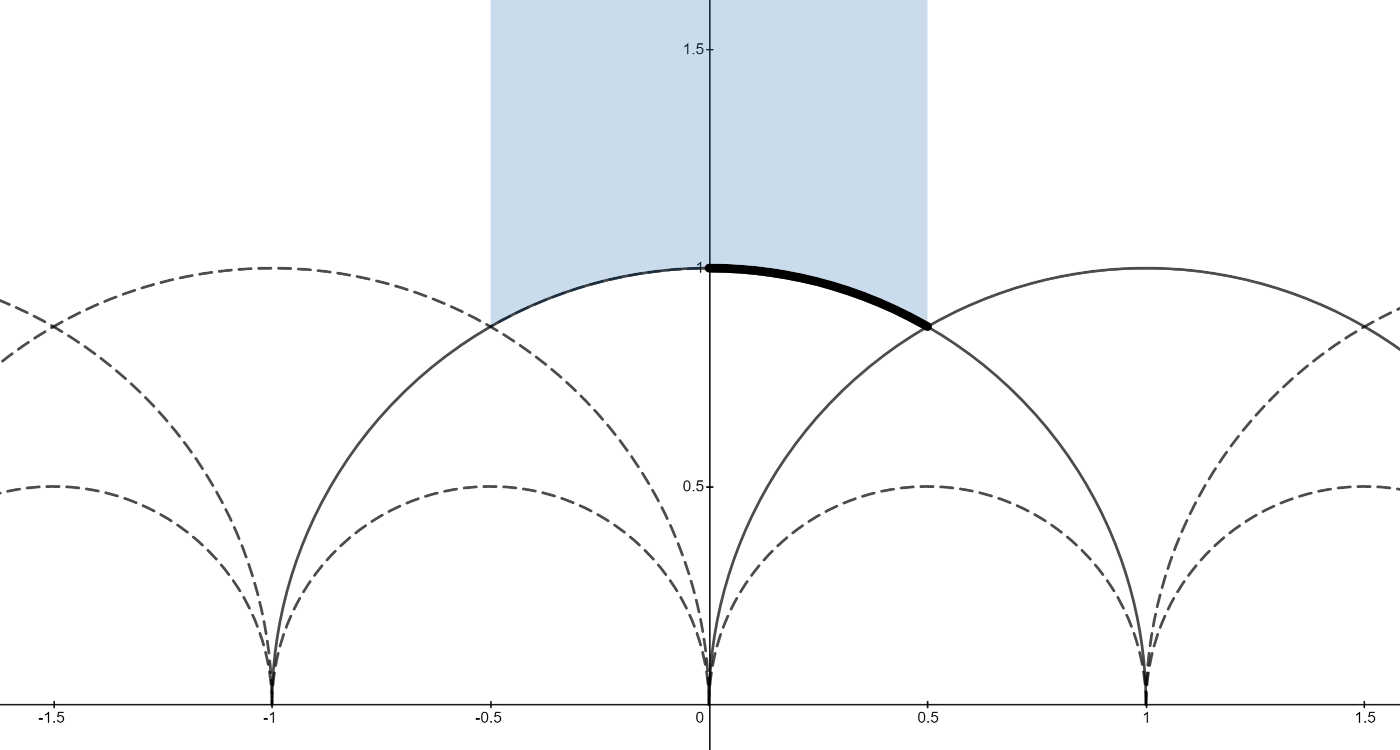}
		\caption{Fundamental Domain $\mc{D}(\PSL_2(\zz))$ for $\PSL_2(\zz)$ (shaded), and the lower arc $\mc{C}(\PSL_2(\zz))$ (in bold).}
		\label{Fig:FundamentalDomainPSL2Z}
	\end{figure}
	\begin{figure}
		\centering
		\includegraphics[width=0.5\linewidth]{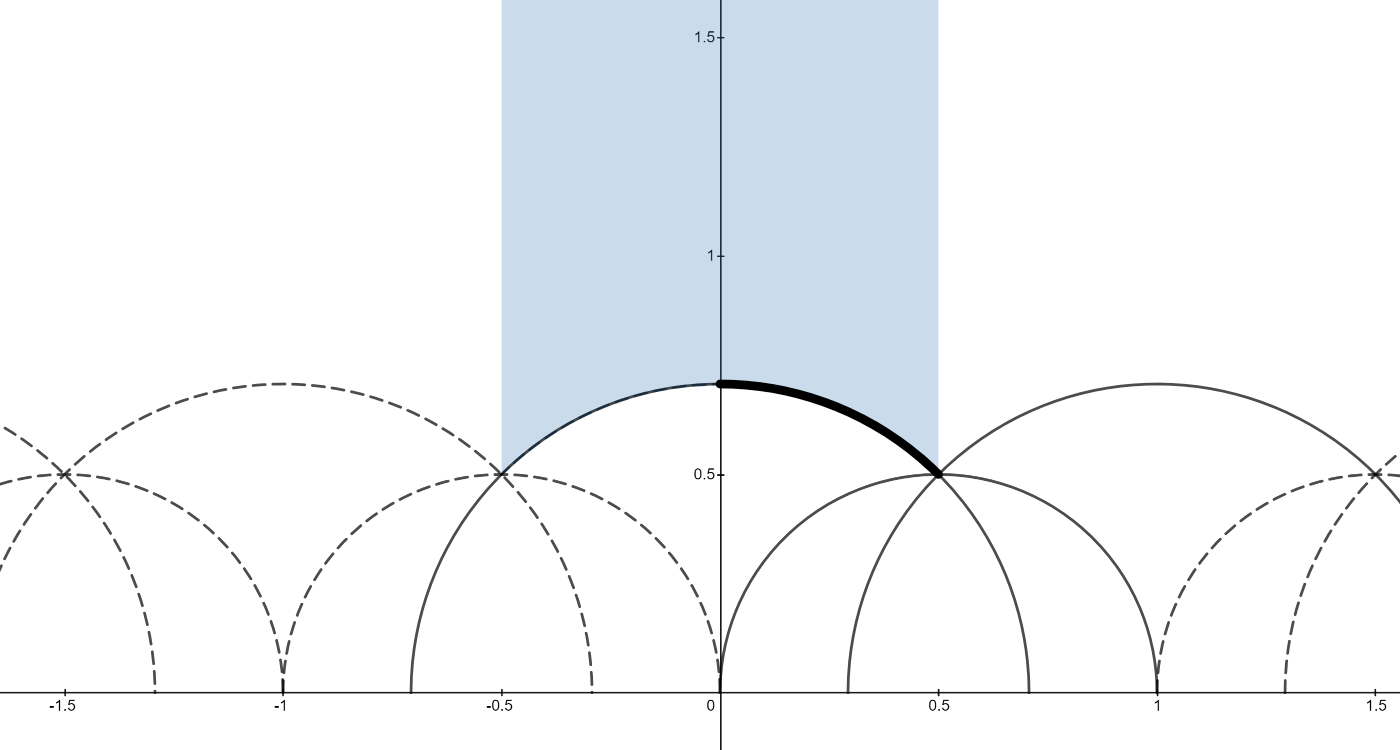}
		\caption{Fundamental Domain $\mc{D}(\Fricke{2})$ for $\Fricke{2}$ (shaded), and the lower arc $\mc{C}(\Fricke{2})$ (in bold).}
		\label{Fig:FundamentalDomainFricke2}
	\end{figure}
	\begin{figure}
		\centering
		\includegraphics[width=0.5\linewidth]{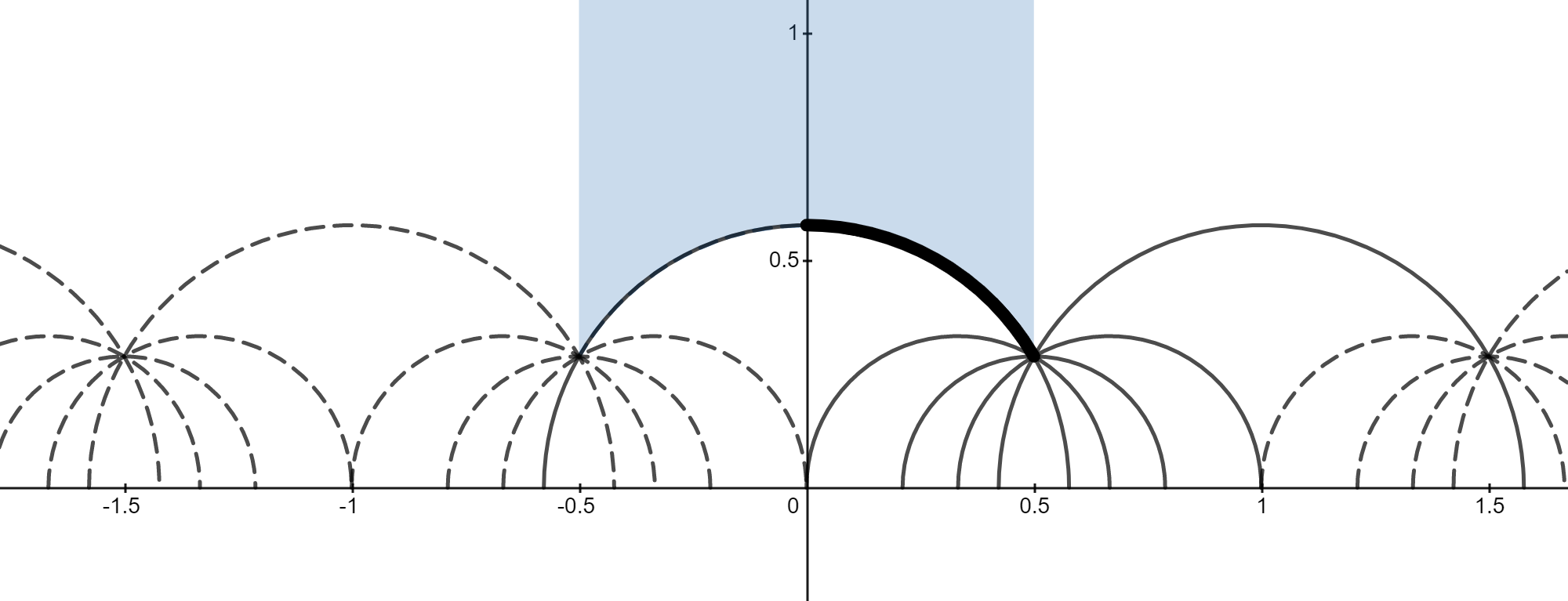}
		\caption{Fundamental Domain $\mc{D}(\Fricke{3})$ for $\Fricke{3}$ (shaded), and the lower arc $\mc{C}(\Fricke{3})$ (in bold).}
		\label{Fig:FundamentalDomainFricke3}
	\end{figure}
	\begin{figure}
		\centering
		\includegraphics[width=0.5\linewidth]{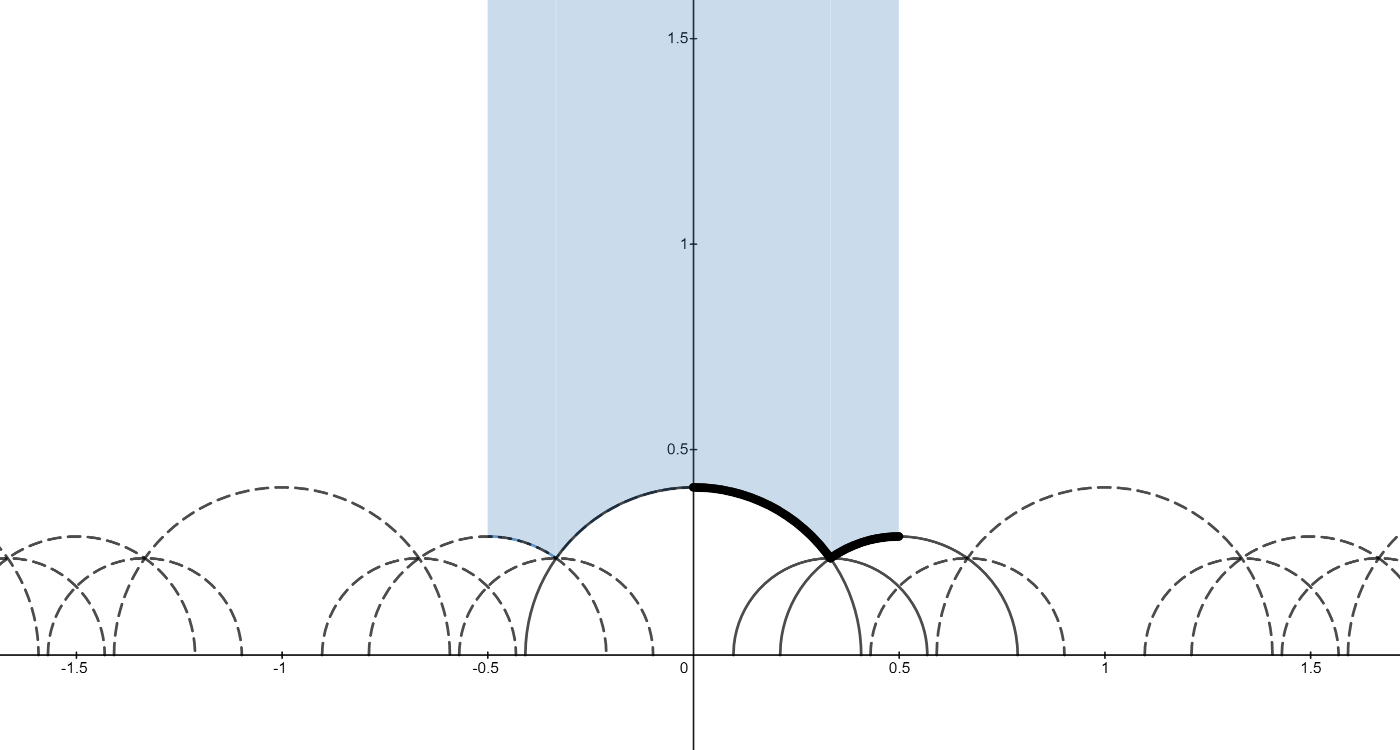}
		\caption{Fundamental Domain $\mc{D}(\Gamma_0(6){+})$ for $\Gamma_0(6){+}$ (shaded), and the lower arcs $\mc{C}(\Gamma_0(6){+})$ (in bold).}
		\label{Fig:FundamentalDomainGamma06+}
	\end{figure}
	\begin{figure}
		\centering
		\includegraphics[width=0.5\linewidth]{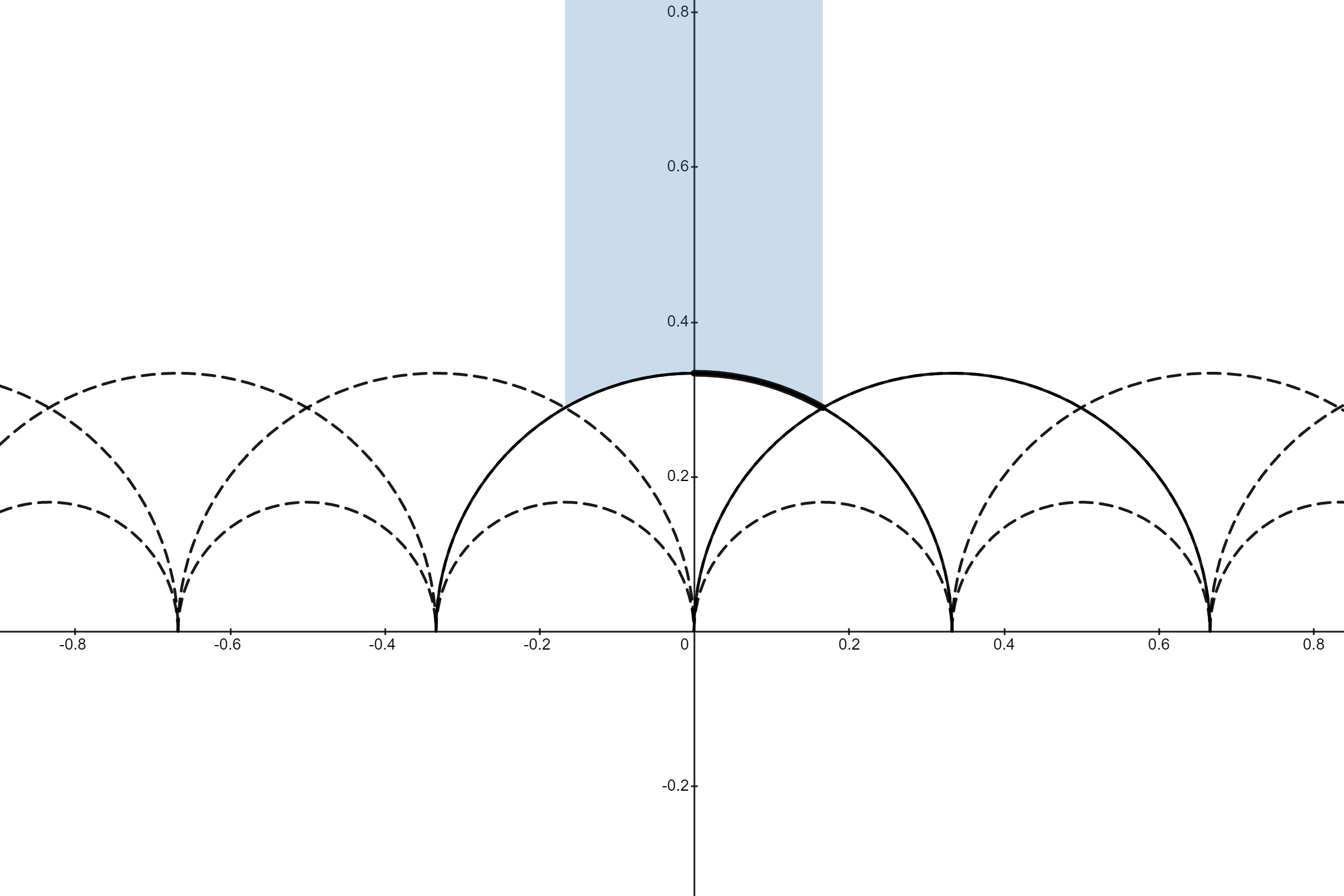}
		\caption{Fundamental Domain $\mc{D}(\Gamma_0(3\divides 3))$ for $\Gamma_0(3\divides 3)$ (blue), and the lower arcs $\mc{C}(\Gamma_0(3\divides 3))$ (in bold). The fundamental domain $\mc{D}(\Gamma_0(3\edivides 3))$ consists of three copies of $\mc{D}(\Gamma_0(3\divides 3))$ `side-by-side.'}
		\label{Fig:FundamentalDomainGamma03bar3}
	\end{figure}
\end{example}

To summarize our results about fundamental domains, let $m,h\in\nn$, and let $\langle e,f,g,\ldots\rangle\subset\Ex(m)$. First, one may construct a fundamental domain for $\Gamma_0(m)+e,f,g,\ldots$ by drawing in all of the arcs $\mc{A}(K)$ for $K\in\Gamma_0(m)+e,f,g,\ldots$. This fundamental domain has real part between $\pm\frac{1}{2}$. One may then produce the fundamental domain $\mc{D}(\Gamma_0(mh\divides h){+e,f,g,\ldots})$ by scaling $\mc{D}(\Gamma_0(m)+e,f,g,\ldots)$ by $\frac{1}{h}$ (Lemma \ref{Corollary:FundamentalDomainConjugates}), which shrinks the total width of the fundamental domain to $\frac{1}{h}$. Finally, one constructs $\mc{D}(\Gamma_0(mh\edivides h){+e,f,g,\ldots})$ by laying copies of $\mc{D}(\Gamma_0(mh\divides h){+e,f,g,\ldots})$ `side-to-side' to construct a fundamental domain which once again has real part between $\pm\frac{1}{2}$.

\subsection{Replicable groups}\label{Subsection:ReplicableGroups}

Consider the set of all groups of the form described above, \[ \mc{R} = \{ \Gamma_0(mh\divides h){+e,f,g,\ldots} \st m,h\in\nn, \langle e,f,g,\ldots\rangle \subset \Ex(m) \}. \] In this section, we examine relationships between these groups. These relationships are mostly known, with varying hypotheses (see \cite{CN} \S 6), though the precise form of Theorem \ref{Theorem:GroupCosets} appears new. Lemma \ref{Lemma:GHKinverseExpression} is a technical result required for our method, and is also new.

We define three types of map on the set $\mc{R}$, which we call conjugation, extension, and replication.

\begin{definition}\label{Def:ReplicationOfGroups}
	Let $\Gamma = \Gamma_0(mh\divides h){+e,f,g,\ldots}\in\mc{R}$, and $a,k,t\in\nn$. We define the \emph{conjugation map} on $\mc{R}$ by \[ \Gamma_t = \begin{psmallmatrix} 1 & 0 \\ 0 & t \end{psmallmatrix}\Gamma\begin{psmallmatrix} t & 0 \\ 0 & 1 \end{psmallmatrix}. \] We also define the \emph{extension map}, \[ \Gamma+k = \begin{cases}
		\Gamma_0(mh\divides h){+e,f,g,k,\ldots} & k\in\Ex(m) \\
		\Gamma_0(mh\divides h){+e,f,g,\ldots} & \text{else}
	\end{cases}, \] and the \emph{replication map}, \[ \Gamma^{(a)} = \Gamma_0\left(\frac{mh}{(mh,a)}\divides \frac{h}{(h,a)}\right){+e',f',g',\ldots} \in \mc{R} \] where $\langle e',f',g' = \langle e,f,g,\ldots\rangle\cap \Ex\left(\frac{m(h,a)}{(mh, a)}\right)$.
\end{definition}

We extend the subscript notation for conjugation to two additional circumstances. First, for any element $K\in\PGL_2^+(\qq)$, we let $K_h$ denote conjugation of $K$ by $\begin{psmallmatrix} h & 0 \\ 0 & 1 \end{psmallmatrix}$, and second, for $\Gamma = \Gamma_0(mh\divides h){+e,f,g,\ldots}$ and any divisor $h'\divides h$, we let $\Gamma_{\frac{1}{h'}} = \Gamma_0(m\frac{h}{h'}\divides \frac{h}{h'}){+e,f,g,\ldots}$. The following describes the extent to which these operations commute with one another.

\begin{lemma}\label{Lemma:OperationCommutation}
	Let $\Gamma\in\mc{R}$ and $a_i,k_i,t_i\in\nn$, for $i=1,2$. Then the following identities hold:
	\begin{enumerate}
		\item $(\Gamma_{t_1})_{t_2} = \Gamma_{t_1t_2}$,
		\item $(\Gamma+k_1)+k_2 = (\Gamma+k_2)+k_1$,
		\item $(\Gamma^{(a_1)})^{(a_2)} = \Gamma^{(a_1a_2)}$,
	\end{enumerate}
	That is, conjugation, extension, and replication each commute with themselves. On the other hand, for any $a,k,t\in\nn$,
	\begin{enumerate}[resume]
		\item $(\Gamma+k)_t = \Gamma_t + k$,
		\item $(\Gamma+k)^{(a)} = \Gamma^{(a)}+k$,
		\item ${\Gamma_h}^{(a)} = {\Gamma^{(\frac{a}{(a,h)})}}_{\frac{h}{(a,h)}}$.
	\end{enumerate}
	In particular, if $(a,h) = 1$, then conjugation and replication commute.
\end{lemma}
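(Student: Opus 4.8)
The plan is to reduce everything to one observation: a group $\Gamma=\Gamma_0(mh\divides h){+e,f,g,\ldots}\in\mc{R}$ is completely determined by the triple $(m,h,G)$, where $G=\langle e,f,g,\ldots\rangle$ is a subgroup of $\Ex(m)$, and each of conjugation, extension, and replication acts by an explicit transformation of this data. Writing $m_a:=\frac{m(h,a)}{(mh,a)}$ for the level parameter produced by replication, the definitions unwind to
\begin{align*}
\Gamma_t &\leftrightarrow (m,\,th,\,G), \\
\Gamma+k &\leftrightarrow (m,\,h,\,\langle G,k\rangle) \quad (\text{for }k\in\Ex(m); \text{ otherwise } (m,h,G)), \\
\Gamma^{(a)} &\leftrightarrow \Bigl(m_a,\ \tfrac{h}{(h,a)},\ G\cap\Ex(m_a)\Bigr),
\end{align*}
where, using $\Gamma_0(mh\divides h){+e,f,g,\ldots}=\Dmat{h^{-1}}(\Gamma_0(m){+e,f,g,\ldots})\Dmat{h}$, conjugation by $\begin{psmallmatrix}1&0\\0&t\end{psmallmatrix}$ amounts to conjugating $\Gamma_0(m){+e,f,g,\ldots}$ by $\Dmat{th}$ rather than $\Dmat{h}$. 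Each of (1)--(6) then becomes an equality of two composite transformations of triples, which I would check by comparing the level data ($m$ and $h$) and the Atkin--Lehner data ($G$) separately.

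Identities (1), (2), and (4) should be routine. Statement (1), and the level part of (4), come from the matrix identity $\begin{psmallmatrix}1&0\\0&t_2\end{psmallmatrix}\begin{psmallmatrix}1&0\\0&t_1\end{psmallmatrix}=\begin{psmallmatrix}1&0\\0&t_1t_2\end{psmallmatrix}$ (and its counterpart for $\begin{psmallmatrix}t&0\\0&1\end{psmallmatrix}$); statement (2), and the $G$-part of (4), follow because the subgroup of $\Ex(m)$ generated by a set is independent of the order of the generators, and because conjugation leaves $m$ fixed, so the side condition ``$k\in\Ex(m)$'' is preserved. For the level data of (3) and (6), writing $\Gamma=\Gamma_0(m\ell\divides\ell){+e,f,g,\ldots}$ in the latter, I would reduce to the elementary gcd identities
\[ (h,a_1)\cdot\bigl(\tfrac{h}{(h,a_1)},\,a_2\bigr)=(h,a_1a_2),\qquad (h\ell,\,a)=\bigl(\ell,\,\tfrac{a}{(a,h)}\bigr)\cdot(a,h), \]
together with their analogues in which $h$ (resp.\ $\ell$) is replaced by $mh$ (resp.\ $m\ell$); each is verified one prime at a time, and the same valuation bookkeeping shows $m_a\divides m$ and, more generally, $m_{a_1a_2}\divides m_{a_1}$.

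The real content is in the Atkin--Lehner data of (3), (5), and (6), where I must compare subgroups of $\Ex(m)$ obtained by repeatedly intersecting $G$ with groups of the form $\Ex(m_a)$ and joining in an exact divisor $k$. The tools I would use are Lemma~\ref{Lemma:ExactGroupIntersections} --- which, since $m_a\divides m$, gives $\Ex(m_a)\cap\Ex(m)=\{\,j\in\Ex(m):j\divides m_a\,\}$ --- and the fact that $\Ex(m)$ has exponent two, so that every element of $\langle G,k\rangle$ is $g$ or $g*k$ with $g\in G$. For (3): since $m_{a_1a_2}\divides m_{a_1}\divides m$, any $j\in G\cap\Ex(m_{a_1a_2})$ satisfies $j\divides m_{a_1}$ and $j\in\Ex(m)$, hence $j\in\Ex(m_{a_1})$, so $\bigl(G\cap\Ex(m_{a_1})\bigr)\cap\Ex(m_{a_1a_2})=G\cap\Ex(m_{a_1a_2})$, which is exactly what is needed. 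For (6) the two replication levels $m_a$ agree by the gcd computation of the previous paragraph, and since conjugation does not touch $G$ the Atkin--Lehner parts then match automatically. Identity (5) --- the interaction of extension with replication --- is the step I expect to be the main obstacle: one must show $\langle G,k\rangle\cap\Ex(m_a)$ equals $\langle\,G\cap\Ex(m_a),\,k\,\rangle$ when $k\in\Ex(m_a)$, and equals $G\cap\Ex(m_a)$ otherwise. The first case will follow from the exponent-two structure (write an element of the left side as $g$ or $g*k$ and use that $\Ex(m_a)\cap\Ex(m)$ is a subgroup containing $k$); the second requires that adjoining $k$ \emph{before} replicating cannot produce a new exact divisor of $m_a$ --- i.e.\ that no product $g*k$ with $g\in G$ lies in $\Ex(m_a)$ when $k$ itself does not --- and this is precisely where one must carefully track how $k$ interacts with the replication parameter $a$ (and with $h$). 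Once (6) is in hand, the closing remark that conjugation and replication commute when $(a,h)=1$ is immediate, since then $\tfrac{a}{(a,h)}=a$ and $\tfrac{h}{(a,h)}=h$.
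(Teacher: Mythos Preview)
Your overall strategy---encoding $\Gamma$ as a triple $(m,h,G)$ and tracking how each operation transforms this data---is exactly what the paper does; its proof is the one-line ``write out the groups and compare, using Lemma~\ref{Lemma:ExactGroupIntersections}.'' Your treatment of (1)--(4) and (6), including the prime-by-prime gcd identities and the observation that $m_{a_1a_2}\mid m_{a_1}\mid m$, is correct and follows this same route.

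You are also right to single out (5), and specifically the case $k\in\Ex(m)\setminus\Ex(m_a)$, as the place where something nontrivial must happen. But the step you isolate---that no product $g*k$ with $g\in G$ can land in $\Ex(m_a)$ when $k$ itself does not---is false, and with it identity (5) as stated. Take $\Gamma=\Gamma_0(6){+2}$ (so $m=6$, $h=1$, $G=\{1,2\}$), $k=6$, $a=2$. Then $\langle G,k\rangle=\langle 2,6\rangle=\Ex(6)$, so $\Gamma+k=\Gamma_0(6){+}$ and
\[
(\Gamma+k)^{(2)}=\Gamma_0(3){+},\qquad\text{since }\Ex(6)\cap\Ex(3)=\{1,3\}.
\]
On the other hand $G\cap\Ex(3)=\{1\}$, so $\Gamma^{(2)}=\Gamma_0(3)$, and since $6\notin\Ex(3)$ the extension map gives $\Gamma^{(2)}+6=\Gamma_0(3)$. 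Thus $(\Gamma+k)^{(a)}\neq\Gamma^{(a)}+k$. The obstruction is exactly the one you named: $2*6=3\in\Ex(3)$ even though $6\notin\Ex(3)$. So neither your argument nor the paper's can be completed for (5) without an extra hypothesis (e.g.\ restricting to $k\in\Ex(m_a)$, which is the only case the paper's proof sketch actually addresses). Since the paper only invokes this lemma via identity (6), the defect in (5) does not affect the downstream results.
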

\begin{proof}
	Each identity may be checked by writing out the groups in question and comparing the results, using Lemma \ref{Lemma:ExactGroupIntersections} to determine the sets of exact divisors adjoined. Also, recall $\Gamma^{(a)}+k$ strictly extends $\Gamma^{(a)}$ only when $k\in\Ex\left(\frac{m(h,a)}{(mh,a)}\right) = \Ex\left(\frac{m}{(m,\frac{a}{(a,h)})}\right)$.
\end{proof}

We will require one more result relating groups of the form $\Gamma_0(mh\divides h){+e,f,g,\ldots}$. We have not seen this precise statement elsewhere. 

\begin{definition}\label{Def:HeckeSet}
	Let $n\in\nn$, then the \emph{Hecke set of level $n$} is \[ \mc{H}_n := \left\{ \begin{psmallmatrix} a & b \\ 0 & d \end{psmallmatrix} \bigst a,b,d\in\zz, ad = n, 0\leq b< d \right\}. \] For $H = \begin{psmallmatrix} a & b \\ 0 & d	\end{psmallmatrix}\in\mc{H}_n$, we let $\Gamma^{(H)} = \Gamma^{(a)}$ denote the replication-by-$a$ map on $\mc{R}$.
	
	The \emph{reduced Hecke set of level $n$} is \[ \mc{H}_n^* := \{ H = \begin{psmallmatrix} a & b \\ 0 & d \end{psmallmatrix}\in\mc{H}_n \st (a,b,d) = 1 \}. \]
\end{definition}

\begin{definition}\label{Def:Phi_n}
	Let $\Gamma = \Gamma_0(mh\divides h){+e,f,g,\ldots}$, and $(n,h) = 1$. For a canonical coset representative $K = \begin{psmallmatrix} khw & x \\ mh^2y & khz \end{psmallmatrix}$, we define
	\begin{align*}
		\phi_n: \Gamma &\to \mc{H}_n^* \\
		\begin{psmallmatrix} khw & x \\ mh^2y & khz \end{psmallmatrix} &\mapsto \begin{psmallmatrix} a & b \\ 0 & d \end{psmallmatrix},
	\end{align*}
	where $a = (n, \frac{m}{k}y)$, $d = \frac{n}{a}$, and $0\leq b<d$ is such that $b\equiv zh^{-1}\left(\frac{my}{ak}\right)^{-1}\pmod{d}$.
\end{definition}
We show $\phi_n$ is well-defined, and give some basic properties of the map.
\begin{lemma}\label{Lemma:PhinWellDefined}
	Let $\Gamma = \Gamma_(mh\divides h){+e,f,g,\ldots}$, then the function $\phi_n$ is well-defined, and morever descends to a well-defined function on $\LeftQuotientByStab{\Gamma}$. 
	
	Let $h'\in\nn$ with $(n,h')=1$, and $\Gamma_{h'} = \Gamma_0(mhh'\divides hh'){+e,f,g,\ldots}$. Then letting $0\leq b_{h'}< d$ be such that $b_{h'}\equiv b(h')^{-1}\pmod{d}$, the map \[ H = \begin{psmallmatrix} a & b \\ 0 & d \end{psmallmatrix}\in\phi_n(\Gamma)\mapsto H_{h'} = \begin{psmallmatrix} a & b_{h'} \\ 0 & d \end{psmallmatrix}\in\phi_n(\Gamma_{h'}) \] defines a bijection between $\phi_n(\Gamma)$ and $\phi_n(\Gamma_{h'})$. If $m$ is square-free, then $\phi_n(\Gamma) = \phi_n(\Gamma_{h'}) = \mc{H}_n^*$.
\end{lemma}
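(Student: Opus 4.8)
The statement has four parts; the first three reduce to bookkeeping with canonical coset representatives and the involutory decomposition, and only the last requires a genuine construction.

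\textbf{Well-definedness into $\mc{H}_n^*$, and descent to $\LeftQuotientByStab{\Gamma}$.} Since the canonical coset representative of $K\in\Gamma$ is unique (Definition~\ref{Def:GroupsGamma0mhh}), it suffices to check that the formula for $\phi_n$ makes sense and outputs an element of $\mc{H}_n^*$. All inverses are mod $d$; since $d\mid n$ and $(n,h)=1$ we have $(d,h)=1$, so $h^{-1}$ exists mod $d$, and if a prime $p$ divided both $\frac{my}{ak}=\frac{(m/k)y}{a}$ and $d=n/a$ then $pa$ would divide $(m/k)y$ and $n$, forcing $pa\mid a$ — impossible — so $\frac{my}{ak}$ is a unit mod $d$. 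Plainly $ad=n$ and $0\le b<d$. For primitivity, $kwz-\frac{m}{k}xy=1$ forces $(kz,\frac{m}{k}y)=1$, hence $(z,a)=1$ as $a\mid\frac{m}{k}y$; and $b\equiv z\cdot u\pmod d$ with $u$ a unit gives $(b,d)=(z,d)$, so $(a,b,d)=(a,(z,d))$ divides $(a,z)=1$. For the descent, $\Gamma_\infty=\langle T^{1/h}\rangle$, so two elements of $\Gamma$ lie in a common left $\Gamma_\infty$-coset exactly when they differ on the left by some $T^{j/h}$; left-multiplying $\begin{psmallmatrix}khw&x\\mh^2y&khz\end{psmallmatrix}$ by $\begin{psmallmatrix}h&j\\0&h\end{psmallmatrix}$ and dividing out the factor $h$ yields a canonical representative of $T^{j/h}K$ with the same $k,y,z$, so $\phi_n$ — which sees only $k,y,z$ — is constant on these cosets.

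\textbf{The bijection $\phi_n(\Gamma)\to\phi_n(\Gamma_{h'})$.} Conjugation by $\Dmat{h'}$ is an isomorphism $\Gamma\to\Gamma_{h'}$ carrying $\Gamma_\infty=\langle T^{1/h}\rangle$ onto $\Gamma_{h',\infty}=\langle T^{1/(hh')}\rangle$ (Lemma~\ref{Lemma:MatrixCommutations}), hence descends to a bijection $\LeftQuotientByStab{\Gamma}\to\Gamma_{h',\infty}\backslash\Gamma_{h'}$. On canonical representatives it sends $\begin{psmallmatrix}khw&x\\mh^2y&khz\end{psmallmatrix}$ to $\begin{psmallmatrix}k(hh')w&x\\m(hh')^2y&k(hh')z\end{psmallmatrix}$ — i.e. it replaces $h$ by $hh'$ and leaves $k,w,x,y,z$ fixed — so $a,d$ are unchanged and $b$ becomes congruent mod $d$ to $z(hh')^{-1}(\frac{my}{ak})^{-1}\equiv b(h')^{-1}$, that is, $b\mapsto b_{h'}$. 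Thus $\phi_n$ applied to the conjugate of $K$ equals $\Psi(\phi_n(K))$, where $\Psi\colon\begin{psmallmatrix}a&b\\0&d\end{psmallmatrix}\mapsto\begin{psmallmatrix}a&b_{h'}\\0&d\end{psmallmatrix}$ is the self-map of $\mc{H}_n^*$ given by multiplication by the unit $(h')^{-1}$ on each residue class mod $d$ — a bijection. Passing to images over the coset-space bijection gives $\phi_n(\Gamma_{h'})=\Psi(\phi_n(\Gamma))$, and injectivity of $\Psi$ makes $\Psi$ restrict to the asserted bijection.

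\textbf{Surjectivity when $m$ is square-free.} By the previous part it is enough to treat $h=1$, $\Gamma=\Gamma_0(m)+e,f,g,\ldots$. Given $\begin{psmallmatrix}a&b\\0&d\end{psmallmatrix}\in\mc{H}_n^*$, I would take $k:=\prod_{p\mid(m,d),\,p\nmid a}p$ — an exact divisor of $m$ since $m$ is square-free (Lemma~\ref{Lemma:SquareFreeIsAllDivisors}) — and then choose, by the Chinese Remainder Theorem, a positive integer $y$ with prescribed valuations ($v_p(y)=v_p(a)$ if $p\nmid m$ or $p\mid k$, and $v_p(y)=\max(0,v_p(a)-1)$ if $p\mid m/k$) so that $(n,\frac{m}{k}y)=a$ and $(k,y)=1$. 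Then $kwz-\frac{m}{k}xy=1$ is solvable for $w,x$ once $z$ is chosen with $(kz,\frac{m}{k}y)=1$, and to realize $b$ as well one needs $z\equiv bh\frac{my}{ak}\pmod d$; such a $z$ coprime to $\frac{m}{k}y$ exists because any prime dividing both $d$ and $\frac{m}{k}y$ divides $(a,d)$, hence not $b$ (as $(a,b,d)=1$), so the target residue is a unit there, and CRT supplies $z$. The resulting $K=\begin{psmallmatrix}kw&x\\my&kz\end{psmallmatrix}$ (with $y\ge0$, lying in $\Gamma_\infty$ precisely when $a=n$) has $\phi_n(K)=\begin{psmallmatrix}a&b\\0&d\end{psmallmatrix}$; with $\phi_n(\Gamma)\subseteq\mc{H}_n^*$ from the first part, $\phi_n(\Gamma)=\mc{H}_n^*$, and then $\phi_n(\Gamma_{h'})=\Psi(\mc{H}_n^*)=\mc{H}_n^*$. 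The main obstacle is this last step: one must check prime-by-prime that the CRT choices of $y$ and $z$ give \emph{exactly} the triple $(a,b,d)$ while keeping $(k,y)=1$ and $(kz,\frac{m}{k}y)=1$, and one should confirm that the $k$ produced lies in $\langle e,f,g,\ldots\rangle$ — automatic when that subgroup equals $\Ex(m)$, as it does for every group in $\mc{S}$, which is the setting in which surjectivity is applied.
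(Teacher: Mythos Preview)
Your first two parts match the paper's argument essentially verbatim: the paper also reduces well-definedness to $(a,z)=1$ via $a\mid\frac{m}{k}y$ and $(kz,\frac{m}{k}y)=1$, and handles descent by noting $\phi_n$ depends only on the bottom row $(k,y,z)$; the bijection with $\phi_n(\Gamma_{h'})$ is likewise obtained by tracking how $b$ changes when $h$ is replaced by $hh'$.

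Your surjectivity construction is correct but takes a different route. The paper makes a cleaner choice: it sets $k=m/(a,m)$ and $y=a/(a,m)$, so that $\frac{m}{k}y=a$ \emph{exactly} and the $z$-congruence collapses to $z\equiv bh\pmod d$; it then finds $z$ coprime to $a$ in this residue class by invoking Dirichlet's theorem on primes in arithmetic progressions. Your choice of $k$ (primes dividing $(m,d)$ but not $a$) and your valuation-by-valuation construction of $y$ also yield $(n,\frac{m}{k}y)=a$, and your CRT argument for $z$ is valid --- indeed it is more elementary than the paper's appeal to Dirichlet, since the same CRT reasoning would work with the paper's simpler $k,y$ as well. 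The trade-off is that the paper's $(k,y)$ are much simpler to write down and verify, while your approach avoids an unnecessarily heavy theorem. If you want to streamline, adopt the paper's $k=m/(a,m)$, $y=a/(a,m)$ and keep your CRT argument for $z$; this gives the shortest proof.

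Finally, you are right to flag the condition $k\in\langle e,f,g,\ldots\rangle$: the paper's proof silently assumes this when asserting $K\in\Gamma$, and (as you note) it holds automatically exactly when $\langle e,f,g,\ldots\rangle=\Ex(m)$, which is the only case where the surjectivity claim is actually used later. Strictly speaking, surjectivity onto $\mc{H}_n^*$ can fail for proper subgroups of $\Ex(m)$ (e.g.\ $\Gamma_0(2)$ with $n=2$), so your caveat is not merely pedantic.
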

\begin{proof}
	Since by construction $\frac{my}{ak}$ and $d$ are coprime, and $(d,h)\divides (n,h) = 1$, we find that $\phi_n(K)$ is a well-defined element of $\mc{H}_n$. Choose some $c\in\zz$ such that $c\equiv h^{-1}\left(\frac{my}{ak}\right)^{-1} \pmod{d}$, then since $b$ and $cz$ are congruent modulo $d$, we must have $(b,d) = (cz, d)$. But by construction $c$ is invertible modulo $d$, and thus coprime to $d$, so $(b,d) = (z,d)$. Now, since $K$ is a canonical coset representative, we know $(z, \frac{m}{k}y) = 1$, and since $a\divides \frac{m}{k}y$, this implies $(a,z)=1$. Thus, $(a,b,d) = (a,z,d) = 1$, by associativity of the greatest common divisor.
	
	Note that $\phi_n$ depends only on the lower row of the canonical coset representative for $K$. For any $T^{\frac{r}{h}}\in\Gamma_\infty = \langle T^{\frac{1}{h}}\rangle$, the canonical coset representative $K = \begin{psmallmatrix} khw & x \\ mh^2y & khz \end{psmallmatrix}$ has the same lower row as $K$, so $\phi_n(K) = \phi_n(T^{\frac{r}{h}}K)$. That is, $\phi_n$ is well-defined on $\LeftQuotientByStab{\Gamma}$.
	
	Now, for $K$ as in the definition, $\phi_n(K) = \begin{psmallmatrix} a & b \\ 0 & d \end{psmallmatrix}$ is such that $a$ is independent of $h$ (and therefore $d = \frac{n}{a}$ is as well), and $bh\equiv z\left(\frac{my}{ak}\right)^{-1} \pmod{d}$, where the right hand side is independent of $h$. Thus, when $(n,h')=1$, letting $K_{h'} = \begin{psmallmatrix} k(hh')w & x \\ m(hh')^2y & k(hh')z \end{psmallmatrix} \in \Gamma_{h'}$, we have $\phi_n(K_{h'}) = \begin{psmallmatrix} a & b' \\ 0 & d \end{psmallmatrix}$, where $b'(hh') \equiv bh \pmod{d}$. Therefore, $b' \equiv b(h')^{-1} \pmod{d}$ (since $h'$ is invertible modulo $d$), and since $0\leq b'<d$, we have $b'=b_{h'}$, so the map from $\phi_n(\Gamma)$ to $\phi_n(\Gamma_{h'})$ given in the statement of the Lemma is well-defined. This map is a bijection, again since $h'$ is invertible modulo $d$.
	
	Now, suppose $m$ is square-free, and let $\begin{psmallmatrix} a & b \\ 0 & d \end{psmallmatrix}\in\mc{H}_n^*$, we will find some $K\in\Gamma$ such that $\phi_n(K)=H$, implying $\phi_n(\Gamma) = \mc{H}_n^*$. Since $m$ is square-free, every divisor is exact (Lemma \ref{Lemma:SquareFreeIsAllDivisors}), so define $k\in\Ex(m)$ by $(a,m)=\frac{m}{k}$. Observe that $m\divides ak$, and that since $k$ is an exact divisor, $(a,m)=(a,\frac{m}{k})(a,k)$, so $(a,k)=1$. Define $y=\frac{ak}{m}$, so $a = \frac{m}{k}y$, and $(k,\frac{m}{k}y) = 1$. To find some $z\in\zz$ such that $(kz,\frac{m}{k}y)=1$ and $bh\equiv z\pmod{d}$, let \[ \mc{S} = \left\{ \frac{bh}{(bh,d)}+\frac{d}{(bh,d)}t \bigst t\in\zz \right\}. \] Then $\mc{S}$ contains infinitely primes by Dirichlet's theorem on arithmetic progressions, so we may choose some prime $p\in\mc{S}$ such that $p\ndivides a$. Letting $z=p(bh,d)$, we have $z = bh + dt$, so $z\equiv bh\pmod{d}$. Since $(a,k)=(a,p)=(n,h)=(a,b,d)=1$, and $a=\frac{m}{k}y$, we deduce that \[ (kz,\frac{m}{k}y) = (kz,a) = (a,z) = (a,p(bh,d)) = (a,(bh,d)) = (a,b,d) = 1. \] Let $w,x\in\zz$ be such that $kwz - \frac{m}{k}xy = 1$, then we find $K = \begin{psmallmatrix} khw & x \\ mh^2y & khz \end{psmallmatrix}\in\Gamma$ is such that $(n,\frac{m}{k}y) = (n,a) = a$, and $b\equiv zh^{-1}\left(\frac{my}{ak}\right)^{-1} \equiv zh^{-1}\pmod{d}$, that is, $\phi_n(K)=H$. Since $H\in\mc{H}_n^*$ was arbitrary, $\phi_n(\Gamma) = \mc{H}_n^*$. Finally, note that in this case, for any $h'\in\nn$ with $(n,h')=1$, we know $\phi_n(\Gamma_{h'}) \subset \mc{H}_n^*$ is in bijection with $\phi_n(\Gamma) = \mc{H}_n^*$, so that $\phi_n(\Gamma_{h'}) = \mc{H}_n^*$ as well.
\end{proof}

We will eventually focus exclusively on the case where $m$ is square-free, but for now, we continue in greater generality.

\begin{theorem}\label{Theorem:GroupCosets}
	Let $m,h,n\in\nn$, with $(n,h) = 1$, and let $\Gamma = \Gamma_0(mh\divides h){+e,f,g,\ldots}$. Then \[ \Gamma = \bigsqcup_{H\in\phi_n(\Gamma)} \Gamma \cap \left(\Dmat{n}\Gamma^{(H)}H\right). \] Moreover, for any $H\in\mc{H}_n\setminus\phi_n(\Gamma)$, one has $\Gamma\cap \left(\Dmat{n}\Gamma^{(H)}H\right) = \emptyset$.
\end{theorem}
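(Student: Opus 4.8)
The plan is to reduce the whole statement to one characterization: for $K\in\Gamma$ and $H\in\mc{H}_n$, membership $K\in\Dmat{n}\Gamma^{(H)}H$ (equivalently $\Dmat{n}^{-1}KH^{-1}\in\Gamma^{(H)}$) holds if and only if $H=\phi_n(K)$. Granting this, the three assertions are formal: since $\phi_n(K)\in\phi_n(\Gamma)$, each $K\in\Gamma$ lies in the term indexed by $H=\phi_n(K)$, giving $\Gamma=\bigcup_{H\in\phi_n(\Gamma)}\Gamma\cap(\Dmat{n}\Gamma^{(H)}H)$; uniqueness of $\phi_n(K)$ makes the union disjoint; and if $K\in\Gamma\cap(\Dmat{n}\Gamma^{(H)}H)$ then $H=\phi_n(K)\in\phi_n(\Gamma)$, so the intersection is empty for $H\notin\phi_n(\Gamma)$. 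By Lemma \ref{Lemma:PhinWellDefined}, $\phi_n$ is well-defined, depends only on the bottom row of a canonical coset representative, and descends to $\LeftQuotientByStab{\Gamma}$; it is therefore harmless to fix a canonical coset representative $K=\begin{psmallmatrix} khw & x \\ mh^2y & khz\end{psmallmatrix}$ throughout, and the characterization above is itself insensitive to replacing $K$ by $T^{r/h}K$.

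For the ``if'' direction I would set $H=\phi_n(K)=\begin{psmallmatrix} a & b \\ 0 & d\end{psmallmatrix}$ with $a=(n,\frac{m}{k}y)$, $d=n/a$, and $b$ as in Definition \ref{Def:Phi_n}, compute $\Dmat{n}^{-1}KH^{-1}$ directly, and rescale to the integral $\Omega$-representative in the correct determinant class. Using $(n,h)=1$ to collapse $\frac{m(h,a)}{(mh,a)}$ to $m':=m/(m,a)$, so that $\Gamma^{(H)}=\Gamma^{(a)}=\Gamma_0(m'h\divides h)+e',f',\ldots$, one checks that this matrix has the canonical shape $\begin{psmallmatrix} k'hw' & x' \\ m'h^2y' & k'hz'\end{psmallmatrix}$: integrality of the entries is exactly where the congruence $b\equiv zh^{-1}(\frac{my}{ak})^{-1}\pmod d$ and the coprimality $(d,h)=1$ are used; the relation $k'w'z'-\frac{m'}{k'}x'y'=1$ follows from $kwz-\frac{m}{k}xy=1$ and $ad=n$; and the membership $k'\in\langle e,f,g,\ldots\rangle\cap\Ex(m')$ is exact-divisor bookkeeping handled with Lemmas \ref{Lemma:ExactGroupIntersections}, \ref{Lemma:ExactDivisorsContainSquares}, and \ref{Lemma:ExactDivisorUniqueExpression}.

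The ``only if'' direction is where I expect the real work. Here one starts from $K=\Dmat{n}GH$ with $G=\begin{psmallmatrix} k'hw' & x' \\ m'h^2y' & k'hz'\end{psmallmatrix}$ an \emph{arbitrary} canonical coset representative of $\Gamma^{(a)}$ (so $m'=m/(m,a)$, $k'\in\langle e,f,g,\ldots\rangle\cap\Ex(m')$), multiplies out $\Dmat{n}GH$, and must recover from its bottom row the quantities $a=(n,\frac{m}{k}y_K)$, $d=n/a$, $b\bmod d$ attached by $\phi_n$ to the canonical representative of $K$. The two genuine difficulties are: (i) tracking the $\Omega$-scalar ambiguity --- $\Dmat{n}GH$ has determinant $n^2k'h^2$, so one must identify the rational scalar relating it to a determinant-$kh^2$ canonical representative of $K$, which forces a divisibility relation between $k$ and $k'$; and (ii) running the exact-divisor identification in reverse, reconstructing the exact divisor $k$ of $K$ relative to $m$ from $k'$ and the shape of $\Dmat{n}GH$, in the spirit of Lemma \ref{Lemma:ExactDivisorUniqueExpression}. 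I would organize this by first pinning down $a$ from the bottom-left entry together with the value of $k$, then extracting $b\bmod d$ from the bottom-right entry, using that $h$, $d$, and the relevant cofactors are mutually invertible because $(n,h)=1$. Should the direct matrix bookkeeping become unwieldy, an alternative is to pass to involutory coordinates: compute $\pi(K)$ and $\rho^2(K)$ in terms of $\pi(G)$, $\rho^2(G)$, and $H$ via Lemma \ref{Lemma:PiRhoComputations}, observe via Lemma \ref{Lemma:RadiusFromPi} and Corollary \ref{Corollary:ArcsAreLeftQuotByStab} that the class of $K$ in $\LeftQuotientByStab{\Gamma}$ --- hence $\phi_n(K)$ --- is determined by $\pi(K)$ alone, and check that this forces $\phi_n(K)=H$, treating the degenerate case $\pi(G)=\infty$ (i.e.\ $G\in\Gamma^{(a)}_\infty$) separately.
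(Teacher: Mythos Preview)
Your plan is correct and tracks the paper's proof closely: both reduce to the characterization ``$K\in\Dmat{n}\Gamma^{(H)}H$ iff $H=\phi_n(K)$'', and both recognize that the substantive work lies in controlling the projective scalar (your item (i), the paper's argument that $\ell=s=1$ forcing $k=g$) so that the canonical representative of $\Dmat{n}KH^{-1}$ can be read off directly.

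The one structural difference worth noting is that the paper does not carry general $h$ through the matrix computation. Instead it first proves the statement for $h=1$, where the entries are cleaner, and then obtains general $h$ by conjugating by $\Dmat{h}$: writing $\Gamma=\Dmat{h^{-1}}\Gamma_{1/h}\Dmat{h}$, applying the $h=1$ decomposition to $\Gamma_{1/h}$, and using $(n,h)=1$ together with Lemma~\ref{Lemma:OperationCommutation} to identify $(\Gamma_{1/h})^{(a)}$ with $(\Gamma^{(a)})_{1/h}$. The conjugation turns $\begin{psmallmatrix}a&b\\0&d\end{psmallmatrix}$ into $\begin{psmallmatrix}a&bh\\0&d\end{psmallmatrix}$, and absorbing an element of $\Gamma^{(a)}_\infty$ recovers $\begin{psmallmatrix}a&b'\\0&d\end{psmallmatrix}$ with $b'\equiv bh\pmod d$, which is exactly the bijection of Lemma~\ref{Lemma:PhinWellDefined} between $\phi_n(\Gamma)$ and $\phi_n(\Gamma_{1/h})$. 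Your approach of working with general $h$ throughout is perfectly viable, but the paper's reduction buys a noticeably simpler determinant and divisibility analysis in the core step; you may find it worth adopting if the direct bookkeeping becomes heavy.
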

\begin{proof}
	We first show that the results holds for $h=1$, that is, for the case $\Gamma = \Gamma_0(m){+e,f,g,\ldots}$. Let $K = \begin{psmallmatrix} kw & x \\ my & kz \end{psmallmatrix}\in\Gamma$ be a canonical coset representative. Suppose for some $H = \begin{psmallmatrix} a & b \\ 0 & d \end{psmallmatrix}\in\mc{H}_n$ that $\begin{psmallmatrix} n & 0 \\ 0 & 1 \end{psmallmatrix}KH^{-1} = G \in\Gamma^{(H)}$.  Let $G = \begin{psmallmatrix} gs & t \\ \frac{m}{(a,m)}u & gv \end{psmallmatrix}$ be a canonical coset representative (so $\det G = g$), and compute also that \[ \begin{psmallmatrix} n & 0 \\ 0 & 1 \end{psmallmatrix}KH^{-1} = \begin{psmallmatrix} kdw & ax-bkw \\ \frac{m}{(a,m)}\frac{y(a,m)}{a} & k\frac{az-b\frac{m}{k}y}{n} \end{psmallmatrix}, \] where the matrix representative has determinant $k$, implying that $k\frac{\ell^2}{s^2} = g$, for some $\frac{\ell}{s}\in\qq$ with $(\ell,s)=1$. Scaling appropriately, we must have \[ G = \begin{psmallmatrix} gs & t \\ \frac{m}{(a,m)}u & gv \end{psmallmatrix} = \begin{psmallmatrix} g s\frac{dw}{\ell} & \frac{\ell}{s} ax-bkw \\ \frac{m}{(a,m)}\frac{\ell y(a,m)}{as} & g\frac{s(az-b\frac{m}{k}y)}{\ell n} \end{psmallmatrix}, \] where both matrices are canonical coset representatives.
	
	We now show that we must have $\ell=s=1$, so that $k=g$. Looking at the lower left entry, $\frac{\ell y(a,m)}{as}\in\zz$, so that $as\divides a\ell y$, implying $s\divides y$, since $\ell$ and $s$ are coprime. On the other hand, $k\ell^2 = gs^2$ with $(\ell, s)=1$ implies $s\divides k$. But since $K$ is a canonical coset representative, $kwz - \frac{m}{k}xy = 1$, and so $s\divides (k,y) = 1$, and $s = 1$. Now, since $g = k\ell^2$ is an exact divisor of $\frac{m}{(a,m)}$, we know $k\divides \frac{m}{(a,m)}$, and that $\ell\divides \frac{m}{(a,m)k}$. Looking at the lower right entry above, (recalling $s=1$), \[ \frac{az-b\frac{m}{k}y}{\ell n} = \frac{\frac{a}{(a,m)}z - by\frac{m}{(a,m)k}}{\ell\frac{n}{(a,m)}} \in\zz, \] and since $\ell\divides \frac{m}{(a,m)k}$, this implies $\ell\divides \frac{a}{(a,m)}z$. Since $\ell\divides \frac{m}{(a,m)}$, we find $\ell$ is coprime to $\frac{a}{(a,m)}$, so in fact $\ell\divides z$. But since $\ell\divides\frac{m}{k}$ as well, and $(z,\frac{m}{k})=1$ (again from $kwz - \frac{m}{k}xy = 1$), we find that $\ell = 1$. Thus, if $\begin{psmallmatrix} n & 0 \\ 0 & 1 \end{psmallmatrix}KH^{-1}\in\Gamma^{(H)}$, then $k = g$, and \[ \begin{psmallmatrix} n & 0 \\ 0 & 1 \end{psmallmatrix}KH^{-1} = \begin{psmallmatrix} kdw & ax-bkw \\ \frac{m}{(a,m)}\frac{y(a,m)}{a} & k\frac{az-b\frac{m}{k}y}{n} \end{psmallmatrix} \] is a canonical coset representative of determinant $k$ for some $G\in\Gamma^{(H)}$. 
	
	By inspection the following three criteria are necessary and sufficient for the matrix above to be a canonical coset representative for an element of $\Gamma^{(H)}$: 
	\begin{enumerate}
		\item $k\edivides \frac{m}{(a,m)}$, 
		\item $a\divides y(a,m)$, and 
		\item $n\divides az-b\frac{m}{k}y$.
	\end{enumerate}
	
	By Lemma \ref{Lemma:ExactGroupIntersections}, since $k\in\Ex(m)$, we find $k\in\Ex(\frac{m}{(a,m)})$ if and only if $k\divides\frac{m}{(a,m)}$. Writing $(a,m)= (a,k)(a,\frac{m}{k})$ (using the fact $(k,\frac{m}{k})=1$), we then have $k\divides \frac{k}{(a,k)}\frac{\frac{m}{k}}{(a,\frac{m}{k})}$, so that $k\divides \frac{k}{(a,k)}$, and thus $(a,k)=1$. Since the second criteria above is equivalent to $a\divides my$, and since $(k,\frac{m}{k}y) = 1$, the first two criteria together are equivalent to $a\divides \frac{m}{k}y$.
	
	Now, $a\divides n$, so we must have $a\divides (n,\frac{m}{k}y)$. Let $c = (\frac{n}{a},\frac{my}{ak})$ and write $d = c\tilde{d}$, $\frac{my}{ak} = c\tilde{m}$. Observe that $ac\tilde{d} = n \divides (az - b\frac{m}{k}y)$, so $c \divides (z - b\frac{my}{ak}) = z - bc\tilde{m}$, implying $c\divides z$. But $c\divides \frac{my}{k}$, and $(z,\frac{my}{k}) = 1$, so $c = 1$, and in fact $a = (n,\frac{my}{k})$. The third criterion above is then $d\divides (z - b\frac{my}{ak})$. Moreover, since $ad=n$, we have $(d,\frac{my}{ak}) = 1$, so that this may be written as $b\equiv z\left(\frac{my}{ak}\right)^{-1}\pmod{d}$, demonstrating that $b$ is uniquely determined modulo $d$. Since $0\leq b< d$, this uniquely determines $b\in\zz$. 
	
	An equivalent set of criteria for $\begin{psmallmatrix} n & 0 \\ 0 & 1 \end{psmallmatrix}KH^{-1}\in\Gamma^{(H)}$ to those given above is therefore
	\begin{enumerate}
		\item $a = (n,\frac{m}{k}y)$, 
		\item $d=\frac{n}{a}$, and 
		\item $0\leq b< d$ such that $b\equiv z\left(\frac{my}{ak}\right)^{-1} \pmod{d}$.
	\end{enumerate}
	That is, $\begin{psmallmatrix} n & 0 \\ 0 & 1 \end{psmallmatrix}KH^{-1} \in \Gamma^{(H)}$ if and only if $H = \phi_n(K)$, or, \[ \Gamma = \bigsqcup_{H\in\phi_n(\Gamma)} \Gamma\cap \begin{psmallmatrix} 1 & 0 \\ 0 & n \end{psmallmatrix}\Gamma^{(H)}H, \] when $\Gamma = \Gamma_0(m){+e,f,g,\ldots}$.
	
	Now, let $h\in\nn$ with $(n,h)=1$, and let $\Gamma = \Gamma_0(mh\divides h){+e,f,g,\ldots}$, so $\Gamma_{\frac{1}{h}} = \Gamma_0(m){+e,f,g,\ldots}$. Since $(n,h)=1$, any $a\divides n$ is coprime to $h$, and so by Lemma \ref{Lemma:OperationCommutation}, \[ \Gamma^{(a)} = (\Gamma_{\frac{1}{h}h})^{(a)} = ((\Gamma_{\frac{1}{h}})^{(a)})_h, \] that is, $(\Gamma^{(a)})_{\frac{1}{h}} = (\Gamma_\frac{1}{h})^{(a)}$. Now, we compute that
	\begin{align*}
		\Gamma 
		&= \begin{psmallmatrix} h & 0 \\ 0 & 1 \end{psmallmatrix} \Gamma_{\frac{1}{h}} \begin{psmallmatrix} 1 & 0 \\ 0 & h \end{psmallmatrix} \\
		&= \begin{psmallmatrix} h & 0 \\ 0 & 1 \end{psmallmatrix} \left[ \bigsqcup_{\begin{psmallmatrix} a & b \\ 0 & d \end{psmallmatrix}\in\phi_n(\Gamma_{\frac{1}{h}})} \Gamma_{\frac{1}{h}} \cap \left(\begin{psmallmatrix} 1 & 0 \\ 0 & n \end{psmallmatrix}(\Gamma_{\frac{1}{h}})^{(a)} \begin{psmallmatrix} a & b \\ 0 & d \end{psmallmatrix} \right) \right] \begin{psmallmatrix} 1 & 0 \\ 0 & h \end{psmallmatrix} \\
		&= \bigsqcup_{\begin{psmallmatrix} a & b \\ 0 & d \end{psmallmatrix}\in\phi_n(\Gamma_{\frac{1}{h}})} \Gamma \cap \left(\begin{psmallmatrix} 1 & 0 \\ 0 & n \end{psmallmatrix} \Gamma^{(a)} \begin{psmallmatrix} a & bh \\ 0 & d \end{psmallmatrix} \right) \\
		&= \bigsqcup_{\begin{psmallmatrix} a & b' \\ 0 & d \end{psmallmatrix}\in\phi_n(\Gamma)} \Gamma \cap \left(\begin{psmallmatrix} 1 & 0 \\ 0 & n \end{psmallmatrix} \Gamma^{(a)} \begin{psmallmatrix} a & b' \\ 0 & d \end{psmallmatrix} \right),
	\end{align*}
	where at the last step, we use the fact $\begin{psmallmatrix} 1 & 1 \\ 0 & 1 \end{psmallmatrix}\in \Gamma^{(a)}$, so that for an appropriate choice of $t\in\zz$, we have \[ \begin{psmallmatrix} 1 & t \\ 0 & 1 \end{psmallmatrix}\begin{psmallmatrix} a & bh \\ 0 & d \end{psmallmatrix} = \begin{psmallmatrix} a & bh + dt \\ 0 & d \end{psmallmatrix}, \] where $b' = bh+dt \equiv bh\pmod{d}$ and $0\leq b'<d$. That is, \[ \Gamma = \bigsqcup_{H\in\phi_n(\Gamma)} \Gamma \cap \left(\begin{psmallmatrix} 1 & 0 \\ 0 & n \end{psmallmatrix} \Gamma^{(H)} H \right), \] as desired.
\end{proof}

\begin{remark}
	Suppose that $(n,mh) = 1$, which implies $\Gamma^{(H)} = \Gamma$ for all $H\in\mc{H}_n$, since $(a,mh) = (a,h) = 1$ for any $a\divides n$. Then the above expression becomes \[ \Gamma = \bigsqcup_{H\in\mc{H}_n^*} \Gamma\cap \Dmat{n^{-1}}\Gamma H. \] For $H = \Dmat{n}$, let \[ \Gamma_n = \Gamma\cap \Dmat{n^{-1}}\Gamma\Dmat{n} = \Gamma_0(mnh\divides h){+e,f,g,\ldots}. \] Since the partitions above are invariant under left multiplication by $\Gamma_n$, we conclude that the disjoint union is into cosets in $\Gamma_n\backslash\Gamma$.
	
	In the special case $mh=1$, so $\Gamma = \PSL_2(\zz)$, this is well-known (see, e.g. Lemma 11.11 \cite{Cox}), and is the starting point for the construction of the classical modular equation $\Phi_n(X,Y)$ associated to the $j$-invariant. Analogously, one may construct modular equations whenever the group $\Gamma_0(mh\divides h){+e,f,g,\ldots}$ is genus zero and $(n,mh) = 1$, using the associated Hauptmodul. This has been shown by Cummins and Gannon \cite{CumminsGannon}, who show more generally that functions which satisfy many modular equations must be Hauptmoduls for genus zero groups (or of `trigonometric type' $q^{-1} + \zeta q$ where $\zeta$ is a 24th root of unity).
\end{remark}

Let $n\in\nn$, and $\Gamma = \Gamma_0(mh\divides h){+e,f,g,\ldots}$ with $(n,h)=1$. Given some $K\in\Gamma$, Theorem \ref{Theorem:GroupCosets} tells us precisely when $\Dmat{n}K = GH$ for some $G\in\Gamma^{(H)}$: when $H = \phi_n(K)$. However, we will also need knowledge of a slightly different situation. 

\begin{lemma}\label{Lemma:GHKinverseExpression}
	Let $\Gamma = \Gamma_0(mh\divides h){+e,f,g,\ldots}$, and $(n,h)=1$. Let $H = \begin{psmallmatrix} a & b \\ 0 & d \end{psmallmatrix}\in\mc{H}_n$ and $G = \begin{psmallmatrix} \ell hs & t \\ \frac{m}{(a,m)}h^2u & \ell hv \end{psmallmatrix} \in\Gamma^{(H)}$ a canonical coset representative. Suppose $K = \begin{psmallmatrix} khw & x \\ mh^2y & khz \end{psmallmatrix}\in \Gamma$ is a canonical coset representative such that $\pi(K) = \pi(GH)$.
	
	Then either $G,K\in\Omega_\infty$, and for some $r\in\zz$, \[ GHK^{-1} = T^{\frac{r}{dh}}\Dmat{\frac{a}{d}}, \] or else $G,K\notin\Omega_\infty$, and for some $r\in\zz$, \[ GHK^{-1} = T^{\frac{r}{hN_1N_2}}\Dmat{\frac{n}{N_1^2N_2}}, \] where $N_1 = \frac{au}{(a,m)y}$ and $N_2 = \frac{k}{\ell}$ are integers.
	
	In particular, either $\sigma(GHK^{-1}) = n$, in which case $\Dmat{n}KH^{-1}\in [G]$ in $\LeftQuotientByStab{\Gamma^{(H)}}$ and $H=\phi_n(K)$, or else $\sigma(GHK^{-1})\leq \frac{n}{2}$.
\end{lemma}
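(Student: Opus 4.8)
The plan is to split on whether $\pi(K)=\pi(GH)$ is infinite or finite. Since $\pi(M)=\infty$ precisely when $M\in\Omega_\infty$, and $\Omega_\infty$ is a subgroup containing $H$, the hypothesis $\pi(K)=\pi(GH)$ already forces $K\in\Omega_\infty\iff GH\in\Omega_\infty\iff G\in\Omega_\infty$, so the two cases are exhaustive and mutually exclusive, matching the statement. In the case $G,K\in\Omega_\infty$ one has $K\in\Gamma\cap\Omega_\infty=\Gamma_\infty=\langle T^{1/h}\rangle$ and likewise $G\in\Gamma^{(H)}\cap\Omega_\infty=\langle T^{1/h}\rangle$ (recall $\Gamma^{(H)}=\Gamma^{(a)}$ still has translation parameter $h$, as $(a,h)=1$ when $(n,h)=1$), so $K=T^{x/h}$ and $G=T^{t/h}$ for suitable integers $x,t$, while $H=T^{b/d}\Dmat{a/d}$ by Theorem \ref{Theorem:InvolutoryDecomposition}. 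A short computation with Lemma \ref{Lemma:MatrixCommutations} then gives $GHK^{-1}=T^{(td+bh-xa)/(dh)}\Dmat{a/d}$, so $\sigma(GHK^{-1})=a/d=n/d^2$; since $d\divides n$ this is either $n$ (when $d=1$) or at most $n/4\leq n/2$ (when $d\geq 2$).

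For the case $G,K\notin\Omega_\infty$ (whence also $GH\notin\Omega_\infty$), the plan is to substitute the involutory decompositions of $GH$ and $K$ into $GHK^{-1}$: because $\pi(GH)=\pi(K)$ and $S^{-1}=S$, $S^2=\eye$, the factors $ST^{-\pi(\cdot)}$ cancel, leaving $GHK^{-1}=T^{\theta(GH)}\Dmat{\rho^2(GH)/\rho^2(K)}T^{-\theta(K)}$, and one application of Lemma \ref{Lemma:MatrixCommutations}(5) rewrites this as $T^{\alpha}\Dmat{\sigma}$ with $\sigma=\rho^2(GH)/\rho^2(K)$ and $\alpha=\theta(GH)-\theta(K)\sigma$. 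In particular $GHK^{-1}\in\Omega_\infty$ and $\sigma(GHK^{-1})=\sigma$. Using $\rho^2(GH)=\sigma(H)^{-1}\rho^2(G)=(d/a)\rho^2(G)$ from Lemma \ref{Lemma:PiRhoComputations}, and reading $\rho^2$ off the canonical representatives as in the proof of Lemma \ref{Lemma:Arcs} --- with $\mu=(a,m)$ and $m'=m/\mu$ one gets $\rho^2(G)=\tfrac{\ell}{(m')^2h^2u^2}$ and $\rho^2(K)=\tfrac{k}{m^2h^2y^2}$ --- so after writing $n=ad$ one obtains $\sigma(GHK^{-1})=n/(N_1^2N_2)$ with $N_1=au/(\mu y)$ and $N_2=k/\ell$, which is the asserted form provided $N_1,N_2$ are positive integers.

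That integrality, together with the control of $\alpha$, is the heart of the matter. I would first compute $GHK^{-1}$ as the literal product of the three canonical integer representatives; the hypothesis $\pi(GH)=\pi(K)$ is exactly what forces the lower-left entry of that product to vanish, and extracting this vanishing yields the Diophantine identity $akuz=y(bmhu+d\ell\mu v)$. I expect a prime-by-prime valuation argument --- exploiting $k\in\Ex(m)$, $\ell\in\Ex(m')$, the coprimality conditions $(kz,\tfrac mk y)=1$ and $(\ell v,\tfrac{m'}{\ell}u)=1$ built into canonical representatives, and $(n,h)=1$ --- to give $v_p(\mu)+v_p(y)\leq v_p(a)+v_p(u)$ and $v_p(\ell)\leq v_p(k)$ at every prime $p$, hence $N_1,N_2\in\zz$; positivity is immediate since $u,y\geq 1$ here. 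For $\alpha$, I would simplify $\theta(GH)-\theta(K)\sigma$ to $\ell(asu-wd\mu y)/(m'hau^2)$ and, using $N_1\in\zz$ to rewrite the numerator as $k\mu y(sN_1-wd)$, reduce the claim $\alpha=r/(hN_1N_2)$ to the divisibility $m'\divides k(sN_1-wd)$, which I expect to fall to a second valuation computation feeding off the same Diophantine identity and the canonical-representative relations.

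Finally, for the ``in particular'': when $G,K\notin\Omega_\infty$, $\sigma(GHK^{-1})=n/(N_1^2N_2)$ with $N_1,N_2\geq 1$ equals $n$ iff $N_1=N_2=1$, and otherwise $N_1^2N_2\geq 2$, so $\sigma(GHK^{-1})\leq n/2$; in the equality case $GHK^{-1}=T^{r/h}\Dmat{n}$, whence $\Dmat{n}KH^{-1}=T^{-r/h}G\in\langle T^{1/h}\rangle G$, i.e.\ $\Dmat{n}KH^{-1}\in[G]$ in $\LeftQuotientByStab{\Gamma^{(H)}}$ and in particular $\Dmat{n}KH^{-1}\in\Gamma^{(H)}$, so $H=\phi_n(K)$ by the equivalence established in the proof of Theorem \ref{Theorem:GroupCosets}. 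The $\Omega_\infty$ case is identical in spirit: $\sigma(GHK^{-1})=n$ forces $d=1$, hence $H=\Dmat{n}$ and $GHK^{-1}=T^{(t-xn)/h}\Dmat{n}$, giving $\Dmat{n}KH^{-1}=T^{-(t-xn)/h}G\in[G]\subseteq\Gamma^{(H)}$ and again $H=\phi_n(K)$; otherwise $\sigma(GHK^{-1})=n/d^2\leq n/2$. The main obstacle will be the two valuation arguments of the third paragraph --- especially pinning the denominator of $\alpha$ to exactly $hN_1N_2$ rather than to a proper multiple of it; everything else is routine manipulation with the involutory decomposition and the shape of canonical coset representatives.
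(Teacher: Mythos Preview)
Your overall strategy matches the paper's exactly: split on $\Omega_\infty$, use the involutory decomposition so that $\pi(K)=\pi(GH)$ kills the $ST^{-\pi(\cdot)}$ factors, then identify $\sigma(GHK^{-1})=\rho^2(GH)/\rho^2(K)$ and control the translation part. Your computation of $\sigma$ is actually cleaner than the paper's: you read $\rho^2(K)=k/(m^2h^2y^2)$ directly from the canonical representative and get $\sigma=n/(N_1^2N_2)$ in one line, whereas the paper computes $\rho^2(K)$ via $r^2_{mh\mid h}\circ\pi(GH)$, introduces auxiliary integers $M_1,M_2$, and then spends a paragraph showing $M_1=N_1$ and $M_2/M_1=N_2$.

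There are two slips in your sketch of the $\alpha$ part. First, when you rewrite the numerator $\ell(asu-wd\mu y)$ using $asu=\mu y\,sN_1$, you get $\ell\mu y(sN_1-wd)$, not $k\mu y(sN_1-wd)$. Second, after multiplying by $hN_1N_2$ the quantity to prove integral is $k(sN_1-wd)/(m'u)$, so the divisibility you need is $m'u\mid k(sN_1-wd)$, not merely $m'\mid k(sN_1-wd)$; the factor $u$ matters. The paper handles both integrality claims ($N_1,N_2\in\zz$ and this last divisibility) not by valuations but by short direct arguments from the Diophantine relation $kz\cdot\tfrac{au}{(a,m)}=\ell y c$ and the coprimality built into canonical representatives (e.g.\ $(kz,y)=1$ gives $y\mid au/(a,m)$, and $(\ell,au)=1$ together with $\ell\mid m$ and $(kz,\tfrac mk)=1$ gives $\ell\mid k$); for $\alpha$ it reduces to a congruence $N_1N_2s\equiv N_2dw\pmod{m'u/\ell}$ obtained by combining $kwz\equiv 1\pmod{\tfrac mk y}$ with the $\pi$-identity. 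Your valuation plan would also go through, but you should expect to reproduce exactly these divisibilities rather than the weaker one you wrote down.
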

\begin{proof}
	First, observe since $(n,h)=1$, that $\Gamma^{(H)} = \Gamma_0(\frac{m}{(a,m)}h\divides h){+e',f',g',\ldots}$, where by Lemma \ref{Lemma:ExactGroupIntersections}, \[ \langle e,f,g,\ldots\rangle \cap\Ex\left(\frac{m}{(a,m)}\right) = \{ \ell' \in\langle e,f,g,\ldots \rangle \st (\ell', (a,m)) = 1\}. \] Since any exact divisor of $m$ divides $m$, we find $(\ell,a,m) = (\ell,a) = 1$ (where the canonical coset representative for $G$ given has determinant $\ell h^2$).
	
	Suppose $G\in\Omega_\infty$, then $GH\in\Omega_\infty$, so $\pi(GH) = \infty$, and thus $\pi(K) = \infty$ by assumption, and $K\in\Omega_\infty$. Then, using the involutory decomposition (Thm. \ref{Theorem:InvolutoryDecomposition})
	\[ GHK^{-1} = T^{\frac{t}{h}} T^{\frac{b}{d}}\Dmat{\frac{a}{d}} T^{-\frac{x}{h}} = T^{\frac{bh+dt-ax}{dh}}\Dmat{\frac{a}{d}}. \] Letting $r = bh+dt-ax$, we have the desired result.
	
	Now suppose $G\notin\Omega_\infty$, so $GH\notin\Omega_\infty$, and we deduce that $K\notin\Omega_\infty$ as well, since $\pi(K) = \pi(GH)$. Expressing $\pi(K)$ and $\pi(GH)$ in terms of the entries of the given matrices, we have \[ \frac{kz}{mhy} = \frac{\ell(dv+bhu\frac{m}{\ell(a,m)})}{\frac{a}{(a,m)}mhu}. \] Clearing denominators and defining $c = dv + bhu\frac{m}{\ell (a,m)}$, we obtain the equation (in $\zz$) \[ kz\left(\frac{a}{(a,m)}u\right) = \ell yc. \] Here $(kz,y)=1$ since $kwz-\frac{m}{k}xy=1$ ($K$ being given as a canonical coset representative), and thus $y\divides \frac{au}{(a,m)}$, and $N_1 \in \zz$. Similarly, we have $(\ell,u)=1$ since $G$ was given as a canonical coset representative, and combined with $(\ell,a)=1$, we find $\ell\divides kz$. Since $(kz, \frac{m}{k}) = 1$, we find $(\ell,\frac{m}{k})=1$, but $\ell\divides m = \frac{m}{k}k$, so we must have $\ell\divides k$, son $N_2\in\zz$ as well. Returning to our $\pi(K)=\pi(GH)$ equation, we find \[ zN_1N_2 = z\frac{k}{\ell}\left(\frac{au}{y(a,m)}\right) = c = dv + bhu\frac{m}{\ell(a,m)}. \] In particular, $zN_1N_2 \equiv dv \pmod{\frac{mu}{\ell (a,m)}}$, and so \begin{equation}\label{Eqn:pirelations} kwzN_1N_2 \equiv kwdv \pmod{\frac{mu}{\ell(a,m)}}. \end{equation}
	
	Using the involutory decomposition, we have
	\begin{align*}
		(GH)(K^{-1}) 
		&= \left(T^{\theta(GH)}\Dmat{\rho^2(GH)}ST^{-\pi(GH)}\right)\left(T^{\pi(K)}S\Dmat{\frac{1}{\rho^2(K)}}T^{-\theta(K)}\right) \\
		&= T^{\theta(G)-\frac{\rho^2(GH)}{\rho^2(K)}\theta(K)}\Dmat{\frac{\rho^2(GH)}{\rho^2(K)}},
	\end{align*}
	since $\theta(GH) = \theta(G)$, and using the identities of Lemma \ref{Lemma:MatrixCommutations}. We compute \[ \rho^2(GH) = \sigma(H^{-1})\rho^2(G) = n\frac{\ell(a,m)^2}{(amhu)^2}. \] Using $r_{mh\divides h}^2$ (Lemma \ref{Lemma:RadiusFromPi}), we may compute
	\begin{align*}
		\rho^2(K)
		&= r_{mh\divides h}^2\circ \pi(K) \\
		&= r_{mh\divides h}^2\circ \pi(GH) \\
		&= \frac{ \left( mh \ell c, mhu\frac{a}{(a,m)}\right)\left( h \ell c, mhu\frac{a}{(a,m)}\right) }{mh^2 \left(mhu\frac{a}{(a,m)}\right)^2} \\
		&= \frac{\ell (a,m)^2}{(amhu)^2} \left( \ell c, \frac{au}{(a,m)}\right)\left( c, \frac{m}{\ell}\frac{au}{(a,m)}\right) \\
		&= \frac{1}{n}\rho^2(GH) M_1M_2,
	\end{align*}
	where $M_1 = \left( \ell c, \frac{au}{(a,m)}\right)$ and $M_2 = \left( c, \frac{m}{\ell} \frac{au}{(a,m)}\right)$. Thus, $\frac{\rho^2(GH)}{\rho^2(K)} = \frac{n}{M_1M_2}$, and so \[ GHK^{-1} = T^{\theta(G) - \frac{n}{M_1M_2}\theta(K)}\Dmat{\frac{n}{M_1M_2}}. \]
	
	Observe that since $(\ell, au) = 1$, in fact $M_1 = \left( c,\frac{au}{(a,m)}\right)$, and this implies $M_1$ divides $M_2$, with $\frac{M_2}{M_1} = \left( \frac{c}{M_1}, \frac{m}{\ell}\frac{au}{M_1(a,m)} \right) = \left(\frac{c}{M_1}, \frac{m}{\ell}\right)$. Recall from $\pi(K) = \pi(GH)$, we deduced (up to multiplication by $y$) that $zN_2\left(\frac{au}{(a,m)}\right) = cy$. We now find $zN_2\left(\frac{au}{M_1(a,m)}\right) = \frac{c}{M_1}y$, and we must have $y\divides \frac{au}{M_1(a,m)}$, since $(zN_2,y) = (\frac{k}{\ell}z,y)\divides (kz,y) = 1$. Moreover, $\frac{c}{M_1}$ is coprime to $\frac{au}{M_1(a,m)}$ by construction, so $\frac{c}{M_1}\divides z N_2$, and thus \[ \frac{zN_2M_1}{c} \frac{N_1}{M_1} = 1. \] We find $M_1 = N_1$, and $\frac{c}{M_1} = N_2z$, so that \[ \frac{M_2}{M_1} = \left(\frac{c}{M_1}, \frac{m}{\ell}\right) = \left( N_2z, N_2\frac{m}{k} \right) = N_2(z,\frac{m}{k}) = N_2. \] That is, \[ \frac{\rho^2(GH)}{\rho^2(K)} = \frac{n}{M_1M_2} = \frac{n}{N_1^2\frac{M_2}{M_1}} = \frac{n}{N_1^2N_2}. \]
	
	Thus far, we have \[ GHK^{-1} = T^{\theta(G) - \frac{n}{N_1^2N_2}\theta(K)}\Dmat{\frac{n}{N_1^2N_2}}, \] so it remains only to show that $hN_1N_2(\theta(G) - \frac{n}{N_1^2N_2}\theta(K)) \in \zz$. We compute
	\begin{align*}
		hN_1N_2\theta(G) - \frac{hn}{N_1}\theta(K)
		&= \frac{N_2N_2 s}{\frac{mu}{\ell(a,m)}} - \frac{ad}{\frac{au}{(a,m)y}}\frac{kw}{my} \\
		&= \frac{N_1N_2s}{\frac{mu}{\ell(a,m)}} - \frac{dkw}{\frac{mu}{(a,m)}} \\
		&= \frac{N_1N_2s - dwN_2}{\frac{mu}{\ell(a,m)}}.
	\end{align*}
	
	Now, observe \[ \frac{\frac{m}{k}yN_1^2N_2}{a} = \frac{mu}{\ell(a,m)}N_1 \in\zz, \] so $a\divides \frac{m}{k}yN_1^2N_2$. Since $kwz\equiv 1 \pmod{\frac{my}{k}}$ is equivalent to $kwzN_1^2N_2 \equiv N_1^2N_2 \pmod{\frac{my}{k}N_1^2N_2}$, a fortiori we have $kwzN_1^2N_2 \equiv N_1^2N_2 \pmod{\frac{mu}{\ell(a,m)}N_1}$, or equivalently, \[ kwzN_1N_2 \equiv N_1N_2 \pmod{\frac{mu}{(\ell(a,m)}}. \] Combining this with \eqref{Eqn:pirelations}, and using the fact $\ell sv\equiv 1 \pmod{\frac{mu}{\ell(a,m)}}$, we find \[ (N_1N_2)s \equiv (kwdv)s  \equiv \frac{k}{\ell}dw (\ell sv) \equiv (N_2)dw \pmod{\frac{mu}{\ell(a,m)}}. \] This proves that $GHK^{-1}$ always has the given form.
	
	Finally, $\sigma(GHK^{-1}) = \frac{a}{d}$ or $\sigma(GHK^{-1}) = \frac{n}{N_1^2N_2}$. Thus either $\sigma(GHK^{-1}) = n$, or else we have $\sigma(GHK^{-1}) \leq \frac{n}{2}$. When $\sigma(GHK^{-1}) = n$ and $G,K\in\Omega_\infty$, we must have $a=n$ and $d=1$, so that $GHK^{-1} = T^{\frac{r}{h}}\Dmat{n}$, and $\Dmat{n}KH^{-1}\in [G]$ in $\LeftQuotientByStab{\Gamma^{(H)}}$. On the other hand, when $\sigma(GHK^{-1}) = n$ and $G,K\notin\Omega_\infty$, we must have $N_1 = N_2 = 1$, so that $GHK^{-1} = T^{\frac{r}{h}}\Dmat{n}$, and again $\Dmat{n}KH^{-1}\in [G]$. Since $\Dmat{n}KH^{-1}\in\Gamma^{(H)}$, we must have $H = \phi_n(K)$, by Theorem \ref{Theorem:GroupCosets}.
\end{proof}

In the above lemma, there is no guarantee that a $K$ satisfying $\pi(K) = \pi(GH)$ exists, generically. However, we can ensure such a $K$ exists by restricting ourselves to considering groups $\Gamma_0(mh\divides h){+}$ with $m$ square-free. Observe that whenever \[ \pi(\Gamma) := \{ \pi(K) \st K\in\Gamma \} = \pp^1(\qq), \] we may guarantee that there exists some $K\in\Gamma$ such that $\pi(K) = \pi(GH)$, no matter what $\pi(GH)\in\pp^(\qq)$ is.

\begin{lemma}[Single Cusp Criterion]\label{Lemma:SingleCuspCriterion}
	Let $\Gamma\in\mc{R}$. Then $\pi(\Gamma) = \pp^1(\qq)$ if and only if $\Gamma = \Gamma_0(mh\divides h){+}$ with $m$ square-free.
\end{lemma}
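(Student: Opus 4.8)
The plan is to identify $\pi(\Gamma)$ with the $\Gamma$-orbit of $\infty$ on $\pp^1(\qq)$, reduce to the case $h=1$, and then describe that orbit explicitly in terms of the divisors of $m$. By Lemma~\ref{Lemma:PiRhoComputations} we have $\pi(K)=K^{-1}\infty$, so since $\Gamma$ is a group, $\pi(\Gamma)=\Gamma\cdot\infty$. Using the identity $\Gamma_0(mh\divides h){+e,f,g,\ldots}=\Dmat{h^{-1}}(\Gamma_0(m){+e,f,g,\ldots})\Dmat{h}$ together with $\Dmat{h}\infty=\infty$, one checks $\pi(\Dmat{h^{-1}}K\Dmat{h})=\Dmat{h^{-1}}\pi(K)$; since $\Dmat{h^{-1}}$ permutes $\pp^1(\qq)$, it suffices to treat $h=1$. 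By Definition~\ref{Def:GroupsGamma0mhh} the claim then reads: $\pi(\Gamma_0(m){+e,f,g,\ldots})=\pp^1(\qq)$ if and only if $m$ is square-free and $\langle e,f,g,\ldots\rangle=\Ex(m)$.

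The computational heart is the following. For a canonical coset representative $K=\begin{psmallmatrix}kw&x\\my&kz\end{psmallmatrix}$ of an element of $\Gamma_0(m){+e,f,g,\ldots}$ --- so $k\in\langle e,f,g,\ldots\rangle$ and $kwz-\tfrac{m}{k}xy=1$, whence $\gcd(kz,\tfrac{m}{k}y)=1$ --- I claim $\gcd(kz,my)=k$: the divisor $k$ divides both $kz$ and $my=k\cdot\tfrac{m}{k}y$, while conversely any common divisor of $kz$ and $my$ is coprime to $\tfrac{m}{k}y$ and divides $k\cdot\tfrac{m}{k}y$, hence divides $k$. Therefore $\pi(K)=-\tfrac{kz}{my}=-\tfrac{z}{(m/k)y}$, and this is in lowest terms since $\gcd(z,\tfrac{m}{k}y)\mid 1$. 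Running over $K\in\Gamma$, absorbing signs of numerators using $-\eye\in\Gamma$, and recovering $w,x$ by B\'ezout once the relevant gcd is $1$, I obtain: a fraction $\tfrac{p}{q}$ in lowest terms lies in $\pi(\Gamma_0(m){+e,f,g,\ldots})$ if and only if there is some $k\in\langle e,f,g,\ldots\rangle$ with $\tfrac{m}{k}\mid q$ and $\gcd(k,q)=1$ (here $\gcd(k,q)=\gcd(k,\tfrac{m}{k}y)=\gcd(k,y)$ because $k$ is exact). Crucially this condition does not involve $p$, and the case $q=0$ (the cusp $\infty$) is immediate since $k=1$ always qualifies.

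With this description both implications are elementary. If $m$ is square-free and $\langle e,f,g,\ldots\rangle=\Ex(m)$ --- the set of all divisors of $m$, by Lemma~\ref{Lemma:SquareFreeIsAllDivisors} --- then for any $q$ the choice $k=\prod_{p\mid m,\ p\nmid q}p$ satisfies $\tfrac{m}{k}\mid q$ and $\gcd(k,q)=1$, so $\pi(\Gamma)=\pp^1(\qq)$. Conversely, suppose $\pi(\Gamma)=\pp^1(\qq)$. If some prime $p$ had $p^2\mid m$, then for $q=p$ the constraint $\tfrac{m}{k}\mid p$ would force $k\in\{m,\,m/p\}$, but $p\mid m$ gives $\gcd(m,p)=p$ and $p^2\mid m$ gives $p\mid m/p$ hence $\gcd(m/p,p)=p$ (and $m/p\notin\Ex(m)$ in any case), so $\tfrac1p\notin\pi(\Gamma)$, a contradiction; thus $m$ is square-free. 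Finally, if some divisor $k_0$ of $m$ were not in $\langle e,f,g,\ldots\rangle$, then for $q=m/k_0$ unique factorization of the square-free $m$ forces both $k_0\mid k$ (from $\tfrac{m}{k}\mid q$) and $k\mid k_0$ (from $\gcd(k,q)=1$), so $k=k_0\notin\langle e,f,g,\ldots\rangle$ and $\tfrac1q\notin\pi(\Gamma)$, again a contradiction. Hence $\langle e,f,g,\ldots\rangle=\Ex(m)$, which is the assertion.

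The only real obstacle is the bookkeeping in the middle step: getting $\gcd(kz,my)=k$ cleanly, so that $\pi(\Gamma)$ acquires the transparent divisor-theoretic form above; everything after that is a short gcd argument. Two minor points deserve care: $\pi$ depends only on the bottom row of a canonical coset representative (so numerator signs are irrelevant and are handled by $-\eye\in\Gamma$), and the degenerate value $q=0$ must be checked separately but causes no trouble.
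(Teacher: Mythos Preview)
Your proof is correct and follows essentially the same route as the paper's: both arguments compute $\pi(\Gamma)$ explicitly in terms of the exact divisor $k$ attached to each element, observe that a reduced fraction $p/q$ lies in $\pi(\Gamma)$ precisely when some $k\in\langle e,f,g,\ldots\rangle$ satisfies $\tfrac{m}{k}\mid q$ and $(k,q)=1$, and then reduce the question to whether $\langle e,f,g,\ldots\rangle$ contains every divisor of $m$. The only cosmetic differences are that you reduce to $h=1$ at the outset (the paper carries $h$ throughout) and that you phrase the comparison as a pointwise membership criterion, whereas the paper packages the same computation as a disjoint-union decomposition of $\pp^1(\qq)$ indexed by all divisors $\ell\mid m$.
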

\begin{proof}
	Let $\Gamma = \Gamma_0(mh\divides h){+e,f,g,\ldots}$, then \[ \pi(\Gamma) = \bigsqcup_{k\in\langle e,f,g,\ldots\rangle} \left\{ \frac{-kz}{mhy} \st (kz, \frac{m}{k}y) = 1 \right\}. \] On the other hand, for any $\frac{p}{hq}\in\pp^q(\qq)$ with $(p,q)=1$, we may uniquely define $c = (m,q)$, and therefore uniquely define $\ell,r\in\zz$ with $(\ell,r)=1$ by $m = c\ell$, $q = cr = \frac{m}{\ell}r$. Then $\frac{p}{hq} = \frac{\ell p}{mhr}$ with $(p,q) = (p, \frac{m}{\ell}r) = 1$. That is, \[ \pp^1(\qq) = \bigsqcup_{\ell\divides m} \left\{ \frac{\ell p}{mhq} \st (p,\frac{m}{\ell}r) = (\ell,r) = 1 \right\}.\] Note that whenever $\ell\edivides m$, that is, when $(\ell,\frac{m}{\ell}) = 1$, we have $(\ell, \frac{m}{\ell}r) = 1$, and therefore $(\ell p,\frac{m}{\ell}r) = 1$. In other words, \[ \pp^1(\qq) = \pi(\Gamma) \sqcup \left[ \bigsqcup_{\ell\divides m, \ell\notin\langle e,f,g,\ldots\rangle} \bigsqcup_{\ell\divides m} \left\{ \frac{\ell p}{mhq} \st (p,\frac{m}{\ell}r) = (\ell,r) = 1 \right\} \right].  \] In particular, $\pi(\Gamma) = \pp^1(\qq)$ if and only if $\langle e,f,g,\ldots\rangle = \{ k \st k \text{ divides } m \}$. Since $\Ex(m) = \{ k \st k \text{ divides } m \}$ if and only if $m$ is square-free, by Lemma \ref{Lemma:SquareFreeIsAllDivisors}, we conclude that $\pi(\Gamma) = \pp^1(\qq)$ if and only if $\Gamma = \Gamma_0(mh\divides h){+}$ with $m$ square-free.
\end{proof}

\section{Functions}\label{Section:Functions}

\subsection{Modular Surfaces}\label{Subsection:ModularSurfaces}

Each of the groups $\Gamma = \Gamma_0(mh\divides h){+e,f,g,\ldots}$ is a discrete group, and we may consider the quotient Riemann surface $Y(\Gamma) = \hh/\Gamma$, which is, properly speaking, an orbifold, having \emph{elliptic points}, which are cone singularities arising from equivalence classes of points $\tau\in\hh$ with non-trivial stabilizer subgroup (\cite{Shimura}). Such a subgroup is always finite cyclic, and $\Gamma$-equivalent points have isomorphic stabilizer subgroups. This surface may be compactified to form a compact Riemann surface $X(\Gamma)$ by adding a finite number of \emph{parabolic points}, called the \emph{cusps} of $\Gamma$, which correspond to equivalence classes of points in $\pp^1(\qq)$ (the stabilizer of any such point is conjugate to $\Gamma_\infty$ by some element of $\Omega$, and therefore infinite cyclic). We remark the cusps are in $\pp^1(\qq)$ because $\Gamma$ is commensurate with $\PSL_2(\zz)$, which has this set of cusps, see Shimura \cite{Shimura} chapter 2 for details\footnote{For Shimura, $\Omega = \PGL_2^+(\qq)$ is the commensurator of $\PSL_2(\zz)$, and therefore of $\Gamma$ as well.}.

Due to Lemma \ref{Lemma:SingleCuspCriterion}, we will be focusing on groups $\Gamma$ for which $\pi(\Gamma) = \pp^1(\qq)$. Note that \[ \pi(\Gamma) = \{ K^{-1}\infty \st K\in\Gamma \} = [\infty], \] where $[\infty]$ denotes the equivalence class of $\infty\in\pp^1(\qq)$, that is, a cusp of $\Gamma$. Lemma \ref{Lemma:SingleCuspCriterion} is called the single cusp criterion, since it gives necessary and sufficient conditions for the modular surface $Y(\Gamma)$ to have one cusp.

The point of view of modular surfaces motivates much of the theory involving modular functions and modular forms, but in practice, we calculate using a fundamental domain. Restricting ourselves to groups with one cusp also ensures that $\mc{D}(\Gamma)$ is bounded away from the real line in $\hh$.

\begin{corollary}\label{Corollary:SingleCuspImaginaryBound}
	Let $\Gamma = \Gamma_0(mh\divides h)+$ with $m$ square-free. Then $\tau\in\mc{D}(\Gamma)$ implies $\Im\tau\geq\frac{\sqrt{3}}{2mh}$.
\end{corollary}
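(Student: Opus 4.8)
The plan is to reduce the statement to the classical bound $\Im\tau \geq \frac{\sqrt{3}}{2}$ for the standard fundamental domain $\mc{D}(\PSL_2(\zz))$, by working through the two geometric operations that produce $\mc{D}(\Gamma_0(mh\divides h){+})$ from $\mc{D}(\PSL_2(\zz))$. First I would invoke Lemma \ref{Lemma:SingleCuspCriterion}: since $\Gamma = \Gamma_0(mh\divides h){+}$ with $m$ square-free, we have $\pi(\Gamma) = \pp^1(\qq)$, so that every cusp of $\Gamma$ is equivalent to $\infty$, i.e. $Y(\Gamma)$ has a single cusp. The key geometric input is Corollary \ref{Corollary:LowerBoundaryIsArcs}: a point $\tau\in\mc{D}(\Gamma)$ lies on the lower boundary precisely when $\tau\in\mc{A}(K)$ for some $K\in\Gamma\setminus\Gamma_\infty$, and otherwise $\tau$ lies strictly above all such arcs. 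Thus the minimal imaginary part on $\mc{D}(\Gamma)$ is attained on $\mc{C}(\Gamma)$, and it suffices to bound $\Im\tau$ from below for $\tau$ lying on an arc $\mc{A}(K)$ that forms part of the lower boundary.

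Next I would handle the conjugation step. By Corollary \ref{Corollary:FundamentalDomainConjugates}, $\mc{D}(\Gamma_0(mh\divides h){+}) = \Dmat{h^{-1}}\mc{D}(\Gamma_0(m){+})$, and since $\Dmat{h^{-1}}$ acts on $\hh$ by $\tau \mapsto \tau/h$, it scales imaginary parts by $1/h$. So the bound for $\Gamma_0(mh\divides h){+}$ follows from the bound $\Im\tau \geq \frac{\sqrt{3}}{2m}$ for $\tau\in\mc{D}(\Gamma_0(m){+})$; this reduces everything to the case $h=1$. For $\Gamma_0(m){+}$ with $m$ square-free, I would argue as follows: $\Gamma_0(m){+} \supset \PSL_2(\zz)\cap\Gamma_0(m){+} = \Gamma_0(m)$ — actually, more usefully, the lower boundary of $\mc{D}(\Gamma_0(m){+})$ is built out of arcs $\mc{A}(K)$, and each such half-circle has center $\pi(K) = -\frac{kz}{my}\in\qq$ and squared radius $\rho^2(K) = r^2_{m\divides 1}\circ\pi(K) = \frac{k}{(my)^2}$ (Lemma \ref{Lemma:RadiusFromPi}). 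Since $\tau\in\mc{D}(\Gamma)$ has $|\Re\tau|\leq\frac12$, and since $\tau$ lies on or above every arc, the lowest point is bounded below by the minimum over the relevant arcs of $\sqrt{\rho^2(K) - (\Re\tau - \pi(K))^2}$. The largest such arcs occur for $y=1$, $k=m$, giving radius $\frac{1}{\sqrt m}$ centered at an integer (or half-integer) near the strip $|\Re\tau|\leq\frac12$; a direct computation of the circle height at $\Re\tau = \pm\frac12$ gives exactly $\frac{\sqrt 3}{2m}$, matching the $\PSL_2(\zz)$ case scaled by the radius $\frac{1}{\sqrt m}$ versus $1$...

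The cleanest route, which I would actually take, is: observe $\Dmat{\sqrt m}$ conjugates and relates $\Gamma_0(m){+}$ to a group containing $\PSL_2(\zz)$-type elements — more precisely, the Fricke involution $\begin{psmallmatrix}0&-1\\m&0\end{psmallmatrix}$ has arc $\mc{A}$ equal to the half-circle of radius $\frac{1}{\sqrt m}$ centered at $0$, and the translations $T^{\frac{1}{1}}$... hmm, since for $h=1$ the stabilizer is $\langle T\rangle$. The lower boundary near $\Re\tau = 0$ is the Fricke arc $|\tau| = \frac{1}{\sqrt m}$, while near $\Re\tau = \pm\frac12$ one uses arcs of elements $\begin{psmallmatrix}1&0\\m&1\end{psmallmatrix}$-conjugates; in all cases the minimum of $\Im\tau$ over $\mc{C}(\Gamma_0(m){+})\cap\{|\Re\tau|\leq\frac12\}$ is $\sqrt{\frac1m - \frac14} = \frac{\sqrt{4-m}}{2\sqrt m}$ when $m < 4$ and is governed by intersection points of adjacent arcs when $m\geq 4$. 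I expect the main obstacle to be verifying uniformly (over all square-free $m$) that this minimum is always at least $\frac{\sqrt3}{2m}$: for small $m\in\{1,2,3\}$ it is immediate from the explicit fundamental domains in the figures, while for larger $m$ one must check that although the individual arcs shrink, the density of cusps (hence of arcs) grows fast enough that the ``valleys'' between arcs never dip below $\frac{\sqrt3}{2m}$. I would establish this by noting that any $\tau\in\mc{C}(\Gamma)$ with $\Im\tau$ small must be close to a cusp $\frac pq\in\pp^1(\qq)$, and the arc through $\tau$ associated to an element with that cusp has radius $\rho = \frac{1}{q\sqrt{m/(m,q)^2}} \cdot(\text{bounded factor})$, so $\Im\tau$ on that arc is comparable to $\rho$ which is bounded below; the worst case, a crude estimate shows, is exactly the height over $\Re\tau = \frac{1}{2mh}$ of the largest arc, yielding $\frac{\sqrt3}{2mh}$. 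Assembling the conjugation scaling with this $h=1$ bound completes the proof.
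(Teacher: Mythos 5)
Your setup is sound --- reducing to the lower boundary, invoking the single cusp criterion, and (optionally) conjugating away $h$ --- but the entire content of the corollary is the uniform lower bound over all square-free $m$, and that is precisely the step your sketch does not supply. Two of the claims you lean on there are false. First, ``the arc associated to a nearby cusp has radius bounded below'': the arcs of $\Gamma$ have $\rho^2(K) = \frac{k}{(mhy)^2}$, which tends to $0$ as $y\to\infty$, so the arc through a cusp $\frac{p}{q}$ with large denominator is arbitrarily small and yields no lower bound on $\Im\tau$. Second, the ``worst case'' is not the height of the \emph{largest} arc over $\Re\tau=\frac{1}{2mh}$: the largest arc is the Fricke arc of radius $\frac{1}{h\sqrt{m}}$ centered at $0$, whose height at $\Re\tau=\frac{1}{2mh}$ is $\frac{\sqrt{4m-1}}{2mh}$, which exceeds $\frac{\sqrt{3}}{2mh}$ as soon as $m\geq 2$; the binding constraints come from the \emph{smallest} arcs in a correctly chosen family, and your sketch never identifies that family.

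The paper's proof isolates it directly: since $\pi(\Gamma)=\pp^1(\qq)$ (Lemma \ref{Lemma:SingleCuspCriterion}), for every $t\in\zz$ there is an arc $\mc{A}(K)$ with center $\frac{t}{mh}$, and by Lemma \ref{Lemma:RadiusFromPi} its squared radius is $r^2_{mh\divides h}\left(\frac{t}{mh}\right)=\frac{(t,m)}{(mh)^2}\geq\frac{1}{(mh)^2}$. These centers are spaced $\frac{1}{mh}$ apart, so any $\tau\in\mc{D}(\Gamma)$ satisfies $\abs{\Re\tau-\frac{t}{mh}}\leq\frac{1}{2mh}$ for some $t$; since $\tau$ lies on or above that arc, \[ \frac{1}{(mh)^2}\leq\abs{\tau-\tfrac{t}{mh}}^2=(\Im\tau)^2+\left(\Re\tau-\tfrac{t}{mh}\right)^2\leq(\Im\tau)^2+\frac{1}{(2mh)^2}, \] giving $\Im\tau\geq\frac{\sqrt{3}}{2mh}$ in one stroke. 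This is the density-of-cusps idea you gesture at, but it requires restricting to the cusps with denominator exactly $mh$, where the radius is uniformly at least $\frac{1}{mh}$ \emph{and} the spacing matches; arbitrary nearby cusps do not work. Your reduction to $h=1$ and the case analysis in $m$ are then unnecessary.
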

\begin{proof}
	We have that $\Gamma$ has one cusp, by Lemma \ref{Lemma:SingleCuspCriterion}. In particular, we showed that $\pi(\Gamma) = \pp^1(\qq)$, so in particular there exists an arc $\mc{A}(K)$ with center $\frac{t}{mh}$ for every $t\in\zz$. This arc has squared radius \[ r^2_{mh\divides h}\left(\frac{t}{mh}\right) = \frac{(mht,mh)(ht,mh)}{mh^2(mh)^2} = \frac{(t,m)}{(mh)^2} \geq \frac{1}{(mh)^2}. \] So let $\tau\in\mc{D}(\Gamma)$, and let $t\in\zz$ be such that $\frac{t-\frac{1}{2}}{mh} \leq \Re\tau \leq \frac{t+\frac{1}{2}}{2mh}$. Since $\Im\tau\geq\Im K\tau$, we know that $\tau$ is on or above the arc centered at $\frac{t}{mh}$ with radius at least $\frac{1}{mh}$, so \[ \frac{1}{(mh)^2} \leq \abs{\tau-\frac{t}{mh}}^2 = (\Im\tau)^2 + (\Re\tau - \frac{t}{mh})^2 \leq (\Im\tau)^2 + \frac{1}{(2mh)^2}. \] Thus, $\Im\tau \geq \frac{\sqrt{3}}{2mh}$, as desired.
\end{proof}
\begin{figure}
	\centering
	\includegraphics[width=0.5\linewidth]{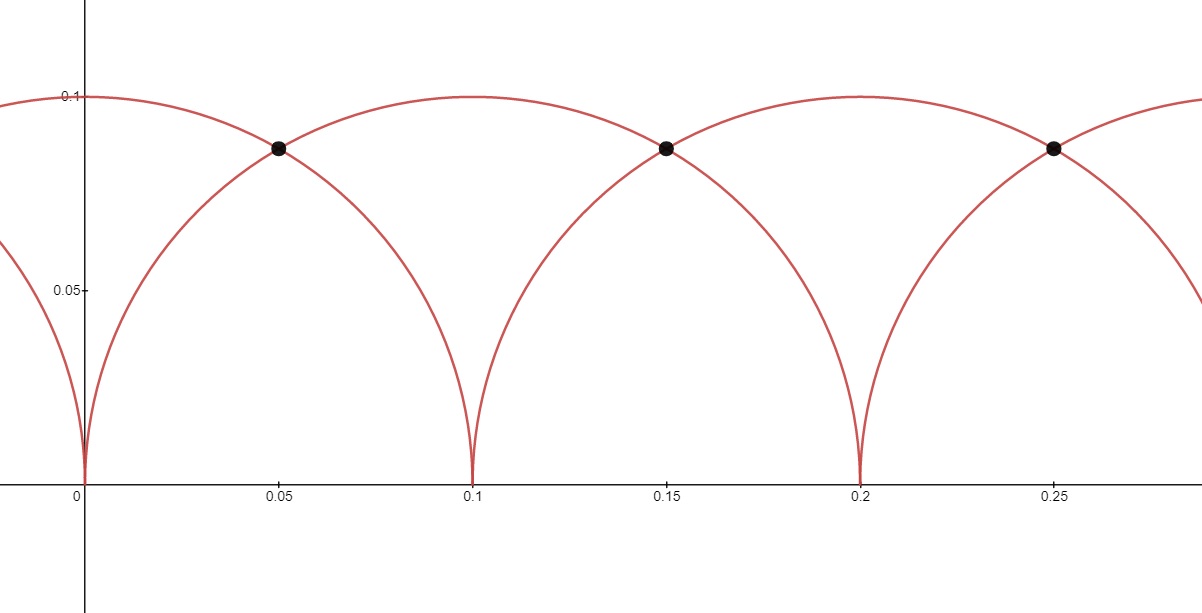}
	\caption{Example of Corollary \ref{Corollary:SingleCuspImaginaryBound}: Here, $m=5, h=2$. There is an arc $\mc{A}(K)$ having radius at least $\frac{1}{10}$ (pictured) centered at each $\frac{t}{10}$ for $t\in\zz$, so if $\tau\in\mc{D}$ then $\tau$ is certainly on or above all the arcs above. The minimal imaginary part of $\tau$ is therefore at least $\frac{\sqrt{3}}{2}\frac{1}{10}$ (when $\tau$ is one of the marked points).}
	\label{Figure:MinimalImaginaryPart}
\end{figure}

We remark that if $\Gamma$ has more than one cusp, then any fundamental domain must approach some point in $\pp^1(\qq)\setminus\{\infty\}$, and is therefore not bounded away from $\rr$, that is, there exists a fundamental domain bounded away from the real line if and only if $\Gamma$ has one cusp.

\begin{definition}[Critical set]\label{Def:CriticalSet}
	Let $\Gamma = \Gamma_0(mh\divides h){+e,f,g,\ldots}$ or $\Gamma_0(mh\edivides h){+e,f,g,\ldots}$. Then the \emph{critical set} for $\Gamma$ is \[ \mc{K} := \{ [K]\in\LeftQuotientByStab{\Gamma} \st \mc{A}(K)\cap\mc{D}(\Gamma) \neq \emptyset \}. \]
\end{definition}
By Corollary \ref{Corollary:ArcsAreLeftQuotByStab}, $\mc{A}(K)$ is independent of coset representative for $[K]$, so the critical set is well-defined. (The fundamental domains are well-defined by Thm. \ref{Theorem:FundamentalDomains} and Corollary \ref{Corollary:FundamentalDomainsExactGroups}.) 

\begin{lemma}\label{Lemma:CriticalSetIsFinite}
	Let $\Gamma = \Gamma_0(mh\divides h)+$ with $m$ square-free. Then $\mc{K}$ is a finite, non-empty set.
\end{lemma}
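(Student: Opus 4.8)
The plan is to show that the critical set $\mc{K}$ is non-empty by exhibiting a single arc meeting $\mc{D}(\Gamma)$, and finite by bounding which arcs can possibly touch the fundamental domain, using the imaginary-part bound of Corollary~\ref{Corollary:SingleCuspImaginaryBound} together with the formula for $r^2_{mh\divides h}$.

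\emph{Non-emptiness.} By Corollary~\ref{Corollary:LowerBoundaryIsArcs}, any $\tau$ on the lower boundary $\mc{C}(\Gamma)$ lies on some arc $\mc{A}(K)$ with $K\in\Gamma\setminus\Gamma_\infty$, and $\mc{C}(\Gamma)\subset\mc{D}(\Gamma)$ is non-empty (the fundamental domain has a lower boundary since, e.g., $\mc{D}(\Gamma)$ is bounded away from $\rr$ by Corollary~\ref{Corollary:SingleCuspImaginaryBound} but is not all of the region $\Im\tau\geq c$). Hence $[K]\in\mc{K}$, so $\mc{K}\neq\emptyset$. Alternatively, one can simply note that the arc centered at $0$ with squared radius $r^2_{mh\divides h}(0/1) = 1/(mh)^2$ passes through the point $i/(mh)$, and check directly that (a translate of) this point lies in $\mc{D}(\Gamma)$, giving an explicit element of $\mc{K}$.

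\emph{Finiteness.} Since $\mc{A}$ is a bijection from $\LeftQuotientByStab{\Gamma}$ onto $\mc{A}(\Gamma)$ (Corollary~\ref{Corollary:ArcsAreLeftQuotByStab}), it suffices to bound the set of arcs meeting $\mc{D}(\Gamma)$. Each arc $\mc{A}(K)$ for $K\in\Gamma\setminus\Gamma_\infty$ is a half-circle with center $\pi(K)\in\qq$ and squared radius $\rho^2(K) = r^2_{mh\divides h}(\pi(K))$ by Lemma~\ref{Lemma:RadiusFromPi}. Write $\pi(K) = p/q$ in lowest terms; then $\rho^2(K) = (mhp,q)(hp,q)/(mh^2q^2) \leq (mh)(h)/(mh^2q^2)\cdot(\text{bounded factors})$, and crucially $\rho(K)\to 0$ as the denominator $q$ of $\pi(K)$ grows. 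On the other hand, by Corollary~\ref{Corollary:SingleCuspImaginaryBound} every point of $\mc{D}(\Gamma)$ has $\Im\tau\geq \sqrt{3}/(2mh)$, while a point on $\mc{A}(K)$ has $\Im\tau\leq \rho(K)$. So if $\mc{A}(K)\cap\mc{D}(\Gamma)\neq\emptyset$ we need $\rho(K)\geq\sqrt{3}/(2mh)$, which forces $q$ to lie in a finite set (explicitly, $\rho^2(K)\leq (\text{something})/(h q^2)$-type estimate bounds $q$). For each of the finitely many admissible denominators $q$, the center $\pi(K) = p/q$ must moreover have real part within distance $\rho(K)$ (hence within a bounded distance) of the strip $-\tfrac{1}{2h}<\Re\tau\leq\tfrac{1}{2h}$ occupied by $\mc{D}(\Gamma)$, so only finitely many numerators $p$ are possible. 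Therefore only finitely many arcs meet $\mc{D}(\Gamma)$, and $\mc{K}$ is finite.

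The main obstacle is getting the estimate on $\rho^2(K) = r^2_{mh\divides h}(p/q)$ clean enough to conclude that the denominator $q$ is bounded: one has to handle the gcd factors $(mhp,q)$ and $(hp,q)$, which can be as large as $mh$ and $h$ respectively, so the bound $\rho^2(K)\leq \tfrac{m h^2}{mh^2 q^2}\cdot(\cdots)$ must be organized carefully — writing $(mhp,q)\mid mh$ and $(hp,q)\mid h$ (using $(p,q)=1$) gives $\rho^2(K)\leq \tfrac{(mh)(h)}{mh^2 q^2} = \tfrac{1}{q^2}$, which already forces $q^2 \leq (2mh/\sqrt 3)^2$, i.e. $q\leq 2mh/\sqrt3$, a finite range. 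Combined with the real-part restriction this closes the argument; the remaining work is routine bookkeeping over the finitely many $(p,q)$.
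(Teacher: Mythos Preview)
Your proposal is correct and follows essentially the same strategy as the paper: use Corollary~\ref{Corollary:SingleCuspImaginaryBound} to force $\rho(K)\geq\sqrt{3}/(2mh)$ for any arc meeting $\mc{D}(\Gamma)$, then bound the center by the width restriction $|\Re\tau|\leq 1/(2h)$. The only cosmetic differences are that the paper parametrizes arcs via the canonical-coset-representative data $(k,y,z)$ (computing $\rho^2(K)=k/(mhy)^2$ directly) rather than via the reduced fraction $p/q$ for $\pi(K)$, and the paper gets non-emptiness more cheaply by observing that $[\eye]=\Gamma_\infty\in\mc{K}$ since $\mc{A}(\eye)=\hh$.
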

\begin{proof}
	First, note that for any $K\in\Gamma_\infty$, $\mc{A}(K) = \hh$ certainly intersects the fundamental domain for $\Gamma$, so $[K]\in\mc{K}$, and $\mc{K}$ is non-empty. We may always take $\eye$ as a coset representative for $\Gamma\infty$. 
	
	On the other hand, by Corollary \ref{Corollary:SingleCuspImaginaryBound}, $\mc{D}(\Gamma)$ is bounded away from the real line by at least $\frac{\sqrt{3}}{2mh}$. That is, if $[K]\in\mc{K}$ does not stabilize infinity, then $\rho^2(K) \geq \frac{3}{(2mh)^2}$. But for any canonical coset representative $K = \begin{psmallmatrix} khw & x \\ mh^2y & khz \end{psmallmatrix}\in\Gamma$ with $y > 0$ (since $K\notin\Gamma_\infty$), we may compute $\rho^2(K) = \frac{kh^2}{(mh^2y)^2} = \frac{k}{(mhy)^2}$, and there are only a finite number of (coprime, by $kwz - \frac{m}{k}xy = 1$) pairs $(k,y)$ such that $\frac{k}{(mhy)^2}\geq \left(\frac{\sqrt{3}}{2mh}\right)^2 = \frac{3}{(2mh)^2}$, that is, such that $0 < y < 2\sqrt{\frac{k}{3}}$. For each such pair $(k,y)$, there are only finitely many choices of $z$ (with $z$ satisfying $(kz,\frac{m}{k}y) = 1$) such that $-\frac{1}{2h}-\frac{\sqrt{k}}{mhy} \leq \pi(K) \leq \frac{1}{2h}+\frac{\sqrt{k}}{mhy}$. If $\pi(K)$ is not in the given interval, then the arc centered at $\pi(K)$ with radius $\frac{\sqrt{k}}{mhy}$ certainly does not intersect $\mc{D}(\Gamma)$, which has real part between $\pm\frac{1}{2h}$.
	
	The above gives us a finite set of triples $(k,y,z)$ with $\left(kz,\frac{m}{k}y\right) = 1$. We may identify each $(k,y,z)$ with an element of $[K]$, as in the proof of Thm. \ref{Theorem:FundamentalDomains}, with $\pi(K)=-\frac{kz}{mhy}$ and $\rho^2(K) = \frac{k}{(mhy)^2}$. This gives us an element $[K]$, corresponding to an arc $\mc{A}(K)$ which may intersect $\mc{D}(\Gamma)$. Moreover, by construction, for $K\in\Gamma$, if $\mc{A}(K)$ intersects $\mc{D}(\Gamma)$, then necessarily $[K]$ corresponds to one of the triples $(k,y,z)$ we identified. Thus, there are only finitely many $[K]\in\mc{K}$.
\end{proof}

The proof above also gives an algorithm for computing $\mc{K}$, by listing all possible $(k,y,z)$ as described, and checking whether they intersect $\mc{D}(\Gamma)$. Indeed, if $\mc{D}(\Gamma)$ has not been computed, then this set of all $(k,y,z)$ may be used to compute the fundamental domain, by drawing all these arcs and taking the region above them, and in the appropriate real interval.

\begin{table}[!ht]
	\setlength{\tabcolsep}{18pt}
	\renewcommand{\arraystretch}{1.5}
	\centering
	\begin{tabular}{| l c |}
		\hline
		Group & $\mc{K}$ \\ 
		\hline
		$\PSL_2(\zz)$ & $\left\{ \eye, \begin{psmallmatrix} 0 & -1 \\ 1 & 0 \end{psmallmatrix}, \begin{psmallmatrix} 0 & -1 \\ 1 & -1 \end{psmallmatrix} \right\}$ \\
		$\Fricke{2}$ & $\left\{ \eye, \begin{psmallmatrix} 0 & -1 \\ 2 & 0 \end{psmallmatrix}, \begin{psmallmatrix} 0 & -1 \\ 2 & -2 \end{psmallmatrix}, \begin{psmallmatrix} 1 & -1 \\ 2 & -1 \end{psmallmatrix} \right\}$ \\
		$\Gamma_0(6){+}$ & $\left\{ \eye, \begin{psmallmatrix} 0 & -1 \\ 6 & 0 \end{psmallmatrix}, \begin{psmallmatrix} 3 & -2 \\ 6 & -3 \end{psmallmatrix}, \begin{psmallmatrix} 2 & -1 \\ 6 & -2 \end{psmallmatrix} \right\}$ \\
		$\Gamma_0(3\divides 3)$ & $\left\{ \eye,  \begin{psmallmatrix} 0 & -1 \\ 9 & 0 \end{psmallmatrix}, \begin{psmallmatrix} 0 & -1 \\ 9 & -3 \end{psmallmatrix} \right\}$ \\
		\hline
	\end{tabular}
	\caption{A complete set of coset representatives for the critical set $\mc{K}$ for a few groups $\Gamma_0(mh\divides h){+}$.}
	\label{Table:CriticalSets}
\end{table}
\begin{example}\label{Example:CriticalSets}
	As an example of computing the critical set, let $\Gamma = \Fricke{2} = \Gamma_0(2){+}$, so $m=2$, $h=1$. It may be useful to reference the fundamental domain given in Figure \ref{Fig:FundamentalDomainFricke2}. To collect the set of all triples $(k,y,z)$ in the above proof, we first collect all pairs $(k,y)$ with $k,y$ coprime, with $k\in\Ex(2) = \{1,2\}$, and with $0 < y < 2\sqrt{\frac{k}{3}}$, yielding only the pairs $(1,1)$ and $(2,1)$. Since $y=1$, for $k=1,2$, we consider $z\in \left[-\frac{1+\sqrt{k}}{2},\frac{1+\sqrt{k}}{2}\right]$, so $z\in\{-1,0,1\}$ with $(kz,\frac{2}{k}) = 1$, and we must check the arcs $\mc{A}(K)$ associated to triples \[ (k,y,z) \in \{ (1,1,1), (1,1,-1), (2,1,1), (2,1,0), (2,1,-1) \}, \] via the mapping taking $(k,y,z)$ to the arc with center $\pi(K) = -\frac{kz}{mhy}$ and radius $\rho(K)=\frac{\sqrt{k}}{mhy}$. We find only three intersect the fundamental domain, $(2,1,0)$, $(2,1,-1)$, and $(1,1,-1)$, which are associated to arcs $\mc{A}(K)$, where $K$ may be given by $\begin{psmallmatrix} 0 & -1 \\ 2 & 0 \end{psmallmatrix}, \begin{psmallmatrix} 0 & -1 \\ 2 & -2 \end{psmallmatrix}, \begin{psmallmatrix} 1 & -1 \\ 2 & -1 \end{psmallmatrix}$, respectively. (Here, the lower left entry of each matrix is $mh^2y = 2$ and the lower right entry is $kz$.) We remark that the remaining two arcs intersect the closure of the fundamental domain for $\Fricke{2}$, at the point $\tau = -\frac{1}{2}+\frac{i}{2}$, and a different choice of lower boundary would result in a slightly different critical set.
	
	Analogous calculations give the critical sets for $\Gamma_0(6){+}$ and $\Gamma_0(3\divides 3)$. Note that as one should expect, the cricial set for $\Gamma_0(3\divides 3)$ is conjugate to the critical set for $\PSL_2(\zz)$ by $\Dmat{3} = \begin{psmallmatrix} 3 & 0 \\ 0 & 1 \end{psmallmatrix}$, since the groups themselves are conjugate by the same linear fractional transformation.
\end{example}

\begin{remark}
	The critical set need not be identified at this point. It will be identified as a byproduct of a later calculation which uses a larger collection of triples $(k,y,z)$, see Example \ref{Example:NcComputation} in \S\ref{Subsection:CompletingApproximation} below.
\end{remark}

\subsection{Modular Functions}\label{Subsection:ModularFunctions}

Recall that a character is a homomorphism $\chi:\Gamma\to\cc^*$, taking values in the roots of unity.

\begin{definition}
	Let $\Gamma = \Gamma_0(mh\divides h){+e,f,g,\ldots}$, and let $\chi:\Gamma\to\cc^\times$ be a homomorphism taking values in the $h$th roots of unity. Then a \emph{modular function} for $\Gamma$ with character $\chi$ is a function $f:\hh\to\cc$ such that
	\begin{enumerate}
		\item $f(\tau) = \chi(K)f(K\tau)$ for all $K\in\Gamma$,
		\item $f(\tau)$ is meromorphic on $\hh$, and 
		\item $f(\tau)$ satisfies certain bounded growth conditions as $\tau\to\qq\cup\{\infty\}$.
	\end{enumerate}
	If moreover $f(\tau)$ is holomorphic on $\hh$, we say $f$ is a \emph{weakly-holomorphic} modular function for $\Gamma$ with character $\chi$. If the character $\chi$ is trivial, we say simply that $f$ is a (weakly-holomorphic) modular function for $\Gamma$.
\end{definition}
We will not give the precise growth conditions here. Rather, we note that the first two parts of the definition of a modular function ensure that $f$ descends to a well-defined meromorphic function on the quotient Riemann surface $\lmod{\ker\chi}{\hh}$, and the growth condition ensures this function is also meromorphic after compactification by adjoining points for each cusp \cite{SERRE},\cite{Shimura}. Note that a modular function with character $\chi$ for $\Gamma$ is also a modular function (with trivial character) for the subgroup $\ker\chi$.

The set of all modular functions (with trivial character) for $\Gamma$ is a field under the usual pointwise operations, and the set of all weakly-holomorphic modular functions for $\Gamma$, denoted $\mc{M}_0^!(\Gamma)$, is an algebra under the usual operations of pointwise addition, subtraction, multiplication, and division (which is not allowed among weakly-holomorphic modular functions, as this would in general introduce poles in the upper half-plane). We will be particularly interested in the case where the algebra of weakly-holomorphic modular functions for $\Gamma$ is generated by a single element, so is isomorphic to $\cc[X]$, a polynomial algebra in a single variable.

The modular functions we consider will be modular functions for groups $\Gamma_0(mh\divides h){+e,f,g,\ldots}$ with character $\lambda$, as in \S\ref{Subsection:MoreGroups} above. That is, we consider modular functions with trivial character for $\Gamma_0(mh\edivides h){+e,f,g,\ldots}$.

\subsection{Genus Zero Groups, Hauptmoduls}\label{Subsection:GenusZeroGroups}

As noted above, a discrete group $\Gamma\subset\Omega$ commensurable with $\PSL_2(\zz)$ defines a Riemann surface $Y(\Gamma) = \lmod{\Gamma}{\hh}$, which may be compactified by the addition of a finite number of cusps, corresponding to the orbits of $\pp^1(\qq)$ under $\Gamma$. The genus of this compact Riemann surface $X(\Gamma) = \widehat{\lmod{\Gamma}{\hh}}$ is, informally, the number of `holes' in this surface, so that a genus zero surface is topologically a sphere, a genus one surface a torus, and so on. There are only finitely many genus zero groups of the form $\Gamma_0(mh\divides h){+e,f,g,\ldots}$, and a complete list may be found in \cite{Ferenbaugh}.

By pulling back to $\hh$, the meromorphic functions on $X(\Gamma)$ may be identified with the field of modular functions for $\Gamma$ (with trivial character). Essentially by Liouville's theorem, the only holomorphic functions on $X(\Gamma)$ are constant, while functions which have their poles confined to the cusps of $X(\Gamma)$ correspond to weakly-holomorphic modular functions for $\Gamma$.

Importantly for our purposes, the field of meromorphic functions on a genus zero surface $X(\Gamma)$ is generated by a single function, which witnesses a biholomorphic equivalence between $X(\Gamma)$ and the Riemann sphere $\pp^1(\cc) = \cc\cup\{\infty\}$. Any such function $f$ is called a \emph{Hauptmodul}. It is common to choose as Hauptmodul a function $f$ mapping the cusp $[i\infty]\in X(\Gamma)$ to $\infty\in\pp^1(\cc)$, so that $f$ is also a generator of the algebra of weakly-holomorphic functions $\mc{M}_0^!(\Gamma)$. Such a choice of Hauptmodul is always possible: if $g$ is a Hauptmodul for $\Gamma$ such that $g([i\infty]) = c\in\cc$, then $f = \frac{-1}{g - c} = \begin{psmallmatrix} 0 & -1 \\ 1 & -c \end{psmallmatrix}g$ is a Hauptmodul for $\Gamma$ (since $\PSL_2(\cc)$ acts bijectively on $\pp^1(\cc)$) such that $f([i\infty]) = \infty$. This choice of Hauptmodul is still not unique, since any linear function of $f$ is a Hauptmodul mapping $[i\infty]$ to $\infty$. 

\begin{lemma}\label{Lemma:WeaklyHolomorphicModularFunctionsPolynomialAlgebra}
	Let $\Gamma\subset\Omega$ be a genus zero group commensurable with $\PSL_2(\zz)$. Then $\mc{M}_0^!(\Gamma) \isom \cc[X]$ if and only if $\Gamma$ has one cusp.
\end{lemma}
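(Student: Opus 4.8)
The plan is to prove the two implications separately, using the dictionary recalled above: a weakly-holomorphic modular function for $\Gamma$ is the same thing as a meromorphic function on the compact Riemann surface $X(\Gamma)$ whose poles are confined to the cusps, and for a genus zero group $X(\Gamma)$ is biholomorphic to $\pp^1(\cc)$ via any Hauptmodul. The substance is then the classical fact that the ring of holomorphic functions on $\pp^1(\cc)$ minus a finite set is $\cc[X]$ precisely when one point is removed.

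Suppose first that $\Gamma$ has exactly one cusp. As recalled above, genus zero guarantees a Hauptmodul, and we may choose one, $f$, with $f([i\infty]) = \infty$; then $f$ induces a biholomorphism $X(\Gamma) \isom \pp^1(\cc)$, so the field of modular functions for $\Gamma$ is $\cc(f)$ (whence $f$ is transcendental over $\cc$), and $f$ attains each value of $\cc\cup\{\infty\}$ exactly once on $X(\Gamma)$, with the value $\infty$ attained only at the sole cusp $[i\infty]$. Given $g\in\mc{M}_0^!(\Gamma)$, write $g = p(f)/q(f)$ with $p,q\in\cc[X]$ coprime. If $q$ were non-constant it would have a root $\alpha\in\cc$, and at the unique point $P\in X(\Gamma)$ with $f(P)=\alpha$ --- which is not a cusp, since $\alpha\neq\infty$ and $[i\infty]$ is the only cusp --- the function $g$ would have a pole (as $p(\alpha)\neq 0$ by coprimality of $p$ and $q$), contradicting that a weakly-holomorphic modular function has its poles only at cusps. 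Hence $q$ is constant and $g\in\cc[f]$; since conversely every element of $\cc[f]$ has its poles confined to $[i\infty]$, we conclude $\mc{M}_0^!(\Gamma) = \cc[f] \isom \cc[X]$.

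For the converse I argue by contraposition. First, $\Gamma$ has at least one cusp, being commensurable with the non-cocompact group $\PSL_2(\zz)$; assume it has two distinct ones, $P_1$ and $P_2$, on $X(\Gamma)$. Since $X(\Gamma)$ has genus zero, post-composing a Hauptmodul with the fractional linear transformation carrying $P_1\mapsto 0$ and $P_2\mapsto\infty$ yields a meromorphic function $u$ on $X(\Gamma)$ with $\mathrm{div}(u) = P_1 - P_2$. Then the only pole of $u$ is at the cusp $P_2$ and the only pole of $u^{-1}$ is at the cusp $P_1$, so both $u$ and $u^{-1}$ lie in $\mc{M}_0^!(\Gamma)$; that is, $u$ is a unit of $\mc{M}_0^!(\Gamma)$ that is not constant (it vanishes at $P_1$). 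But a $\cc$-algebra isomorphism fixes $\cc$ and carries units to units, while $\cc[X]^\times = \cc^\times$; hence $\mc{M}_0^!(\Gamma)\not\isom\cc[X]$.

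I do not anticipate a serious obstacle. The only steps requiring care are invoking the existence, for a genus zero group, of a Hauptmodul normalized to have its pole at the cusp $[i\infty]$ (carried out in the discussion preceding the lemma) together with the resulting biholomorphism with $\pp^1(\cc)$, and being precise in the converse that ``isomorphic'' is meant in the category of $\cc$-algebras, so that the non-constant unit produced in $\mc{M}_0^!(\Gamma)$ is a genuine obstruction to $\mc{M}_0^!(\Gamma) \isom \cc[X]$.
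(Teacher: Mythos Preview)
Your forward direction is essentially the same as the paper's: write an arbitrary weakly-holomorphic function as a rational function in the Hauptmodul $f$ and observe that a non-constant denominator would force a pole at a non-cusp point.

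Your converse, however, takes a genuinely different route. The paper argues by contradiction that if $\mc{M}_0^!(\Gamma)=\cc[h]$ for some $h$, then necessarily $\cc[h]=\cc[f]$ for the Hauptmodul $f$ with $f([i\infty])=\infty$ (since $f$ is a polynomial in $h$ and $h$ is rational in $f$, so the polynomial must be linear); it then exhibits $g=-1/(f-f([r]))\in\mc{M}_0^!(\Gamma)\setminus\cc[f]$. You instead construct a function $u$ with $\mathrm{div}(u)=P_1-P_2$ supported on the cusps, note that both $u$ and $u^{-1}$ are weakly holomorphic, and conclude that $\mc{M}_0^!(\Gamma)$ has a non-constant unit, an obstruction since $\cc[X]^\times=\cc^\times$. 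Your argument is a bit cleaner in that it avoids the preliminary step of pinning down the generator, and it isolates a structural invariant (the unit group) rather than chasing a specific element; the paper's argument, on the other hand, stays closer to the concrete function-theoretic picture and does not need to invoke that the isomorphism is one of $\cc$-algebras. Both are correct.
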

\begin{proof}
	Let $f$ be a Hauptmodul for $\Gamma$ taking $[i\infty]$ to $\infty$, noting the field of modular functions is $\cc(f)$. Suppose $\Gamma$ has one cusp, and note since $f$ maps $Y(\Gamma)$ to $\cc$ surjectively, a non-constant polynomial in $f$ necessarily has zeros in $Y(\Gamma)$. Let $g\in\mc{M}_0^!(\Gamma)$ and write $g = \frac{F(f)}{G(f)}\in\cc(f)$. Since $g$ has no poles in $Y(\Gamma)$, $G(f)$ must be constant, so $g = F(f)$, and $\mc{M}_0^!(\Gamma) = \cc[f]$. Now suppose $\Gamma$ has more than one cusp, say $[r]\neq [i\infty]$, for some $r\in\qq$, and suppose for a contradiction that $\mc{M}_0^!(\Gamma) = \cc[h]$ for some weakly-holomorphic modular function $h$. Since $f\in\mc{M}_0^!(\Gamma)$, $f$ is a polynomial in $h$, while $h$ is a rational function of $f$, so in fact $f$ is an invertible (thus linear) polynomial in $h$, so that $\mc{M}_0^!(\Gamma) = \cc[f]$. But now let $g = \frac{-1}{f-f([r])}$. As a rational function of $f$, $g$ is a modular function for $\Gamma$. Moreover, since $f(\tau)-f([r]) = 0$ if and only if $[\tau]=[r]$ (by bijectivity of $f:X(\Gamma)\to\pp^1(\cc)$), $g$ has a pole only at $[r]$, a cusp of $\Gamma$, so $g\in\mc{M}_0^!(\Gamma)$. But $g$ is non-constant, and any non-constant polynomial in $f$ has a pole at $[i\infty]$, so $g\notin\cc[f]$, a contradiction, implying $\mc{M}_0^!(\Gamma) \supsetneq \cc[f]$ whenever $\Gamma$ has more than one cusp.
\end{proof}

\begin{example}
	Let $\Gamma = \PSL_2(\zz) = \Gamma_0(1)$, then a common choice of such a Hauptmodul is the $j$-invariant \cite{Cox}, \[ j(\tau) = q^{-1} + 744 + 196884q + 21493760q^2 + 864299970q^3 + \ldots, \] which is a weakly-holomorphic modular function for $\PSL_2(\zz)$. Here, $q = e^{2\pi i\tau}$, so that the series above (called the $q$-expansion of $j$) defines a function on the unit disk centered at $q=0$. Since $q\to 0$ as $\tau\to i\infty$, the $q$-series is interpreted as the Laurent series at the cusp corresponding to infinity on $X(\PSL_2(\zz))$. Note that $\PSL_2(\zz)$ has one cusp, by the single cusp criterion \ref{Lemma:SingleCuspCriterion}. Explicitly, for any $-\frac{d}{c}\in\qq$ with $(c,d)=1$, we may choose $a,b\in\zz$ such that $ad-bc=1$, and $M = \begin{psmallmatrix} a & b \\ c & d \end{psmallmatrix}\in\PSL_2(\zz)$ maps $\pi(M) = -\frac{d}{c}$ to $i\infty$, so that $[i\infty] = \pp^1(\qq)$. By Lemma \ref{Lemma:WeaklyHolomorphicModularFunctionsPolynomialAlgebra} above, the algebra of weakly-holomorphic modular functions is generated by $j$.
\end{example}

Note that $q = e^{2\pi i\tau} = e^{2\pi i(\tau+k)}$ for any $k\in\zz$, but since $\PSL_2(\zz)$ contains $T^k$ for any $k\in\zz$, the passage to the $q$-series above is well-defined. More generally if the stabilizer of $\infty$ in $\Gamma$, is $\langle T^{\frac{1}{h}}\rangle$ then modular functions for $\Gamma$ may be written as functions of $q^{h}$.

\begin{example}
	The function \[ j(3\tau) = q^3 + 744 + 196884q^3 + \cdots \] is a modular function for $\Gamma_0(3\divides 3)$. It is a series in $q^3$. The function \[ T_{3C} = \sqrt[3]{j(3\tau)} = q^{-1} + 248q^2 + 4124q^5 + \cdots \] is a modular function for $\Gamma_0(3\divides 3)$ with character $\lambda$, and so a modular function (with trivial character) for $\Gamma_0(3\edivides 3) = \ker\lambda$. Since $\Gamma_0(3\divides 3)$ is conjugate to $\PSL_2(\zz)$ by $\Dmat{3}\in\Omega$, $\Gamma_0(3\divides 3)$ has one cusp just like $\PSL_2(\zz)$, or one may use the single cusp criterion again.
\end{example}

Observe the $j$-invariant has additional property that it is a Hauptmodul carrying the cusp infinity on $X(\PSL_2(\zz))$ to the point at infinity on the Riemann sphere, but it is not unique in having this property. We will instead prefer to use the function \[ T_{1A}(\tau) = j(\tau) - 744 = q^{-1} + 196884q + \cdots, \] which is the `normalized Hauptmodul' for $\PSL_2(\zz)$. (We will explain the naming convention `$T_{1A}$' shortly, in Subsection \S\ref{Subsection:MonstrousMoonshine}). 

\begin{definition}\label{Def:NormalizedHauptmodul}
	Let $\Gamma = \Gamma_0(mh\divides h){+e,f,g,\ldots}$ be a genus zero group, then the \emph{normalized Hauptmodul} for $\Gamma$ is the unique weakly-holomorphic modular function $f(\tau)$ (with trivial character) for $\Gamma$ which is holomorphic at all non-infinite cusps, and having $q$-expansion \[ f(\tau) = q^{-h} + O(q^h), \] where $O(q)$ is big-O notation. Similarly, the normalized Hauptmodul for a group of the form $\Gamma_0(mh\edivides h){+e,f,g,\ldots}$ is the unique weakly holomorphic modular function for $\Gamma_0(mh\edivides h)+e,f,g,\dots$ which is holomorphic at all non-infinite cusps and has $q$-expansion $q^{-1} + O(q)$.
\end{definition}

Note above that $T_{3C}(q) = q^{-1} + O(q^{2})$, not simply $q^{-1} + O(q)$, which follows from the fact $T_{3C}^3(q) = j(q^3) = q^{-3} + 744 + O(q^3)$. We will use this to locate zeros of certain modular functions in \S\ref{Subsection:Zeros:ConjugateGroups}.

Combining the single cusp criterion for groups $\Gamma = \Gamma_0(mh\divides h){+e,f,g,\ldots}$ (Lemma \ref{Lemma:SingleCuspCriterion}) with Lemma \ref{Lemma:WeaklyHolomorphicModularFunctionsPolynomialAlgebra} above, we observe that $\mc{M}_0^!(\Gamma) \isom \cc[X]$ if and only if $m$ is square-free and $\langle e,f,g,\ldots \rangle = \Ex(m)$. Let $f$ be the normalized Hauptmodul for $\Gamma$, and consider $\cc[f] = \mc{M}_0^!(\Gamma)$ as a vector space. We may choose a basis by choosing a polynomial $F_n(f)$ of degree $n$ for each $n\geq 0$, and $F_n(f) = f^n$ is one such choice. However, we will be more interested in a different choice of basis, the \emph{Faber polynomials} for $f$, defined below.

\subsection{Replicable functions}\label{Subsection:Replication}

We have defined replication for groups in Subection \S\ref{Subsection:ReplicableGroups}. We now define replication for functions. We provide one of several equivalent definitions \cite{CN}.

\begin{definition}[Faber Polynomials]\label{Def:FaberPolynomial}
	Let $f(q) = q^{-1}+\sum_{n\geq 0} c_nq^n$ be a formal $q$-series with coefficients in $\cc$. Then the $n$th Faber polynomial for $f(q)$ is the unique polynomial $F_{n,f}(X)\in\cc[X]$ such that \[ F_{n,f}(q) := F_{n,f}(f(q)) = q^{-n}+O(q), \] where $O(q)$ is big-O notation. 
\end{definition}

Observe $F_{n,f}(X)$ is unique, since if $G(f(q)) = q^{-n} + O(q)$ then $F_{n,f}-G$ is a polynomial such that $(F_{n,f}-G)(f(q)) = O(q)$, and must therefore be identically the zero polynomial. We will be particularly interested in the case that $f(q)$ is a modular function for $\Gamma$. In particular, if $f$ is the normalized Hauptmodul for a genus zero group $\Gamma_0(mh\divides h){+}$ with $m$ square-free, then the set $\{ F_{n,f}(f) = q^{-n}+O(q) \st n\in \nn\cup\{0\} \}$ is a basis for $\cc[f] = \mc{M}_0^!(\Gamma)$.

\begin{definition}[Replicable functions]\label{Def:ReplicableFunction}
	Let $f(q) = q^{-1} + \sum_{n\geq 0} c_nq^n$ be a function on the unit disk. Then $f$ is a \emph{replicable function} if there exists a sequence of functions $\{ f^{(1)}(q), f^{(2)}(q), f^{(3)}(q), \ldots \}$ such that for all $n\in\nn$, \[ F_{n,f}(q) = \sum_{H\in\mc{H}_n} f^{(H)}(H\tau), \] where for $H = \begin{psmallmatrix} a & b \\ 0 & d \end{psmallmatrix}$, $f^{(H)} := f^{(a)}$. If moreover $f^{(a)}(q)$ is a replicable function for all $a\in\nn$, we say $f$ is a \emph{completely replicable function}. If there exists some $k$ such that $f^{(a)} = f^{(a+k)}$ for all $a\in\nn$, then we say $f$ is of (finite) level $k$.
\end{definition}

Note the level $k$ of a replicable function of finite level is not unique, though there is a unique minimal level. This is a rather restrictive definition, and one may reasonably ask whether there are any replicable functions out there.

\begin{example}
	Let $f(q) = q^{-1}$, and let $f^{(a)}(q) = f(q)$ for all $a\in\nn$. Then since $f(q)^n = q^{-n}$, we have $F_{n,f}(X) = X^n$, while \[ \sum_{H\in\mc{H}_n} f(H\tau) = \sum_{a\divides n}\sum_{0\leq b<d} e^{2\pi i\frac{b}{d}}q^{-\frac{a}{d}} = q^{-n} = F_{n,f}(f(q)), \] since the sum of all $d$th roots of unity is zero unless $d=1$. That is, $f$ is a completely replicable function of level 1.
\end{example}

\begin{example}\label{Example:jIsReplicable}
	Let $f(q) = j(\tau)$, and let $f^{(a)}(q) = j(\tau)$ for all $a\in\nn$. Then for any $n\in\nn$ \[ \sum_{H\in\mc{H}_n} f^{(H)}(H\tau) = \sum_{H\in\mc{H}_n} j(H\tau) \] is, up to a factor of $n$, the classical Hecke operators for $\PSL_2(\zz)$ \cite{SERRE}. The classical Hecke operators act on modular functions, that is, $\sum_{H\in\mc{H}_n} j(H\tau)$ is also a modular function for $\PSL_2(\zz)$, and is holomorphic away from the cusp at infinity, and therefore a polynomial in $j(\tau)$. The formula for the action of the Hecke operators on $q$-series \cite{SERRE} shows that $\sum_{H\in\mc{H}_n} j(H\tau) = q^{-n}+O(q)$, so by uniqueness of the Faber polynomials, $\sum_{H\in\mc{H}_n} j(H\tau) = F_{n,f}(q)$. That is, $j(\tau)$ is a completely replicable function of finite level.
\end{example}

Perhaps you are not satisfied with these `trivial' examples of replication. For a larger collection of examples, we turn to Monstrous Moonshine.

\subsection{Monstrous Moonshine}\label{Subsection:MonstrousMoonshine}

In the mid-1970's, several connections were noticed between modular functions such as the $j$-invariant and the largest sporadic simple group $\bb{M}$, known as the Monster group. A set of conjectures, known as Monstrous Moonshine, were formulated by Conway and Norton \cite{CN}, and proved by Borcherds \cite{Borcherds} in the early 1990's. Monstrous Moonshine is now seen as part of a broader connection between certain finite groups, especially the sporadic simple groups, and automorphic forms.

Monstrous Moonshine provides a supply of some 171 replicable functions, but a conjecturally complete list of completely replicable functions with integer coefficients includes over 300 functions \cite{ACMS}

\begin{theorem}\label{Theorem:MoonshineReplication}
	Associated to each conjugacy class $[g]$ of elements of $\bb{M}$ is a function $T_g(q) = q^{-1}+O(q)$, which is the normalized Hauptmodul for a genus zero group $\Gamma = \Gamma_0(mh\edivides h){+e,f,g,\ldots}$. Moreover,  $T_g^{(a)}(q) := T_{g^a}(q)$ is the normalized Hauptmodul for $\Gamma^{(a)}$ (see \ref{Def:ReplicationOfGroups}), and we have \[ F_{n,g}(q) := F_{n,T_g}(q) = \sum_{H\in\mc{H}_n} T_g^{(H)}(H\tau), \] where for $H = \begin{psmallmatrix} a & b \\ 0 & d \end{psmallmatrix}$, we write $T_{g}^{(H)} = T_g^{(a)}$. In particular, $T_g(q)$ is a completely replicable function of finite level $mh$.
\end{theorem}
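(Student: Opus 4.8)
The plan is to take the deep part of the statement from the literature and reconcile it with the framework of \S\ref{Section:Groups}, then to derive the remaining claims (the replicate identity, complete replicability, and the level) more or less formally. The input is the Monstrous Moonshine theorem of Borcherds \cite{Borcherds}, confirming the conjectures of Conway and Norton \cite{CN}: to each conjugacy class $[g]$ of $\bb{M}$ is attached the McKay--Thompson series $T_g(q) = q^{-1} + O(q)$, the graded character of the Moonshine module twisted by $g$, and $T_g$ is the normalized Hauptmodul of a genus zero group which in \cite{CN}, \cite{CMS} is written $\Gamma_0(N\divides h){+e,f,g,\ldots}$. The first step is purely notational: writing $N = mh$, verify that $\Gamma_0(N\divides h){+e,f,g,\ldots}$ in the sense of \cite{CN}, \cite{CMS}, \cite{Ferenbaugh} agrees with Definition \ref{Def:GroupsGamma0mhh}, and that imposing the multiplier $\lambda$ of Lemma \ref{Lemma:LambdaProperties} yields our $\Gamma_0(mh\edivides h){+e,f,g,\ldots}$; note in particular that the latter has stabilizer $\langle T\rangle$ at $\infty$ (\S\ref{Subsection:MoreGroups}), so its normalized Hauptmodul genuinely has an integral $q$-expansion $q^{-1}+O(q)$, as Definition \ref{Def:NormalizedHauptmodul} requires.

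Second, the claim $T_g^{(a)} := T_{g^a} = $ (normalized Hauptmodul of $\Gamma^{(a)}$): given the power-map tables of \cite{CN}, which record the group attached to $g^a$, one must check that this group is obtained from $\Gamma_0(mh\divides h){+e,f,g,\ldots}$ by the recipe of Definition \ref{Def:ReplicationOfGroups} --- replace $mh$ by $mh/(mh,a)$ and $h$ by $h/(h,a)$, and cut the Atkin--Lehner group down to $\langle e,f,g,\ldots\rangle\cap\Ex\!\big(m(h,a)/(mh,a)\big)$ --- extended to the $\edivides$-groups through $\lambda$ in the obvious way. I expect this bookkeeping to be the most tedious part; it can be done uniformly from the arithmetic of Moonshine groups with Lemmas \ref{Lemma:ExactGroupIntersections}, \ref{Lemma:SquareFreeIsAllDivisors}, and \ref{Lemma:OperationCommutation}, or else checked class by class against \cite{CN}. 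Once it is in place, $\Gamma^{(a)}$ is itself a Moonshine group, hence genus zero, so its normalized Hauptmodul is the unique function of Definition \ref{Def:NormalizedHauptmodul}, and it is $T_{g^a}$ by the power-map data of \cite{CN}.

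Third, the replication formula $F_{n,g}(q) = \sum_{H\in\mc{H}_n} T_g^{(H)}(H\tau)$. I would establish that the right-hand side lies in $\mc{M}_0^!(\Gamma)$, is holomorphic away from the cusp at $\infty$, and has $q$-expansion $q^{-n}+O(q)$; the conclusion then follows because $T_g$ generates $\mc{M}_0^!(\Gamma)$ (Lemma \ref{Lemma:WeaklyHolomorphicModularFunctionsPolynomialAlgebra}, via the single cusp criterion Lemma \ref{Lemma:SingleCuspCriterion}), so the right-hand side is a polynomial in $T_g$, and by uniqueness of Faber polynomials (Definition \ref{Def:FaberPolynomial}) that polynomial is $F_{n,g}$. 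The $q$-expansion is the standard Hecke-operator computation, exactly as in Example \ref{Example:jIsReplicable}: using $T_g^{(a)}(q)=q^{-1}+O(q)$ and $\sum_{0\le b<d}e^{2\pi i b/d}=0$ for $d>1$, only the term $H=\Dmat{n}$ contributes a pole and it contributes $q^{-n}$, while the constant terms cancel. The substantive point is modularity of the right-hand side under $\Gamma$: the individual summands $T_g^{(a)}(H\tau)$ are modular only for (conjugates of) the smaller groups $\Gamma^{(a)}$, so one must show that translating $\tau$ by $\gamma\in\Gamma$ permutes the $\mc{H}_n$-indexed summands compatibly with the assignment $H\mapsto T_g^{(H)}$. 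This is precisely the phenomenon controlled by Theorem \ref{Theorem:GroupCosets} and Lemma \ref{Lemma:GHKinverseExpression} --- the decomposition $\Gamma=\bigsqcup_{H\in\phi_n(\Gamma)}\Gamma\cap(\Dmat{n}\Gamma^{(H)}H)$ tells us, for each $K\in\Gamma$, which $H$ and which $G\in\Gamma^{(H)}$ satisfy $\Dmat{n}K=GH$, so that the translate is absorbed via $T_g^{(H)}(GH\tau)=T_g^{(H)}(H\tau)$ --- and it is also exactly the content of the replicability of $T_g$ proved in \cite{Borcherds} and recast in the replicable-function language of \cite{CN}, \cite{ACMS}, which one may simply cite.

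Finally, complete replicability follows by applying the third paragraph with $g^a$ in place of $g$, for every $a$. For finite level $mh$: by the recipe of the second paragraph the group $\Gamma^{(a)}$ depends only on $(mh,a)$ and $(h,a)$, hence (using $h\divides mh$) only on $a\bmod mh$; therefore $\Gamma^{(a)}=\Gamma^{(a+mh)}$ as groups, their normalized Hauptmoduls coincide, and $T_g^{(a)}=T_g^{(a+mh)}$ for all $a$, so $T_g$ has level dividing $mh$. That $mh$ is in fact minimal is recorded in \cite{CN}, which I would cite rather than reprove. The only real obstacle in the whole argument is the bookkeeping of the second and third paragraphs --- matching the abstract replication map of Definition \ref{Def:ReplicationOfGroups} to the Moonshine power maps, and seeing that Theorem \ref{Theorem:GroupCosets} really delivers modularity of the twisted sum; neither is conceptually hard, but both are error-prone, which is presumably why the twisted-Hecke bookkeeping is singled out for caution in \S\ref{Section:Introduction}.
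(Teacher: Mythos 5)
Your proposal is correct, but it is worth knowing that the paper does not prove this theorem at all: the text immediately following the statement simply says it ``is true due to Borcherds' proof of Monstrous Moonshine,'' i.e.\ the genus-zero Hauptmodul property, the power-map identification $T_g^{(a)} = T_{g^a}$, and the twisted Hecke (replication) identity are all imported wholesale from \cite{Borcherds} and \cite{CN}. Your sketch is an expanded reconciliation of that citation with the paper's notation, and its ingredients are sound: the $q$-expansion computation via $\sum_{0\le b<d}e^{2\pi i b/d}=0$, the uniqueness argument via Lemma \ref{Lemma:WeaklyHolomorphicModularFunctionsPolynomialAlgebra} and Definition \ref{Def:FaberPolynomial}, and the finite-level argument from $\Gamma^{(a)}$ depending only on $(mh,a)$ and $(h,a)$, hence on $a\bmod mh$, combined with uniqueness of the normalized Hauptmodul. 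The one place where your ``self-contained'' route would need more than you indicate is the modularity of $\sum_H T_g^{(H)}(H\tau)$ under $\Gamma$: Theorem \ref{Theorem:GroupCosets} decomposes $\Gamma$ itself (it tells you, for $K\in\Gamma$, for which $H$ one has $\Dmat{n}K\in\Gamma^{(H)}H$), whereas modularity of the sum requires the right-action statement that for each $K\in\Gamma$ and $H\in\mc{H}_n$ one can write $HK=GH'$ with $G$ in the correct group, with $H\mapsto H'$ a bijection preserving the replicate index $a$, and with the characters $\lambda^{(a)}$ tracked throughout --- exactly the bookkeeping the introduction flags as error-prone and that the paper's method is designed to avoid. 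Since you explicitly allow falling back on \cite{Borcherds} for the replicability identity at that point, there is no gap; just be aware that the paper's own ``proof'' is the citation and nothing more, so your second and third paragraphs are doing work the paper never attempts.
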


This is true due to Borcherds' proof of Monstrous Moonshine \cite{Borcherds}. Conjugacy classes of $\bb{M}$ are generally referred to by their name in the Atlas of Finite Groups \cite{ATLAS}, which are of the form `\#N' where `\#' is the order of the elements of the group, and `N' is a letter differentiating congugacy classes of elements of the same order.

\begin{example}
	The conjugacy class of the identity in $\bb{M}$ is denoted 1A, and we have already seen $T_{1A} = j(\tau) - 744 = q^{-1} + 196884q + \cdots$. There is also a conjugacy class of elements of order three denoted 3C, and corresponding to $T_{3C} = q^{-1} + 248q^2 + 4124q^{5}+\cdots$. The square of an element of conjugacy class 3C is again in class 3C, while the cube is the identity, so in class 1A. Thus, \[ T_{3C}^{(a)} = \begin{cases} T_{1A} & 3\divides a \\ T_{3C} & 3\ndivides a. \end{cases} \] As a concrete example, take $n=3$, then the Hecke set of level 3 is \[ \mc{H}_3 = \left\{ \begin{psmallmatrix} 3 & 0 \\ 0 & 1 \end{psmallmatrix}, \begin{psmallmatrix} 1 & 0 \\ 0 & 3 \end{psmallmatrix}, \begin{psmallmatrix} 1 & 1 \\ 0 & 3 \end{psmallmatrix}, \begin{psmallmatrix} 1 & 2 \\ 0 & 3 \end{psmallmatrix}  \right\}, \] and
	\begin{align*}
		F_{n,3C}(q) 
		&= \sum_{H\in\mc{H}_3} T_{3C}^{(H)}(H\tau) \\
		&= T_{1A}(3\tau) + T_{3C}\left(\frac{\tau}{3}\right) + T_{3C}\left(\frac{\tau+1}{3}\right) + T_{3C}\left(\frac{\tau+2}{3}\right).
	\end{align*}
\end{example}

Later work by others has developed the theory further, for example Carnahan's work on Generalized Moonshine, and from this we may learn some facts about the signs of coefficients of these $q$-series. The element $\begin{psmallmatrix} 0 & -1 \\ mh^2 & 0 \end{psmallmatrix}\in\Gamma_0(mh\divides h)+m$ is generally known as the \emph{Fricke involution}, and Carnahan defines an element $g\in\bb{M}$ to be a Fricke element of the Monster if the invariance group $\Gamma_0(mh\edivides h){+e,f,g,\ldots}$ for $T_{g}$ contains $\Gamma_0(mh\edivides h)+m$.
\begin{lemma}
	$T_{g}$ has non-negative $q$-series coefficients if and only if $g$ is a Fricke element of $\bb{M}$.
\end{lemma}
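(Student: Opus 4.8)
\emph{Plan.} The plan is to convert ``$g$ is a Fricke element'' into the concrete statement that $T_g$ is invariant under the Fricke involution $W = \begin{psmallmatrix} 0 & -1 \\ mh^2 & 0 \end{psmallmatrix}$, then to prove the contrapositive of one direction by an elementary argument on the imaginary axis, and to appeal to generalized moonshine for the other. First I would fix notation: let $\Gamma = \Gamma_0(mh\divides h){+e,f,g,\ldots}$ be the invariance group of $T_g$ with character $\lambda$, so that $\Gamma_g := \ker\lambda = \Gamma_0(mh\edivides h){+e,f,g,\ldots}$ and $T_g$ is the normalized Hauptmodul for $\Gamma_g$ (Definition \ref{Def:NormalizedHauptmodul}), holomorphic at every cusp of $\Gamma_g$ except $[\infty]$. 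Unwinding Carnahan's definition together with \S\ref{Subsection:MoreGroups} (a step I expect to be routine), $g$ is a Fricke element precisely when $W\in\Gamma_g$, i.e.\ when $T_g(-1/(mh^2\tau)) = T_g(\tau)$. By Definition \ref{Def:GroupsGamma0mhh}, $W$ is the determinant-$mh^2$ coset representative with $k=m$ (take $w=z=0$, $x=-1$, $y=1$), so $W\in\Gamma$ iff $m\in\langle e,f,g,\ldots\rangle$, and then $W\in\ker\lambda$ iff $\lambda(W)=1$; since $W$ is an involution and $\lambda$ is valued in $h$-th roots of unity, $\lambda(W)\in\{1,-1\}$, and $\lambda(W)=-1$ can occur only for $h$ even (it is pinned down by Lemma \ref{Lemma:LambdaProperties}).

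\emph{Step 2 ($T_g$ has non-negative coefficients $\Rightarrow$ $g$ is Fricke).} I would prove the contrapositive. Suppose $T_g(\tau) = q^{-1}+\sum_{n\geq1}a_nq^n$ with all $a_n\geq0$, and suppose $g$ is not Fricke; I derive a contradiction. By Step 1, either $m\notin\langle e,f,g,\ldots\rangle$, or $m\in\langle e,f,g,\ldots\rangle$ and $\lambda(W)=-1$. In the first case no element of $\Gamma_g$ sends $0$ to $\infty$: such an element, written $\begin{psmallmatrix} khw & x \\ mh^2y & khz \end{psmallmatrix}$ as in Definition \ref{Def:GroupsGamma0mhh}, has $khz=0$, hence $z=0$, hence (from the determinant relation) $\tfrac mk\divides 1$, i.e.\ $k=m\in\langle e,f,g,\ldots\rangle$, which is impossible; so the cusps $[0]$ and $[\infty]$ of $X(\Gamma_g)$ are distinct, $T_g$ is holomorphic at $[0]$, and $T_g(it)$ has a finite limit as $t\to0^+$. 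But $T_g(it) = e^{2\pi t}+\sum_{n\geq1}a_ne^{-2\pi nt}\to 1+\sum_{n\geq1}a_n$ as $t\to0^+$, so $\sum_n a_n<\infty$, contradicting the classical Hardy--Ramanujan--Rademacher growth (Fourier coefficients $\gg e^{c\sqrt n}$ for a suitable $c>0$) of a nonconstant weight-zero weakly-holomorphic modular function with a pole at a cusp. In the second case $W\in\Gamma$, so the character-$\lambda$ transformation law gives $T_g(W\tau) = \lambda(W)^{-1}T_g(\tau) = -T_g(\tau)$; setting $\tau=it$ and letting $t\to0^+$, the left side $T_g(i/(mh^2t))$ tends to $+\infty$ (it is $T_g$ near $[\infty]$), so $T_g(it)\to-\infty$, which is impossible since $a_n\geq0$ forces $T_g(it)\geq e^{2\pi t}>1$. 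In either case some $a_n<0$.

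\emph{Step 3 ($g$ Fricke $\Rightarrow$ $T_g$ has non-negative coefficients), and the main obstacle.} This is the substantive direction and the place where the proof really costs something. When $g$ is Fricke, $W\in\Gamma_g$ and $T_g$ is genuinely $W$-invariant, forcing $T_g(it)\to+\infty$ at both ends of the imaginary axis; but this functional equation alone does not control the signs of the $a_n$, and the positivity must be extracted from the arithmetic source of $T_g$. Here I would invoke the generalized-moonshine results referenced above (Carnahan): for Fricke $g$, $T_g$ is a graded dimension of a non-negatively graded $g$-twisted module over the Moonshine vertex algebra, so each $a_n$ is the dimension of a complex vector space, hence a non-negative integer. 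Alternatively — and this is all that is needed for the groups in $\mc{S}$ — one can bypass this machinery by using that there are only finitely many genus-zero groups of the form $\Gamma_0(mh\divides h){+e,f,g,\ldots}$ and checking non-negativity directly from the known infinite-product and eta-quotient expansions of the finitely many normalized Hauptmoduls in question. I expect Step 3 to be the real obstacle: the argument of Step 2 only \emph{detects} failure of non-negativity for non-Fricke $g$ (through the cusp structure, or through the sign $\lambda(W)=-1$), whereas non-negativity for Fricke $g$ is a positivity statement that reflects an underlying module and is invisible at the level of the functional equations of $T_g$, so it has to be imported from (or, for the finite list, verified in place of) Carnahan's construction.
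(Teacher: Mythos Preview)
Your proposal is correct, and for the direction ``Fricke $\Rightarrow$ non-negative'' it matches the paper: both invoke Carnahan's generalized moonshine result that the $q$-expansion coefficients arise as dimensions of graded pieces of a twisted module (the paper phrases this as ``coefficients of the expansion at zero are dimensions,'' then transfers to the expansion at infinity by Fricke invariance). The genuine difference is in the converse. The paper handles ``non-Fricke $\Rightarrow$ some negative coefficient'' by brute inspection of the finite Conway--Norton list. Your Step~2 replaces this with a conceptual argument on the imaginary axis: in Case~1 you use that $[0]\neq[\infty]$ forces $T_g(it)$ to have a finite limit as $t\to 0^+$, which (via monotone convergence under the hypothesis $a_n\geq 0$) would give $\sum a_n<\infty$, contradicting the exponential growth of Fourier coefficients of a weakly holomorphic modular function with a cusp pole; in Case~2 you use the character relation $T_g(W\tau)=-T_g(\tau)$ to force $T_g(it)\to -\infty$. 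This buys a uniform, list-free proof of that direction, at the cost of importing the Hardy--Ramanujan--Rademacher growth estimate (not proved in the paper, though standard) and of relying on the fact---used elsewhere in the paper (cf.\ Lemma~\ref{Lemma:ConjugateGroupMoreFaberZeros}) but not established in full generality there---that $T_g$ transforms under all of $\Gamma_0(mh\!\mid\!h){+}e,f,g,\ldots$ with character $\lambda$. One small caution: your unpacking of ``Fricke'' as ``$W\in\ker\lambda$'' should be checked against the paper's phrasing ``$\Gamma_0(mh\!\parallel\!h){+}m\subset\Gamma_g$,'' which requires compatibility of the two $\lambda$'s; for the Moonshine groups this is fine (and indeed Lemma~\ref{Lemma:LambdaProperties}(1) forces $\lambda(W)=1$ whenever every prime of $h$ divides $m$, so your Case~2 may well be vacuous in the Moonshine setting), but it is worth stating explicitly.
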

\begin{proof}
	Carnahan shows that for Fricke elements, the coefficients of the expansion of $T_g$ at zero are all dimensions of certain spaces. By invariance under the Fricke involution, the same holds for the coefficients of the expansion at infinity. By inspection of the remaining cases \cite{CN}, we find $T_g$ always has negative $q$-series coefficients when $g$ is not a Fricke element.
\end{proof}
\begin{corollary}\label{Corollary:SingleCuspAllPositiveFourierCoefficients}
	$T_{g}^{(a)}$ has non-negative $q$-series coefficients for all $a\in\nn$ if and only if the invariance group of $T_g$ is $\Gamma = \Gamma_0(mh\edivides h)+$ with $m$ square-free. That is, all replicates of $T_g$ have non-negative $q$-series coefficients if and only if $\Gamma$ has one cusp.
\end{corollary}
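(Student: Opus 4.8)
The plan is to translate non-negativity of coefficients into a statement about exact divisors and then verify it combinatorially. By Theorem~\ref{Theorem:MoonshineReplication}, $T_g^{(a)} = T_{g^a}$ is the normalized Hauptmodul for the replicate group $\Gamma^{(a)}$, and by the preceding Lemma the $q$-series of $T_{g^a}$ has only non-negative coefficients precisely when $g^a$ is a Fricke element of $\bb{M}$, i.e.\ when the invariance group of $T_{g^a}$ contains the Fricke involution of matching level. Write $\Gamma = \Gamma_0(mh\edivides h){+e,f,g,\ldots}$ and, following Definition~\ref{Def:ReplicationOfGroups}, $\Gamma^{(a)} = \Gamma_0(m^{(a)}h^{(a)}\edivides h^{(a)}){+e^{(a)},f^{(a)},\ldots}$ with $m^{(a)} = \frac{m(h,a)}{(mh,a)}$ and $\langle e^{(a)},f^{(a)},\ldots\rangle = \langle e,f,g,\ldots\rangle\cap\Ex(m^{(a)})$. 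Since $m^{(a)}\in\Ex(m^{(a)})$ always, ``$g^a$ is Fricke'' is equivalent to ``$m^{(a)}\in\langle e,f,g,\ldots\rangle$'', so the corollary reduces to the claim: $m^{(a)}\in\langle e,f,g,\ldots\rangle$ for every $a\in\nn$ if and only if $\langle e,f,g,\ldots\rangle = \Ex(m)$ and $m$ is square-free. The closing clause about one cusp is then the Single Cusp Criterion (Lemma~\ref{Lemma:SingleCuspCriterion}), after remarking that $\Gamma_0(mh\edivides h){+e,f,g,\ldots}$ and $\Gamma_0(mh\divides h){+e,f,g,\ldots}$ have the same cusps, because $\langle T^{1/h}\rangle$ surjects onto the quotient by $\ker\lambda$ and so the class of $\infty$ does not split.

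For the ``if'' direction, assume $\langle e,f,g,\ldots\rangle = \Ex(m)$ with $m$ square-free. A short valuation count shows $m^{(a)} \mid m$ for every $a$, hence $m^{(a)}$ is square-free; by Lemma~\ref{Lemma:SquareFreeIsAllDivisors} this gives $\Ex(m^{(a)}) = \{k : k\mid m^{(a)}\} \subseteq \{k : k\mid m\} = \Ex(m)$, so (using Lemma~\ref{Lemma:ExactGroupIntersections} for the intersection) $\langle e^{(a)},f^{(a)},\ldots\rangle = \Ex(m)\cap\Ex(m^{(a)}) = \Ex(m^{(a)}) \ni m^{(a)}$. Thus every $g^a$ is Fricke and every $T_g^{(a)}$ has non-negative coefficients.

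For the ``only if'' direction I argue by contraposition, producing one bad exponent $a$ in each failing case. The key computation, obtained prime by prime, is that $v_p(m^{(a)}) = v_p(m)$ when $v_p(a) \le v_p(h)$; $v_p(m^{(a)}) = v_p(m) + v_p(h) - v_p(a)$ when $v_p(h) < v_p(a) \le v_p(m) + v_p(h)$; and $v_p(m^{(a)}) = 0$ otherwise (here $v_p$ denotes $p$-adic valuation). First, if $m$ is not square-free, pick $p$ with $v_p(m) \ge 2$ and set $a = p^{v_p(h)+1}$; the formula gives $m^{(a)} = m/p$, and since $v_p(m) \ge 2$ forces $p \mid m/p$ we get $(m/p,\,p) \ne 1$, so $m/p$ is not an exact divisor of $m$ and hence $m/p \notin \Ex(m) \supseteq \langle e,f,g,\ldots\rangle$; thus $g^a$ is not Fricke. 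Second, if $m$ is square-free but $\langle e,f,g,\ldots\rangle \subsetneq \Ex(m)$, choose a divisor $k_0 \mid m$ with $k_0 \notin \langle e,f,g,\ldots\rangle$ and set $a = \prod_{p \mid m,\ p \nmid k_0} p^{v_p(h)+1}$; the same formula (with $v_p(m) = 1$ for $p \mid m$) yields $m^{(a)} = k_0 \notin \langle e,f,g,\ldots\rangle$, so again $g^a$ is not Fricke. In either case some $T_g^{(a)}$ has a negative coefficient, establishing the contrapositive.

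The only delicate point is the ``only if'' direction: one must identify exactly which exact divisor $m^{(a)} = \frac{m(h,a)}{(mh,a)}$ equals for the chosen $a$, which the prime-by-prime valuation identity supplies, and then recognize when a divisor fails to be exact (Lemma~\ref{Lemma:SquareFreeIsAllDivisors}). The ``if'' direction and all the reduction steps are routine bookkeeping with Definition~\ref{Def:ReplicationOfGroups} and Lemma~\ref{Lemma:ExactGroupIntersections}.
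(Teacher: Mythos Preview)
Your proof is correct and follows the same overall reduction as the paper: both use the preceding lemma to translate ``$T_g^{(a)}$ has non-negative coefficients'' into ``$m^{(a)} \in \langle e,f,g,\ldots\rangle$'', and both handle the ``if'' direction identically by observing that $m^{(a)}\mid m$ and invoking Lemmas~\ref{Lemma:SquareFreeIsAllDivisors} and~\ref{Lemma:ExactGroupIntersections}.

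The one genuine difference is in the ``only if'' direction. You split into two cases and use a $p$-adic valuation formula for $m^{(a)}$ to engineer suitable exponents $a$ in each case. The paper instead makes a single uniform choice: for any divisor $k\mid m$ with $k\notin\langle e,f,g,\ldots\rangle$ (such a $k$ exists in both failure modes), taking $a=\frac{mh}{k}$ gives $(h,a)=h$ and $(mh,a)=\frac{mh}{k}$, so $m^{(a)}=k$ directly, and one is done. Your valuation approach is more general-purpose and makes the structure of $m^{(a)}$ completely explicit, which could be useful elsewhere; the paper's choice is shorter and avoids the case split, but is perhaps less transparent about \emph{why} that particular $a$ works.
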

\begin{proof}
	Let the invariance group of $T_{g}$ be $\Gamma = \Gamma_0(mh\divides h){+e,f,g,\ldots}$. Suppose $k\divides m$ and $k\notin\langle e,f,g,\ldots\rangle$. Then \[ \Gamma^{(\frac{mh}{k})} = \Gamma_0\left( \frac{mh}{ (mh,\frac{mh}{k}) } \bigst \frac{h}{(h,\frac{mh}{k})}\right)+e',f',g',\ldots = \Gamma_0(k)+e',f',g',\ldots, \] where $\langle e',f',g',\ldots \rangle\subset\langle e,f,g,\ldots\rangle$, and thus $\Gamma_0(k)+k\not\subset\Gamma_0(k)+e,f,g,\ldots$. That is, $g^{\frac{mh}{k}}$ is not a Fricke element, or equivalently, $T_{g}^{(\frac{mh}{k})}$ has negative $q$-series coefficients.
	
	On the other hand, suppose every $k\divides m$ is an element of $\langle e,f,g,\ldots\rangle$, so that $\langle e,f,g,\ldots\rangle = Ex(m)$ and $m$ is square-free, by Lemma \ref{Lemma:SquareFreeIsAllDivisors}. By Lemma \ref{Lemma:SingleCuspCriterion}, this is equivalent to $\Gamma$ having one cusp. It remains only to show that $T_g^{(a)}$ has non-negative $q$-series coefficients, or equivalently, that $\Gamma^{(a)}$ contains its Fricke involution. By the definition of $\Gamma^{(a)}$ (Def \ref{Def:ReplicationOfGroups}), this is equivalent to $\frac{m(h,a)}{(mh,a)}\in \Ex(m)\cap \Ex\left(\frac{m(h,a)}{(mh,a)}\right)$. Since $\Ex(m)$ contains all divisors of $m$, $\frac{m(h,a)}{(mh,a)}\in \Ex(m)$. And for any $n\in\nn$, $n\in\Ex(n)$, so $\frac{m(h,a)}{(mh,a)}\in \Ex\left(\frac{m(h,a)}{(mh,a)}\right)$. Thus, $\frac{m(h,a)}{(mh,a)}$ is in the intersection so $\Gamma^{(a)}$ contains its Fricke involution, and $T_g^{(a)}$ has non-negative $q$-series coefficients.
\end{proof}

\section{Faber Polynomials for Replicable Functions}\label{Section:ZerosOfFaberPolynomials}

Let $f(q) = q^{-1} + \sum_{k\geq 0} a_kq^k$ be a replicable function, and a Hauptmodul for a genus zero group $\Gamma = \Gamma_0(mh\edivides h){+e,f,g,\ldots}$. Our goal is to locate the zeros of the Faber polynomial $F_{n,f}(X)$ for $f(q)$, for all $n\in\nn$. We will use two different strategies, depending on whether or not $(n,h) = 1$. We begin with the case where $(n,h) > 1$, which uses a certain `harmonic' relationship among replicable functions observed by Conway and Norton \cite{CN}. After that, we handle the case of $(n,h) = 1$ using an extension of the method of Asai, Kaneko, and Ninomiya \cite{AKN}.

\subsection{Harmonics and Faber Polynomials}\label{Subsection:Zeros:ConjugateGroups}

Let $f(q)$ be a replicable function for a group of the form $\Gamma_0(mh\edivides h){+e,f,g,\ldots}$, with replicates $f^{(a)}$, and let $n\in\nn$ with $(n,h) = d > 1$. In this case, the form of the replicate $f^{(d)}(q)$ is particularly nice, being the $d$th \emph{harmonic} of $f$ \cite{CN}. That is, for some constant $c$, the function $f^{(d)}(q)$ satisfies \[ f^{(d)}(q^d) + c = f(q)^d. \] From this identity we may deduce a relationship between the Faber polynomials for $f$ and $f^{(d)}$.

\begin{lemma}\label{Lemma:ConjugateGroupFaberZeros}
	Let $f(q) = q^{-1} + \sum_{k\geq 0} a_kq^k$, let $d\divides n$, and suppose $g(q) = q^{-1} + \sum_{k\geq 0} b_kq^k$ is such that $g(q^d) + c = f(q)^d$ for some constant $c$. Then \[ F_{n,f}(X) = F_{\frac{n}{d},g}(X^d - c). \] In particular, if $Y_1, Y_2,\ldots Y_\frac{n}{d}$ are the $\frac{n}{d}$ zeros of $F_{\frac{n}{d},g}(Y)$ counting multiplicity, then the $n$ zeros of $F_{n,f}$ are precisely the $d$ complex roots of $Y_1+c, Y_2+c, Y_\frac{n}{d}+c$, counting multiplicity.
\end{lemma}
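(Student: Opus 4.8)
\emph{Proof proposal.} The plan is to use only the defining property of Faber polynomials (Definition \ref{Def:FaberPolynomial}) together with their uniqueness: by the remark following that definition, a polynomial $G(X)$ equals $F_{n,f}(X)$ as soon as $G(f(q)) = q^{-n} + O(q)$, with no need to check degrees separately. The one genuine idea is the substitution $Q = q^d$. The ring homomorphism $\cc((Q)) \to \cc((q))$ sending $Q \mapsto q^d$ carries $g(Q) = Q^{-1} + \sum_{k\geq 0} b_k Q^k$ to $g(q^d)$, which by hypothesis equals $f(q)^d - c$; this is the bridge between the two Faber polynomials.

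Concretely, I would first record the identity
\[ F_{n/d,g}\bigl(g(Q)\bigr) = Q^{-n/d} + O(Q), \]
which is Definition \ref{Def:FaberPolynomial} applied to $g$ and the integer $n/d$ (recall $d \divides n$). Substituting $Q \mapsto q^d$ and using $g(q^d) = f(q)^d - c$ gives
\[ F_{n/d,g}\bigl(f(q)^d - c\bigr) = q^{-n} + O(q^d) = q^{-n} + O(q), \]
the last step because $d \geq 1$ forces $O(q^d) \subseteq O(q)$. Setting $G(X) := F_{n/d,g}(X^d - c) \in \cc[X]$, this says exactly $G(f(q)) = q^{-n} + O(q)$, so by uniqueness of the $n$th Faber polynomial of $f$ we get $G(X) = F_{n,f}(X)$, proving the first assertion.

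For the ``in particular'' statement, note that $F_{n/d,g}$ is monic of degree $n/d$ (comparing leading terms in Definition \ref{Def:FaberPolynomial}), so over $\cc$ we may factor $F_{n/d,g}(Y) = \prod_{i=1}^{n/d}(Y - Y_i)$ with $Y_1,\dots,Y_{n/d}$ its zeros listed with multiplicity. Substituting $Y = X^d - c$ then gives
\[ F_{n,f}(X) = \prod_{i=1}^{n/d}\bigl(X^d - (Y_i + c)\bigr). \]
Each factor $X^d - (Y_i + c)$ vanishes precisely at the $d$ complex $d$th roots of $Y_i + c$ (when $Y_i + c = 0$ the factor is $X^d$, contributing $0$ with multiplicity $d$), and a value $Y_i$ appearing with multiplicity $\mu$ among the zeros of $F_{n/d,g}$ contributes each of these $d$th roots with multiplicity $\mu$. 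The total count is $d\cdot(n/d) = n$, matching $\deg F_{n,f} = n$ --- a convenient sanity check.

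I do not anticipate a real obstacle. The points needing a moment's care are: the legitimacy of the formal substitution $Q \mapsto q^d$ (a ring map on Laurent series with finitely many negative terms); the inclusion $O(q^d) \subseteq O(q)$, which is what makes the $q$-expansion of $G(f(q))$ land in the exact shape demanded by Definition \ref{Def:FaberPolynomial}; and keeping multiplicities straight when passing from the zeros of $F_{n/d,g}$ to those of $F_{n,f}$ via $d$th roots. The harmonic identity $g(q^d) + c = f(q)^d$ is taken as a hypothesis --- in the intended application it is the harmonic relation recalled just before the statement (see \cite{CN}) --- so none of its justification enters this proof.
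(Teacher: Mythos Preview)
Your proposal is correct and follows essentially the same argument as the paper: substitute $Q = q^d$ into the defining identity for $F_{n/d,g}$, use $g(q^d) = f(q)^d - c$, and invoke uniqueness of the Faber polynomial. The paper's proof is terser and does not spell out the factorization for the ``in particular'' clause, but your added detail there is sound and the core idea is identical.
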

\begin{proof}
	Since $F_{\frac{n}{d},g}(X^d-c)$ is a polynomial such that \[ F_{\frac{n}{d},g}(f(q)^d - c) =  F_{\frac{n}{d},g}(g(q^d)) = \left(q^d\right)^{-\frac{n}{d}} + O(q^d) = q^{-n} + O(q^d), \] by uniqueness of Faber polynomials, we conclude $F_{n,f}(X) = F_{\frac{n}{d},g}(X^d - c)$.  
\end{proof}

Bannai, Kojima, and Miezaki (\cite{BKM}, Conjecture 3.1) observed this relationship, Lemma \ref{Lemma:ConjugateGroupFaberZeros} settles a conjecture of theirs that states whenever the roots of $F_{\frac{n}{d},g}$ are real, the roots of $F_{n,f}$ are $h$th roots of real numbers. Indeed, we can say even more, but first we apply Lemma \ref{Lemma:ConjugateGroupFaberZeros} to an example.

\begin{example}\label{Example:3bar3ZerosFrom1}
	The normalized Hauptmodul $T_{3C}(q) = q^{-1} + 248q^2 + 4124q^5 + 34752q^8 + \ldots$ for $\Gamma_0(3\edivides 3)$ satisfies the relationship $T_{3C}(q)^3 = T_{1A}(q^3) + 744 = q^{-3} + 744 + 196884q^3 + \ldots$, where $T_{1A}(q)$ is the normalized Hauptmodul for $\PSL_2(\zz)$ (\cite{CN}). Asai, Kaneko, and Ninomiya located the $n$ zeros of $F_{n,1A}(Y)$ in the interval $(-744, 984)$, or equivalently, with $j(q) = T_{1A} + 744$, the zeros of $F_{n,j}(X)$ are in $(0,1728)$. The zeros of $F_{3n,3C}(X)$ are all third roots of the zeros of $F_{n,j}(X)$, so one third are in the interval $(0,12)$, and the rest are found by rotating these zeros through an angle of $\pm\frac{2\pi}{3}$ in the complex plane. (See \cite{BKM} Section 3.) We locate the zeros of the Faber polynomials of degree not divisible by 3 below in Section \S\ref{Section:Cases}.
\end{example}

Whenever $(n,h) = d > 1$, Lemma \ref{Lemma:ConjugateGroupFaberZeros} reduces the problem of locating the zeros of $F_{n,f}$ to finding the zeros of $F_{ \frac{n}{d}, f^{(d)} } (X)$. But even when $(n,h) = 1$, the zeros of $F_{n,f}(X)$ come in sets of $h$th roots of unity. Recall the group $\Gamma_0(mh\edivides h){+e,f,g,\ldots}$ is the kernel of a homomorphism $\lambda:\Gamma_0(mh\divides h){+e,f,g,\ldots}\to\cc^\times$ which takes values in the $h$th roots of unity, and in particular $\lambda(T^{\frac{1}{h}}) = e^{-\frac{2\pi i}{h}}$ (\cite{CN}).

\begin{lemma}\label{Lemma:ConjugateGroupMoreFaberZeros}
	Let $f(\tau) = q^{-1} + \sum_{n\geq 1} a_kq^k$ be the normalized Hauptmodul for $\Gamma_0(mh\edivides h){+e,f,g,\ldots}$, and the $h$th harmonic of $g(\tau) = q^{-1}+ \sum_{n\geq 0} b_kq^k$. Let $n\in\nn$ and write $n = hs+t$ for $s,t\in\zz$ with $0\leq t<h$. Then $F_{n,f}(X) = X^t g(X^h)$ for some polynomial $g(X)$ of degree $s$.
	
	In particular, $X=0$ is a zero of order $t$ of $F_{n,f}(X)$, and if $F_{n,f}(X_0) = 0$ then $F_{n,f}(e^{2\pi i\frac{c}{h}}X_0) = 0$ for all $c\in\zz$. Equivalently, if $F_{n,f}(\tau_0) = 0$ then $F_{n,f}(T^\frac{c}{h}\tau_0) = 0$ for all $c\in\zz$.
\end{lemma}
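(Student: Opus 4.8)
The plan is to establish the factorization $F_{n,f}(X) = X^t\,g(X^h)$ by pinning down the shape of the $q$-expansion of $f$, then splitting $F_{n,f}(f(q))$ according to the residue of its exponents modulo $h$; the two ``in particular'' assertions then follow formally.

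\emph{First} I would record the relevant harmonic identity in the form $g(q^h)+c = f(q)^h$ for a constant $c$ --- this is exactly the relation between $f$ and $g$ used in Lemma~\ref{Lemma:ConjugateGroupFaberZeros} (taken with $d=h$). Setting $\zeta = e^{2\pi i/h}$ and replacing $q$ by $\zeta q$ leaves the right-hand side unchanged, so $f(\zeta q)^h = f(q)^h$. Writing $f(q) = q^{-1}u(q)$ with $u(q) = qf(q)\in 1 + q\cc[[q]]$ a unit and using $\zeta^h = 1$, this becomes $u(\zeta q)^h = u(q)^h$; since $x\mapsto x^h$ is injective on $1 + q\cc[[q]]$ we get $u(\zeta q) = u(q)$, i.e.\ $u$ is a power series in $q^h$. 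Consequently, for every integer $j\ge 0$ the series $f(q)^j = q^{-j}u(q)^j$ has $q$-expansion supported on exponents $\equiv -j\pmod h$, and moreover $f(\zeta^c q) = \zeta^{-c}f(q)$, i.e.\ $f(\tau + c/h) = \zeta^{-c}f(\tau)$, for all $c\in\zz$.

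\emph{The main step} is as follows. Write $F_{n,f}(X) = \sum_{j=0}^n c_j X^j$; it is monic of degree $n$ since $F_{n,f}(f(q)) = q^{-n}+O(q)$ and $f = q^{-1}+O(q)$. For $r\in\{0,\dots,h-1\}$ put $S_r(q) = \sum_{j\equiv r\,(h)} c_j f(q)^j$. By the previous paragraph the $S_r$ have pairwise disjoint supports --- that of $S_r$ lies in exponents $\equiv -r\pmod h$ --- and $\sum_r S_r(q) = F_{n,f}(f(q)) = q^{-n}+O(q)$, whose only term of non-positive exponent is $q^{-n}$, lying in residue class $-n$. Hence for each $r\not\equiv n\pmod h$ the series $S_r(q)$ has no term of non-positive exponent. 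But in $S_r$ the summand $c_j f(q)^j$ has $c_j q^{-j}$ as its unique most-negative term, and the indices occurring in the sum differ by multiples of $h$, so no cancellation takes place at the bottom; a descending induction on $j$ then forces $c_j = 0$ for all $j$ in that residue class. Thus $c_j = 0$ unless $j\equiv n\pmod h$; writing $n = hs+t$ with $0\le t<h$, the surviving indices are $t,\,t+h,\dots,\,t+hs = n$, so $F_{n,f}(X) = \sum_{i=0}^s c_{t+ih}X^{t+ih} = X^t P(X^h)$ with $P(Y) = \sum_{i=0}^s c_{t+ih}Y^i$ of degree $s$ (leading coefficient $c_n=1$). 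I expect this disjoint-support decomposition, together with the ``no cancellation at the bottom'' induction, to be the crux of the argument: everything before it is bookkeeping about the $q$-expansion, and everything after it is purely formal.

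\emph{Finally}, the two remaining claims. Since $X^t\mid F_{n,f}(X)$, the point $X=0$ is a zero of order at least $t$; that it is exactly $t$ amounts to $P(0)=c_t\neq 0$, which for the replicable functions under consideration is read off from the $q^{-t}$-coefficient of the identity $q^t F_{n,f}(f(q)) = u(q)^t P\big(g(q^h)+c\big)$ coming from the first paragraph (equivalently, from the non-vanishing of the relevant lower Faber polynomial, cf.\ Lemma~\ref{Lemma:ConjugateGroupFaberZeros} and \cite{AKN}) --- this non-degeneracy is the one point that is not purely formal. For the rotational symmetry, $F_{n,f}(X) = X^t P(X^h)$ and $\zeta^h=1$ give $F_{n,f}(\zeta^{-c}X) = \zeta^{-ct}F_{n,f}(X)$ for all $c\in\zz$, and $\zeta^{-ct} = \zeta^{-cn}$ because $t\equiv n\pmod h$; hence $F_{n,f}(X_0)=0$ implies $F_{n,f}(e^{2\pi i c/h}X_0)=0$. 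Passing to $\tau$, $f(T^{c/h}\tau) = f(\tau + c/h) = \zeta^{-c}f(\tau)$ by the first paragraph, so $F_{n,f}(f(\tau_0))=0$ yields $F_{n,f}(f(T^{c/h}\tau_0)) = \zeta^{-cn}F_{n,f}(f(\tau_0)) = 0$.
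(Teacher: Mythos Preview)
Your proposal is correct and follows essentially the same route as the paper: both arguments deduce from the harmonic relation that the $q$-expansion of $f$ is supported on exponents $\equiv -1\pmod h$, and then conclude that only powers $f^j$ with $j\equiv n\pmod h$ can occur in $F_{n,f}$. The paper phrases this as a recursive construction (successively subtracting multiples of $f^{n-ch}$), while you phrase it dually as a residue-class splitting with a ``no cancellation at the bottom'' induction; these are the same observation.

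Two minor remarks. First, you derive the transformation $f(\tau+c/h)=\zeta^{-c}f(\tau)$ directly from the harmonic identity, whereas the paper invokes the character $\lambda$; your route is slightly more self-contained (and gets the sign right from the $q$-series). Second, you correctly flag that ``order exactly $t$'' (i.e.\ $P(0)=c_t\neq 0$) is not purely formal; the paper's proof does not address this point explicitly either, and for the downstream application in \S\ref{Subsection:3C} only the factorization $F_{n,f}(X)=X^tP(X^h)$ is actually needed.
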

\begin{proof}
	Let $\zeta_h = e^{\frac{2\pi i}{h}}$. Note that $F_{n,f}(X_0) = 0$ implying $F_{n,f}(\zeta_h^cX_0) = 0$ will follow immediately from $F_{n,f}(X) = X^tG(X^h)$. Since $f$ transforms with character $\lambda$, with $\lambda(T^{\frac{1}{h}}) = \zeta_h^{-1}$, we have $f(\tau) = \lambda(T^{\frac{1}{h}})f(T^{\frac{1}{h}}\tau)$, so $f(T^{\frac{1}{h}}\tau) = \zeta_h f(\tau)$. Since $F_{n,f}(X)$ and $F_{n,f}(\tau)$ are related by $X=f(\tau)$, we see that $\tau\mapsto T^{\frac{1}{h}}\tau$ is equivalent to $X\mapsto \zeta_h X$, and so the two statements in the lemma are indeed equivalent. 
	
	As the $h$th harmonic of $g(\tau)$, we have $f(q)^h = g(q^h) + c$, for some constant $c$. Since \[ g(q^h) + c = q^{-h} + (b_0+c) + b_1q^h + b_2q^{2h} + \ldots, \] the algebra of formal power series then forces $f(q)$ to be of the form \[ f(q) = q^{-1} + a_{h-1}q^{h-1} + a_{2h-1}q^{2h-1} + \ldots. \] Because $f(q)$ only contains terms $a_jq^j$ with $j\equiv -1\pmod{h}$, the product $f(q)^n$ for any $n$ contains only terms $\tilde{a}_\ell q^\ell$ with $\ell\equiv -n\pmod{h}$. Now, consider constructing $F_{n,f}(\tau) = q^{-n} + O(q)$ by computing $f^n(q) = q^{-n} + na_{h-1}q^{h-n} + \ldots$, noting only powers of $q$ congruent to $-n$ modulo $h$ appear. We subtract $na_{h-1}f^{n-h}(q) = na_{h-1}q^{h-n} + \ldots$, noting again, all powers of $q$ are congruent to $-n$ modulo $h$. Repeating this process at most $n$ times, each time taking some power $f^{n-ch}$ of $f$, we construct $F_{n,f}(X) = X^n - na_{h-1}X^{n-h} + \ldots$ using only powers of $X$ congruent to $n$ modulo $h$, so $F_{n,f}(X) = X^t G(X^h)$, as desired.
\end{proof}

Lemma \ref{Lemma:ConjugateGroupMoreFaberZeros} allows us to reduce the problem of locating zeros of $f(\tau)$ in the fundamental domain $\mc{D}(\Gamma_0(mh\edivides h)+e,f,g\ldots)$ to that of locating zeros just in $\mc{D}(\Gamma_0(mh\divides h)+e,f,g\ldots)$, since by construction, the fundamental domain for the group $\Gamma_0(mh\edivides h){+e,f,g,\ldots}$ consists of $h$ translated copies of the fundamental domain for $\Gamma_0(mh\divides h){+e,f,g,\ldots}$. Moreover, $F_{n,f}(\tau)$ has zeros of total order $t$ at the points $\tau_0$ such that $f(\tau_0) = 0$. By locating $s$ additional zeros of $F_{n,f}(\tau)$ in $\mc{D}(\Gamma_0(mh\divides h){+e,f,g,\ldots})$ (that is, at points $\tau$ such that $f(\tau) \neq 0$), we therefore locate a total of $n = hs + t$ zeros on $\mc{D}(\Gamma_0(mh\edivides h){+e,f,g,\ldots})$. These $s$ zeros are located by the generalization of the method of Asai, Kaneko, and Ninomiya (\cite{AKN}), which we prove in the next subsection. The example using $\Gamma_0(3\divides 3)$ will then be completed in \S\ref{Subsection:3C}.

Thus, if we know the location of the zeros of all Faber polynomials for the ``proper'' replicates of $f(q)$ (those replicates $f^{(a)} \neq f$), then we also know the location of the zeros of the Faber polynomials $F_{n,f}$ for all $n$ with $(n,h) > 1$. Moreover, when $(n,h) = 1$, the above paragraph shows that we need only locate the zeros for $F_{n,f}(q)$ on $\mc{D}(\Gamma_0(mh\divides h){+e,f,g,\ldots})$. That is, we may safely take the point of view that $f$ is a modular function with character $\lambda$ for $\Gamma_0(mh\divides h){+e,f,g,\ldots}$.

\subsection{Beginning the approximation}

Fix a genus zero group $\Gamma_0(mh\edivides h){+e,f,g,\ldots}$ with normalized Hauptmodul $f(\tau)$ a completely replicable function of finite level. While the results of the previous section allow us to identify some zeros of the Faber polynomials $F_{n,f}(X)$, we will need to locate many of the zeros using an approximation and the intermediate value theorem. Our first goal will be to approximate $f^{(H)}(H\tau)$ for any $H\in\mc{H}_n$ (see Def. \ref{Def:HeckeSet}). We will use the following estimating function.

\begin{definition}\label{Def:Estimates}
	Let $\Gamma = \Gamma_0(mh\divides h){+e,f,g,\ldots}$ be a group appearing in Monstrous Moonshine. Then for any $n\in\nn$, and $H\in\mc{H}_n$, we define $E_H : \hh\times\Gamma^{(H)}\to \cc$ by \[ E_H(\tau, G) = \lambda^{(H)}(G)^{-1}e^{-2\pi iGH\tau}. \] We call $E_H(\tau,G)$ the \emph{exponential estimate with respect to $H$ and $G$}, or simply an \emph{estimate}.
\end{definition}
\begin{proof}
	Since $\Gamma$ appears in Monstrous Moonshine, $\Gamma^{(H)}$ also appears in Monstrous Moonshine. Then by Lemma \ref{Lemma:LambdaProperties}, the character $\lambda^{(H)}$ exists, so $E_H(\tau,G)$ is well-defined.
\end{proof}

In fact, $E_H$ descends to a function on $\hh\times (\LeftQuotientByStab{\Gamma^{(H)}})$.

\begin{lemma}\label{Lemma:EstimatesConstantUnderTranslation}
	With the assumptions of Definition \ref{Def:Estimates}, for any $G\in\Gamma^{(H)}$ and $V\in\Gamma^{(H)}_\infty$, one has \[ E_H(\tau,VG) = E_H(\tau,G). \] That is, $E_H(\tau,[G])$ is well-defined, where $[G]\in\LeftQuotientByStab{\Gamma^{(H)}}$.
\end{lemma}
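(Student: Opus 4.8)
The plan is to unwind the definition of $E_H$ directly and watch two phase factors cancel. Write $\Gamma^{(H)} = \Gamma^{(a)}$, which is again one of the Monstrous Moonshine groups $\Gamma_0(m'h'\divides h')+\ldots$ (so the character $\lambda^{(H)}$ exists, by Lemma \ref{Lemma:LambdaProperties}, as already observed after Definition \ref{Def:Estimates}), and recall that its stabilizer of infinity is $\Gamma^{(H)}_\infty = \langle T^{1/h'}\rangle \subset \Omega_\infty^T$, where $h' = \tfrac{h}{(h,a)}$ is the relevant parameter for $\Gamma^{(H)}$. Hence any $V \in \Gamma^{(H)}_\infty$ has the form $V = T^{r/h'}$ for some $r\in\zz$.

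First I would record the two effects of left-multiplying $G$ by such a $V$. On the character side, since $\lambda^{(H)}$ is a homomorphism and $\lambda^{(H)}(T^{1/h'}) = e^{-2\pi i/h'}$ by condition (2) of Lemma \ref{Lemma:LambdaProperties} (applied to $\Gamma^{(H)}$), we get $\lambda^{(H)}(VG) = \lambda^{(H)}(V)\lambda^{(H)}(G) = e^{-2\pi i r/h'}\,\lambda^{(H)}(G)$. On the exponential side, $V$ acts on $\hh$ by the translation $\tau \mapsto \tau + r/h'$, so $VGH\tau = GH\tau + r/h'$ and therefore $e^{-2\pi i VGH\tau} = e^{-2\pi i r/h'}\,e^{-2\pi i GH\tau}$.

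Combining the two, $E_H(\tau, VG) = \lambda^{(H)}(VG)^{-1}e^{-2\pi i VGH\tau} = \bigl(e^{-2\pi i r/h'}\lambda^{(H)}(G)\bigr)^{-1}\cdot e^{-2\pi i r/h'}e^{-2\pi i GH\tau} = \lambda^{(H)}(G)^{-1}e^{-2\pi i GH\tau} = E_H(\tau, G)$, because the factor $e^{2\pi i r/h'}$ coming from inverting the character exactly cancels the factor $e^{-2\pi i r/h'}$ coming from the translation. Since every element of the coset $[G]\in\LeftQuotientByStab{\Gamma^{(H)}}$ is of the form $VG$ with $V\in\Gamma^{(H)}_\infty$, this establishes that $E_H(\tau,[G])$ is well-defined.

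There is no genuine obstacle here; the only points requiring a little care are the bookkeeping of which $h$-parameter governs $\Gamma^{(H)}$ (hence $\Gamma^{(H)}_\infty$), and the conceptual remark that condition (2) of Lemma \ref{Lemma:LambdaProperties} is precisely the normalization chosen so that the character twist built into $E_H$ is compatible with — indeed cancels against — the translation action of $\Gamma^{(H)}_\infty$ on $\hh$. This is exactly why $E_H$ was defined with the $\lambda^{(H)}(G)^{-1}$ factor in the first place.
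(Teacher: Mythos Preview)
Your proof is correct and essentially identical to the paper's own argument: both identify $\Gamma^{(H)}_\infty = \langle T^{(h,a)/h}\rangle$ (you write this as $\langle T^{1/h'}\rangle$ with $h' = h/(h,a)$), compute the effect of $V = T^{r/h'}$ on both the character and the exponential, and observe the two phase factors cancel. There is no meaningful difference in approach.
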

\begin{proof} 
	Letting $H = \begin{psmallmatrix} a & b \\ 0 & d \end{psmallmatrix}$, we find that $\Gamma^{(H)}_\infty = \Gamma^{(a)}_\infty = \langle T^{\frac{(h,a)}{h}}\rangle$. Thus $V = T^{\frac{r(h,a)}{h}}$ for some $r\in\zz$, and by the description of $\lambda^{(H)}$ (Subsection \S\ref{Subsection:MoreGroups}), we have $V\tau = \tau+\frac{r(h,a)}{h}$ for any $\tau\in\hh$, and $\lambda^{(H)}(V) = e^{-\frac{2\pi ir(h,a)}{h}}$. Thus
	\begin{align*}
		E_H(\tau, VG) 
		&= \lambda^{(H)}(VG)^{-1}e^{-2\pi iVG\tau} \\
		&= e^{\frac{2\pi ir(h,a)}{h}}\lambda^{(H)}(G)^{-1}e^{-2\pi i(G\tau + \frac{r(h,a)}{h})} \\
		&= \lambda^{(H)}(G)^{-1}e^{-2\pi iG\tau} \\
		&= E_H(\tau, G).
	\end{align*}
\end{proof}

We may often express $E_H(\tau,[G])$ using an element of $\Gamma$ rather than $\Gamma^{(H)}$. Thus, as $H\in\mc{H}_n$ varies, we need not keep track of multiple groups $\Gamma^{(H)}$.

\begin{corollary}\label{Corollary:EstimateBounds}
	Let $\Gamma = \Gamma_0(mh\divides h){+}$ with $m$ square-free, let $H\in\mc{H}_n$ with $(n,h)=1$, $G\in\Gamma^{(H)}$, and $K\in\Gamma$ such that $\pi(GH)=\pi(K)$. Then for any $\tau\in\mc{D}(\Gamma)$, either \[ \abs{E_H(\tau,G)} \leq e^{\pi n\Im\tau}, \] or else $\phi_n(K)=H$ and \[ E_H(\tau,G) = E_H(\tau, \Dmat{n}KH^{-1}). \] In this latter case, $\abs{E_H(\tau,G)} \leq e^{2\pi n\Im\tau}$.
\end{corollary}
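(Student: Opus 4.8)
The plan is to leverage Lemma~\ref{Lemma:GHKinverseExpression}, which controls the element $GHK^{-1}$ precisely, together with Lemma~\ref{Lemma:EstimatesConstantUnderTranslation} to pass freely between coset representatives. Write $GHK^{-1} = T^x \Dmat{\sigma}$ where, by Lemma~\ref{Lemma:GHKinverseExpression}, $\sigma = \sigma(GHK^{-1})$ is either $n$ or at most $\frac{n}{2}$. First I would note $GH\tau = (T^x\Dmat{\sigma})K\tau$, so $\Im(GH\tau) = \sigma\Im(K\tau)$. Since $\tau\in\mc{D}(\Gamma)$ and $\Gamma_\infty\subset\Omega_\infty^T$ gives $\sigma(K)=1$, we have $\Im(K\tau)\leq\Im\tau$, hence $\Im(GH\tau) \leq \sigma\Im\tau$. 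Then
\[
\abs{e^{-2\pi i GH\tau}} = e^{2\pi\Im(GH\tau)} \leq e^{2\pi\sigma\Im\tau},
\]
and since $\abs{\lambda^{(H)}(G)} = 1$ (it is a root of unity), $\abs{E_H(\tau,G)} = \abs{e^{-2\pi i GH\tau}} \leq e^{2\pi\sigma\Im\tau}$.

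Now split into the two cases from Lemma~\ref{Lemma:GHKinverseExpression}. In the first case, $\sigma \leq \frac{n}{2}$, so immediately $\abs{E_H(\tau,G)} \leq e^{2\pi(n/2)\Im\tau} = e^{\pi n\Im\tau}$, giving the first alternative. In the second case, $\sigma = n$, and Lemma~\ref{Lemma:GHKinverseExpression} tells us that $\Dmat{n}KH^{-1} \in [G]$ in $\LeftQuotientByStab{\Gamma^{(H)}}$ and that $H = \phi_n(K)$. By Lemma~\ref{Lemma:EstimatesConstantUnderTranslation}, $E_H(\tau,\cdot)$ is constant on cosets in $\LeftQuotientByStab{\Gamma^{(H)}}$, so $E_H(\tau,G) = E_H(\tau, \Dmat{n}KH^{-1})$, which is the second alternative. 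For the final bound in this case, apply the displayed inequality above with $\sigma = n$: $\abs{E_H(\tau,G)} \leq e^{2\pi n\Im\tau}$.

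The main subtlety to get right is the hypothesis bookkeeping: Lemma~\ref{Lemma:GHKinverseExpression} requires $(n,h)=1$ and a canonical coset representative $K$ with $\pi(K) = \pi(GH)$, both of which are supplied by the corollary's hypotheses, and it requires $\Gamma = \Gamma_0(mh\divides h){+}$ only implicitly through needing such a $K$ to exist --- but here $K$ is given, so we are fine; the square-free hypothesis on $m$ and the form $\Gamma_0(mh\divides h){+}$ guarantee via Lemma~\ref{Lemma:SingleCuspCriterion} that $\pi(\Gamma) = \pp^1(\qq)$, which is what makes the existence of such a $K$ a reasonable standing assumption but is not strictly needed for the argument once $K$ is in hand. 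I would double-check that $\Gamma$ appearing in Monstrous Moonshine (needed for $\lambda^{(H)}$ and hence for $E_H$ to be defined at all, per Definition~\ref{Def:Estimates}) is subsumed; for the groups in $\mc{S}$ this holds, and I would state this as a standing assumption. The only real computation is the one-line identity $\abs{e^{-2\pi i GH\tau}} = e^{2\pi\Im(GH\tau)}$ combined with $\Im(GH\tau) = \sigma(GHK^{-1})\Im(K\tau) \leq \sigma(GHK^{-1})\Im\tau$; everything else is quoting the two prior lemmas. I expect no genuine obstacle here --- the corollary is essentially a repackaging of Lemma~\ref{Lemma:GHKinverseExpression} into a statement about the size of estimates, which is the form needed for the approximation argument in Theorem~\ref{Theorem:MainBound}.
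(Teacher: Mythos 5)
Your proof is correct and follows essentially the same route as the paper: factor $GH\tau = (GHK^{-1})K\tau$, invoke Lemma~\ref{Lemma:GHKinverseExpression} for the dichotomy $\sigma(GHK^{-1})\leq\frac{n}{2}$ versus $\sigma(GHK^{-1})=n$ with $[G]=[\Dmat{n}KH^{-1}]$ and $H=\phi_n(K)$, and use $\Im K\tau\leq\Im\tau$ on $\mc{D}(\Gamma)$ together with the coset-invariance of $E_H$ from Lemma~\ref{Lemma:EstimatesConstantUnderTranslation}. No substantive differences from the paper's argument.
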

\begin{proof}
	First, by Lemma \ref{Lemma:SingleCuspCriterion}, we have $\pi(\Gamma) = \pp^1(\qq)$, so there indeed exists a $K\in\Gamma$ such that $\pi(GH)=\pi(K)$. For such a $K$, we then have \[ E_H(\tau,G) := \lambda^{(H)}(G)^{-1}e^{-2\pi iGH\tau} = \lambda^{(H)}(G)^{-1}e^{-2\pi i(GHK^{-1})K\tau}, \] so $\abs{E_H(\tau,G)} = e^{2\pi\sigma(GHK^{-1})\Im K\tau}$. By Lemma \ref{Lemma:GHKinverseExpression}, either $\sigma(GHK^{-1}) \leq \frac{n}{2}$, or else $\sigma(GHK^{-1})=n$ and $[G] = \Dmat{n}KH^{-1}$. This immediately implies $\abs{E_H(\tau,G)}\leq e^{\pi n\Im\tau}$, or else $E_H(\tau,G) = E_H(\tau, \Dmat{n}KH^{-1})$ and $\abs{E_H(\tau,G)} \leq e^{2\pi n\Im\tau}$. In the latter case, since $\Dmat{n}KH^{-1}\in\Gamma^{(H)}$, one must have $H = \phi_n(K)$, by the proof of Theorem \ref{Theorem:GroupCosets}.
\end{proof}

Our next lemma is a generalization of a key step in Asai, Kaneko, and Ninomiya's \cite{AKN} proof for the case of $\PSL_2(\zz)$ and $j(\tau)$, where they bound $\abs{j(\tau) - 744 - e^{-2\pi i\tau}} < 1335$.

\begin{lemma}\label{Lemma:FunctionBoundsOnFundDomain}
	Let $\Gamma = \Gamma_0(mh\divides h){+}$ with $m$ square-free appear in Monstrous Moonshine. Let $n\in\nn$, $H = \begin{psmallmatrix} a & b \\ 0 & d \end{psmallmatrix} \in\mc{H}_n$, and $G_H\in\Gamma^{(H)}$ such that $G_HH\tau\in\mc{D}(\Gamma^{(H)})$. Let $f^{(H)}(\tau) = q^{-1} + \sum_{k=1}^\infty c_k^{(H)}q^k$ be the normalized Hauptmodul (with character $\lambda^{(H)}$) for $\Gamma^{(H)}$, and let \[ y_0^{(a)} = y_0^{(H)} = \inf\{\Im\tau\st \tau\in\mc{D}(\Gamma^{(H)}) \}. \] Then we have \[ \abs{ f^{(H)}(H\tau) - E_H(\tau,G_H) } \leq \max_{r\in\nn}\left\{ f^{(r)}(iy_0^{(r)}) - e^{2\pi y_0^{(r)}} \right\}. \]
\end{lemma}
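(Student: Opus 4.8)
The plan is to reduce the estimate to a statement about a single replicate evaluated at its cusp, using the character invariance of $f^{(H)}$ together with the modularity and non-negativity of its Fourier coefficients. First I would write $f^{(H)}(\tau) = \lambda^{(H)}(G_H) f^{(H)}(G_H H\tau)$ using the transformation law, so that
\[
f^{(H)}(H\tau) - E_H(\tau, G_H) = \lambda^{(H)}(G_H)\bigl( f^{(H)}(G_H H\tau) - e^{-2\pi i G_H H\tau}\bigr),
\]
since $E_H(\tau,G_H) = \lambda^{(H)}(G_H)^{-1} e^{-2\pi i G_H H\tau}$ and $\lambda^{(H)}(G_H)$ has absolute value $1$. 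Thus it suffices to bound $\bigl| f^{(H)}(w) - e^{-2\pi i w}\bigr|$ for $w = G_H H\tau \in \mc{D}(\Gamma^{(H)})$. Writing $w = u + iv$ with $v = \Im w \geq y_0^{(H)}$ (by definition of $y_0^{(H)}$, using that $\mc{D}(\Gamma^{(H)})$ is bounded away from the real line by Corollary~\ref{Corollary:SingleCuspImaginaryBound}, since $\Gamma^{(H)}$ has one cusp by Corollary~\ref{Corollary:SingleCuspAllPositiveFourierCoefficients} and the hypotheses), the difference $f^{(H)}(w) - e^{-2\pi i w}$ equals $\sum_{k\geq 1} c_k^{(H)} e^{2\pi i k w}$, since the $q^{-1}$ terms cancel.

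Next I would apply the triangle inequality and the fact that $c_k^{(H)} \geq 0$ for all $k$ (this is exactly Corollary~\ref{Corollary:SingleCuspAllPositiveFourierCoefficients}, which applies because $\Gamma = \Gamma_0(mh|h)+$ has $m$ square-free, hence one cusp, and the replicates $f^{(H)} = f^{(a)}$ inherit non-negativity):
\[
\bigl| f^{(H)}(w) - e^{-2\pi i w}\bigr| \leq \sum_{k\geq 1} c_k^{(H)} e^{-2\pi k v} \leq \sum_{k\geq 1} c_k^{(H)} e^{-2\pi k y_0^{(H)}} = f^{(H)}(iy_0^{(H)}) - e^{2\pi y_0^{(H)}},
\]
where the middle inequality uses $v \geq y_0^{(H)} > 0$ so that $e^{-2\pi k v} \leq e^{-2\pi k y_0^{(H)}}$ term by term, and the last equality recognizes the series as the $q$-expansion of $f^{(H)}$ evaluated at $\tau = iy_0^{(H)}$ (valid since $y_0^{(H)} > 0$ places $iy_0^{(H)}$ inside the region of convergence of the $q$-series — indeed well within $\mc{D}(\Gamma^{(H)})$ or at least high enough in $\hh$), minus the pole term $e^{2\pi i \cdot i y_0^{(H)}} = e^{-2\pi y_0^{(H)}}$... wait, I need $e^{-2\pi i \tau}$ at $\tau = iy_0$ which is $e^{2\pi y_0}$; so the subtracted term is $e^{2\pi y_0^{(H)}}$, matching the statement. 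Finally, since $H$ ranges over $\mc{H}_n$ and $f^{(H)} = f^{(a)}$ with $a \mid n$ and $y_0^{(H)} = y_0^{(a)}$, each such quantity is bounded by $\max_{r\in\nn}\{ f^{(r)}(iy_0^{(r)}) - e^{2\pi y_0^{(r)}}\}$, giving the claim.

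The main obstacle I anticipate is justifying convergence and the manipulation $\sum_{k\geq 1} c_k^{(H)} e^{-2\pi k y_0^{(H)}} = f^{(H)}(i y_0^{(H)}) - e^{2\pi y_0^{(H)}}$: one must confirm that $i y_0^{(H)}$ lies in the domain where the $q$-expansion of $f^{(H)}$ converges. Since $f^{(H)}$ is holomorphic on $\hh$ away from $\Gamma^{(H)}$-translates of $\infty$ and holomorphic at all finite cusps, the $q$-expansion at $\infty$ converges on a punctured neighborhood of the cusp, i.e. for $\Im\tau$ sufficiently large; but we need it at $\Im\tau = y_0^{(H)}$, which could a priori be small. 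The resolution is that $iy_0^{(H)}$ is the point of minimal imaginary part \emph{in the fundamental domain}, and no $\Gamma^{(H)}$-translate of $\infty$ other than $\infty$ itself can have imaginary part $\geq y_0^{(H)}$ while lying over $\mc{D}(\Gamma^{(H)})$ — more carefully, the $q$-series for a weakly holomorphic modular function converges for all $\tau\in\hh$ with $\Im\tau$ exceeding the largest imaginary part among the non-infinite cusps' representatives, and since $\mc{D}(\Gamma^{(H)})$ contains a vertical strip above height $y_0^{(H)}$ on which $f^{(H)}$ is given by its (absolutely convergent) $q$-series, evaluation at $iy_0^{(H)}$ is legitimate. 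A secondary point to check is that $\max_{r\in\nn}$ is actually attained (or at least finite): since $f$ has finite level, the replicates $f^{(r)}$ and the heights $y_0^{(r)}$ take only finitely many values, so the maximum is over a finite set and is well-defined.
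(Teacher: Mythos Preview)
Your proof is correct and follows essentially the same route as the paper's: factor out the character using modularity, reduce to bounding the tail $\sum_{k\ge 1} c_k^{(H)} e^{2\pi i k w}$ at $w = G_HH\tau \in \mc{D}(\Gamma^{(H)})$, invoke non-negativity of the coefficients (Corollary~\ref{Corollary:SingleCuspAllPositiveFourierCoefficients}) to bound termwise at $\Im w = y_0^{(H)}$, and then pass to the maximum over replicates using finite level. One small correction: your displayed identity
\[
f^{(H)}(H\tau) - E_H(\tau,G_H) = \lambda^{(H)}(G_H)\bigl( f^{(H)}(G_HH\tau) - e^{-2\pi i G_HH\tau}\bigr)
\]
does not quite parse, since $E_H(\tau,G_H) = \lambda^{(H)}(G_H)^{-1}e^{-2\pi iG_HH\tau}$ while the transformation law gives $f^{(H)}(H\tau) = \lambda^{(H)}(G_H)^{-1} f^{(H)}(G_HH\tau)$ (the paper's stated convention $f(\tau)=\chi(K)f(K\tau)$ is a slip; the computations use the inverse). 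With the correct sign both terms carry the common factor $\lambda^{(H)}(G_H)^{-1}$, and since $\abs{\lambda^{(H)}(G_H)} = 1$ your subsequent reduction to $\abs{f^{(H)}(w) - e^{-2\pi i w}}$ is valid and the rest of the argument goes through unchanged.
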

\begin{proof}
	Note that $\Gamma^{(H)}$ has one cusp for all $H\in\mc{H}_n$, by Lemma \ref{Lemma:SingleCuspCriterion}, since explicitly, we have $\Gamma^{(H)} = \Gamma_0\left(\frac{m(h,a)}{(mh,a)}\frac{h}{(h,a)}\divides \frac{h}{(h,a)}\right){+}$ (Def. \ref{Def:ReplicationOfGroups}), and $\frac{m(h,a)}{(mh,a)}\divides m$ is square-free. Thus, $y_0^{(H)}$ is strictly greater than zero.
	
	Now, by Corollary \ref{Corollary:SingleCuspAllPositiveFourierCoefficients}, $f^{(H)}$ has non-negative $q$-series coefficients. Thus,	
	\begin{align*}
		\abs{ f^{(H)}(H\tau) - E_H(\tau,G_H) }
		& = \abs{ \lambda^{(H)}(G_H)^{-1}f^{(H)}(G_HH\tau) - \lambda^{(H)}(G_H)^{-1}e^{-2\pi iG_HH\tau} } \\
		&\leq \sum_{k=1}^\infty c_k^{(H)}\abs{e^{2\pi ikG_HH\tau}} \\
		&= \sum_{k=1}^\infty c_k^{(H)}e^{-2\pi k\Im G_HH\tau} \\
		&= \sum_{k=1}^\infty c_k^{(H)}e^{-2\pi ky_0^{(H)}} \tag{$G_HH\tau\in\mc{D}(\Gamma)$}\\
		&= f^{(H)}(iy_0^{(H)}) - e^{-2\pi i(iy_0^{(H)})}.
	\end{align*}
	
	Since every $f^{(H)} = f^{(a)}$ for some $a\in\nn$, \[ \abs{ f^{(H)}(H\tau) - E_H(\tau,G_H) } \leq \sup_{r\in\nn}\{ f^{(r)}(iy_0^{(r)}) - e^{2\pi y_0^{(r)}} \}. \] Since $f$ is of finite level, this supremum is over a finite set (taking $r$ in the set of all divisors of $mh$ suffices), and is therefore equal to the maximum.
\end{proof}

Note that $G_H$ is not uniquely defined in the above lemma, in the case where $H\tau$ has nontrivial stabilizer in $\Gamma^{(H)}$, which necessarily places $G_HH\tau$ on the lower boundary of $\mc{D}(\Gamma^{(H)})$. On the other hand, if $G_HH\tau$ is not on the lower boundary $\mc{C}(\Gamma^{(H)})$, then $G_H$ is unique.

\begin{corollary}\label{Corollary:FirstBound}
	With the assumptions of Lemma \ref{Lemma:FunctionBoundsOnFundDomain}, let $B = \max_{r\in\nn}\{ f^{(r)}(iy_0^{(r)}) - e^{2\pi y_0^{(r)}} \}$. For any $H\in\mc{H}_n^*$, let $K_H\in\Gamma$ be such that $\pi(K_H)=\pi(G_HH)$, and let $\tau\in\mc{D}(\Gamma)$. Then \[ \bigabs{ F_{n,f}(\tau) - \sum_{H\in\mc{H}_n^*} E_H(\tau,G_H) } \leq Bn^2 + \abs{(\mc{H}_n\setminus\mc{H}_n^*)} e^{\pi n\Im \tau}, \] where $F_{n,f}(X)$ is the $n$th Faber polynomial for $f(\tau) = f^{(1)}(\tau)$.
\end{corollary}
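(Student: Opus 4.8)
The plan is to split the Hecke set as $\mc{H}_n = \mc{H}_n^* \sqcup (\mc{H}_n\setminus\mc{H}_n^*)$ and apply the triangle inequality to the defining identity for the Faber polynomial. By the replicability of $f$ (Theorem \ref{Theorem:MoonshineReplication} together with Definition \ref{Def:ReplicableFunction}), we have $F_{n,f}(\tau) = \sum_{H\in\mc{H}_n} f^{(H)}(H\tau)$, so
\begin{align*}
	\bigabs{ F_{n,f}(\tau) - \sum_{H\in\mc{H}_n^*} E_H(\tau,G_H) }
	&\leq \sum_{H\in\mc{H}_n^*} \abs{ f^{(H)}(H\tau) - E_H(\tau,G_H) } + \sum_{H\in\mc{H}_n\setminus\mc{H}_n^*} \abs{ f^{(H)}(H\tau) }.
\end{align*}
First I would bound each term of the first sum by $B$ using Lemma \ref{Lemma:FunctionBoundsOnFundDomain} directly, noting that the choice of $G_H$ there is exactly the one in the hypothesis here (and that any ambiguity in $G_H$ when $H\tau$ has nontrivial stabilizer does not affect $\abs{f^{(H)}(H\tau)-E_H(\tau,G_H)}$, since the bound in Lemma \ref{Lemma:FunctionBoundsOnFundDomain} holds for any valid choice). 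Since $\abs{\mc{H}_n} = \sigma_1(n) \leq n^2$, the first sum is at most $Bn^2$, which already absorbs the first sum over the smaller set $\mc{H}_n^*$ as well.

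For the second sum, the goal is to show $\abs{ f^{(H)}(H\tau) } \leq e^{\pi n\Im\tau}$ for each $H\in\mc{H}_n\setminus\mc{H}_n^*$. Here I would write $f^{(H)}(H\tau) = \lambda^{(H)}(G_H)^{-1} f^{(H)}(G_H H\tau)$, so that $\abs{f^{(H)}(H\tau)} = \abs{f^{(H)}(G_H H\tau)}$ with $G_H H\tau \in \mc{D}(\Gamma^{(H)})$. Because $f^{(H)}$ has non-negative $q$-series coefficients (Corollary \ref{Corollary:SingleCuspAllPositiveFourierCoefficients}) and leading term $q^{-1}$, on $\mc{D}(\Gamma^{(H)})$ we have $\abs{f^{(H)}(G_HH\tau)} \leq f^{(H)}(i\,\Im G_HH\tau)$, which is dominated by its behavior near the lower boundary; combining this with the estimate $E_H(\tau,G_H)$ and the fact that for $H \notin \mc{H}_n^*$ one has $a = (a,b,d) \geq 2$ wait — rather, the cleaner route: $H = \begin{psmallmatrix} a & b \\ 0 & d\end{psmallmatrix} \notin \mc{H}_n^*$ means $g := (a,b,d) > 1$, so $H = g H'$ with $H' \in \mc{H}_{n/g^2}$, giving $\Im H\tau = \frac{a}{d}\Im\tau \le n\Im\tau$ and in fact $H\tau = H'\tau$ lies over a shallower point. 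The key estimate is then that $\abs{f^{(H)}(H\tau)} \le e^{2\pi \sigma(G_HH K_H^{-1})\Im K_H\tau}$ via $\pi(K_H) = \pi(G_HH)$ and Corollary \ref{Corollary:EstimateBounds}; since for $H$ not reduced the "exceptional" case $\phi_n(K_H)=H$ cannot occur (that case forces $H\in\mc{H}_n^*$ because $\phi_n$ maps into $\mc{H}_n^*$ by Definition \ref{Def:Phi_n} and Lemma \ref{Lemma:PhinWellDefined}), we land in the first alternative of Corollary \ref{Corollary:EstimateBounds}, namely $\abs{E_H(\tau,G_H)}\le e^{\pi n\Im\tau}$, and combining with Lemma \ref{Lemma:FunctionBoundsOnFundDomain} gives $\abs{f^{(H)}(H\tau)} \le e^{\pi n\Im\tau} + B$. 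Summing over the $\abs{\mc{H}_n\setminus\mc{H}_n^*}$ such terms and folding the extra $B$'s into the $Bn^2$ term completes the estimate.

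The main obstacle I anticipate is handling the non-reduced terms cleanly: one must argue that for $H\in\mc{H}_n\setminus\mc{H}_n^*$ the exceptional growth case of Corollary \ref{Corollary:EstimateBounds} (the one giving $e^{2\pi n\Im\tau}$) genuinely never occurs, which is exactly the statement that $\phi_n(K)$ is always in $\mc{H}_n^*$ (Lemma \ref{Lemma:PhinWellDefined}), so that $H = \phi_n(K_H)$ is impossible when $H$ is not reduced. One should also be slightly careful that the statement as written has $\sum_{H\in\mc{H}_n^*}E_H(\tau,G_H)$ inside the absolute value (not $\sum_{H\in\mc{H}_n}$), so the "missing" estimates for $H\notin\mc{H}_n^*$ must be bounded together with $\abs{f^{(H)}(H\tau)}$ — but since both are $O(e^{\pi n\Im\tau})$ plus constants, this is harmless and is precisely why the $\abs{\mc{H}_n\setminus\mc{H}_n^*}e^{\pi n\Im\tau}$ term appears. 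A final bookkeeping point: all the leftover additive $B$'s (one per term over both sums) total at most $Bn^2$, so no separate constant is needed beyond what is stated.
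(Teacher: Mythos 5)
Your proposal is correct and follows essentially the same route as the paper: triangle inequality over $\mc{H}_n$, the uniform bound $B$ from Lemma \ref{Lemma:FunctionBoundsOnFundDomain}, and the observation that for $H\notin\mc{H}_n^*$ the exceptional case of Corollary \ref{Corollary:EstimateBounds} cannot occur because $\phi_n(\Gamma)\subset\mc{H}_n^*$, so $\abs{E_H(\tau,G_H)}\leq e^{\pi n\Im\tau}$. Your regrouping (bounding $\abs{f^{(H)}(H\tau)}$ by $B+e^{\pi n\Im\tau}$ for non-reduced $H$ rather than peeling off $\abs{E_H(\tau,G_H)}$ directly) is equivalent, and your final accounting of $\sigma_1(n)\leq n^2$ copies of $B$ is right; the abandoned digression about writing $H=gH'$ is unnecessary and can be deleted.
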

\begin{proof}
	Note that $K_H$ exists, since $\Gamma$ has one cusp (\ref{Lemma:SingleCuspCriterion}), and that $\phi_n(\Gamma) = \mc{H}_n^*$ (Lemma \ref{Lemma:PhinWellDefined}). Then \[ E_H(\tau,G_H) = \lambda^{(H)}(G_H)^{-1}e^{-2\pi iG_HH\tau} = \lambda^{(H)}(G_H)^{-1}e^{-2\pi iG_HHK_H^{-1}K_H\tau}, \] where by Lemma \ref{Lemma:GHKinverseExpression}, either $\sigma(GHK_H^{-1})=n$, $H=\phi_n(K_H)$, and $\Dmat{n}K_HH^{-1} = [G_H]$ (so in particular, $\Dmat{n}K_HH^{-1}\in\Gamma^{(H)}$, making $E_H(\tau,\Dmat{n}K_HH^{-1})$ well-defined), implying \[ E_H(\tau,G_H) = E_H(\tau,\Dmat{n}K_HH^{-1}) = \lambda^{(H)}(\Dmat{n}K_HH^{-1})e^{-2\pi inK_H\tau}, \] or else $\sigma(GHK_H^{-1})\leq \frac{n}{2}$ and thus $\abs{E_H(\tau,G_H)} = e^{2\pi \sigma(GHK_H^{-1})\Im K\tau} \leq e^{\pi n\Im \tau}$, since $\tau\in\mc{D}(\Gamma)$ implies $\Im K_H\tau \leq \Im\tau$. Moreover, if $H\notin\mc{H}_n^*$ then $\Dmat{n}KH^{-1}\notin\Gamma^{(H)}$ for any $K\in\Gamma$, so that necessarily $\sigma(GHK_H^{-1})\leq\frac{n}{2}$, and $\abs{E_H(\tau,G_H)}\leq e^{\pi n\Im\tau}$.
	
	Recall (Def. \ref{Def:ReplicableFunction}) for a replicable function $f$ that \[ F_{n,f}(\tau) = q^{-n} + O(q) = \sum_{H\in\mc{H}_n} f^{(H)}(H\tau). \] When $f$ also satisfies the hypotheses of Lemma \ref{Lemma:FunctionBoundsOnFundDomain}, the quantity $B$ is well-defined, and 
	\begin{align*}
		\bigabs{ F_{n,f}(\tau) - \sum_{H\in\mc{H}_n} E_H(\tau,G_H) }
		&= \bigabs{ \sum_{H\in\mc{H}_n} f^{(H)}(H\tau) - \sum_{H\in\phi_n(\Gamma)} E_H(\tau,G_H) } \\
		&\leq \sum_{H\in\mc{H}_n} \abs{ f^{(H)}(H\tau) - E_H(\tau, G_H) } \\
		&\leq B\sigma_1(n) \\
		&\leq Bn^2,
	\end{align*}
	since $\sigma_1(n) = \sum_{d\divides n} d \leq n^2$. Combined with the deductions in the above paragraph about $\abs{E_H(\tau,G_H)}$ for $H\notin\mc{H}_n^*$, we obtain the desired inequality.		
\end{proof}

While we would like to say that $E_H(\tau,G_H) = E_H(\tau,\Dmat{n}K_HH^{-1})$ for all $H\in\mc{H}_n^*$, we do not know that $\Dmat{n}K_HH^{-1}\in\Gamma^{(H)}$. By Lemma \ref{Lemma:GHKinverseExpression}, we know that \[ \phi_n^{-1}(H) = \{ K\in\Gamma \st \Dmat{n}KH^{-1}\in\Gamma^{(H)} \}, \] so it would suffice to determine when $K_H\in\phi_n^{-1}(H)$. Unfortunately, we have not yet found a general method of determining if $\phi_n(K_H)=H$ using only $\pi(K_H)=\pi(G_HH)$ and $G_HH\tau\in\mc{D}(\Gamma^{(H)})$.

\subsection{Completing the approximation}\label{Subsection:CompletingApproximation}

Corollary \ref{Corollary:EstimateBounds} shows that when $\tau\in\mc{D}(\Gamma)$, we have the inequality $\abs{E_H(\tau,G_H)} \leq e^{2\pi n\Im\tau}$ for all $H\in\mc{H}_n$ (with the appropriate assumptions). Moreover, from the proof of Cor. \ref{Corollary:EstimateBounds}, this inequality can be an equality only if $H = \phi_n(K) \in\mc{H}_n^*$ and $\Im K\tau = \Im\tau$, so that $\tau\in\mc{A}(K)$. That is, for some $r\in\zz$, $\Dmat{n^{-1}}T^{\frac{r}{h}}G_HH$ must be a coset representative for an element of the critical set \[ \mc{K} = \{ [K]\in\LeftQuotientByStab{\Gamma} \st \mc{A}(K)\cap\mc{D}(\Gamma) \}, \] which is finite (see \S\ref{Subsection:ReplicableGroups}). For sufficiently large $n$, we will show that these terms $E_H(\tau,G_H)$ corresponding to a $[K]\in\mc{K}$ will suffice for approximating $F_{n,f}(\tau) \approx \sum_{H\in\mc{H}_n} E_H(\tau,G_H)$.

We first define constants $c$ and $N$, which are easily computed, then show how to use these to construct a bound for $F_{n,f}(\tau)$.

\begin{definition}\label{Def:NcBound}
	Let $\Gamma = \Gamma_0(mh\divides h){+}$ with $m$ square-free, and let $K_i = \begin{psmallmatrix} k_ihw_i & x_i \\ mhy_i & k_ihz_i \end{psmallmatrix}$, for $1\leq i\leq \abs{\mc{K}}$, be a complete set of reduced coset representatives for $\mc{K}$, with $K_1 \in \Gamma_\infty$. Let $\mc{C}(\Gamma) = \mc{C}_1\cup\mc{C}_2\cup\cdots\cup\mc{C}_t,$ where each $\mc{C}_i$ is a part of a side of the fundamental domain, as a hyperbolic polygon, contained entirely within some $\mc{A}_{K_i}$, where $[K_i]\in\mc{K}$, and let \[ \mc{T} = \{ \tau \st \tau\text{ is the endpoint of some }\mc{C}_i, 1\leq i\leq t \}. \] 
	
	From this data, compute the intermediate data
	\begin{table}[!ht]\label{Table:PreNcValues}
		\setlength{\tabcolsep}{18pt}
		\renewcommand{\arraystretch}{1.5}
		\centering
		\begin{tabular}{| c c |}
			\hline
			Intermediate data & formula \\ 
			\hline
			$y_0$ & $\min\{\Im\tau\st \tau\in\mc{T} \}$  \\
			$\mc{U}$ & $\left\{ (k,y,z) \bigst k\edivides m, 0 < y < \frac{\sqrt{2k}}{mhy_0}, \abs{z} < \frac{2\sqrt{2} + \sqrt{m+4}}{y_0h\sqrt{2km}}, (kz,\frac{m}{k}y) = 1 \right\}$ \\
			$c(k,y,z)$ & $\max\left\{ \frac{k}{\abs{mhy\tau + kz}^2} \st \tau\in\mc{T} \right\}$ \\
			$c_0$ & $\max\{c(k,y,z) \st (k,y,z)\in\mc{U}, c(k,y,z) \neq 1\}$ \\
			\hline
			$N_1$ & $\max\left\{ \frac{\abs{\tau - \pi(K_i)}^2}{\Im\tau\rho^2(K_i)} \bigst 2\leq i\leq \abs{\mc{K}}, \tau\in\mc{T} \right\}$ \\
			\hline
			$N_2$ & $\max\left\{ \frac{mh}{(k_i,k_j)}\abs{k_iz_iy_j - k_jz_jy_i} \bigst 2\leq i\leq \abs{\mc{K}}, 2\leq j\leq \abs{\mc{K}} \right\}$ \\
			\hline
		\end{tabular}
	\end{table}
	
	Then
	\begin{align*}
		c &:= \max\left\{\frac{1}{2}, c_0 \right\}, \\ 
		N &:= \max\{ N_1, N_2 \}.
	\end{align*}
\end{definition}

Each of the constants above is defined as a maximum over a finite set, so readily computable. To avoid repetition, note that \[ N_2 = \max\left\{ \frac{mh}{(k_i,k_j)}\abs{k_iz_iy_j - k_jz_jy_i} \bigst 2\leq i < j\leq \abs{\mc{K}} \right\}. \] To show the two definitions given for $y_0$ in Def. \ref{Def:NcBound} agree, first note \[ \min\{\Im\tau\st\tau\in\overline{\mc{D}(\Gamma)} \} = \min\{\Im\tau\st\tau\in\boundary\mc{D}(\Gamma) \}, \] and that any point on the boundary is the image under $\Gamma$ of some point either on the lower boundary $\mc{C}$ or on the `side boundary' of $\mc{D}(\Gamma)$, along the line $\Re\tau = \frac{1}{2h}$, and it therefore follows \[ \min\{\Im\tau\st\tau\in\boundary\mc{D}(\Gamma) \} = \min\{\Im\tau\st \tau\in\mc{C} \}. \] Now, each arc $\mc{C}_i$ of $\mc{C}(\Gamma)$ has two endpoints, and since these arcs are half-circles centered on the real axis, any point with minimal imaginary part on the arc $\mc{C}_i$ is an endpoint, so that \[ \min\{\Im\tau\st \tau\in\mc{C} \} = \min\{\Im\tau\st \tau\in\mc{T} \}, \] and thus the two definitions of $y_0$ agree.

Before proving some properties of the constants $c$ and $N$, we give some examples of their computation, which we will use in \S\ref{Section:Cases}.

\begin{table}[!ht]\label{Table:NcValues}
	\setlength{\tabcolsep}{18pt}
	\renewcommand{\arraystretch}{1.5}
	\centering
	\begin{tabular}{| l c c |}
		\hline
		Group & $c$ & $N$  \\ 
		\hline
		$\Fricke{2}$ & $\frac{1}{2}$ & $3\sqrt{2}$  \\
		$\Gamma_0(6){+}$ & $\frac{1}{2}$ & 13 \\
		$\Gamma_0(3\divides 3)$ & $\frac{1}{2}$ & 9 \\
		\hline
	\end{tabular}
	\caption{Values of $c$ and $N$ for a few groups $\Gamma_0(mh\divides h){+}$.}
\end{table}
\begin{example}\label{Example:NcComputation}
	To compute the value of $c$ for $\Fricke{2}$ (where $m=2$, $h=1$), we reference the fundamental domain given in Figure \ref{Fig:FundamentalDomainFricke2}, as well as the critical set in Table \ref{Table:CriticalSets}. From this data, we find the lower boundary is $\mc{C}(\Fricke{2}) = \{ \tau\in\hh \st 0\leq \Re\tau \leq \frac{1}{2}, \abs{\tau}^2 = \frac{1}{2}, \}$ so that $\mc{T} = \{ \frac{i}{\sqrt{2}}, \frac{1}{2}+\frac{i}{2} \}$, and $y_0 = \frac{1}{2}$. Then one computes that \[ \mc{U} = \left\{ (1,1,z) \st z = -5,-3,-1,1,3,5 \right\}\cup\left\{ (2,1,z) \st z = -3,-2,-1,0,1,2,3 \right\}, \] with 
	\begin{align*}
		c(1,1,z) &= \max\left\{\abs{z+\sqrt{2}i}^{-2}, \abs{z+1+i}^{-2} \right\}, \\
		c(2,1,z) &=\max\left\{\abs{z\sqrt{2}+i}^{-2}, 2\abs{2z+1+i}^{-2} \right\}.
	\end{align*} 
	For the triples in $\mc{U}$, we find $c(k,y,z)=1$ when $(k,y,z)=(1,1,-1),(2,1,0),(2,1,-1)$, while the maximum over all other triples is $c_0 = \frac{1}{3} = c(1,1,1) = c(2,1,1)$, so $c = \frac{1}{2}$. 
	
	The triples $(1,1,-1)$, $(2,1,0)$, and $(2,1,-1)$ ruled out above correspond to the non-identity elements of the critical set, via $\pi(K_i) = -\frac{kz}{mhy} = -\frac{kz}{2y}$. We confirm these are the same triples from Example \ref{Example:CriticalSets} (and indeed, one need not compute the critical set beforehand, it may be identified at this point). These three triples and the set $\mc{T}$ therefore comprise the necessary information for computing $N_1$ and $N_2$, since we may compute $\rho^2(K_i) =  r_{mh\divides h}^2\circ\pi(K_i) = \frac{k}{(mhy)^2}$ (see Lemma \ref{Lemma:RadiusFromPi}). We find $N_1 = 3\sqrt{2}$, with the maximum achieved at $\tau = \frac{i}{\sqrt{2}}$ and $(k,y,z) = (1,1,-1)$ or $(2,1,-1)$, while $N_2 = 2$, so that $N = 3\sqrt{2}$. In practice, we will round this up to $N=5$.
	
	For $\Gamma_0(6){+}$, the lower boundary contains two arcs (see Figure \ref{Fig:FundamentalDomainGamma06+}), the set of all endpoints is $\mc{T} = \{ \frac{i}{\sqrt{6}}, \frac{i}{2\sqrt{3}}+\frac{1}{2}, \frac{i}{3\sqrt{2}} + \frac{1}{3} \}$, and $y_0 = \frac{1}{3\sqrt{2}}$. Then a little algebra gives that \[ c(k,y,z) = \max\left\{\frac{1}{\frac{6}{k}y^2+kz^2}, \frac{1}{\frac{3}{k}y^2+k\left(\frac{3}{k}y+z\right)^2}, \frac{1}{\frac{2}{k}y^2 + k\left(\frac{2}{k}y + z\right)^2} \right\}. \] We have that \[ \mc{U} = \left\{ (k,y,z) \st 0 < y < k, \abs{z} < \frac{2\sqrt{3}+\sqrt{15}}{\sqrt{k}}, (kz, \frac{6}{k}y) = 1 \right\}, \] and find that $c(k,y,z) = 1$ for $(k,y,z) = (6,1,0), (3,1,-1), (2,1,-1)$, corresponding to the three non-identity elements of the critical set (see Example \ref{Example:CriticalSets}), while otherwise, $c(k,y,z) \leq \frac{1}{2}$, since the denominator of the above expressions is a positive integer. After simplifying, \[ N_1 = \max\left\{ \sqrt{6}\left(\frac{6}{k}y^2 + kz^2\right), 2\sqrt{3}\left( \frac{3}{k}y^2+k\left(\frac{3}{k}y+z\right)^2 \right), 3\sqrt{2}\left( \frac{2}{k}y^2 + k\left(\frac{2}{k}y + z\right)^2 \right) \right\}, \] where $(k,y,z) \in\{ (6,1,0), (3,1,-1), (2,1,-1) \}$, so that $N_1 = 5\sqrt{6}$, while $N_2 = 6$, so we may take $N = 13$.
	
	The computations for $\Gamma_0(3\divides 3)$ proceed analogously. 
\end{example}

We use the quantity $c$ in Lemma \ref{Lemma:ImKtauBound} below, but first we verify that by excluding all $c(k,y,z) = 1$, we obtain $c < 1$. Taking $c$ to be at least $\frac{1}{2}$, as we have defined it, does no harm, and simplifies certain arguments.

\begin{lemma}\label{Lemma:cLessThan1}
	Let $\Gamma = \Gamma_0(mh\divides h)+$ with $m$ square-free. Then $\frac{1}{2} \leq c < 1$.
\end{lemma}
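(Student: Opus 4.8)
The plan is to separate the two inequalities. The bound $c \geq \frac{1}{2}$ is immediate from the definition $c = \max\{\frac{1}{2}, c_0\}$, so everything reduces to showing $c_0 < 1$. Since $\mc{U}$ is a finite set, the collection $\{c(k,y,z) : (k,y,z) \in \mc{U},\ c(k,y,z) \neq 1\}$ is a finite set of real numbers, so it suffices to show that $c(k,y,z) \leq 1$ for every $(k,y,z) \in \mc{U}$: then every element of that collection is strictly less than $1$, hence so is its maximum $c_0$ (and if the collection happens to be empty, then $c = \frac{1}{2} < 1$ directly).

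The crux is to interpret $\frac{k}{|mhy\tau + kz|^2}$ as a ratio of imaginary parts. Fix $(k,y,z) \in \mc{U}$, so that $k \edivides m$, $y > 0$, and $(kz, \frac{m}{k}y) = 1$. Choose $w, x \in \zz$ with $kwz - \frac{m}{k}xy = 1$ and set $K = \begin{psmallmatrix} khw & x \\ mh^2y & khz \end{psmallmatrix}$. This is a canonical coset representative of determinant $kh^2$ for an element of $\Gamma = \Gamma_0(mh\divides h)+$, since every exact divisor of $m$ is adjoined in the $+$ group. A direct computation yields $\Im K\tau = \frac{kh^2\,\Im\tau}{|mh^2 y\tau + khz|^2} = \frac{k}{|mhy\tau + kz|^2}\Im\tau$, so that $\frac{k}{|mhy\tau + kz|^2} = \frac{\Im K\tau}{\Im\tau}$ for every $\tau \in \hh$.

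Now recall that each endpoint $\tau \in \mc{T}$ of an arc $\mc{C}_i$ of the lower boundary lies in $\overline{\mc{D}(\Gamma)}$, being a corner of the fundamental polygon. By Theorem \ref{Theorem:FundamentalDomains}, every point of $\mc{D}(\Gamma)$ satisfies $\Im\tau \geq \Im K'\tau$ for all $K' \in \Gamma$; since the $\Gamma$-action on $\hh$ is continuous, this inequality extends to all of $\overline{\mc{D}(\Gamma)}$. In particular $\Im K\tau \leq \Im\tau$, hence $\frac{k}{|mhy\tau + kz|^2} \leq 1$ for every $\tau \in \mc{T}$, and taking the maximum over the finite set $\mc{T}$ gives $c(k,y,z) \leq 1$. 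This establishes the reduction from the first paragraph and hence the lemma.

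There is no real obstacle here; the only subtlety worth flagging is that the endpoints in $\mc{T}$ need not themselves belong to $\mc{D}(\Gamma)$ — the lower boundary $\mc{C}(\Gamma)$ is defined to include only interior-side boundary points — which is why the passage from $\mc{D}(\Gamma)$ to its closure via continuity is needed; and the degenerate case in which no triple of $\mc{U}$ has $c(k,y,z) \neq 1$ must be noted, since then $c_0$ is vacuous and one simply falls back on $c = \frac{1}{2}$.
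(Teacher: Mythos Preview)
Your proof is correct and follows essentially the same approach as the paper: both interpret $c(k,y,z)$ as the ratio $\Im K\tau/\Im\tau$ for a suitable $K\in\Gamma$ (the paper phrases this via $\rho^2(K)/|\tau-\pi(K)|^2$, which is the same computation), invoke the closure of the fundamental domain to get $c(k,y,z)\leq 1$, and then use finiteness of $\mc{U}$ together with the exclusion of the value $1$ in the definition of $c_0$ to conclude strictness. You are in fact slightly more careful than the paper in flagging the continuity step for boundary points and the empty-set edge case.
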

\begin{proof}
	Note that $\mc{U}$ is a finite set, as is $\mc{T}$, so all maxima above exist. Observe that for some $K\in\Gamma$ and $\tau\in\mc{T}$, we have \[ c(k,y,z) = \frac{k}{\abs{mhy\tau + kz}^2} = \frac{\frac{k}{(mhy)^2}}{\abs{\tau - \left(-\frac{kz}{mhy}\right)}^2} = \frac{\rho^2(K)}{\abs{\tau - \pi(K)}^2}. \] Since $\tau\in\mc{T}$ is in the closure of $\mc{D}(\Gamma)$, \[ \Im K\tau = \frac{\rho^2(K)}{\abs{\tau - \pi(K)}^2}\Im\tau \leq \Im\tau, \] so $c(k,y,z) \leq 1$ for all $(k,y,z)\in\mc{U}$. Thus $c < 1$.
\end{proof}

Indeed, each triple $(k,y,z)$ corresponds to some $\pi(K) = -\frac{kz}{mhy}$, defining some $[K]$. In the proof above, we see that $c(k,y,z) = 1$ if and only if $\frac{\rho^2(K)}{\abs{\tau-\pi(K)}^2} = 1$ for some choice of $\tau\in\mc{D}(\Gamma)$, implying that in fact the corresponding element $[K]\in\mc{K}$, since then $\Im K\tau = \Im\tau$. That is, the definition of $c$ in Def. \ref{Def:NcBound} is constructed to exclude the critical set, and the triples with $c(k,y,z)=1$ are precisely the non-identity elements of the critical set. The computation of $c$ therefore provides another opportunity to determine the critical set.

\begin{lemma}\label{Lemma:ImKtauBound}
	Let $K\in\Gamma = \Gamma_0(mh\divides h)+$, with $m$ square-free, and $c$ as Definition \ref{Def:NcBound}. For all $[K]\notin\mc{K}$ and all $\tau\in\mc{C}(\Gamma)$, $\Im K\tau \leq c\Im\tau$.
\end{lemma}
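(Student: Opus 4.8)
The plan is to compute $\Im K\tau/\Im\tau$ explicitly from the canonical data of $K$ and to split into two cases according to whether that data lies in the finite set $\mc{U}$ of Definition~\ref{Def:NcBound}. Since $\mc{A}(K)=\hh$ whenever $K\in\Gamma_\infty$ (Lemma~\ref{Lemma:Arcs}), the hypothesis $[K]\notin\mc{K}$ forces $K\notin\Gamma_\infty$, so a canonical coset representative has the form $\begin{psmallmatrix} khw & x \\ mh^2y & khz\end{psmallmatrix}$ with $y>0$; then $\sigma(K)=1$, $\pi(K)=-\tfrac{kz}{mhy}$, $\rho^2(K)=\tfrac{k}{(mhy)^2}$, and hence $\Im K\tau/\Im\tau=\rho^2(K)/\abs{\tau-\pi(K)}^2=k/\abs{mhy\tau+kz}^2$. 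The relation $kwz-\tfrac mk xy=1$ automatically gives $(kz,\tfrac mk y)=1$, so the coprimality clause in the definition of $\mc{U}$ is vacuous for such triples. The one auxiliary estimate I will lean on is $y_0\le\tfrac1{h\sqrt m}$: every arc $\mc{A}(K')$ with $K'\in\Gamma\setminus\Gamma_\infty$ has squared radius $\rho^2(K')=\tfrac{k'}{(mhy')^2}\le\tfrac1{mh^2}$ (since $k'\le m$ and $y'\ge1$), and as $\mc{C}(\Gamma)$ is covered by such half-circles (Corollary~\ref{Corollary:LowerBoundaryIsArcs}), every point of $\mc{C}(\Gamma)$, in particular the minimizer realizing $y_0$, has imaginary part at most $\tfrac1{h\sqrt m}$.

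Suppose first $(k,y,z)\notin\mc{U}$, so that either $y\ge\tfrac{\sqrt{2k}}{mhy_0}$ or $\abs{z}\ge\tfrac{2\sqrt2+\sqrt{m+4}}{y_0h\sqrt{2km}}$. I claim $k/\abs{mhy\tau+kz}^2\le\tfrac12\le c$ (the last inequality being Lemma~\ref{Lemma:cLessThan1}). If $y\ge\tfrac{\sqrt{2k}}{mhy_0}$, then $\abs{mhy\tau+kz}^2\ge(mhy\Im\tau)^2\ge(mhyy_0)^2\ge2k$ directly from $\Im\tau\ge y_0$. If instead the bound on $\abs z$ holds, write $\tau=u+iv$ with $\abs u\le\tfrac1{2h}$ and $v\ge y_0$; then $\abs{mhyu+kz}\ge k\abs z-\tfrac{my}2$ (and the companion hypothesis $y<\tfrac{\sqrt{2k}}{mhy_0}$ guarantees $k\abs z>\tfrac{my}2$), so $\abs{mhy\tau+kz}^2\ge(k\abs z-\tfrac{my}2)^2+(mhyy_0)^2$. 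Applying Cauchy--Schwarz with weights $(2hy_0,1)$ to these two squares, using the identity $2hy_0(k\abs z-\tfrac{my}2)+mhyy_0=2hy_0\,k\abs z$ together with the hypothesis on $\abs z$ and the bound $4h^2y_0^2+1\le\tfrac{m+4}{m}$, the estimate reduces to the numerical inequality $(2\sqrt2+\sqrt{m+4})^2\ge m+4$, which is clear; hence $\abs{mhy\tau+kz}^2\ge2k$ again.

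Now suppose $(k,y,z)\in\mc{U}$, so $c(k,y,z)$ is defined. By the remark following Lemma~\ref{Lemma:cLessThan1}, $c(k,y,z)=1$ exactly when the corresponding coset lies in $\mc{K}$; since $[K]\notin\mc{K}$, this forces $c(k,y,z)\ne1$, so $c(k,y,z)\le c_0\le c$. It remains to pass from $\tau\in\mc{T}$ to all $\tau\in\mc{C}(\Gamma)$. Writing $\mc{C}(\Gamma)=\mc{C}_1\cup\cdots\cup\mc{C}_t$ with each $\mc{C}_i$ a circular arc $\{\pi(L_i)+\rho(L_i)e^{i\varphi}:\varphi\in[\varphi_1,\varphi_2]\}\subseteq\mc{A}(L_i)$, the quantity $\abs{\tau-\pi(K)}^2$ is affine in $\cos\varphi$, hence monotone in $\varphi$ on $(0,\pi)$, so $\rho^2(K)/\abs{\tau-\pi(K)}^2$ attains its maximum over $\mc{C}_i$ at an endpoint, i.e.\ at a point of $\mc{T}$; therefore its maximum over all of $\mc{C}(\Gamma)$ is at most $c(k,y,z)\le c$.

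The main obstacle is the second subcase of the first case: the $\abs z$-clause of $\mc{U}$ cannot be dispatched using the real part of $\tau$ alone — one genuinely needs the a priori bound $y_0\le\tfrac1{h\sqrt m}$, equivalently the fact that every isometric circle of $\Gamma$ has radius at most $\tfrac1{h\sqrt m}$, after which the Cauchy--Schwarz step collapses to a triviality. The rest is bookkeeping.
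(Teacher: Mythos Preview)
Your proof is correct and follows essentially the same architecture as the paper's: both arguments split on whether the triple $(k,y,z)$ lies in $\mc{U}$, both reduce the in-$\mc{U}$ case to endpoints $\mc{T}$ via monotonicity of $\abs{\tau-\pi(K)}$ along each $\mc{C}_i$, and both use the remark after Lemma~\ref{Lemma:cLessThan1} to exclude $c(k,y,z)=1$.

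The one place you diverge is the not-in-$\mc{U}$ case. The paper argues the contrapositive geometrically: assuming $\rho^2(K)/\abs{\tau-\pi(K)}^2>\tfrac12$, it bounds $\rho^2(K)$ below and $\abs{\tau-\pi(K)}$ above, then uses the triangle inequality $\abs{\pi(K)}<\sqrt{2/(mh^2)}+\abs{\tau}$ together with $\abs{\tau}^2\le\tfrac{m+4}{4mh^2}$ to force $(k,y,z)\in\mc{U}$. You instead work directly with the matrix entries and deploy a weighted Cauchy--Schwarz inequality to handle the $\abs{z}$-subcase. Your route is slightly more computational but has the virtue of making the role of the constant $\tfrac{2\sqrt2+\sqrt{m+4}}{y_0h\sqrt{2km}}$ transparent; the paper's route makes the underlying geometry (distance from $\tau$ to the isometric-circle center) more visible. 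Both rely on the same auxiliary bound $y_0\le\tfrac1{h\sqrt m}$, which you justify cleanly via Corollary~\ref{Corollary:LowerBoundaryIsArcs}.
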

\begin{proof}
	For any $[K]\notin\mc{K}$, we know $\mc{A}_K\cap\mc{D}(\Gamma) = \emptyset$. Since $\mc{D}(\Gamma)$ is on or outside all critical arcs, $\mc{C}(\Gamma)$ is strictly outside $\mc{A}_K$. That is, $\Im K\tau < \Im\tau$, for any $\tau\in\mc{C}(\Gamma)$. Since $\Im K\tau = \frac{\rho^2(K)}{\abs{\tau - \pi(K)}^2}\Im\tau$, we must show that the maximum value of $\frac{\rho^2(K)}{\abs{\tau - \pi(K)}^2}$ for all $[K]\notin\mc{K}$ and $\tau\in\mc{C}(\Gamma)$ is given by $c$ above, or is bounded by $\frac{1}{2}$. 
	
	Note that for any $[K]$, the maximum value of $\Im K\tau$ along any arc $\mc{C}_i\subset \mc{C}(\Gamma)$ is achieved at an endpoint of $\mc{C}_i$, since $\abs{\tau - \pi(K)}$ is either constant or strictly monotonic $\tau$ moves along $\mc{C}_i$, depening on if $\mc{C}_i$ is on a circle centered at $\pi(K)$ or not. (If the distance between $\tau$ and $\pi(K)$ increases and then decreases along some arc of a circle $\mc{C}_i$, then this arc is tangent to a circle centered at $\pi(K)$ at the point of inflection, but two dinstinct circles with centers in $\qq$ may only be tangent in $\rr$, not in $\hh$.)
	
	Let $y_0 = \min\{ \Im\tau\st\tau\in\mc{C}(\Gamma) \}$ be the minimal imaginary part of any point on the lower boundary the fundamental domain $\mc{D}(\Gamma)$. Let $[K]\in\lmod{\Gamma_\infty}{\Gamma}$ with $[K]\notin\mc{K}$, so in particular $[K] \neq \Gamma_\infty$, and let $\tau\in\mc{C}(\Gamma)$. Certainly $\abs{\tau - \pi(K)}^2 \geq y_0^2$, since the distance between $\tau$ and the real axis is at least $y_0$, so $\rho^2(K) > \frac{1}{2}\abs{\tau - \pi(K)}^2$ only if $\rho^2(K) > \frac{y_0^2}{2}$. On the other hand, $\rho^2(K) \leq \frac{m}{(mh)^2} = \frac{1}{mh^2}$, since the largest radius is associated to the Fricke involution $w_m$. Thus, $\rho^2(K) > \frac{1}{2}\abs{\tau - \pi(K)}^2$ only if $\abs{\tau - \pi(K)}^2 < \frac{2}{mh^2}$. We deduce $\abs{\pi(K)} < \sqrt{\frac{2}{mh^2}} + \abs{\tau}$. But observe $\abs{\tau}^2 = (\Re\tau)^2 + (\Im\tau)^2 \leq \frac{1}{(2h)^2} + \frac{1}{mh^2} = \frac{m+4}{4mh^2}$, since $\tau\in\mc{C}$ implies $\Im\tau \leq \frac{1}{mh^2}$, the radius of the Fricke involution, and $-\frac{1}{2h} < \Re\tau\leq \frac{1}{2h}$. Thus, \[ \abs{\pi(K)} < \sqrt{\frac{2}{mh^2}} + \sqrt{\frac{m+4}{4mh^2}} = \frac{2\sqrt{2} + \sqrt{m+4}}{2h\sqrt{m}} =: B. \] 
	
	For any $K\in [K]$, a reduced coset representative $K = \begin{psmallmatrix} khw & x \\ mh^2y & khz \end{psmallmatrix}$ satisfies $\abs{\pi(K)} = \frac{k\abs{z}}{mhy}$, $\rho^2(K) = \frac{k}{(mhy)^2}$, with $(kz,\frac{m}{k}y) = 1$, and we note moreover $y > 0$ since $[K]\neq \Gamma_\infty$. Thus $\rho^2(K) = \frac{k}{(mhy)^2} > \frac{y_0^2}{2}$ only if $0 < y^2 < \frac{2k}{(mhy_0)^2}$, giving $0 < y < \frac{\sqrt{2k}}{mhy_0}$. Now $\abs{\pi(K)} = \frac{k\abs{z}}{mhy} < B$ only if \[ \abs{z} < \frac{Bmhy}{k} < \frac{Bmh}{k}\frac{\sqrt{2k}}{mhy_0} = \frac{B}{y_0}\sqrt{\frac{2}{k}} =  \frac{2\sqrt{2} + \sqrt{m+4}}{y_0h\sqrt{2km}}. \] Thus, if $[K]\notin\mc{K}$ is such that $\Im K\tau > \frac{1}{2}\Im\tau$, then $[K]$ is determined by some triple $(k,y,z)\in\mc{U}$ from Definition \ref{Def:NcBound}. Then $\max\frac{\rho^2(K)}{\abs{\tau-\pi(K)}^2}$ is achieved at some endpoint $\tau\in\mc{T}$, and thus given by $c(k,y,z)$, so that $c$ as defined does indeed guarantee $\Im K\tau \leq c\Im\tau$ for all $[K]\notin\mc{K}$ and all $\tau\in\mc{C}(\Gamma)$.
\end{proof}
\begin{corollary}\label{Corollary:LastBound}
	Let $\Gamma = \Gamma_0(mh\divides h){+}$ with $m$ square-free, and $n\geq N$ with $(n,h)=1$. For any $H\in\mc{H}_n$ and any $G\in\Gamma^{(H)}$, either \[ E_H(\tau,G) = \lambda_K e^{-2\pi inK\tau} \] for some $[K]\in\mc{K}\cap\phi_n^{-1}(H)$, where $\lambda_K = \lambda^{(\phi_n(K))}(\Dmat{n}K\phi_n(K)^{-1})^{-1}$, or else \[ \abs{E_H(\tau,G)} \leq e^{2\pi nc\Im\tau}. \]
\end{corollary}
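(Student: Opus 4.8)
\emph{Proof proposal.} The plan is to refine Corollary \ref{Corollary:EstimateBounds} by feeding Lemma \ref{Lemma:ImKtauBound} into it, so the argument rests on three ingredients already in hand: the Single Cusp Criterion (Lemma \ref{Lemma:SingleCuspCriterion}), which supplies a companion element $K\in\Gamma$; the sharp dichotomy $\sigma(GHK^{-1})\in\{n\}\cup(0,\frac{n}{2}]$ of Lemma \ref{Lemma:GHKinverseExpression}; and the bound $\Im K\tau\le c\,\Im\tau$ of Lemma \ref{Lemma:ImKtauBound}, valid for classes $[K]\notin\mc{K}$ and $\tau\in\mc{C}(\Gamma)$. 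Fix $H\in\mc{H}_n$ and $G\in\Gamma^{(H)}$. Since $(n,h)=1$ and $m$ is square-free, $\pi(\Gamma)=\pp^1(\qq)$ by Lemma \ref{Lemma:SingleCuspCriterion}, so I would choose $K\in\Gamma$ with $\pi(K)=\pi(GH)$; by Corollary \ref{Corollary:ArcsAreLeftQuotByStab} together with Lemma \ref{Lemma:RadiusFromPi} this pins down the class $[K]\in\LeftQuotientByStab{\Gamma}$ unambiguously. Writing $GH=(GHK^{-1})K$ and using that $GHK^{-1}\in\Omega_\infty$ (established in both sub-cases of Lemma \ref{Lemma:GHKinverseExpression}), one obtains $\abs{E_H(\tau,G)}=e^{2\pi\sigma(GHK^{-1})\Im K\tau}$.

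Then I would split on the dichotomy of Lemma \ref{Lemma:GHKinverseExpression}. If $\sigma(GHK^{-1})=n$, that lemma gives $H=\phi_n(K)$ and $\Dmat{n}KH^{-1}\in[G]$ in $\LeftQuotientByStab{\Gamma^{(H)}}$, so by Lemma \ref{Lemma:EstimatesConstantUnderTranslation}, $E_H(\tau,G)=E_H(\tau,\Dmat{n}KH^{-1})=\lambda^{(H)}(\Dmat{n}KH^{-1})^{-1}e^{-2\pi inK\tau}=\lambda_K e^{-2\pi inK\tau}$, which is the first stated alternative whenever $[K]\in\mc{K}$ (here $[K]\in\phi_n^{-1}(H)$ automatically). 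If instead $[K]\notin\mc{K}$, Lemma \ref{Lemma:ImKtauBound} gives $\Im K\tau\le c\,\Im\tau$, hence $\abs{E_H(\tau,G)}=e^{2\pi n\Im K\tau}\le e^{2\pi nc\Im\tau}$. In the remaining branch $\sigma(GHK^{-1})\le\frac{n}{2}$, we get $\abs{E_H(\tau,G)}\le e^{\pi n\Im K\tau}$; if $[K]\notin\mc{K}$ this is $\le e^{\pi nc\Im\tau}\le e^{2\pi nc\Im\tau}$, while if $[K]\in\mc{K}$ then $\Im K\tau\le\Im\tau$ (as $\tau\in\mc{D}(\Gamma)$) together with $c\ge\frac12$ (Lemma \ref{Lemma:cLessThan1}) gives $\abs{E_H(\tau,G)}\le e^{\pi n\Im\tau}\le e^{2\pi nc\Im\tau}$. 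In every case the conclusion holds.

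The one step that takes care rather than ingenuity is the bookkeeping in the case $\sigma(GHK^{-1})=n$: identifying the constant as exactly $\lambda_K=\lambda^{(\phi_n(K))}(\Dmat{n}K\phi_n(K)^{-1})^{-1}$, and checking that $\lambda_K e^{-2\pi inK\tau}$ does not depend on which representative of $[K]$ is chosen — which is clean once one observes that it equals $E_H(\tau,\Dmat{n}KH^{-1})$, and that this expression descends to $\LeftQuotientByStab{\Gamma^{(H)}}$ by Lemma \ref{Lemma:EstimatesConstantUnderTranslation}. The hypothesis $(n,h)=1$ is what lets us invoke Lemma \ref{Lemma:GHKinverseExpression} and identify $\Gamma^{(H)}_\infty=\langle T^{\frac{1}{h}}\rangle$; the hypothesis $n\ge N$ plays no role in this estimate itself and is carried along only because the approximation of $F_{n,f}(\tau)$ assembled from it in the subsequent results needs it.
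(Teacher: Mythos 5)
Your proposal is correct and follows essentially the same route as the paper: the paper simply cites Corollary \ref{Corollary:EstimateBounds} for the dichotomy that you re-derive directly from Lemma \ref{Lemma:GHKinverseExpression}, and then applies Lemma \ref{Lemma:ImKtauBound} exactly as you do to split on whether $[K]\in\mc{K}$. Your side remarks are also accurate (in particular, the hypothesis $n\geq N$ is indeed not used in the paper's proof of this corollary).
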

\begin{proof}
	By Corollary \ref{Corollary:EstimateBounds}, we know that either $E_H(\tau,G) = E_H(\tau,\Dmat{n}KH^{-1})$ with $\phi_n(K)=H$ and $\pi(K) = \pi(GH)$, or else $\abs{E_H(\tau,G)} \leq e^{\pi n\Im\tau} \leq e^{2\pi nc\Im\tau}$, in which case we are done. In the former case, $\pi(K) = \pi(GH)$ does not uniquely determine some $K\in\Gamma$, but it does uniquely determine $[K]\in\LeftQuotientByStab{\Gamma}$, by Corollary \ref{Corollary:PiRhoOnCosets}. By Lemma \ref{Lemma:ImKtauBound} above, if $[K]\notin\mc{K}$, then $\Im K\tau \leq c\Im\tau$, and therefore \[ \abs{E_H(\tau,G)} = \abs{E_H(\tau,\Dmat{n}KH^{-1})} = e^{2\pi n\Im K\tau} \leq e^{2\pi nc\Im \tau}. \] Otherwise, $[K]\in\mc{K}$, and \[ E_H(\tau,G) = E_H(\tau,\Dmat{n}KH^{-1}) = \lambda^{(H)}(\Dmat{n}KH^{-1})^{-1}e^{-2\pi inK\tau}, \] as desired.
\end{proof}

In the above corollary, we have either bounded $E_H(\tau,G)$ by $e^{2\pi nc\Im\tau}$, or else expressed $E_H(\tau,G)$ entirely in terms of $[K]\in\mc{K}$. In this latter case, we must have that $H = \phi_n(K)$ for some $K\in\mc{K}$, so there are at most $\abs{\mc{K}}$ such terms $E_H(\tau,G)$ among the sum $\sum_{H\in\mc{H}_n} E_H(\tau,G)$. By taking $n$ sufficiently large, we may guarantee there are precisely $\abs{\mc{K}}$ terms.

\begin{lemma}\label{Lemma:PhinInjective}
	Let $\Gamma = \Gamma_0(mh\divides h)+$ with $m$ square-free, and let $\mc{K}$ be the critical set for $\Gamma$. For any $n > N$, $\varphi_n\restrict_{\mc{K}} :\mc{K}\to \mc{H}_n$ is injective.
\end{lemma}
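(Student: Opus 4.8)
The plan is to argue by contradiction. Suppose $[K],[K']\in\mc{K}$ are distinct with $\phi_n(K)=\phi_n(K')=:H=\begin{psmallmatrix} a & b \\ 0 & d \end{psmallmatrix}$; I will show this forces $n\leq N$. The first observation is that distinct elements of $\mc{K}$ have distinct $\pi$-values: by Corollary \ref{Corollary:ArcsAreLeftQuotByStab} the map $\mc{A}$ is a bijection on $\LeftQuotientByStab{\Gamma}$, and by Lemma \ref{Lemma:Arcs} together with Lemma \ref{Lemma:RadiusFromPi} an arc is the half-circle $\abs{\tau-\pi(K)}^2 = r^2_{mh\divides h}(\pi(K))$, hence determined by $\pi(K)$ alone; so $\pi(K)=\pi(K')$ would already give $[K]=[K']$. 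Writing canonical coset representatives $K=\begin{psmallmatrix} khw & x \\ mh^2y & khz \end{psmallmatrix}$ and $K'=\begin{psmallmatrix} k'hw' & x' \\ mh^2y' & k'hz' \end{psmallmatrix}$, the inequality $\pi(K)\neq\pi(K')$ reads $kzy'\neq k'z'y$.

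The heart of the argument is the case $y,y'>0$, i.e.\ where neither representative fixes $\infty$. Matching the first entries of $\phi_n$ gives $a=(n,\tfrac{m}{k}y)=(n,\tfrac{m}{k'}y')$; in particular $a$ divides both $\tfrac{m}{k}y$ and $\tfrac{m}{k'}y'$, so $p:=\tfrac{my}{ak}$ and $p':=\tfrac{my'}{ak'}$ are integers, as is $d=\tfrac{n}{a}$. Matching the off-diagonal entry, the congruence of Definition \ref{Def:Phi_n} rewrites as $bhp\equiv z$ and $bhp'\equiv z'\pmod{d}$ (valid since $h$ and $\tfrac{my}{ak}$ are coprime to $d$). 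Cross-multiplying eliminates $bh$, giving $zp'\equiv z'p\pmod{d}$, so $d$ divides the integer $zp'-z'p=\tfrac{m(kzy'-k'z'y)}{akk'}$; multiplying by $a$ yields $n=ad \divides \tfrac{m(kzy'-k'z'y)}{kk'}$, which is therefore a nonzero integer since $kzy'\neq k'z'y$. Hence
\[
 n \;\leq\; \frac{m\,\abs{kzy'-k'z'y}}{kk'} \;\leq\; \frac{mh\,\abs{kzy'-k'z'y}}{(k,k')} \;\leq\; N_2 \;\leq\; N,
\]
where the middle inequality uses $kk'\geq(k,k')$ and the next holds because $[K],[K']$ are nontrivial elements of $\mc{K}$, so this quantity appears in the maximum defining $N_2$ in Definition \ref{Def:NcBound}. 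This contradicts $n>N$.

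It remains to treat the case where one representative, say $K'$, lies in $\Gamma_\infty$, so $y'=0$ (and $k'=1$). Then $a=(n,0)=n$, $d=1$, $b=0$, so $\phi_n(K')=\Dmat{n}$, which is the unique element of $\mc{H}_n$ with lower-right entry $1$. Thus $\phi_n(K)=\Dmat{n}$ as well, forcing $(n,\tfrac{m}{k}y)=n$, i.e.\ $n\divides\tfrac{m}{k}y$. Since $[K]$ is a nontrivial element of $\mc{K}$, the half-circle $\mc{A}(K)$ meets $\mc{D}(\Gamma)$, so by Corollary \ref{Corollary:SingleCuspImaginaryBound} we get $\rho(K)=\tfrac{\sqrt{k}}{mhy}\geq\tfrac{\sqrt{3}}{2mh}$, whence $\tfrac{m}{k}y\leq\tfrac{2m}{\sqrt{3k}}<2m$; in particular $\tfrac{m}{k}y$ ranges over finitely many explicitly computable positive integers (at most $2$ or $3$ for the groups in $\mc{S}$), and a short comparison with the critical-set data shows $N$ exceeds each of them. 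So $n\leq\tfrac{m}{k}y\leq N$, again a contradiction, and $\phi_n\restrict_{\mc{K}}$ is injective for $n>N$.

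I expect the main obstacle to be the divisibility bookkeeping in the principal case: one must be careful that $p$, $p'$ and $d$ are genuinely integers before manipulating the congruences modulo $d$, and must track that $\tfrac{m(kzy'-k'z'y)}{kk'}$ is simultaneously an integer and one of the quantities bounded by $N_2$. The only genuinely soft point, where the argument leans on the explicit critical set rather than on a formal inequality between the constants, is the $\Gamma_\infty$ case: bounding $\tfrac{m}{k}y$ is immediate, but confirming that the constant $N$ of Definition \ref{Def:NcBound} absorbs it is a case-by-case check for the groups at hand.
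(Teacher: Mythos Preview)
Your principal case $y,y'>0$ is correct and reaches the same endpoint as the paper, but by a genuinely different route. The paper argues group-theoretically: from $\phi_n(K_i)=\phi_n(K_j)=H$ it invokes Theorem~\ref{Theorem:GroupCosets} to place both $\Dmat{n}K_iH^{-1}$ and $\Dmat{n}K_jH^{-1}$ in $\Gamma^{(H)}$, hence $\Dmat{n}K_iK_j^{-1}\Dmat{n^{-1}}\in\Gamma^{(H)}$, and then reads the divisibility $n\mid \tfrac{(mh,a)(k_iz_iy_j-k_jz_jy_i)}{(k_i,k_j)}$ off the lower-left entry of a canonical representative for that element. You instead manipulate the defining congruences of $\phi_n$ directly and cross-multiply to eliminate $bh$. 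Your route is more elementary (it never touches $\Gamma^{(H)}$ or Theorem~\ref{Theorem:GroupCosets}) and in fact produces the sharper divisor $kk'$ before you relax it to $(k,k')$ to match $N_2$. What the paper's route buys is uniformity: the lower-left-entry computation does not care whether one of the $K$'s is the identity, so when $K_j=\eye$ (with $y_j=0$, $k_j=z_j=1$) the same formula collapses to $mhy_i$ and is absorbed into the single maximum the paper compares against $N$.

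Your separate $\Gamma_\infty$ case is, as you honestly flag, the one soft spot. Bounding $\tfrac{m}{k}y$ via the radius estimate and then checking it against the tabulated $N$ group by group is a finite verification, not a deduction from Definition~\ref{Def:NcBound}. For the groups in $\mc{S}$ it is harmless, and arguably the paper is in the same position (its $N_2$ is defined over $2\le i,j$, so the term $mhy_i$ coming from $j=1$ is being tacitly covered by the computed $N$ rather than by the stated formula). If you want a fully general statement, the cleanest fix is simply to enlarge the index range in the definition of $N_2$ to all $1\le i,j\le\lvert\mc{K}\rvert$; the $j=1$ contribution is exactly $mhy_i$, and then your contradiction $n\le mhy_i\le N_2\le N$ goes through without any case-by-case appeal.
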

\begin{proof}
	Let $K_i = \begin{psmallmatrix} k_ihw_i & x_i \\ mhy_i & k_ihz_i \end{psmallmatrix}$, for $1\leq i\leq \abs{\mc{K}}$, be a complete set of reduced coset representatives for $\mc{K}$, and recall \[ N \geq \max_{i,j\leq \abs{\mc{K}}} \left\{ \frac{mh}{(k_i,k_j)}\abs{k_iz_iy_j - k_jz_jy_i} \right\}. \] Let $i,j \leq \abs{\mc{K}}$ be such that $H = \varphi_n(K_i) = \varphi_n(K_j)$. By Lemma \ref{Lemma:GHKinverseExpression}, $\Dmat{n}K_iH^{-1}\in\Gamma^{(H)}$ and $\Dmat{n}K_jH^{-1}\in\Gamma^{(H)}$, so $\Dmat{n}K_iH^{-1}\left(\Dmat{n}K_jH^{-1}\right) = \Dmat{n}K_iK_j^{-1}\Dmat{n}^{-1} \in \Gamma^{(H)}$. A reduced coset representative for $\Dmat{n}K_iK_j^{-1}\Dmat{\frac{1}{n}}$ must have determinant $\ell\frac{h^2}{(a,h)^2}$ with $\ell$ square-free, and the square-free part of the determinant of $\Dmat{n}K_iK_j^{-1}\Dmat{\frac{1}{n}}$ is $\frac{k_ik_j}{(k_i,k_j)^2}$. We compute that the lower left entry of a reduced coset representative for $\Dmat{n}K_iK_j^{-1}\Dmat{\frac{1}{n}}$ is therefore \[ \frac{mh^2(k_iz_iy_j - k_jz_jy_i)}{n(k_i,k_j)(a,h)} = \frac{mh^2}{(mh,a)(h,a)}\left[\frac{(mh,a)(k_iz_iy_j - k_jz_jy_i)}{n(k_i,k_j)}\right], \] so that $n$ divides $\frac{(mh,a)(k_iz_iy_j - k_jz_jy_i)}{n(k_i,k_j)}$. But since $n > N \geq \frac{mh\abs{k_iz_iy_j - k_jz_jy_i}}{n(k_i,k_j)}$, we conclude that $\frac{(mh,a)(k_iz_iy_j - k_jz_jy_i)}{n(k_i,k_j)} = 0$. Since the lower left entry of $K_iK_j^{-1}$ is therefore zero, $K_iK_j^{-1}\in\Gamma_\infty$, so $[K_i] = [K_j]$, and $\varphi_n\restrict_{\mc{K}}$ is injective.
\end{proof}

In choosing $G\in\Gamma^{(H)}$ for $E_H(\tau,G)$, we asked that $GH\tau\in\mc{D}(\Gamma^{(H)})$. We are also insisting that $[K]\in\mc{K}$ satisfy $\Dmat{n}KH^{-1}\in [G]$ in $\LeftQuotientByStab{\Gamma^{(H)}}$, that is, we must be able to translate $(\Dmat{n}KH^{-1})H\tau = \Dmat{n}K\tau$ horizontally by some $\frac{r}{h}$ and land in $\mc{D}(\Gamma^{(H)})$. To ensure this is the case, we may guarantee that $\Dmat{n}K\tau$ is strictly above every (non-infinite) arc $\mc{A}(G)$ in $\Gamma^{(H)}$, for any possible $H$. Since the radius of $\mc{A}(M)$ for $M=\begin{psmallmatrix} a & b \\ c & d \end{psmallmatrix}\in\PGL_2^+(\qq)$ is $\rho(M) = \frac{\sqrt{\det(M)}}{c^2} \leq 1$, it suffices to ensure $\Im nK\tau > 1$.

\begin{lemma}\label{Lemma:nImKtauGreaterThan1}
	Let $\Gamma = \Gamma_0(mh\divides h)+$, let $\mc{K}$ be the critical set for $\Gamma$, and let $N$ be as above. Then for all $[K]\in\mc{K}$, all $n > N$ and all $\tau\in\mc{C}(\Gamma)$, $n\Im K\tau > 1$.
\end{lemma}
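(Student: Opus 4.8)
The plan is to reduce the bound on $\Im K\tau$ for an arbitrary $\tau\in\mc{C}(\Gamma)$ to its values at the finite endpoint set $\mc{T}$, using that $\Im$ is ``geodesically concave'' on $\hh$. Throughout I would use the identity $\Im K\tau = \rho^2(K)\,\Im\tau/\abs{\tau-\pi(K)}^2$ for $K\in\Gamma\setminus\Gamma_\infty$ (Lemma \ref{Lemma:Arcs}), which shows that the quantity $\abs{\tau-\pi(K_i)}^2/(\Im\tau\,\rho^2(K_i))$ appearing in the definition of $N_1$ is exactly $1/\Im K_i\tau$. Hence by the definition of $N\ge N_1$ we have $\Im K\tau'\ge 1/N$ for every non-identity $[K]\in\mc{K}$ and every $\tau'\in\mc{T}$. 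I would also record that $N\ge 1/y_0$: the endpoint $\tau_0\in\mc{T}$ with $\Im\tau_0=y_0$ lies on $\mc{C}_i$ for some $i$, hence (Def. \ref{Def:NcBound}, with Corollary \ref{Corollary:LowerBoundaryIsArcs}) on the half-circle $\mc{A}(K_j)$ of some non-identity $[K_j]\in\mc{K}$, so $\Im K_j\tau_0=\Im\tau_0=y_0$ and $N\ge N_1\ge 1/\Im K_j\tau_0=1/y_0$.

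For the identity coset $[K]=\Gamma_\infty$ the lemma is then immediate: $K$ acts by a real translation, so $\Im K\tau=\Im\tau\ge y_0$ for $\tau\in\mc{C}(\Gamma)$, and $n>N\ge 1/y_0$ gives $n\Im K\tau\ge ny_0>1$. For a general non-identity $[K]\in\mc{K}$ and $\tau\in\mc{C}(\Gamma)$, write $\tau\in\mc{C}_i$; by Def. \ref{Def:NcBound}, $\mc{C}_i$ is a connected sub-arc of the isometric circle $\mc{A}(K_j)$ of some non-identity $[K_j]\in\mc{K}$, a half-circle centred on $\rr$, i.e.\ a hyperbolic geodesic. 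Since $K$ acts on $\hh$ by an isometry, it maps this geodesic to another geodesic, so $K(\mc{C}_i)$ is a connected sub-arc of a vertical half-line or of a half-circle centred on $\rr$; on either curve $\Im$ is monotone or strictly concave (it equals $\rho\sin\theta$, $\theta\in(0,\pi)$, on a half-circle), so its minimum over $K(\overline{\mc{C}_i})$ is attained at an endpoint. Translating back, $\Im K\tau$ attains its minimum over $\overline{\mc{C}_i}$ at an endpoint of $\mc{C}_i$, and such endpoints lie in $\mc{T}$. Therefore $\Im K\tau\ge\min_{\tau'\in\mc{T}}\Im K\tau'\ge 1/N$, and $n>N$ gives $n\Im K\tau\ge n/N>1$.

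The only step with genuine content is the ``minimum at an endpoint'' claim, so I would be careful to justify: (i) each $\mc{C}_i$ really sits inside a single isometric circle $\mc{A}(K_j)$ with $[K_j]\in\mc{K}$ and $[K_j]\ne\Gamma_\infty$ (an honest half-circle, not all of $\hh$) — this is how $\mc{C}(\Gamma)$ is cut up in Def. \ref{Def:NcBound}, together with Corollary \ref{Corollary:LowerBoundaryIsArcs}; (ii) isometries of $\hh$ send geodesics to geodesics; and (iii) $\Im$ has no interior local minimum along a geodesic. Everything else is unwinding $N=\max\{N_1,N_2\}$ (only $N_1$ is used here, not $N_2$) together with the already-established identification $y_0=\min\{\Im\tau:\tau\in\mc{C}(\Gamma)\}$. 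A harmless technicality is that $\mc{C}(\Gamma)$ was defined to omit some of its boundary points; since $\tau\mapsto\Im K\tau$ is continuous one simply argues with the closures $\overline{\mc{C}_i}$.
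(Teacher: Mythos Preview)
Your proof is correct and follows the same overall strategy as the paper: show $\Im K\tau\ge 1/N$ for every $[K]\in\mc{K}$ and $\tau\in\mc{C}(\Gamma)$ by reducing to the finite endpoint set $\mc{T}$ and reading off the bound from the definition of $N_1$. The identity case is handled the same way in both (via $N\ge 1/y_0$).

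The one substantive difference is in the ``minimum at an endpoint'' step for non-identity $[K]$. The paper argues that the Euclidean distance $\abs{\tau-\pi(K)}$ is either constant or strictly monotone along each circular sub-arc $\mc{C}_i$, and then asserts this forces the minimum of $\Im K\tau=\rho^2(K)\Im\tau/\abs{\tau-\pi(K)}^2$ to occur at an endpoint. Your argument instead observes that $\mc{C}_i$ is a hyperbolic geodesic, $K$ is an isometry, so $K(\mc{C}_i)$ is again a geodesic, and $\Im$ is concave (or monotone) along any geodesic, hence attains its minimum at an endpoint. Your route is cleaner: it sidesteps the need to track the interaction between the varying numerator $\Im\tau$ and the varying denominator $\abs{\tau-\pi(K)}^2$, which the paper's phrasing (``equivalently, that the maximum value of $\abs{\tau-\pi(K)}^2$ occurs for $\tau\in\mc{T}$'') glosses over. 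The paper's approach ultimately gives the same endpoint reduction, but your isometry/concavity argument is the more transparent justification of that step.
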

\begin{proof}
	It suffices to show that $\Im K\tau \geq \frac{1}{N}$ for all $[K]\in\mc{K}$ and all $\tau\in\mc{C}(\Gamma)$. First, suppose $[K] = [\eye]$, so $\Im K\tau = \Im\tau$. Since $\tau\in\mc{D}(\Gamma)$, for each $K_i$ with $2\leq i\leq\abs{\mc{K}}$, \[ \Im K_i\tau = \frac{\rho^2(K_i)}{\abs{\tau - \pi(K_i)}^2}\Im\tau \leq \Im\tau, \] and so $\frac{\abs{\tau-\pi(K_i)}^2}{\rho^2(K_i)}\geq 1$. Thus, $N_1 \geq \frac{1}{\Im\tau}$, so $\Im K\tau \geq \frac{1}{N}$.
	
	Now suppose $[K] \neq [\eye]$. We have \[ \Im K\tau \geq \frac{\Im\tau \rho^2(K)}{\abs{\tau - \pi(K)}^2}, \] with $\tau\in\mc{C}(\Gamma)$. We claim the minimum value of the right hand side must occur when $\tau\in\mc{T}$, equivalently, that the maximum value of $\abs{\tau-\pi(K)}^2$ occurs for $\tau\in\mc{T}$. From this, by definition of $N_1$, we again will have $\Im K\tau \geq \frac{1}{N}$. Observe that $\abs{\tau - \pi(K)}$ changes between increasing and decreasing only if $\tau$ is (instantaneously) traveling a path tangent to a circle centered at $\pi(K)\in\qq$. Since $\tau$ lies on some $\mc{C}_i$, a portion of a circle centered in $\qq$, the arc $\mc{C}_i$ can only be tangent to a circle centered at $\pi(K)$ if $\mc{C}_i$ is part of a circle centered at $\pi(K)$. In this case, $\abs{\tau-\pi(K)}$ is constant on all of $\mc{C}_i$, otherwise, $\abs{\tau-\pi(K)}$ is monotonic on each $\mc{C}_i$, so the extreme values occur at the endpoints, in $\mc{T}$. Thus, $\Im K\tau\geq \frac{1}{N}$.
\end{proof}

We may finally prove our main tool for bounding these modular functions.

\begin{theorem}\label{Theorem:MainBound}
	Let $f$ be a replicable function from Monstrous Moonshine having eigengroup $\Gamma = \Gamma_0(mh\divides h)+$, with $m$ square-free. Let \[ M = \max_{r\in\nn}\left\{ f^{(r)}(iy_0^{(r)}) - e^{2\pi y_0^{(r)}} \right\}, \] as in Lemma \ref{Lemma:FunctionBoundsOnFundDomain}, and let $N$ and $c$ be as in Definition \ref{Def:NcBound}. Then for all $\tau\in\mc{C}(\Gamma)$ and all $n \geq N$ with $(n, h) = 1$, \[ \abs{F_{n,f}(\tau) - \sum_{[K]\in\mc{K}} \lambda_Ke^{-2\pi inK\tau}} < (M + e^{2\pi cn\Im\tau})n^2, \] where $\lambda_K := \lambda^{(\varphi_n(K))}\left(\Dmat{n}K\varphi_n(K)^{-1}\right)^{-1}$, and $\lambda_Ke^{-2\pi inK\tau}$ is independent of choice of $K$.
\end{theorem}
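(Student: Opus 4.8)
The plan is to assemble three preceding results — Corollary \ref{Corollary:FirstBound}, Corollary \ref{Corollary:LastBound}, and Lemma \ref{Lemma:PhinInjective} — into the stated bound, with Lemma \ref{Lemma:nImKtauGreaterThan1} used to identify exactly which estimates survive as main terms. Fix $\tau\in\mc{C}(\Gamma)\subseteq\mc{D}(\Gamma)$ and $n\geq N$ with $(n,h)=1$ throughout. First I would apply Corollary \ref{Corollary:FirstBound} with $B=M$, obtaining
\[
  \bigabs{ F_{n,f}(\tau) - \sum_{H\in\mc{H}_n^*} E_H(\tau,G_H) } \leq Mn^2 + \abs{\mc{H}_n\setminus\mc{H}_n^*}\,e^{\pi n\Im\tau},
\]
where each $G_H\in\Gamma^{(H)}$ satisfies $G_HH\tau\in\mc{D}(\Gamma^{(H)})$; it then remains to compare the sum over $\mc{H}_n^*$ with $\sum_{[K]\in\mc{K}}\lambda_Ke^{-2\pi inK\tau}$.

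The core is a term-by-term dichotomy. For $H\in\mc{H}_n^*$, Corollary \ref{Corollary:LastBound} applied to $G=G_H$ gives either $\abs{E_H(\tau,G_H)}\leq e^{2\pi nc\Im\tau}$, or $E_H(\tau,G_H)=\lambda_Ke^{-2\pi inK\tau}$ for the coset $[K]\in\mc{K}$ determined by $\pi(K)=\pi(G_HH)$ (and then $\phi_n(K)=H$). I would then establish the reverse direction: for each $[K]\in\mc{K}$, putting $H:=\phi_n(K)$, which lies in $\mc{H}_n^*=\phi_n(\Gamma)$ by Lemma \ref{Lemma:PhinWellDefined}, the term $E_H(\tau,G_H)$ is exactly $\lambda_Ke^{-2\pi inK\tau}$. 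Indeed $\Dmat{n}KH^{-1}\in\Gamma^{(H)}$ by Theorem \ref{Theorem:GroupCosets}, and $(\Dmat{n}KH^{-1})H\tau=\Dmat{n}K\tau$ has imaginary part $n\Im K\tau>1$ by Lemma \ref{Lemma:nImKtauGreaterThan1}; since $m$ is square-free, $\Gamma^{(H)}=\Gamma_0(\tfrac{m}{(m,a)}h\divides h)+$, and all its non-infinite isometric circles have radius at most $1$, so $\Dmat{n}K\tau$ lies strictly above every arc of $\Gamma^{(H)}$. Hence $\Dmat{n}K\tau$ has maximal imaginary part in its $\Gamma^{(H)}$-orbit, a unique translate $T^{r/h}\Dmat{n}K\tau$ lies in $\mc{D}(\Gamma^{(H)})$, and that point is strictly above the lower boundary; by uniqueness of $G_H$ when $G_HH\tau\notin\mc{C}(\Gamma^{(H)})$ (the remark after Lemma \ref{Lemma:FunctionBoundsOnFundDomain}) we conclude $G_H=T^{r/h}\Dmat{n}KH^{-1}$, so $[G_H]=[\Dmat{n}KH^{-1}]$ in $\LeftQuotientByStab{\Gamma^{(H)}}$, and Lemma \ref{Lemma:EstimatesConstantUnderTranslation} yields $E_H(\tau,G_H)=E_H(\tau,\Dmat{n}KH^{-1})=\lambda^{(H)}(\Dmat{n}KH^{-1})^{-1}e^{-2\pi inK\tau}=\lambda_Ke^{-2\pi inK\tau}$. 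Running the same computation with $K$ replaced by a translate $T^{r/h}K$, using $\Dmat{n}T^{r/h}=T^{rn/h}\Dmat{n}$ and $T^{rn/h}\in\Gamma^{(H)}_\infty$, shows $\lambda_Ke^{-2\pi inK\tau}$ depends only on $[K]$.

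Now Lemma \ref{Lemma:PhinInjective} makes $\phi_n$ injective on $\mc{K}$, so $A:=\phi_n(\mc{K})\subseteq\mc{H}_n^*$ has $\abs{A}=\abs{\mc{K}}$, and the two directions above identify $A$ as precisely the set of $H\in\mc{H}_n^*$ whose estimate takes the main-term form, with $\sum_{H\in A}E_H(\tau,G_H)=\sum_{[K]\in\mc{K}}\lambda_Ke^{-2\pi inK\tau}$; the remaining terms, indexed by $\mc{H}_n^*\setminus A$, are each bounded by $e^{2\pi nc\Im\tau}$. Combining with the Corollary \ref{Corollary:FirstBound} estimate, absorbing $e^{\pi n\Im\tau}\leq e^{2\pi nc\Im\tau}$ via $c\geq\tfrac12$ (Lemma \ref{Lemma:cLessThan1}), and using $\abs{\mc{H}_n}=\sigma_1(n)\leq n^2$ together with $\abs{\mc{K}}\geq1$ gives
\[
  \bigabs{ F_{n,f}(\tau) - \sum_{[K]\in\mc{K}}\lambda_Ke^{-2\pi inK\tau} } \leq Mn^2 + \bigl(\abs{\mc{H}_n}-\abs{\mc{K}}\bigr)e^{2\pi nc\Im\tau} < \bigl(M+e^{2\pi cn\Im\tau}\bigr)n^2 .
\]
The main obstacle is the bridging step in the second paragraph: turning the abstractly chosen $G_H$ of Corollary \ref{Corollary:FirstBound} into the explicit $\Gamma^{(H)}_\infty$-coset of $\Dmat{n}KH^{-1}$ for $[K]\in\mc{K}$, so that the relevant estimates literally equal the main terms rather than merely being comparable to them. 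Once the radius-at-most-$1$ bound on the isometric circles of $\Gamma^{(H)}$ and Lemma \ref{Lemma:nImKtauGreaterThan1} are in hand this is forced, and everything else is bookkeeping with the index sets $\mc{H}_n$, $\mc{H}_n^*$, and $\mc{K}$.
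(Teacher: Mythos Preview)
Your proposal is correct and follows essentially the same line as the paper's proof: bound $F_{n,f}$ by a sum of estimates via Corollary~\ref{Corollary:FirstBound}, use Corollary~\ref{Corollary:LastBound} to discard the non-critical terms by $e^{2\pi nc\Im\tau}$, and invoke Lemmas~\ref{Lemma:PhinInjective} and~\ref{Lemma:nImKtauGreaterThan1} (together with the radius-at-most-$1$ observation) to identify the surviving terms with $\sum_{[K]\in\mc{K}}\lambda_Ke^{-2\pi inK\tau}$. The only cosmetic difference is that you start from the stated form of Corollary~\ref{Corollary:FirstBound} (sum over $\mc{H}_n^*$ plus an $e^{\pi n\Im\tau}$ tail) and later reabsorb that tail via $c\geq\tfrac12$, whereas the paper starts from the intermediate bound over all of $\mc{H}_n$; your explicit verification that $\lambda_Ke^{-2\pi inK\tau}$ is coset-invariant is a detail the paper asserts but does not spell out.
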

\begin{proof}
	Recall as an intermediate result in the proof of Corollary \ref{Corollary:FirstBound}, we have \[ \bigabs{ F_{n,f}(\tau) - \sum_{H\in\mc{H}_n} E_H(\tau,G_H) } \leq Mn^2, \] where each $G_H$ is such that $G_HH\tau\in\mc{D}(\Gamma^{(H)})$. For each $H\in\mc{H}_n$, let $K_H\in\Gamma$ be such that $\pi(K_H)=\pi(G_HH)$. By Corollary \ref{Corollary:LastBound}, if $H\notin\phi_n(\mc{K})$, then $\abs{E_H(\tau,G_H)} \leq e^{2\pi nc\Im\tau}$, so that \[ \bigabs{\sum_{H\notin\phi_n(\mc{K})}} \leq \sum_{H\notin\phi_n(\mc{K})} e^{2\pi nc\Im\tau} \leq n^2e^{2\pi nc\Im\tau}. \] Since $n>N$, $\phi_n\restrict_{\mc{K}}$ injective, so $\phi_n(\mc{K})\subset\mc{H}_n$ is in bijection with $\mc{K}$. For every $[K]\in\mc{K}$, one has $\Im \Dmat{n}K\tau > 1$ by Lemma \ref{Lemma:nImKtauGreaterThan1}, and by the reasoning above that lemma, each $[K]\in\mc{K}$ corresponds, up to translation in $\Gamma^{(H)}_\infty$, with a choice of $G_H$ such that $G_HH\tau\in\mc{D}(\Gamma^{(H)})$. That is, \[ \sum_{H\in\phi_n(\mc{K})} E_H(\tau,G_H) = \sum_{[K]\in\mc{K}} \lambda_Ke^{-2\pi inK\tau}. \] Thus, \[ \bigabs{ F_{n,f}(\tau) - \sum_{[K]\in\mc{K}} \lambda_Ke^{-2\pi inK\tau} } \leq Mn^2 + \bigabs{ \sum_{H\notin\phi_n{\mc{K}}} E_H(\tau,G_H) } \leq (M+e^{2\pi nc\Im\tau})n^2. \]
\end{proof}

For any group $\Gamma$ appearing in the Monstrous Moonshine correspondence, we may compute $\mc{K}$, $N$, and $c$ entirely in terms of $\Gamma$. Unfortunately, at this point the root of unity $\lambda_K$ still depends on $\Gamma^{(H)}$, and we have no better way of determining $\lambda_K$ than choosing some representative $K$ for $[K]$, letting $H=\phi_n(K)$ and computing $\lambda^{(H)}(\Dmat{n}KH^{-1})^{-1}$.

\section{Proofs of Theorem 1}\label{Section:Cases}

We apply the method developed in Section \ref{Section:ZerosOfFaberPolynomials} to three specific cases, to demonstrate the method. We first present the case of $\Fricke{2}$, as this demonstrates the basic procedure. We then consider $\Gamma_0(6){+}$, which is interesting for having multiple arcs on the lower boundary, and then provide an example of how conjugation affects the procedure with $\Gamma_0(3\edivides 3)$. The other cases are similar to these three but we omit the details here (see \cite{Toomey})

We have additionally verified that this procedure locates zeros for the Fricke groups of level $3$, $5$, and $7$, as well as for $\Gamma_0(10){+}$. Moreover, where the Hauptmodul $T_{11A}$ for $\Fricke{11}$ is real-valued, this method locates a positive proportion of the zeros of the associated Faber polynomials (we expect the zeros located are all the real zeros).

In all cases, we are only able to approximate $F_{n,f}(\tau) = q^{-n} + O(q)$ for $n > N$, and must verify the zeros for $n\leq N$ separately. However, for the cases we consider in Theorem \ref{Theorem:MainTheorem}, the zeros for $n<200$ were located by Shigezumi (\cite{SHI}), and one may compute $N<200$ for each group, so that these manual calcuations have already been done.

\subsection{The case \texorpdfstring{$\Gamma = \Fricke{2}$}{of the Fricke group of level 2}}

We consider the case of the group \[ \Gamma = \Fricke{2} = \left\{\begin{psmallmatrix} kw & x \\ my & kz \end{psmallmatrix} \bigst kwz - \frac{m}{k}xy = 1 \right\}. \] This is a genus zero group having normalized Hauptmodul (\cite{CN}) \[ T_{2A}(q) = q^{-1} + 4372q + 96256q^2 + 1240002q^3 + \ldots, \] where $\eta(q) = q^{\frac{1}{24}}\prod_{n=1}^{\infty} (1-q^n)$ is the Dedekind eta function. We have that \[ \mc{C} = \mc{C}(\Fricke{2}) = \left\{\tau\in\hh \st \abs{\tau}^2 = \frac{1}{2}, 0\leq \Re\tau \leq \frac{1}{2} \right\}, \] the lower boundary of our fundamental domin \[ \mc{D} = \mc{D}(\Fricke{2}) = \mc{C}\cup\left\{\tau\in\hh \st \abs{\tau} > \frac{1}{2}, 0< \Re\tau\leq \frac{1}{2} \right\}. \] The function $T_{2A}$ is real-valued on $\mc{C}$, taking values in the interval $[-104, 152]$ (\cite{SHI}), so we may use the intermediate value theorem to count zeros along this arc.

Let $W_2 = \begin{psmallmatrix} 0 & -1 \\ 2 & 0 \end{psmallmatrix}\in\Gamma$, and let $K_2 = \begin{psmallmatrix} -1 & 0 \\ 2 & -1 \end{psmallmatrix}\in\Gamma$. We calculate the critical set using Lemma \ref{Lemma:CriticalSetIsFinite}, and obtain \[ \mc{K} = \left\{ [T], [W_2], [W_2T^{-1}], [K_2] \right\}, \] corresponding to the `infinite arc' $\hh$, and the three arcs centered at $0$, $1$, and $\frac{1}{2}$, which intersect $\mc{C}$. As representatives for the elements of $\mc{K}$, we take \[ \left\{ \begin{psmallmatrix} 1 & 0 \\ 0 & 1 \end{psmallmatrix}, \begin{psmallmatrix} 0 & -1 \\ 2 & 0 \end{psmallmatrix}, \begin{psmallmatrix} 0 & -1 \\ 2 & -2 \end{psmallmatrix}, \begin{psmallmatrix} -1 & 0 \\ 2 & -1 \end{psmallmatrix} \right\}, \] respectively, so that \[ \sum_{[K]\in\mc{K}} \lambda_Ke^{-2\pi inK\tau} 
= e^{-2\pi in\tau} + e^{-2\pi in\frac{-1}{2\tau}} + e^{-2\pi in\frac{-1}{2\tau-2}} +e^{-2\pi in\frac{-\tau}{2\tau-1}}. \] Since $\Gamma = \Fricke{2}$, all replicates of $\Gamma$ have cusp width 1 at infinity, and so necessarily $\lambda_K = 1$ for each $K\in\mc{K}$. When restricting to $\tau = x+iy\in\mc{C}$, where $\abs{\tau}^2=\frac{1}{2}$, we compute that \[ \abs{2\tau-2}^2 = (2x-2)^2+(2y)^2 = \abs{2\tau}^2 - 8x + 4 = 6-8x, \] and similarly $\abs{2\tau - 1}^2 = 3-4x$, so that 
\begin{align*}
	\sum_{[K]\in\mc{K}} \lambda_Ke^{-2\pi inK\tau} 
	&= e^{-2\pi in\tau} + e^{2\pi in\overline{\tau}} + e^{2\pi in\frac{2\overline{\tau}-2}{6-8x}} +e^{2\pi in\frac{\tau(2\overline{\tau}-1)}{3-4x}} \\
	&= e^{2\pi ny}\left( e^{-2\pi inx} + e^{2\pi inx}\right) + e^{2\pi in\frac{\overline{\tau}-1}{3-4x}} +e^{2\pi in\frac{1-\tau}{3-4x}} \\
	&= 2e^{2\pi ny}\cos\left(2\pi nx\right) + 2e^{2\pi n\frac{y}{3-4x}}\cos\left(\frac{2\pi(x-1)}{3-4x}\right).
\end{align*}

In Example \ref{Example:NcComputation}, we calculate $c=\frac{1}{2}$, and $N = 3\sqrt{2}$, so we take $N\geq 5$. In the appendix, we compute that $T_{1A}\left(\frac{i\sqrt{3}}{2}\right) = 4\cdot 15^3(30-17\sqrt{3})^3 - 744$ and $T_{2A}\left(\frac{i}{2}\right) = 544$, so that in Theorem \ref{Theorem:MainTheorem}, we have \[ M = \max\left\{ 4\cdot 15^3(30-17\sqrt{3})^3 - 744 - e^{\pi\sqrt{3}}, 544 - e^{\pi} \right\} \approx 1334.813\ldots < 1335, \] which yields the bound \[ \bigabs{ F_{n}(\tau) - \sum_{K\in\mc{K}} e^{2\pi nK\tau} } \leq (1335 + e^{\pi n\Im\tau})n^2. \] Moving the two terms corresponding to $[K] = [W_2T^{-1}], [K_2]$ to the right hand side, restricting to $\tau = x+iy\in\mc{C}$, and multiplying through by $e^{-2\pi ny}$, we establish the bound 
\begin{align*}
	\bigabs{ F_{n}(\tau)e^{-2\pi ny} - 2\cos(2\pi nx) } 
	&\leq \left[ 1335n^2 + n^2e^{\pi ny} + \abs{2e^{2\pi n\frac{y}{3-4x}}\cos\left(2\pi\frac{x-1}{3-4x}\right)} \right]e^{-2\pi ny} \\
	&= \left[ 1335n^2 + n^2e^{\pi ny}\right]e^{-2\pi ny} + 2e^{2\pi ny\frac{4x-2}{3-4x}}, \stepcounter{equation}\tag{\theequation}\label{2Abound}
\end{align*}
since cosine is bounded by $1$ for real inputs. We analyze the bound in \ref{2Abound} in two parts.

Observe that \[ \frac{\partial}{\partial y} \left[ 1335n^2 + n^2e^{\pi ny}\right]e^{-2\pi ny} = -\pi n^3\left[e^{-\pi ny} + e^{-2\pi ny}\right] < 0 \] for all $y>0$, so is maximal on $\mc{C}$ when $y = \frac{1}{2}$. Making this substitution, we then find \[ \frac{\partial}{\partial n} \left[ 1335n^2 + n^2e^{\frac{\pi}{2} n}\right]e^{-\pi n} = \left[2-\pi n\right] Mne^{-\pi n} + \left[2 - \frac{pi}{2} n\right] ne^{-\frac{\pi}{2} n} \] is negative for $n > \frac{4}{\pi}$. Thus, for any $n\geq N=5$ and any $\tau\in\mc{C}$, the first term in \ref{2Abound} is bounded, with \[ \left[ 1335n^2 + n^2e^{\pi ny}\right]e^{-2\pi ny} \leq \left[ 1335(5)^2 + (5)^2e^{\frac{5\pi}{2}}\right]e^{-5\pi} \approx 0.01474 < 0.015. \]

\begin{remark}
	The above has shown that $F_n(\tau)$ is very well approximated by $\sum_{[K]\in\mc{K}} \lambda_K e^{-2\pi inK\tau}$ along the lower arc $\mc{C}$. In fact, along `most' of $\mc{C}$, only two terms of the four in this sum are needed to approximate $F_n(\tau)$. Heuristically, a term $\lambda_K e^{-2\pi inK\tau}$ can only be of significant magnitude when $\tau$ is `near' $\mc{A}(K)$. The lower boundary $\mc{C}$ is a segment of $\mc{A}(W_2)$, where $W_2 = \begin{psmallmatrix} 0 & -1 \\ 2 & 0 \end{psmallmatrix}$), and also a subset of $\mc{A}(\eye) = \hh$, so the two terms associated to $K = \eye, W_2$ always contribute significantly to the behavior of $F_n(\tau)$. The arcs associated to the remaining two elements of the critical set intersect $\mc{C}$ at the point $\tau = \frac{1+i}{2}$, and so make a significant contribution to the behavior of $F_n(\tau)$ only near this point.
\end{remark}

We would now like to bound the second term in \ref{2Abound} on the lower arc $\mc{C}$. Unfortunately, at the point $x+iy = \frac{1}{2}+\frac{i}{2} \in \mc{C}$, one has $2e^{2\pi ny\frac{4x-2}{3-4x}} = 2$, and the resulting bound on $F_{n}(\tau)e^{-2\pi ny}$ would not be effective for locating zeros using the intermediate value theorem (the function is not bounded within $2$ of $2\cos(2\pi nx)$, so we cannot guarantee any sign changes). To produce a better bound, we must avoid this problem point on the lower boundary, but as $n$ increases, we expect to find a zero of $F_n(\tau)$ arbitrarily close to $\frac{1+i}{2}$. To handle this difficulty, for any $n\in\nn$ we define \[ \mc{C}_n = \left\{\tau\in\mc{C} \st 0\leq \Re\tau \leq \frac{6n-1}{12n} \right\}, \] a region which avoids $\tau = \frac{1+i}{2}$, but where one may check that $2\cos(2\pi n\Re\tau)$ still changes sign $n+1$ times.

For $\tau = x+iy\in\mc{C}$, $y=\sqrt{\frac{1}{2}-x^2}$, so that \[ 2e^{2\pi ny\frac{4x-2}{3-4x}} = 2e^{2\pi n\frac{4x-2}{3-4x}\sqrt{\frac{1}{2}-x^2}}, \] and we note that $\frac{4x-2}{3-4x}\sqrt{\frac{1}{2}-x^2}$ is a negative and increasing function of $x$ for $x\in [0,\frac{1}{2})$, so that $2e^{2\pi n\frac{4x-2}{3-4x}\sqrt{\frac{1}{2}-x^2}}$ is increasing and obtains its maximum value on $\mc{C}_n$ when $x = \frac{6n-1}{12n}$. Substituting this value of $x$ in and simplifying, we find that for $\tau\in\mc{C}_n$, we have \[ 2e^{2\pi ny\frac{4x-2}{3-4x}} \leq 2e^{-\frac{\pi}{3n+1}\sqrt{(n+\frac{1}{6})^2-\frac{1}{18}}}, \] and this bound is a decreasing function of $n$, so that for $n\geq N=5$, \[ 2e^{2\pi ny\frac{4x-2}{3-4x}} \leq 2e^{-\frac{\pi}{16}\sqrt{(\frac{31}{6})^2-\frac{1}{18}}} \approx 0.72595 < 0.726. \]

Combining the two bounds above, for all $n\geq 5$ and all $\tau\in\mc{C}_n$, we find \[\bigabs{ F_{n}(\tau)e^{-2\pi ny} - 2\cos(2\pi nx) } < 0.015 + 0.726 = 0.741 < \sqrt{3}. \] For $0\leq k<n$, define $x_k = \frac{k}{2n}$, and define $x_n = \frac{6n-1}{12n}$, noting that for $0\leq k\leq n$, the point $\tau_k := x_k + i\sqrt{\frac{1}{2}-x_k^2} \in \mc{C}_n$. For $0\leq k <n$, we compute $2\cos(2\pi nx_k) = 2\cos(\pi k) = 2(-1)^k$, and for $2\cos(2\pi nx_n) = 2\cos(\pi n - \frac{\pi}{6}) = \sqrt{3}(-1)^n$. Since $F_{n}(\tau)e^{-2\pi ny}$ is real-valued and continuous on $\mc{C}_n$, and stays within $0.726 < \sqrt{3}$ of $2\cos(2\pi nx)$, we deduce that $F_{n}(\tau)e^{-2\pi ny}$  must change sign $n+1$ times on $\mc{C}_n$, and therefore has $n$ simple zeros on (the interior of) $\mc{C}_n\subset\mc{C}$. Equivalently, by the definition of the Faber polynomials, we have found that $F_n(X)$ has $n$ simple zeros on the interval $(-104,152)$, since $T_{2A}(\tau)$ takes values in $[-104,152]$ on $\mc{C}$, and is nonzero at the endpoints.

\subsection{The case \texorpdfstring{$\Gamma = \Gamma_0(6){+}$}{of the normalizer of the Hecke congruence subgroup of level 6}}

We now consider a case of $\Gamma = \Gamma_0(6){+}$, where the lower boundary consists of more than one arc. Let
\begin{align*} 
	\mc{C}_1 
	&= \left\{\tau\in\hh\st \abs{\tau}^2 = \frac{1}{6}, 0\leq \Re\tau \leq \frac{1}{3} \right\}, \\
	\mc{C}_2
	&= \left\{\tau\in\hh\st \abs{\tau-\frac{1}{2}}^2 = \frac{1}{12}, \frac{1}{3}\leq \Re\tau \leq \frac{1}{2} \right\},
\end{align*}
then $\mc{C} = \mc{C}_1\cup\mc{C}_2$ is the lower boundary of $\mc{D}(\Gamma)$ (see Figure \ref{Fig:FundamentalDomainGamma06+}). The normalized Hauptmodul is \[ T_{6A}(\tau) = q^{-1} + 79q + 352q^2 + 1431q^3 + 4160q^4 + 13015q^5 + \ldots, \] which is a completely replicable function appearing in Monstrous Moonshine with replicates \[ T_{6A}^{(a)} = \begin{cases} T_{6A} & a\equiv 1\pmod{6} \\ T_{3A} & a\equiv 2\pmod{6} \\ T_{2A} & a\equiv 3\pmod{6} \\ T_{1A} & a\equiv 0\pmod{6}, \end{cases} \] where $T_{1A} = j - 744$ is the normalized Hauptmodul for $\PSL_2(\zz)$, $T_{2A}$ is the normalized Hauptmodul for $\Fricke{2}$, and $T_{3A}$ is the normalized Hauptmodul for $\Fricke{3}$ \cite{CN} (see \S\ref{Section:SpecialValues} for more on the function $T_{3A}$).

The critical set for $\Gamma_0(6){+}$ is given by coset representatives \[ \mc{K} = \{ \eye, \begin{psmallmatrix} 0 & -1 \\ 6 & 0 \end{psmallmatrix}, \begin{psmallmatrix} 3 & -2 \\ 6 & -3 \end{psmallmatrix}, \begin{psmallmatrix} 2 & -1 \\ 6 & -2 \end{psmallmatrix} \}, \] while $N = 13$ and $c=\frac{1}{2}$ (\ref{Example:CriticalSets}, \ref{Example:NcComputation}). In Section \ref{Section:SpecialValues}, we compute the values of $T_{6A}^{(a)}(y_0^{(a)})$ for all replicates $T_{6A}^{(a)}$, so that 
\begin{align*}
	B 
	&= \max_{a\in\nn} \{ T_{6A}^{(a)}(y_0^{(a)}) - e^{2\pi y_0^{(a)}} \} \\
	&= \max\{ 1565.577 - e^{\pi\sqrt{3}}, 544 - e^{\pi}, 1416 - e^{\frac{\pi}{\sqrt{3}}}, 86 - e^{\frac{\sqrt{2}\pi}{3}} \} \\
	&= 1416 - e^{\frac{\pi}{\sqrt{3}}} \\
	&\approx 1409.866\ldots,
\end{align*}
and we take $B = 1410$. By Theorem \ref{Theorem:MainBound}, we have \[ \bigabs{ F_{n,6A}(\tau) - \sum_{K\in\mc{K}} e^{-2\pi inK\tau} }e^{-2\pi ny} < 1410n^2e^{-2\pi ny} + n^2e^{-\pi ny} \] for all $\tau=x+iy\in\mc{C}$ and all $N \geq 13$. The right hand side above is a decreasing function of $y$, so bounded above at $y_0 = \frac{1}{3\sqrt{2}}$. One checks that $1410n^2e^{-2\pi ny_0} + n^2e^{-\pi ny_0}$ is decreasing for $n\geq 2$, so that for $N\geq 13$, \[ \bigabs{ F_{n,6A}(\tau) - \sum_{K\in\mc{K}} e^{-2\pi inK\tau} }e^{-2\pi ny} < 1410(13^2)e^{-\frac{13\sqrt{2}\pi}{3}} + 13^2e^{-\frac{13\sqrt{2}\pi}{6}} \approx 0.01219\ldots < 0.013. \]

Now, as in the case of $\Fricke{2}$, we write $\sum_{K\in\mc{K}} e^{-2\pi inK\tau}$ in terms of the real and imaginary part of $\tau$. We use the identity $\abs{\tau}^2 = \frac{1}{6}$ for $\tau = x+iy\in\mc{C}_1$, and $\abs{\tau-\frac{1}{2}}^2 = \frac{1}{12}$ for $\tau = x+iy\in\mc{C}_2$ to determine (after considerable algebra) that \[ \sum_{K\in\mc{K}} e^{-2\pi inK\tau} = 2e^{2\pi ny}\cos(2\pi nx) + \begin{cases} 2e^{2\pi n\frac{y}{5-12x}}\cos(2\pi n\frac{3-7x}{5-12x}) & \tau\in\mc{C}_1 \\ 2e^{2\pi n\frac{y}{6x-1}}\cos(2\pi n\frac{x}{6x-1}) & \tau\in\mc{C}_2. \end{cases} \] Thus, we find for $\tau = x+iy\in\mc{C}$ and $n\geq 13$ that \[ \bigabs{F_{n,6A}(\tau)e^{-2\pi iny} - 2\cos(2\pi nx)} < 0.013 + \begin{cases} 2e^{-2\pi ny\frac{4-12x}{5-12x}} & \tau\in\mc{C}_1 \\ 2e^{-2\pi ny\frac{6x-2}{6x-1}} & \tau\in\mc{C}_2. \end{cases} \]

\begin{remark}
	The term $2e^{2\pi ny}\cos(2\pi nx)$ arises from different pairs of terms depending on the arc. On $\mc{C}_1$, we find $2e^{2\pi ny}\cos(2\pi nx) = e^{2\pi in\tau} + e^{2\pi in\frac{-1}{6\tau}}$, where these exponentials come from $\begin{psmallmatrix} 1 & 0 \\ 0 & 1 \end{psmallmatrix}$ and $\begin{psmallmatrix} 0 & -1 \\ 6 & 0 \end{psmallmatrix}$ in the critical set. On $\mc{C}_2$, the term $2e^{2\pi ny}\cos(2\pi nx)$ arises from the exponentials coming from the identity matrix and $\begin{psmallmatrix} 3 & -2 \\ 6 & -3 \end{psmallmatrix}$. In both cases, the non-identity matrix is the element of the critical set associated to the arc $\mc{C}_i$ containing $\tau$, and we find this holds generally in other cases, at least where the arc has center in $\frac{1}{2}\zz$.
\end{remark}

To locate zeros of $F_{n,6A}(\tau)$, we must exclude a small region near the elliptic point $\tau = \frac{1}{3} + \frac{i}{3\sqrt{2}}$. We thus define \[ \mc{C}_{1}^* = \left\{ \tau\in\mc{C}_1 \st 0\leq \Re\tau \leq \frac{1}{3} - \frac{1}{6n} \right\}, \] \[ \mc{C}_{2}^* = \left\{ \tau\in\mc{C}_2 \st \frac{1}{3} + \frac{1}{6n} \leq \Re\tau \leq \frac{1}{2} \right\}, \] depending on $n$, and let $\mc{C}^* = \mc{C}_1^*\cup\mc{C}_2^*$. One checks that $e^{-2\pi ny\frac{4-12x}{5-12x}}$ is an increasing function of $x$ and a decreasing function of $y$ on $\mc{C}_1$, so bounded on $\mc{C}_1^*$ by taking the right endpoint of $\mc{C}_1^*$, where $x = \frac{1}{3} - \frac{1}{6n}$, and $y>0$ is defined by $x^2+y^2=\frac{1}{6}$. By an entirely similar argument, we may bound $e^{-2\pi ny\frac{6x-2}{6x-1}}$ using the left endpoint of $\mc{C}_2^*$, where $x= \frac{1}{3}+\frac{1}{6n}$, and $y>0$ is defined by $(x-\frac{1}{2})^2+y^2 = \frac{1}{12}$. Substituting these values for $x$ and $y$, we obtain expressions in terms of $n$, \[ e^{-2\pi ny\frac{4-12x}{5-12x}} = e^{-\frac{2\pi\sqrt{2n^{2}+4n-1}}{3\left(n+2\right)}}, \]  \[ e^{-2\pi ny\frac{6x-2}{6x-1}} = e^{-\frac{\pi\sqrt{2x^{2}+2x-1}}{3\left(x+1\right)}}, \] both of which are decreasing functions of $n$ for $n\geq 1$ (indeed, for $n > -\frac{1}{2} + \frac{\sqrt{3}}{2}$ in the former case, and for $n > -1 +\sqrt{\frac{3}{2}}$ in the latter). Substituting $n=13$, we obtain bounds
\[ e^{-2\pi ny\frac{4-12x}{5-12x}} \leq 0.063681\ldots < 0.064 \qquad \text{on } \mc{C}_1^*, \] and \[ e^{-2\pi ny\frac{6x-2}{6x-1}}\leq 0.24047\ldots < 0.25 \qquad \text{on } \mc{C}_2^*, \] implying for $\tau\in\mc{C}*$ and $n\geq 13$ that \[ \bigabs{F_{n,6A}(\tau)e^{-2\pi iny} - 2\cos(2\pi nx)} < 0.013 + 0.25 = 0.263. \] 

We now use this approximation to locate all zeros of $F_{n,6A}$. We define points $\tau_k = x_k+iy_k\in\mc{C}$ by $x_k = \frac{k}{2n}$. When $k\neq\frac{2n}{3}$, we note that $\tau_k\in\mc{C}^*$ and $2\cos(2\pi nx_k) = 2\cos(\pi k) = \pm 2$. 

For $n$ not divisible by 3, then, our approximation forces $F_{n,6A}$ to have the same sign as cosine at all $n+1$ points $\tau_k$ on $\mc{C}$. Since $F_{n,6A}$ is real-valued on $\mc{C}$, by the intermediate value theorem these $n+1$ sign changes give us $n$ simple zeros for $F_{n,6A}$ on $\mc{C}$. when $3\ndivides n$, and thus forcing $F_{n,6A}$ to have $n$ zeros on the lower boundary of $\mc{D}(\Gamma)$.

Now, when $3\divides n$, let $k^* = \frac{2n}{3}$, and note we know the sign of $F_{n,6A}$ at all points $\tau_k\in\mc{C}$ except $\tau_k^* = \frac{1}{3}+\frac{i}{3\sqrt{2}}$. In particular, at the points $\tau_k$ neighboring $\tau_{k^*}$ (which are on $\mc{C}^*$), we have \[ 2\cos(2\pi nx_{k^*\pm 1}) = 2\cos\left(2\pi n\left(\frac{1}{3}\pm \frac{1}{2n}\right)\right) = 2\cos(2\pi\frac{n}{3}\pm \pi) = -2, \] so $F_{n,6A}$ is negative. However, at the (inner) endpoints of $\mc{C}_1^*$ and $\mc{C}_2^*$, we compute \[ 2\cos\left(2\pi n\left(\frac{1}{3}\pm\frac{1}{6n}\right)\right) = 2\cos\left(\frac{2\pi n}{3} \pm \frac{\pi}{3}\right) = 1, \] so $T_{n,6A}$ is positive at these endpoints of $\mc{C}_1^*$ and $\mc{C}_2^*$. Thus, we find that $F_{n,6A}$ changes sign $n+1$ times on $\mc{C}^*\subset\mc{C}$, and by the intermediate value theorem, we once again locate $n$ zeros on the lower arcs of $\mc{D}{\Gamma}$, as desired.

\subsection{The case of \texorpdfstring{$\Gamma = \Gamma_0(3\edivides 3)$}{of the normalizer of the Hecke congruence subgroup of level 9}}\label{Subsection:3C}

Finally, we provide an example of a group $\Gamma_0(mh\edivides h)+$ with $h>1$, the group $\Gamma = \Gamma_0(3\edivides 3)$. This group is a normal subgroup of \[ \Gamma_0(3\divides 3) = \begin{psmallmatrix} 1 & 0 \\ 0 & 3 \end{psmallmatrix}\PSL_2(\zz)\begin{psmallmatrix} 3 & 0 \\ 0 & 1 \end{psmallmatrix} = \left\{ \begin{psmallmatrix} 3w & x \\ 9y & 3z \end{psmallmatrix} \bigst wz-xy = 1 \right\}, \] which the kernel of a homomorphism $\lambda:\Gamma_0(3\divides 3)\to\cc$ obeying $\lambda\left(T^{\frac{1}{3}}\right) = e^{-\frac{2\pi i}{3}}$ (\cite{CMS}). This group is genus zero with normalized Hauptmodul \cite{CN} \[ T_{3C}(q) = \sqrt[3]{T_{1A}(q^3)+744} = q^{-1} + 248q^2 + 4124q^5 + 34752q^8 + \ldots. \] The lower boundary of the fundamental domain $\mc{D}(\Gamma)$ consists of three arcs, each a portion of circles with radius $\frac{1}{3}$, centered at $x=0$, $x=-\frac{1}{3}$, and $x=\frac{1}{3}$ on the real axis. (See Figure \ref{Fig:FundamentalDomainGamma03bar3}, which gives a fundamental domain for $\Gamma_0(3\divides 3)$; the fundamental domain for $\Gamma$ consists of three copies of this fundamental domain laid side by side.) Using the relation between $T_{3C}$ and $T_{1A}$, we find that $T_{3C}$ takes values in the interval $[0,12]$ on \[ \mc{C} = \left\{ \tau\in\hh \st \abs{\tau} = \frac{1}{3}, 0\leq \Re\tau \leq \frac{1}{3} \right\}, \] and by translation by $T^{\frac{1}{3}}$, the values of $T_{3C}$ on the rest of the lower boundary are given by the line segments in $\cc$ from $0$ to $12e^{\pm\frac{2\pi i}{3}}$ (that is, the interval $[0,12]$ multiplied by $\lambda(T^{\pm\frac{1}{3}})$, so rotated $\mp \frac{2\pi}{3}$ in the complex plane).

Since $T_{3C}(q)^3 = T_{1A}(q^3) + 744$, Lemma \ref{Lemma:ConjugateGroupFaberZeros} implies that whenever $3\divides n$, we have \[ F_{n,3C}(X) = F_{\frac{n}{3},1A}(X^3-744), \] so that if $F_{\frac{n}{3},1A}(Y) = 0$, then the third roots of $Y+744$ are the zeros of $F_{n,3C}$. Recall Asai, Kaneko, and Ninomiya \cite{AKN} located the $\frac{n}{3}$ zeros of $F_{\frac{n}{3},1A}(X)$ on the interval $[-744,984]$, implying the zeros of $F_{n,3C}(X)$ are all third roots of values in $[0,12]$. Since $T_{3C}$ takes these values on the lower arcs of $\mc{D}(\Gamma)$, all of the zeros of $F_{n,3C}(\tau)$ are on the lower boundary whenever $3\divides n$.

When $3\ndivides n$, write $n = 3s+t$ with $t\in\{1,2\}$, then Lemma \ref{Lemma:ConjugateGroupMoreFaberZeros} gives us that \[ F_{n,3C}(X) = X^t g(X^3), \] where $g(X)$ is a polynomial of degree $s$, so that $F_{n,3C}(\tau)$ has a zero of order $t$ at $T_{3C}(\tau) = 0$, which occurs on the lower arcs of $\mc{D}(\Gamma)$, at the point $\tau = \frac{1}{6}+\frac{\sqrt{3}}{6}$, as well as its horizontal translates by $\pm\frac{1}{3}$. These points are all elliptic points of order 3, so the $3t$ zeros at these three points account for $t$ zeros of the function $F_{n,3C}(\tau)$. Moreover, by showing $F_{n,3C}(\tau)$ has $s$ zeros on the \textit{interior} of $\mc{C}$ (which contains no elliptic points) we will locate $3s$ additional zeros on the lower boundary of $\mc{D}(\Gamma)$, and therefore all $n=3s+t$ zeros of $F_{n,3C}(\tau)$ in the fundamental domain.

To locate these $s$ zeros, we must approximate $F_{n,3C}(\tau)$ using the same methods as the previous cases. Recall we computed the critical set in Example \ref{Example:CriticalSets} to be \[ \mc{K} = \left\{ \begin{psmallmatrix} 1 & 0 \\ 0 & 1 \end{psmallmatrix}, \begin{psmallmatrix} 0 & -1 \\ 9 & 0 \end{psmallmatrix}, \begin{psmallmatrix} 0 & -1 \\ 9 & -3 \end{psmallmatrix} \right\}, \] and we found in Example \ref{Example:NcComputation} that $N = 9$ and $c=\frac{1}{2}$. In Section \ref{Section:SpecialValues}, we compute that \[ T_{3C}\left(i\frac{\sqrt{3}}{6}\right) = 15\sqrt[3]{4}(30-17\sqrt{3}) \\ T_{1A}\left(i\frac{\sqrt{3}}{2}\right) = 4\cdot 15^3(30-17\sqrt{3})^3 - 744, \] so that \[ M = max\left\{ T_{3C}(i\frac{\sqrt{3}}{6}) - e^{\frac{\pi}{\sqrt{3}}}, T_{1A}(i\frac{\sqrt{3}}{2}) - e^{\pi\sqrt{3}} \right\} \approx 1334.813\ldots < 1335. \] We therefore obtain by Theorem \ref{Theorem:MainBound} the approximation \[ \abs{F_{n,3C}(\tau) - \sum_{K\in\mc{K}}\lambda_K e^{-2\pi inK\tau} } < (1335 + e^{\pi n\Im\tau})n^2. \] 

We now determine the value of $\lambda_K$ for $K = \begin{psmallmatrix} 0 & -1 \\ 9 & 0 \end{psmallmatrix} = \Dmat{\frac{1}{9}}S$. From the description of $\lambda$ in \cite{CMS}, we have $\lambda(T^{\frac{1}{3}}) = e^{-\frac{2\pi i}{3}}$ and $\lambda\left(\begin{psmallmatrix} 3 & 0 \\ 9 & 3 \end{psmallmatrix}\right) = e^{\frac{2\pi i}{3}}$. Noting $\begin{psmallmatrix} 3 & 0 \\ 9 & 3 \end{psmallmatrix} = T^{\frac{1}{3}}\Dmat{\frac{1}{9}}ST^{\frac{1}{3}} = T^{\frac{1}{3}}\begin{psmallmatrix} 0 & -1 \\ 9 & 0 \end{psmallmatrix}T^{\frac{1}{3}}$, and using the fact $\lambda$ is a homomorphism, we find $\lambda_K = 1$. Since for $\tau\in\mc{C}$ we have  $\abs{\tau}^2 = \frac{1}{9}$, we compute $K\tau = \frac{-1}{9\tau} = -\overline{\tau}$. The corresponding terms in our approximation from Theorem \ref{Theorem:MainBound} become \[ e^{-2\pi in\tau} + e^{2\pi in\overline{\tau}} = 2e^{2\pi n\Im\tau}\cos(2\pi n\Re\tau). \]

Further, for $K = \begin{psmallmatrix} 0 & -1 \\ 9 & -3 \end{psmallmatrix}$, we find that \[ \abs{e^{-2\pi inK\tau}} = e^{2\pi n\Im K\tau} \leq e^{2\pi n\Im\tau}, \] since $\tau\in\mc{D}(\Gamma)$ implies $\Im K\tau \leq \Im\tau$ for all $K\in\Gamma$. 

Combining the above, we deduce that for $\tau = x+iy\in\mc{C}$ and $n\geq 9$, we have \[ \abs{F_{n,3C}e^{-2\pi ny} - 2\cos(2\pi nx)} < 1335n^2e^{-2\pi ny} + n^2e^{-\pi ny} + 1. \] The right hand side is a decreasing function of $y$ obtaining a minimum on $\mc{C}$ at $y=\frac{\sqrt{3}}{6}$, and substituting this value in, one checks the resulting expression is a decreasing function of $n$ for $n\geq 9$ (the expression is in fact decreasing for $n\geq 8$). For all $\tau\in\mc{C}$ and all $n\geq 9$, then, we obtain the bound \[ \abs{F_{n,3C}e^{-2\pi ny} - 2\cos(2\pi nx)} < 1335\cdot 9^2e^{-3\pi\sqrt{3}} + 9^2e^{-\frac{3}{2}\pi \sqrt{3}} + 1 \approx 1.0319\ldots < 1.04. \] Since $F_{n,3C}$ stays within a distance of 2 of $2\cos(2\pi nx)$, we must have the same $s+1$ sign changes $2\cos(2\pi nx)$ for $x\in [0,\frac{1}{3}]$, giving $s$ zeros of $F_{n,3C}$ on the interior of $\mc{C}$. Thus $F_{n,3C}$ has $n = 3s+t$ zeros on the lower boundary of $\mc{D}(\Gamma)$, as desired.	

\section{Appendix I: Special values of modular functions}\label{Section:SpecialValues}
In the method of the proof of Theorem \ref{Theorem:MainTheorem}, establishing the bound \[ B = \max\{ f^{(H)}(iy_0^{(H)}) - e^{-2\pi y_0^{(H)}} \st H\in\mc{H}_n \} \] in Corollary \ref{Corollary:FirstBound}, one must compute values of modular functions at (quadratic) purely imaginary points $iy_0$. It suffices to approximate these values using a computer calculation (the $q$-series converges exponentially to a real value), but we may also determine these values exactly. For the cases covered here, we need the values $T_{1A}\left(\frac{i\sqrt{3}}{2}\right)$, $T_{2A}\left(\frac{i}{2}\right)$, $T_{3A}\left(\frac{i}{2\sqrt{3}}\right)$, $T_{6A}\left(\frac{i}{3\sqrt{2}}\right)$, and $T_{3C}\left(\frac{i\sqrt{3}}{6}\right)$.

First, we compute $T_{1A}\left(\frac{i\sqrt{3}}{2}\right)$. We use the known special value of the $j$-invariant at $i\frac{\sqrt{3}}{2}$ (see, e.g. \cite{BBRSTT}) to determine 
\begin{equation}\label{Eqn:T1Avalue}
	T_{1A}\left(i\frac{\sqrt{3}}{2}\right) = j\left(i\frac{\sqrt{3}}{2}\right) - 744 = 4\cdot 15^3(30-17\sqrt{3})^3 - 744 \approx 1565.5777\ldots. 
\end{equation}

Next, we compute $T_{2A}\left(\frac{i}{2}\right)$ using the expression \[ T_{2A}(q) = \left(\frac{\eta(q)}{\eta(q^2)}\right)^{24} + 2^{12}\left(\frac{\eta(q^2)}{\eta(q)}\right)^{24} + 24 \] from Conway and Norton \cite{CN}. Noting that $\Delta(q) = \eta(q)^{24}$ is an eigenfunction of all (classical) Hecke operators $T_n$ (\cite{SERRE}), we use the Hecke operator of level 2 to find 
\[ -24\Delta(i) = \left(\Delta\big\vert T_2\right)(i) := 2^{11}\Delta(2i) + \frac{1}{2}\Delta\left(\frac{i}{2}\right) + \frac{1}{2}\Delta\left(\frac{i+1}{2}\right). \]
As a weight 12 modular form for $\PSL_2(\zz)$, $\Delta$ satisfies $\Delta(2i) = 2^{-12}\Delta(i/2)$ and $\Delta\left(\frac{i+1}{2}\right) = -2^6\Delta(i)$, so we deduce that $\Delta\left(\frac{i}{2}\right) = 8\Delta(i)$. Thus, we find
\begin{equation}\label{Eqn:T2Avalue}
	T_{2A}\left(\frac{i}{2}\right) = \frac{\Delta\left(\frac{i}{2}\right)}{\Delta(i)} + 2^{12}\frac{\Delta(i)}{\Delta\left(\frac{i}{2}\right)} + 24 = 8 + 512 + 24 = 544. 
\end{equation}

Now, we compute $T_{3A}\left(\frac{i}{2\sqrt{3}}\right)$. From Shigezumi \cite{SHI} $T_{3A}(\tau) = q^{-1} + 783q + \ldots$ takes values on $[-42,66]$ on the lower boundary, and in particular, $T_{3A}\left(\frac{i}{\sqrt{3}}\right) = 66$ and $T_{3A}\left(\frac{i}{2\sqrt{3}} + \frac{1}{2}\right) = -42$. The second Faber polynomial for $T_{3A}$ is $F_{2,3A}(X) = X^2 - 2\cdot 783$. Let $\tau_0 = \frac{i}{\sqrt{3}}$, so $T_{3A}(\tau_0) = 66$, and $F_{2,3A}\left(T_{3A}(\tau_0)\right) = 2790$. 

Now, observe that \[ T_{3A}\left( \frac{i}{2\sqrt{3}}\right) = T_{3A}\left(\frac{\tau_0}{2}\right) = T_{3A}(2\tau_0), \] since  $2\tau_0 = \frac{2i}{\sqrt{3}} = \frac{-1}{3\frac{\tau_0}{2}}$. Moreover, $T_{3A}(\frac{\tau_0+1}{2}) = T_{3A}\left(\frac{i}{2\sqrt{3}} + \frac{1}{2}\right) = -42$. The twisted Hecke operator of level 2 from Monstrous Moonshine then gives
\begin{align*} 
	2790 &= F_{2,3A}(\tau_0) \\
	&= \sum_{H\in\mc{H}_2} T_{3A}(H\tau_0) \\
	&= T_{3A}\left(2\tau_0\right) + T_{3A}\left(\frac{\tau_0}{2}\right) + T_{3A}\left(\frac{\tau_0+1}{2}\right) \\
	&= 2T_{3A}\left(\frac{i}{2\sqrt{3}}\right) - 42,
\end{align*}
giving 
\begin{equation}\label{Eqn:T3Avalue} 
	T_{3A}\left(\frac{i}{2\sqrt{3}}\right) = \frac{2832}{2} = 1416.
\end{equation}

Next, we need the value of $T_{6A}\left(\frac{i}{3\sqrt{2}}\right)$. Conway and Norton \cite{CN} provide the following identities:
\[ T_{6A} + 12 = t_{6A} = t_{6B} + \frac{1}{t_{6B}}, \]
\[ t_{6B} = \left[ \frac{\eta(2\tau)\eta(3\tau)}{\eta(\tau)\eta(6\tau)} \right]^{12}, \]
\[ t_{2B} = \left[ \frac{\eta(\tau)}{\eta(2\tau)} \right]^{24}, \]
\[ T_{2A} - 24 = t_{2A} = t_{2B} + \frac{4096}{t_{2B}}. \]

A little algebra gives us that \[ t_{6A}^2 = t_{6B}^2 + \frac{1}{t_{6B}^2} + 2, \] where  \[ t_{6B}(\tau)^2 = \frac{ t_{2B}(3\tau) }{ t_{2B}(\tau) }, \qquad \text{and} \qquad t_{2B} = \frac{t_{2A}}{2} \pm \sqrt{\frac{t_{2A}^2}{4} - 4096}.\] Since $3\tau_0 = \frac{i}{\sqrt{2}}$, we already know $T_{2A}(3\tau_0) = 152$ (\cite{SHI}), implying $t_{2B}(3\tau_0) = 64$, and thus \[ t_{6A}^2(\tau_0) = \frac{64}{t_{2B}(\tau_0)} + \frac{t_{2B}(\tau_0)}{64} + 2. \]

To determine $t_{2B}(\tau_0)$, we find $T_{2A}(\tau_0)$ using the theory of twisted Hecke operators and Faber polynomials. We compute that the Faber polynomial $F_{3,2A}(X) = X^3 - 13118X - 288768$, so that \[ F_{3,2A}(T_{2A}(\frac{i}{\sqrt{2}})) = F_{3,2A}(152) = 1229408. \] By Monstrous Moonshine,
\begin{align*}
	F_{3,2A}\left(T_{2A}\left(\frac{i}{\sqrt{2}}\right)\right)
	&= \sum_{H\in\mc{H}_3} T_{2A}^{(H)}\left(\frac{i}{\sqrt{2}}\right) \\
	&= T_{2A}\left(\frac{3i}{\sqrt{2}}\right) + T_{2A}\left(\frac{i}{3\sqrt{2}}\right) + T_{2A}\left(\frac{i}{3\sqrt{2}}+\frac{1}{3}\right) + T_{2A}\left(\frac{i}{3\sqrt{2}}+\frac{2}{3}\right).
\end{align*}
Here, $\frac{3i}{\sqrt{2}}$ and $\frac{i}{3\sqrt{2}}$ belong to the same orbit under $\Fricke{2}$, while both $\frac{i}{3\sqrt{2}}+\frac{1}{3}$ and $\frac{i}{3\sqrt{2}}+\frac{2}{3}$ belong to the orbit of $\frac{i}{\sqrt{2}}$ (and recall $T_{2A}(\frac{i}{\sqrt{2}}) = 152$, from \cite{SHI}). Thus by modularity of $T_{2A}$, we have \[ T_{2A}(\tau_0) = \frac{F_{3,2A}(152) - 2\cdot 152}{2} = 614552, \] implying $t_{2B}(\tau_0) = 307264 \pm 125440\sqrt{6}$. We do not need to determine the sign here (though it is not hard to show it must be negative), since we may determine
\begin{align*} 
	t_{6A}^2(\tau_0)
	&= \frac{64}{t_{2B}(\tau_0)}+\frac{t_{2B}(\tau_0)}{64}+2 \\
	&= \frac{1}{4801\pm 1960\sqrt{6}} + (4801\pm 1960\sqrt{6}) + 2 \\
	&= (4801\mp 1960\sqrt{6}) + (4801\pm 1960\sqrt{6}) + 2 \\
	&= 9604.
\end{align*}
Since $T_{6A}$ is real-valued and positive for $\tau$ on the imaginary axis (having only non-negative $q$-coefficients), we conclude 
\begin{equation}\label{Eqn:T6Avalue} 
	T_{6A} = +\sqrt{9604} - 12 = 86.
\end{equation}

Finally, we compute $T_{3C}\left(\frac{i\sqrt{3}}{6}\right)$. We use the relation $T_{3C}(\tau) = \sqrt[3]{j(3\tau)}$ (\cite{CN}), giving 
\begin{equation}\label{Eqn:T3Cvalue} 
	T_{3C}\left(\frac{i\sqrt{3}}{6}\right) = \sqrt[3]{j\left(i\frac{\sqrt{3}}{2}\right)} = 15\sqrt[3]{4}(30-17\sqrt{3}), 
\end{equation}
using the value of $j$ from the case of $T_{2A}$ above.

\bibliographystyle{amsplain}
\bibliography{ZerosArxiv.bib}

\providecommand{\bysame}{\leavevmode\hbox to3em{\hrulefill}\thinspace}
\providecommand{\MR}{\relax\ifhmode\unskip\space\fi MR }
\providecommand{\MRhref}[2]{%
  \href{http://www.ams.org/mathscinet-getitem?mr=#1}{#2}
}
\providecommand{\href}[2]{#2}
\begin{thebibliography}{10}

\bibitem{ACMS}
D.~Alexander, C.~Cummins, J.~McKay, and C.~Simons, \emph{Completely replicable
  functions}, Groups, Combinatorics and Geometry (1992), 87--98.

\bibitem{AKN}
Tetsuya Asai, Masanobu Kaneko, and Hirohita Ninomiya, \emph{Zeros of certain
  modular functions and an application}, Comment. Math. (1997), 93--101.

\bibitem{BBRSTT}
Angelica Babei, Lea Beneish, Manami Roy, Holly Swisher, Bella Tobin, and
  Fang-Ting Tu, \emph{Generalized ramanujan-sato series arising from modular
  forms}, 2022.

\bibitem{BKM}
Eiichi Bannai, Koji Kojima, and Tsuyoshi Miezaki, \emph{On the zeros of
  heck-type faber polynomials}, Kyushu Journal Math. \textbf{62} (2008),
  15--61.

\bibitem{Borcherds}
R.~Borcherds, \emph{Monstrous moonshine and monstrous lie superalgebras},
  Inventiones mathematicae \textbf{109} (1992), 405--444.

\bibitem{ChoiIm}
SoYoung Choi and Bo-Hae Im, \emph{On the zeros of certain weakly holomorphic
  modular forms for {$\Gamma_0^+(2)$}}, Journal of Number Theory \textbf{166}
  (2016), 298--323.

\bibitem{ATLAS}
J.H. Conway, \emph{Atlas of finite groups: Maximal subgroups and ordinary
  characters for simple groups}, Clarendon Press, 1985.

\bibitem{CN}
J.H. Conway and S.P. Norton, \emph{Monstrous moonshine}, Bull. London Math.
  Soc. \textbf{11} (1979), 308--339.

\bibitem{CMS}
John Conway, John McKay, and Abdellah Sebbar, \emph{On the discrete groups of
  moonshine}, Proceedings of The American Mathematical Society - PROC AMER MATH
  SOC \textbf{132} (2004), 2233--2233.

\bibitem{Cox}
David~A. Cox, \emph{Primes of the form {$x^2 + ny^2$}}, ch.~3, pp.~181--281,
  John Wiley and Sons, Ltd, 2013.

\bibitem{CumminsGannon}
C.J. Cummins and T.~Gannon, \emph{Modular equations and the genus zero property
  of moonshine functions}, Inventiones mathematicae \textbf{129} (1997), no.~3,
  413--443 (eng).

\bibitem{DuncanFrenkel}
John F.~R. Duncan and Igor~B. Frenkel, \emph{Rademacher sums, moonshine and
  gravity}, 2012.

\bibitem{Ferenbaugh}
Charles~R. Ferenbaugh, \emph{The genus-zero problem for {$n \vert h$}-type
  groups}, Duke Mathematical Journal \textbf{72} (1993), no.~1, 31--63.

\bibitem{HanamotoKuga}
Seiichi Hanamoto and Seiji Kuga, \emph{Zeros of certain weakly holomorphic
  modular forms for the fricke group {$\Gamma_0^+(3)$}}, Acta Arithmetica
  (2021).

\bibitem{Katok}
S.~Katok, \emph{Fuchsian groups}, Chicago Lectures in Mathematics, University
  of Chicago Press, 1992.

\bibitem{SERRE}
Jean-Pierre Serre, \emph{A course in arithmetic}, Springer-Verlag, 1973.

\bibitem{SHI}
Junichi Shigezumi, \emph{On the zeros of certain modular functions for the
  normalizers of congruence subgroups of low levels}, 2008.

\bibitem{Shimura}
Goro Shimura, \emph{Introduction to the arithmetic theory of automorphic
  functions}, Princeton University Press, 1971.

\bibitem{Toomey}
Ben Toomey, \emph{Zeros of replicable functions}, (In preparation.).

\end{thebibliography}

\end{document}